\documentclass{amsart}
\usepackage[margin=1in]{geometry}

\usepackage{tikz}
\usepackage{physics}
\usepackage{mathdots}
\usepackage{yhmath}
\usepackage{cancel}
\usepackage{siunitx}
\usepackage{array}
\usepackage{multirow}
\usepackage{wrapfig}
\usepackage[font=small]{caption}
\usepackage[all]{xy}
\usepackage{amscd}
\usepackage{gensymb}
\usepackage{tabularx}
\usepackage{extarrows}
\usepackage{booktabs}
\usetikzlibrary{fadings}
\usetikzlibrary{patterns}
\usetikzlibrary{shadows.blur}
\usetikzlibrary{shapes}
\usepackage{amsmath}
\usepackage{amsthm}
\usepackage{amssymb}
\usepackage{mathtools}
\usepackage{soul}
\usepackage{color}

\usepackage{ytableau}
\usepackage{shuffle}

\newtheorem{thm}{Theorem}[section]

\newtheorem{prop}[thm]{Proposition}
\newtheorem{cor}[thm]{Corollary}
\newtheorem{lem}[thm]{Lemma}

\theoremstyle{definition}
\newtheorem{defn}[thm]{Definition}
\newtheorem{ex}[thm]{Example}
\newtheorem{rem}[thm]{Remark}

\newcommand{\rperp}{ \mathrel{
\begin{tikzpicture}[x=1pt,y=1pt,yscale=-.25,xscale=.25]
%Straight Lines [id:da511493416307716] 
\draw[line width=.40pt]    (10,10) -- (20,10) ;
%Straight Lines [id:da2120301398612603] 
\draw[line width = .40pt]    (20,20) -- (0,20) ;
%Straight Lines [id:da5080477356595996] 
\draw[line width =.40pt]    (10,0) -- (10,20) ;
\end{tikzpicture}}}

%%% tableau macros %%%

\newlength\cellsize \setlength\cellsize{12\unitlength}
\savebox2{%
\begin{picture}(12,12)
\put(0,0){\line(1,0){12}}
\put(0,0){\line(0,1){12}}
\put(12,0){\line(0,1){12}}
\put(0,12){\line(1,0){12}}
\end{picture}}
\newcommand\cellify[1]{\def\thearg{#1}\def\nothing{}%
\ifx\thearg\nothing
\vrule width0pt height\cellsize depth0pt\else
\hbox to 0pt{\usebox2\hss}\fi%
\vbox to 12\unitlength{
\vss
\hbox to 12\unitlength{\hss$#1$\hss}
\vss}}
\newcommand\tableau[1]{\vtop{\let\\=\cr
\setlength\baselineskip{-16000pt}
\setlength\lineskiplimit{16000pt}
\setlength\lineskip{0pt}
\halign{&\cellify{##}\cr#1\crcr}}}
\savebox3{%
\begin{picture}(12,12)
\put(0,0){\line(1,0){12}}
\put(0,0){\line(0,1){12}}
\put(12,0){\line(0,1){12}}
\put(0,12){\line(1,0){12}}
\end{picture}}
\newcommand\expath[1]{%
\hbox to 0pt{\usebox3\hss}%
\vbox to 12\unitlength{
\vss
\hbox to 12\unitlength{\hss$#1$\hss}
\vss}}

%%%%% end tableaux macros %%%%%%%%%%%%%%%%%%%%%%%%%%%%

%%% bigtableau macros %%%

\newlength\ccellsize \setlength\ccellsize{18\unitlength}
\savebox4{%
\begin{picture}(18,18)
\put(0,0){\line(1,0){18}}
\put(0,0){\line(0,1){18}}
\put(18,0){\line(0,1){18}}
\put(0,18){\line(1,0){18}}
\end{picture}}
\newcommand\ccellify[1]{\def\thearg{#1}\def\nothing{}%
\ifx\thearg\nothing
\vrule width0pt height\ccellsize depth0pt\else
\hbox to 0pt{\usebox4\hss}\fi%
\vbox to 18\unitlength{
\vss
\hbox to 18\unitlength{\hss$#1$\hss}
\vss}}
\newcommand\bigtableau[1]{\vtop{\let\\=\cr
\setlength\baselineskip{-16000pt}
\setlength\lineskiplimit{16000pt}
\setlength\lineskip{0pt}
\halign{&\ccellify{##}\cr#1\crcr}}}
\savebox5{%
\begin{picture}(18,18)
\put(0,0){\line(1,0){18}}
\put(0,0){\line(0,1){18}}
\put(12,0){\line(0,1){18}}
\put(0,12){\line(1,0){18}}
\end{picture}}
\newcommand\cexpath[1]{%
\hbox to 0pt{\usebox5\hss}%
\vbox to 18\unitlength{
\vss
\hbox to 18\unitlength{\hss$#1$\hss}
\vss}}

%%%%% end bigtableaux macros %%%%%%%%%%%%%%%%%%%%%%%%%%%%

%%%%%% shuffle symbols %%%%%%%%%%%
\def\shuffle{\begin{picture}(16,5)(-2,0) 
\put(0,0){\line(0,1){5}}
\put(5,0){\line(0,1){5}}
\put(10,0){\line(0,1){5}}
\put(0,0){\line(1,0){10}}
\end{picture}}
\def\Qshuffle{\stackrel{Q}{\shuffle}}
%%%%%%%%%%%%%%%%%%%%%%%%%%%%%%%%%%%%%%%

%\renewcommand{\familydefault}{\rmdefault}
%%%%%%%%%%%title%%%%%%%%%%%%%%%%%%%

%%%%%%%%author%%%%%%%%%%%%%%
\title[Colored Dual Immaculates]{A Generalization of the Dual Immaculate Quasisymmetric Functions in Partially Commutative Variables}
\author[S. Daugherty]{Spencer Daugherty} \thanks{sdaughe@ncsu.edu. Department of Mathematics, North Carolina State University, USA}

%\email{sdaughe@ncsu.edu}
%\address{Department of Mathematics, North Carolina State University}

%%%%%%%%%%institution%%%%%%%%%%%%%%%%%
%\address{\addressmark{1}Department of Mathematics, North Carolina State University} 

%% date of submission 
%\received{\today}

%% leave this blank until submitting a revised version
%\revised{}

% keywords 
%\keywords{}

\usepackage[backend=bibtex, maxbibnames=10, doi=false, url=false, isbn=false]{biblatex}
\addbibresource{main_bib.bib}

\begin{document}

\maketitle

%%%%%%%%%%%% abstract %%%%%%%%%%%%%%%%%%%
\begin{abstract}

We define a new pair of dual bases that generalize the immaculate and dual immaculate bases to the colored algebras $QSym_A$ and $NSym_A$. The colored dual immaculate functions are defined combinatorially via tableaux, and we present results on their Hopf algebra structure, expansions to and from other bases, and skew functions. For the colored immaculate functions, defined using creation operators, we study expansions to and from other bases and provide a right Pieri rule. This includes a combinatorial method for expanding colored immaculate functions into the colored ribbon basis that specializes to a new analogous result in the uncolored case. We use the same methods to define colored generalizations of the row-strict immaculate and row-strict dual immaculate functions with similar results. 
\end{abstract}

\section{Introduction}

The quasisymmetric functions, introduced by Gessel \cite{gessel}, and the noncommutative symmetric functions, introduced by Gelfand, Krob, Lascoux, Leclerc, Retakh, and Thibon \cite{gelfand}, are generalizations of the symmetric functions with rich theory and importance in algebraic combinatorics.  Their algebras, $QSym$ and $NSym$, are dual Hopf algebras that also appear in representation theory, algebraic geometry, and category theory. A significant amount of work has been done to find quasisymmetric or noncommutative analogues of symmetric function objects, specifically of the Schur basis.  This includes the development of the quasisymmetric Schur basis, extended Schur basis, dual immaculate basis, and row-strict dual immaculate basis in $QSym$ and the dual quasisymmetric Schur basis, the row-strict dual quasisymmetric Schur basis, the shin basis, the immaculate basis, and the row-strict immaculate basis in $NSym$ \cite{Assa, berg18, Camp, haglund2011, rowstrict}.

The immaculate functions, for example, are Schur-like in that they map to the Schur functions under the forgetful map from $NSym$ to $Sym$, and they have a Jacobi-Trudi rule, a right Pieri rule, and a creation operator construction.  The dual immaculate functions, on the other hand, resemble the combinatorial definition of the Schur functions using tableaux. The primary goal of this paper is to define and study generalizations of the immaculate and dual immaculate functions in the colored algebras $QSym_A$ and $NSym_A$ introduced by Doliwa in \cite{doliwa21}.

The isomorphism between $NSym$ and a subalgebra of rooted trees led to a colored generalization of $NSym$, called $NSym_A$, that is isomorphic to a larger subalgebra of colored rooted trees studied in \cite{Foissy}.  The colored algebra $QSym_A$ is defined dually using partially commutative colored variables.  The initial goal of these generalizations was to extend the study of the relationship between symmetric functions and integrable systems to a noncommutative setting which is of growing interest in mathematical physics \cite{other_doliwa, kupershmidt2000kp}.  Additionally, the Hopf algebra of rooted trees has various applications in the field of symbolic computation \cite{Grossman_symb}.  Doliwa defines the algebraic structure of $QSym_A$ and $NSym_A$ and analogues to some classic bases. We bring the study of Schur-like bases to this space to continue developing its theory.

Studying the lift of the dual immaculate and immaculate functions to the colored quasisymmetric functions and colored noncommutative symmetric functions also allows us to obtain results on the original bases.  The introduction of colored variables reduces a significant amount of cancellation and allows for the study of various patterns in more detail.  Any results on the colored dual immaculate functions or the colored immaculate functions have immediate implications for their original counterparts. 

Section \ref{background} of this paper provides background on the symmetric functions, Hopf algebras, the quasisymmetric functions, and the noncommutative symmetric functions. We then review the immaculate and dual immaculate functions, the skew dual immaculate functions, and the immaculate poset. Section \ref{colorsection} introduces Adam Doliwa's colored generalizations of $QSym$ and $NSym$. We review their Hopf algebra structure as well as the previously defined bases.  In Section \ref{colordualsection}, we define the colored dual immaculate functions by introducing a colored generalization of immaculate tableaux.  We then give expansions of the colored dual immaculate functions into the colored monomial and colored fundamental bases using the combinatorics of colored immaculate tableaux.  Furthermore, we provide an expansion of the colored fundamental functions into the colored dual immaculate basis defined combinatorially by counting paths in a graph related to standard colored immaculate tableaux.  This result specializes to a new analogous result on the fundamental and dual immaculate bases in $QSym$. Section \ref{colorimmsection} defines the colored immaculate functions using a colored generalization of Bernstein creation operators. We prove a right Pieri rule for the colored immaculate basis and give expansions of the colored complete homogeneous and colored ribbon bases into the colored immaculate basis. Using duality, we obtain an expansion of the colored immaculate functions into the colored ribbon basis using the colored immaculate descent graph. Applying the uncoloring map yields a new combinatorial model for expanding the immaculate functions into the ribbon noncommutative symmetric functions. Additionally, applying the forgetful map yields a new expression for the decomposition of Schur functions into ribbon Schur functions.
In Section \ref{posetsection}, we introduce a partially ordered set on sentences in the style of the immaculate poset and skew colored immaculate tableaux. We use this poset to define skew colored dual immaculate functions and find results related to the structure constants of the colored immaculate basis. In Section \ref{rowstrictsection}, we first review the row-strict immaculate and row-strict dual immaculate functions, then we define the colored row-strict immaculate and colored row-strict dual immaculate functions.  These two bases are related to the immaculate and dual immaculate bases by an involution on sentences, which we use to translate our results from previous sections to the row-strict case.

\subsection{Acknowledgements} The author is grateful to Laura Colmenarejo and Sarah Mason for their support and generative feedback, and to Nantel Bergeron, Sheila Sundaram, and Aaron Lauve for helpful conversations. The author would also like to thank the referees for their helpful comments and thoughtful review.

\section{Background}\label{background} 

A \textit{partition} of  a positive integer $n$, written $\lambda \vdash n$, is a sequence of positive integers $\lambda = (\lambda_1, \ldots ,\lambda_k)$ such that $\lambda_1 \geq \ldots \geq \lambda_k$ and $\sum_i \lambda_i = n$.  The \textit{length} of a partition $\lambda = (\lambda_1,\ldots,\lambda_k)$ is the number of parts, $\ell(\lambda) = k$, and the \textit{size} of a partition is the sum of its parts, $|\lambda|=\sum_i \lambda_i$.  A \textit{composition} of a positive integer $n$, written $\alpha \vDash n$, is a sequence of positive integers $\alpha = (\alpha_1, \ldots ,\alpha_k)$ such that $\sum \alpha_i = n$. The \textit{length} of a composition $\alpha = (\alpha_1,\ldots,\alpha_k)$ is the number of parts, $\ell(\alpha) = k$, and the \textit{size} of a composition is the sum of its parts, $|\alpha|=\sum_i \alpha_i$.  A \textit{weak composition} is a composition that allows zeroes as entries. If $\beta$ is a weak composition then $\tilde{\beta}$, called the \emph{flattening} \cite{Assaf17} of $\beta$, is the composition that results from removing all 0's from $\beta$. The length of a weak composition is also its number of parts, although it is often implicitly assumed that there are infinitely many zeroes at the end of any weak composition.

\begin{ex} The partition $\lambda = (3,2,1,1)$ has size $|\lambda| = 7$ and length $\ell(\lambda) = 4$. The composition $ \alpha = (2,1,3)$ has size $|\alpha| = 6$ and length $\ell(\alpha)=3$.  The flattening of the weak composition $\beta = (0,1,1,0,2)$ is $\tilde{\beta}=(1,1,2)$.
\end{ex}

The \emph{Young diagram} of a partition $\lambda = (\lambda_1, \ldots ,\lambda_k)$ is a left-justified array of boxes such that row $i$ has $\lambda_i$ boxes. Following the English convention, the top row is considered to be row 1. The \emph{composition diagram} of a composition $\alpha = (\alpha_1, \ldots , \alpha_k)$ is a left-justified array of boxes such that row $i$ has $\alpha_i$ boxes. This only differs from a Young diagram in that the number of boxes in each row of a Young diagram must weakly decrease from top to bottom, but there is no such restriction for composition diagrams. Let $\alpha = (\alpha_1, \ldots, \alpha_k)$ and $\beta = (\beta_1, \ldots, \beta_j)$ be compositions such that $j\leq k$ and $\beta_i \leq \alpha_i$ for $1 \leq i \leq j$.  The \emph{skew shape} $\alpha/\beta$ is a composition diagram of shape $\alpha$ where the first $\beta_i$ boxes in the $i^{\text{th}}$ row are removed for $1 \leq i \leq j$. We represent this removal by shading in the removed boxes.

\begin{ex} Let $\lambda = (3,2,1,1)$, $\alpha = (2,1,3)$, and $\beta=(1,1,2)$.  Then the Young diagram of $\lambda$, the composition diagram of $\alpha$, and the skew shape $\alpha/\beta$ respectively are: \vspace{2mm}
$$
\scalebox{0.75}{
\begin{ytableau}
    *(white) & *(white) & *(white)\\
    *(white) & *(white)\\
    *(white)\\
    *(white)\\
\end{ytableau}
\text{\qquad \qquad \qquad}

\begin{ytableau}
    *(white) & *(white)\\
    *(white)\\
    *(white) & *(white) & *(white)
\end{ytableau}
\text{\qquad \qquad \qquad}
\begin{ytableau}
    *(lightgray) & *(white)\\
    *(lightgray)\\
    *(lightgray) & *(lightgray) & *(white)
\end{ytableau}
}
$$\vspace{-2mm}
\end{ex}

Let $\alpha = (\alpha_1, \ldots, \alpha_k)$ and $\beta = (\beta_1, \ldots, \beta_j)$ be two compositions. Under the \textit{refinement order} $\preceq$ on compositions of size $n$, we say $\alpha \preceq \beta$ if and only if $\{\beta_1, \beta_1+\beta_2, \ldots, \beta_1 + \cdots + \beta_j = n\} \subseteq \{\alpha_1, \alpha_1 + \alpha_2, \ldots, \alpha_1 + \cdots + \alpha_k = n\}$. Under the \textit{lexicographic order} $\leq_{\ell}$ on compositions, $\alpha \leq_{\ell} \beta$ if and only if $\alpha_i < \beta_i$ where $i$ is the first positive integer such that $\alpha_i \not= \beta_i$ where $\alpha_i =0$  if $i>k$ and $\beta_i = 0$ if $i>j$. Under the \textit{reverse lexicographic order} $\leq_{r\ell}$ on compositions, $\alpha \leq_{r\ell} \beta$ if and only if $\alpha_i>\beta_i$ where $i$ is the smallest positive integer such that $\alpha_i \not= \beta_i$. Note that, in the last two orders, if such an $i$ does not exist then $\alpha = \beta$. Under the \emph{dominance order} 
$\subseteq$ on compositions, we say $\alpha \subseteq \beta$ if and only if $k \leq j$ and $\alpha_i \leq \beta_i$ for $1 \leq i \leq k$.
 
\begin{ex} We have the following chains in the corresponding orders:
\begin{enumerate}
    \item Under the refinement order, $(1,1,1,1) \preceq (1,2,1) \preceq (1,3) \preceq (4)$.
    \item  Under the lexicographic order, $(1,2,3) \leq_{\ell} (1,3,2) \leq_{\ell} (2,1,3) \leq_{\ell} (2,3,1) \leq_{\ell} (3,1,2) \leq_{\ell} (3,2,1)$.
    \item Under the reverse lexicographic order, 
$(3,2,1) \leq_{r\ell} (3,1,2) \leq_{r\ell} (2,3,1) \leq_{r\ell} (2,1,3) \leq_{r\ell} (1,3,2)$.
    \item Under the dominance order, $(1,1,1) \subseteq (2,1,1,1) \subseteq (2,3,1,2)$.
\end{enumerate}
\end{ex}

There is a natural bijection between ordered subsets of $[n-1] = \{1,2, \ldots, n-1 \}$ and compositions of $n$. For an ordered set $S = \{s_1,\ldots,s_k\} \subseteq \{[n-1]\}$, $comp(S) = (s_1, s_2-s_1, \ldots, s_k-s_{k-1}, n-s_k)$ and for a composition $\alpha = (\alpha_1,\ldots,\alpha_j)$, $set(\alpha) = \{\alpha_1, \alpha_1+\alpha_2, \ldots, \alpha_1 + \alpha_2 + \ldots + \alpha_{j-1}\}$. 

\begin{ex}
For $n=8$, let $S = \{2,3,6,7\}$ and $\alpha = (1,2,1,4).$  Then $comp(S)=(2,1,3,1,1)$ and $set(\alpha) = \{1,3,4\}$. 
\end{ex}

For a positive integer $s$ and compositions $\alpha \models n$ and $\beta \models n+s$, we write $\alpha \subset_s \beta$ if $\alpha_j \leq \beta_j$ for all $1 \leq j \leq \ell(\alpha)$, and $\ell(\beta) \leq \ell(\alpha) +1$. This notation comes from \cite{berg18} and $\subset_1$ constitutes a partial order on compositions.

\begin{ex}
The compositions $\beta$ for which $(1,2) \subset_2 \beta$ are  $$(1,2) \subset_2 (2,3) \text{ and }(1,2) \subset_2 (2,2,1) \text{ and }(1,2) \subset_2 (1,3,1).$$
\end{ex}

A \textit{permutation} $\omega$ of a set is a bijection from the set to itself. The permutation $\omega$ of $[n]$ is written in one-line notation as $\omega(1)\omega(2) \cdots \omega(n)$.

\begin{ex} The permutation $312$ maps $1 \to 3$, $2 \to 1$, and $3 \to 2$.
\end{ex}

For any set $I$, the \emph{Kroenecker delta} is the function defined for $i,j \in I$ as $$ \delta_{i,j} = \begin{cases} 1 & \text{ if } i = j,\\
0 & \text{ if } i \not= j.
\end{cases}$$

\subsection{The symmetric functions}
 Let $x = (x_1,x_2, \ldots)$ and $c_{\alpha} \in \mathbb{Q}$.  A \textit{symmetric function} $f(x)$ with rational coefficients is a formal power series $f(x) = \sum_{\alpha} c_{\alpha}x^{\alpha}$ where $\alpha$ is a weak composition of a positive integer,  $x^{\alpha} = x_1^{\alpha_1}\ldots x_k^{\alpha_k}$, and $f(x_{\omega(1)}, x_{\omega(2),\ldots}) = f(x_1,x_2,\ldots)$ for all permutations $\omega$ of $\mathbb{Z}_{>0}$.

\begin{ex} The following function $f(x)$ is a symmetric function:
$$f(x) = x_1^2x_2^3x_3 + x_1^2x_3^3x_2 + x_2^2x_1^3x_3 + x_2^2x_3^3x_1 + x_3^2x_1^3x_2+ x_3^2x_2^3x_1 + \ldots + x_4^2x_5^3x_7 + \ldots .$$
\end{ex}

The algebra of symmetric functions is denoted $Sym$, and we take $\mathbb{Q}$ as our base field unless otherwise specified. $Sym$ has many bases with various applications and combinatorial importance, but we limit ourselves to defining the Schur basis here. See \cite{EC2} for more background on symmetric functions.

For a partition $\lambda \vdash n$, a \emph{semistandard Young tableau} (SSYT) of shape $\lambda$ is a filling of the Young diagram of $\lambda$ with positive integers such that the numbers are weakly increasing from left to right in the rows and strictly increasing from top to bottom in the columns. The \emph{size} of an SSYT is its number of boxes, $n = \sum_i \lambda_i$, and its \emph{type} is a weak composition encoding the number of boxes filled with each integer. We write $type(T)=\beta = (\beta_1,\ldots,\beta_j)$ if $T$ has $\beta_i$ boxes containing an $i$ for all $i \in [j]$. Note that ``type'' is also referred to as ``content'' in the literature. A \emph{standard Young tableau} (SYT) of size $n$ is a Young tableau in which the numbers in $[n]$ each appear exactly once.  A SSYT $T$ of type $\beta = (\beta_1, \ldots, \beta_k)$ is associated with the monomial $x^T = x_1^{\beta_1} \cdots x_k^{\beta_k}$.

\begin{ex} The semistandard Young tableaux of shape $(2,2)$ with entries in $\{1,2,3\}$ and their associated monomials are:
$$\tableau{1&1\\2&2} \quad \quad
\tableau{1&1\\2&3} \quad \quad
\tableau{1&1\\3&3} \quad \quad
\tableau{1&2\\2&3} \quad \quad
\tableau{1&2\\3&3} \quad \quad
\tableau{2&2\\3&3}
$$
$$\ \ x_1^2x_2^2 \quad \quad x_1^2x_2x_3  \quad \ \ x_1^2x_3^2 \quad \quad x_1x_2^2x_3 \quad \ x_1x_2x_3^2 \quad \ \ \ x_2^2x_3^2 \quad$$
The standard Young tableaux of shape $(2,2)$, both of type $(1,1,1,1)$, are:
$$\tableau{1&2\\ 3&4} \quad \quad \tableau{1&3\\ 2&4}$$
\end{ex}

\begin{defn}
For a partition $\lambda$, the Schur symmetric function is defined as $$s_{\lambda} = \sum_T x^T,$$ where the sum runs over all semistandard Young tableaux $T$ of shape $\lambda$ with entries in $\mathbb{Z}_{> 0}$. These functions form a basis of $Sym$ \cite{EC2}.
\end{defn}

\begin{rem}
The \emph{Schur polynomials} are defined over finitely many variables $s_{\lambda}(x_1,\ldots, x_n)$ and correspond to tableaux filled only with integers in $[n]$. This paper deals only with functions in infinitely many variables, but many results restrict to polynomials.
\end{rem}

The Schur functions can be defined in numerous equivalent ways including via a Jacobi Trudi formula or with Bernstein creation operators. One of their most important properties is that they are the characters of irreducible representations of the general linear group \cite{EC2}. 

\subsection{Hopf algebras}

Hopf algebras are widespread in combinatorics and other fields with notable examples including $Sym$, $QSym$, and $NSym$. We provide a brief overview of the structures needed for our purposes. See \cite{doliwa21, grinberg} for more details.

\begin{defn} 
    For a field $\Bbbk$ of characteristic zero, a \emph{Hopf algebra} $(\mathcal{H}, \mu, \Delta, \eta, \epsilon)$ is a bialgebra, which consists of an associative algebra and a coassociative coalgebra, along with an antipode, as defined below. 
    \begin{enumerate}
        \item An associative algebra $(\mathcal{H}, \mu, \eta)$ is a $\Bbbk$-module $\mathcal{H}$ with  $\Bbbk$-linear multiplication $\mu: \mathcal{H} \otimes \mathcal{H} \rightarrow \mathcal{H}$  and a $\Bbbk$-linear unital algebra morphism $\eta: \Bbbk \rightarrow \mathcal{H}$ that together satisfy the commutative diagrams below.
          $$  \begin{CD}
            \mathcal{H}\otimes \mathcal{H}\otimes \mathcal{H} @ > \mathrm{id} \otimes \mu >>  \mathcal{H} \otimes \mathcal{H} \\
            @V \mu \otimes \mathrm{id} VV     @VV \mu V \\
            \mathcal{H}\otimes \mathcal{H} @ > \mu >>  \mathcal{H}
            \end{CD}
            \qquad \qquad
            \begin{CD}
            \mathcal{H}\otimes \Bbbk @= \mathcal{H}  @= \Bbbk \otimes \mathcal{H}\\
            @V\mathrm{id} \otimes \eta VV @V\mathrm{id} VV @VV \eta \otimes \mathrm{id} V \\
            \mathcal{H}\otimes \mathcal{H} @> \mu >>  \mathcal{H} @< \mu << \mathcal{H}\otimes \mathcal{H}
        \end{CD}$$ \vspace{1mm}
        
        \item A co-associative coalgebra $(\mathcal{H}, \Delta, \epsilon)$ is a $\Bbbk$-module $\mathcal{H}$ with $\Bbbk$-linear comultiplication $\Delta: \mathcal{H} \rightarrow \mathcal{H} \otimes \mathcal{H}$  and a $\Bbbk$-linear unital algebra morphism $\epsilon: \mathcal{H} \rightarrow \Bbbk$ called the conunit that together satisfy the commutative diagrams below.

        $$
        \begin{CD}
        \mathcal{H} @ > \Delta >> \mathcal{H}\otimes \mathcal{H} \\
        @V \Delta  VV     @VV \Delta \otimes \mathrm{id} V \\
        \mathcal{H} \otimes \mathcal{H} @ > \mathrm{id} \otimes \Delta >> \mathcal{H}\otimes \mathcal{H}\otimes \mathcal{H} 
        \end{CD} \qquad \qquad
        \begin{CD}
        \mathcal{H}\otimes \mathcal{H} @ < \Delta <<  \mathcal{H} @ > \Delta >> \mathcal{H}\otimes \mathcal{H} \\
        @V\mathrm{id} \otimes \epsilon VV @V\mathrm{id} VV @VV \epsilon \otimes \mathrm{id} V \\
        \mathcal{H}\otimes \Bbbk @= \mathcal{H}  @= \Bbbk \otimes \mathcal{H}
        \end{CD}
     $$ \vspace{1mm}

    \item The \emph{antipode} $S$ is a $\Bbbk$-linear anti-endomorphism $S$ for which the following diagram 
 below commutes.

    $$
    \label{antipode-diagram}
    \xymatrix{
    &\mathcal{H} \otimes \mathcal{H} \ar[rr]^{S \otimes id_\mathcal{H}}& &\mathcal{H} \otimes \mathcal{H} \ar[dr]^\mu& \\
    \mathcal{H} \ar[ur]^\Delta \ar[rr]^{\epsilon} \ar[dr]_\Delta& &\Bbbk \ar[rr]^{\eta} & & \mathcal{H}\\
    &\mathcal{H} \otimes \mathcal{H} \ar[rr]_{id_\mathcal{H} \otimes S}& &\mathcal{H} \otimes \mathcal{H} \ar[ur]_\mu& \\ }
    $$
\end{enumerate}
\end{defn}

The Hopf algebra $(\mathcal{H}, \mu, \Delta, \eta, \epsilon)$ is often simply denoted by $\mathcal{H}$.

\begin{defn} \label{hopf_dual_mult}
Let $(\mathcal{A},  \mu_{\mathcal{A}}, \Delta_{\mathcal{A}}, \eta_{\mathcal{A}}, \epsilon_{\mathcal{A}})$ and $(\mathcal{B},  \mu_{\mathcal{B}}, \Delta_{\mathcal{B}}, \eta_{\mathcal{B}}, \epsilon_{\mathcal{B}})$ be two Hopf Algebras with antipodes $\mathcal{S}_{\mathcal{A}}$ and $\mathcal{S}_{\mathcal{B}}$ respectively, and consider elements $a, a_1, a_2, 1_{\mathcal{A}} \in \mathcal{A}$ and $b, b_1, b_2, 1_{\mathcal{B}} \in \mathcal{B}$ where $1_{\mathcal{A}}$ and $1_{\mathcal{B}}$ are the multiplicative identity elements. $\mathcal{A}$ and $\mathcal{B}$ are \emph{dually paired} by an inner product $\langle \ ,\ \rangle: \mathcal{B} \otimes \mathcal{A} \rightarrow \mathbb{Q}$, if:
$$\langle \mu_{\mathcal{B}}(b_1,b_2), a \rangle = \langle b_1 \otimes_{\mathcal{B}} b_2, \Delta_{\mathcal{A}}(a)\rangle, \qquad \qquad \langle 1_{\mathcal{B}}, a \rangle = \epsilon_{\mathcal{A}}(a),\qquad \qquad \qquad \quad \qquad \qquad \qquad \quad $$ 
$$\langle b, \mu_{\mathcal{A}}(a_1,a_2) \rangle = \langle \Delta_{\mathcal{B}}(b), a_1 \otimes_{\mathcal{A}} a_2 \rangle, \qquad \qquad \epsilon_{\mathcal{B}}(b) = \langle b, 1_{\mathcal{A}}  \rangle,\qquad \qquad  \langle S_{\mathcal{B}}(b), a \rangle = \langle b, S_{\mathcal{A}}(a) \rangle.$$

\end{defn} 

Two bases $\{a_i\}_{i \in I}$ and $\{b_i\}_{i \in I}$ of $\mathcal{A}$ and $\mathcal{B}$ respectively are \emph{dual bases} if and only if $\langle a_i, b_j \rangle = \delta_{i,j}$. 

\begin{ex} $Sym$ is a self-dual Hopf algebra with the inner product $\langle s_{\lambda}, s_{\mu} \rangle = \delta_{\lambda, \mu}$, meaning that the Schur basis is dual to itself.
\end{ex}

The following result gives a relation for the change of bases between dual bases. 

\begin{prop}\label{hopf_dual} \cite{Hoffman} Let $\mathcal{A}$ and $\mathcal{B}$ be dually paired algebras and let $\{a_i\}_{i \in I}$ be a basis of $\mathcal{A}$. A basis $\{b_i\}_{i \in I}$ of $\mathcal{B}$ is the unique basis that is dual to $\{a_i\}_{i \in I}$ if and only if the following relationship holds for any pair of dual bases $\{c_i\}_{i \in I}$ in $\mathcal{A}$ and $\{d_i\}_{i \in I}$ in $\mathcal{B}$: $$a_i = \sum_{j \in I} k_{i,j} c_j \text{\quad and \quad} d_j = \sum_{i \in I} k_{i,j} b_i.$$
\end{prop}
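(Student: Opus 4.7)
The plan is to prove the biconditional by direct linear-algebraic manipulation of the bilinear pairing $\langle \ ,\ \rangle \colon \mathcal{B} \otimes \mathcal{A} \to \mathbb{Q}$, using nothing more than its bilinearity and the duality relations $\langle d_j, c_i\rangle = \delta_{i,j}$ and (in the forward direction) $\langle b_j, a_i\rangle = \delta_{i,j}$. Uniqueness of a dual basis, when one exists, is a standard linear-algebra fact that I will take for granted.

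For the forward direction, I would fix an arbitrary dual pair $\{c_i\}$, $\{d_i\}$ and write
$$a_i = \sum_{j \in I} k_{i,j}\, c_j \quad \text{and} \quad d_j = \sum_{l \in I} m_{l,j}\, b_l,$$
where the $k_{i,j}$ are given and the $m_{l,j}$ are to be determined. I would then evaluate $\langle d_j, a_i\rangle$ in two ways. Expanding $a_i$ in the $c$-basis and invoking $\langle d_j, c_m\rangle = \delta_{j,m}$ gives $\langle d_j, a_i\rangle = k_{i,j}$; expanding $d_j$ in the $b$-basis and invoking $\langle b_l, a_i\rangle = \delta_{i,l}$ gives $\langle d_j, a_i\rangle = m_{i,j}$. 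Equating the two forces $m_{i,j} = k_{i,j}$, which is exactly the asserted relation.

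For the backward direction, I only need to apply the hypothesis to a single well-chosen dual pair. Let $\{b_i'\}$ denote the (unique) basis of $\mathcal{B}$ dual to $\{a_i\}$; then $\{c_i\} = \{a_i\}$ together with $\{d_i\} = \{b_i'\}$ is itself a dual pair. The trivial expansion $a_i = \sum_j \delta_{i,j}\, a_j$ forces $k_{i,j} = \delta_{i,j}$, and the hypothesized second relation collapses to $b_j' = \sum_i \delta_{i,j}\, b_i = b_j$. Hence $\{b_i\}$ coincides with the dual basis of $\{a_i\}$.

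The main obstacle, such as it is, is purely bookkeeping: keeping the index conventions consistent with the pairing $\mathcal{B} \otimes \mathcal{A} \to \mathbb{Q}$ from Definition \ref{hopf_dual_mult}, so that both evaluations of $\langle d_j, a_i\rangle$ align correctly on the $\mathcal{B}$ and $\mathcal{A}$ sides. There is no analytic or topological subtlety, since in the intended applications $\mathcal{A}$ and $\mathcal{B}$ are graded with finite-dimensional homogeneous components, and the entire argument reduces to finite linear algebra within each fixed graded piece.
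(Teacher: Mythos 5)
The paper does not prove this proposition; it is quoted from \cite{Hoffman} without proof, so there is no in-text argument to compare against. Your argument is correct and is the standard one: in the forward direction, evaluating $\langle d_j, a_i\rangle$ two ways against the two expansions and using bilinearity plus the two duality relations forces the coefficient matrices to agree, and in the backward direction, specializing the hypothesis to the dual pair $(\{a_i\},\{b_i'\})$ immediately identifies $\{b_i\}$ with the dual basis. The only points worth flagging are the ones you already flag yourself: the backward direction presupposes that a dual basis of $\{a_i\}$ exists at all (guaranteed here because the graded components are finite-dimensional and the pairing is nondegenerate), and the index bookkeeping must respect the convention $\langle\ ,\ \rangle\colon \mathcal{B}\otimes\mathcal{A}\to\mathbb{Q}$, which the paper itself uses somewhat loosely.
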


Next, we have a relation for the coefficient of the product and the coproduct of dual bases.

\begin{prop}\label{hopf_dual_coproduct}\cite{grinberg}
The coproduct of the basis $\{b_i\}_{i \in I}$ in $\mathcal{B}$ is uniquely defined by the product of its dual basis $\{a_i\}_{i \in I}$ in $\mathcal{A}$ in the following way:  $$a_ja_k = \sum_{i \in I}c^{i}_{j,k}a_i \quad \Longleftrightarrow \quad \Delta(b_i) = \sum_{(j,k) \in I \times I}c^{i}_{j,k}b_j \otimes b_k.$$ Further, $\Delta: \mathcal{B} \rightarrow \mathcal{B} \otimes \mathcal{B}$ is an algebra homomorphism.
\end{prop}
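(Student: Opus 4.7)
The plan is to exploit the duality axiom $\langle \Delta_{\mathcal{B}}(b), a_1 \otimes_{\mathcal{A}} a_2 \rangle = \langle b, \mu_{\mathcal{A}}(a_1, a_2) \rangle$ from Definition \ref{hopf_dual_mult}, which directly couples the coproduct on $\mathcal{B}$ to the product on $\mathcal{A}$. The preliminary observation is that the induced bilinear form $\langle b \otimes b', a \otimes a' \rangle := \langle b, a \rangle \langle b', a' \rangle$ makes $\{b_j \otimes b_k\}_{(j,k) \in I \times I}$ a basis of $\mathcal{B} \otimes \mathcal{B}$ dual to $\{a_j \otimes a_k\}_{(j,k) \in I \times I}$ in $\mathcal{A} \otimes \mathcal{A}$, since $\langle b_r \otimes b_s, a_j \otimes a_k \rangle = \delta_{r,j}\delta_{s,k}$.

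First, assume $a_j a_k = \sum_{i \in I} c^{i}_{j,k} a_i$ and expand $\Delta(b_i) = \sum_{(r,s)} d^{i}_{r,s}\, b_r \otimes b_s$ with unknown scalars $d^{i}_{r,s}$, which are uniquely determined because $\{b_r \otimes b_s\}$ is a basis. Then evaluate $\langle \Delta(b_i), a_j \otimes a_k \rangle$ in two ways. On the one hand, using the duality of tensor bases,
$$\langle \Delta(b_i), a_j \otimes a_k \rangle = \sum_{(r,s)} d^{i}_{r,s} \langle b_r, a_j \rangle \langle b_s, a_k \rangle = d^{i}_{j,k}.$$
On the other hand, applying the coproduct-product duality axiom and the assumed expansion of $a_j a_k$,
$$\langle \Delta(b_i), a_j \otimes a_k \rangle = \langle b_i, a_j a_k \rangle = \sum_m c^{m}_{j,k} \langle b_i, a_m \rangle = c^{i}_{j,k}.$$
Equating the two gives $d^{i}_{j,k} = c^{i}_{j,k}$, establishing the forward implication. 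The reverse direction is obtained by running exactly the same computation in the opposite direction: starting from the stated coproduct formula, compute $\langle b_i, a_j a_k \rangle$ via the duality axiom to recover the structure constants of the product.

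For the final claim that $\Delta : \mathcal{B} \to \mathcal{B} \otimes \mathcal{B}$ is an algebra homomorphism, this is part of the bialgebra compatibility built into $\mathcal{B}$, but one can also derive it from the duality: for any basis elements $a_k, a_l \in \mathcal{A}$, the identities $\langle \Delta(b_i b_j), a_k \otimes a_l \rangle = \langle b_i b_j, a_k a_l \rangle$ and $\langle \Delta(b_i)\Delta(b_j), a_k \otimes a_l \rangle = \langle b_i \otimes b_j, \Delta(a_k a_l)\rangle = \langle b_i \otimes b_j, \Delta(a_k)\Delta(a_l)\rangle$ both equal $\langle b_i \otimes b_j, \Delta(a_k) \Delta(a_l)\rangle$ using that $\Delta$ on $\mathcal{A}$ is already an algebra map; nondegeneracy of the pairing then forces $\Delta(b_i b_j) = \Delta(b_i)\Delta(b_j)$.

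The arguments themselves are short; the main subtlety is the preliminary step of verifying that the tensor-product pairing is nondegenerate and that $\{b_j \otimes b_k\}$ is genuinely dual to $\{a_j \otimes a_k\}$ under it, since the entire argument hinges on being able to extract individual coefficients by pairing against tensor basis elements. Once that bookkeeping is in place, the two-way evaluation of $\langle \Delta(b_i), a_j \otimes a_k \rangle$ closes the proof in a single line.
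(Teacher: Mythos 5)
Your argument is correct: it is the standard dualization computation, evaluating $\langle \Delta(b_i), a_j \otimes a_k \rangle$ once against the tensor basis $\{b_r \otimes b_s\}$ and once via the adjunction $\langle \Delta_{\mathcal{B}}(b), a_1 \otimes a_2 \rangle = \langle b, \mu_{\mathcal{A}}(a_1,a_2) \rangle$, which pins down the structure constants in both directions; the algebra-homomorphism claim is, as you note, simply one of the bialgebra axioms for $\mathcal{B}$. The paper itself offers no proof to compare against -- it states the proposition as a citation to \cite{grinberg} -- so the only point worth flagging is the one you already acknowledge: in the infinite-dimensional setting of $QSym$ and $NSym$ one should invoke the grading (finite-dimensional homogeneous components) to justify that the expansions are locally finite and that coefficients can be extracted by pairing.
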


\begin{ex}
For partitions $\lambda$ and $\mu$, the product of Schur functions in $Sym$ is given by $s_{\lambda}s_{\mu} = \sum_{\nu} c^{\nu}_{\lambda, \mu} s_{\nu}$ where $c^{\nu}_{\lambda, \mu}$ are the Littlewood Richardson coefficients.  For a partition $\nu$, the coproduct on Schur functions is given by $\Delta(s_{\nu}) = \sum_{\lambda, \mu} c^{\nu}_{\lambda, \mu} s_{\lambda} \otimes s_{\mu}$ by Proposition \ref{hopf_dual_coproduct}.
\end{ex}

\subsection{Quasisymmetric functions}
Let $x = (x_1, x_2, \ldots)$ and $b_{\alpha} \in \mathbb{Q}$. A \emph{quasisymmetric function} $f(x)$ with rational coefficients is a formal power series of the form 
$$f(x) = \sum_{\alpha} b_{\alpha}x^{\alpha},$$ where
\begin{enumerate}
\item $\alpha$ is a weak composition of a positive integer,
\item $x^{\alpha} = x_1^{\alpha_1}\ldots x_k^{\alpha_k}$, and
\item the coefficients of monomials $x_{i_1}^{a_1}\ldots x_{i_k}^{a_k}$ and $x_{j_1}^{a_1}\ldots x_{j_k}^{a_k}$ are equal if $i_1 < \ldots  < i_k$ and $j_1 < \ldots  < j_k$.
\end{enumerate}

We define the two most common bases of $QSym$. Given a composition $\alpha$, the \textit{monomial quasisymmetric function} $M_{\alpha}$ is defined as $$M_{\alpha} = \sum_{i_1<\ldots <i_k}x_{i_1}^{\alpha_1}\ldots x_{i_k}^{\alpha_k},$$ where the sum runs over strictly increasing sequences of $k$ positive integers $i_1, \ldots, i_k \in \mathbb{Z}_{>0}$.  
The \textit{fundamental quasisymmetric function} $F_{\alpha}$ is defined as $$F_{\alpha} = \sum_{\beta \preceq \alpha}M_{\beta}.$$  
The fundamental functions are also denoted $L_{\alpha}$ in the literature \cite{EC2}.

\begin{ex} The monomial quasisymmetric function indexed by $(2,1)$ is
$$M_{(2,1)} = \sum_{i<j}x^2_ix_j = x_1^2x_2 + x_1^2x_3 + \ldots  + x_2^2x_3 + x_2^2x_4 + \ldots  + x_3^2x_4 + x^2_3x_5 + \ldots.$$ The expansion of $F_{(3)}$ into the monomial basis is $$F_{(3)} = M_{(3)}+M_{(2,1)}+M_{(1,2)} + M_{(1,1,1)}.$$
\end{ex}

The algebra of quasisymmetric functions, denoted $QSym$, admits a Hopf algebra structure. The monomial basis inherits its product and coproduct from the quasishuffle and concatenation operations on compositions. The \emph{quasishuffle} $ \Qshuffle$ of compositions is defined as the sum of shuffles of $\alpha = (\alpha_1, \ldots , \alpha_k)$ and $\beta = (\beta_1, \ldots , \beta_l)$ where any two consecutive entries $\alpha_i$ and $\beta_j$ (in that order) may be replaced with $\alpha_i + \beta_j$.  Note that the same composition may appear multiple times in the quasishuffle. Multiplication of monomial functions is given by $$M_{\alpha}M_{\beta}= \sum_{\gamma}M_{\gamma},$$ where $\gamma$ is a summand in $\alpha \Qshuffle \beta$ with multiplicity. Comultiplication of the monomial functions is given by $$\Delta(M_{\alpha}) = \sum_{\beta \cdot \gamma = \alpha}M_{\beta} \otimes M_{\gamma},$$ where the sum runs over all compositions $\beta, \gamma$ such that $\beta \cdot \gamma = \alpha$.

\begin{ex} The following equations show the product and coproduct on monomial quasisymmetric functions expanded in terms of the monomial basis:
$$M_{(2,1)}M_{(1)} = 2M_{(2,1,1)}+M_{(1,2,1)}+M_{(2,2)}+M_{(3,1)},$$
$$\Delta(M_{(1,2,1)}) = 1 \otimes M_{(1,2,1)} + M_{(1)}\otimes M_{(2,1)} + M_{(1,2)} \otimes M_{(1)} + M_{(1,2,1)}\otimes 1.$$
\end{ex} 
For more details on quasisymmetric functions see \cite{mason}. 

\subsection{Noncommutative symmetric functions}

The algebra of \emph{noncommutative symmetric functions}, written $NSym$, is the Hopf algebra dual to $QSym$.  $NSym$ can be defined as the algebra with generators $\{H_1,H_2, \ldots \}$ and no relations, that is
$NSym = \mathbb{Q}  \left\langle H_1, H_2, \ldots \right\rangle.$ 

Given a composition $\alpha = (\alpha_1, \ldots, \alpha_k)$, we define $H_{\alpha} = H_{\alpha_1}H_{\alpha_2} \ldots H_{\alpha_k}$. Then, the set $\{H_{\alpha}\}_{\alpha}$ forms a basis of $NSym$ called the \textit{complete homogeneous basis}. $NSym$ and $QSym$ are dually paired by the inner product defined by $\langle H_{\alpha}, M_{\beta} \rangle = \delta_{\alpha, \beta}$ for all compositions $\alpha, \beta$.    

Multiplication and comultiplication in $NSym$ are defined for the complete homogeneous functions as:
$$H_{\alpha}H_{\beta}=H_{\alpha \cdot \beta } \qquad \text{and } \qquad \Delta(H_{\alpha})=\sum_{(\beta,\gamma)}H_{\beta}\otimes H_{\gamma},$$ \vspace{-4mm}\\
where $\beta$ and $\gamma$ are compositions such that $\alpha$ can be obtained from $\beta \Qshuffle \gamma$. For a composition $\alpha$, the \emph{ribbon noncommutative symmetric function} is defined as $$R_{\alpha}=\sum_{\beta \succeq \alpha}(-1)^{\ell{(a)}-\ell{(\beta)}}H_{\beta}.$$ The ribbon functions are a basis of $NSym$ dual to the fundamental basis of $QSym$, meaning $\langle R_{\alpha}, F_{\beta} \rangle = \delta_{\alpha, \beta}$. For a composition $\alpha$, the \emph{elementary noncommutative symmetric function} is defined as $$E_{\alpha} = \sum_{\beta \preceq \alpha} (-1)^{|\alpha|-\ell(\beta)}H_{\beta}.$$ For more details on the noncommutative symmetric functions see \cite{gelfand}.

\subsection{The dual immaculate quasisymmetric functions}

The dual immaculate basis of $QSym$ was introduced by Berg, Bergeron, Saliola, Serrano, and Zabrocki in \cite{berg18}.  Like the Schur functions, the dual immaculate functions are defined combinatorially as the sum of monomials associated to certain tableaux.   

\begin{defn} Let $\alpha$ and $\beta$ be a composition and weak composition respectively. An \textit{immaculate tableau} of shape $\alpha$ and type $\beta$ is a labelling of the boxes of the diagram of $\alpha$ by positive integers in such a way that:
\begin{enumerate}
    \item the number of boxes labelled by $i$ is $\beta_i$,
    \item the sequence of entries in each row, from left to right, is weakly increasing, and
    \item the sequence of entries in the first column, from top to bottom, is strictly increasing.
\end{enumerate}
\end{defn}

An immaculate tableau $T$ of type $\beta = (\beta_1,\ldots, \beta_h)$ is associated with the monomial $x^T = x_1^{\beta_1}x_2^{\beta} \cdots x_h^{\beta_h}$.

\begin{ex} The immaculate tableaux of shape $\alpha = (2,3)$ and type $\beta = (1,2,2)$ are:

$$\tableau{1 & 2\\2&3 &3}\quad \quad \tableau{1&3\\2&2&3}$$\vspace{1mm}

Both tableaux are associated with the monomial $x_1x_2^2x_3^2$.
\end{ex}

\begin{defn} 
For a composition $\alpha$, the \textit{dual immaculate function} is defined by $$\mathfrak{S}^*_{\alpha} = \sum_Tx^T,$$ where the sum runs over all immaculate tableaux $T$ of shape $\alpha$.  
\end{defn}

\begin{ex}\label{22} The dual immaculate function $\mathfrak{S}^*_{(2,2)}$ corresponds to immaculate tableaux of shape $(2,2)$:
$$\tableau{1&1\\2&2}\ \ \ \tableau{1&1\\2&3}\ \ \ \tableau{1&1\\3&3}\ \ \ \tableau{1&2\\2&2} \  \ \ \tableau{1&2\\2&3}\ \ \ \tableau{1&2\\3&3}\ \ \ \tableau{1&3\\2&2}\ \ \ \tableau{1&3\\2&3}\ \ \ \tableau{1&3\\3&3}\ \ \ \tableau{2&2\\3&3}\ \ \ \tableau{2&3\\3&3}\ \ \  \cdots$$\vspace{-1mm}
Therefore, $$\mathfrak{S}^*_{(2,2)} = x_1^2x_2^2 + x_1^2x_2x_3 + x_1^2x_3^2+ x_1x_2^3 + 2x_1x_2^2x_3 +2x_1x_2x_3^2  + x_1x_3^3+ x_2^2x_3^2 + x_2x_3^3 + \ldots .$$
\end{ex} 

The dual immaculate functions have positive expansions into the monomial and fundamental bases.

\begin{prop}\label{uncolor_mon}\cite{berg18}
The dual immaculate functions are monomial positive.  Moreover, they expand as $$\mathfrak{S}^*_{\alpha} = \sum_{\beta \leq_{\ell} \alpha} K_{\alpha,\beta}M_{\beta},$$ where $K_{\alpha,\beta}$ is the number of immaculate tableaux of shape $\alpha$ with type $\beta$.
\end{prop}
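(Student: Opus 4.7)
The plan is to reorganize the combinatorial sum defining $\mathfrak{S}^*_\alpha$ according to the flattened type of each tableau, and then to rule out all compositions $\beta$ that are lexicographically larger than $\alpha$. First, I would rewrite
\[
\mathfrak{S}^*_\alpha \;=\; \sum_T x^T \;=\; \sum_{\gamma} N_{\alpha,\gamma}\, x^\gamma,
\]
where the sum runs over all weak compositions $\gamma$ and $N_{\alpha,\gamma}$ counts the immaculate tableaux of shape $\alpha$ and (weak) type $\gamma$. The key observation is that for any weak composition $\gamma$ with flattening $\tilde\gamma=\beta$, relabeling the entries of an immaculate tableau by the unique order-preserving bijection from $\{i:\gamma_i>0\}$ to $[\ell(\beta)]$ induces a bijection between immaculate tableaux of type $\gamma$ and of type $\beta$, since weak increase along rows and strict increase in the first column are preserved under any order-preserving relabeling of the entries. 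Hence $N_{\alpha,\gamma}=K_{\alpha,\tilde\gamma}$, and grouping the sum by flattening yields $\mathfrak{S}^*_\alpha=\sum_\beta K_{\alpha,\beta}\,M_\beta$ by the definition of $M_\beta$.

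Second, I would show $K_{\alpha,\beta}=0$ whenever $\beta>_\ell\alpha$, arguing by induction on the first index at which $\beta$ and $\alpha$ could disagree. For the base case, every copy of $1$ in an immaculate tableau must lie in row $1$: if $T(r,s)=1$ then the weakly increasing row condition forces $T(r,1)=1$, and strict increase in the first column then forces $r=1$. Hence $\beta_1\leq\alpha_1$. For the inductive step, assuming $\beta_1=\alpha_1,\ldots,\beta_{i-1}=\alpha_{i-1}$, rows $1,\ldots,i-1$ must be entirely filled by the entries $1,\ldots,i-1$ respectively, and the same row/column inequalities then confine every copy of $i$ to row $i$, yielding $\beta_i\leq\alpha_i$. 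Combining these, either $\beta=\alpha$ or $\beta_i<\alpha_i$ at the first differing index, so $\beta\leq_\ell\alpha$ in all cases.

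Monomial positivity is then immediate since the coefficients $K_{\alpha,\beta}$ are nonnegative integers. The main technical obstacle is the inductive step in the lexicographic bound: one must chain the strict column inequalities with the inductive description of the upper rows to conclude that $i$ cannot sit in any row $r\geq i+1$. Concretely, the chain $T(1,1)<T(2,1)<\cdots<T(r,1)$ combined with the inductive values $T(j,1)=j$ for $j<i$ forces $T(r,1)\geq r>i$ whenever $r\geq i+1$, contradicting the requirement $T(r,1)\leq i$ that a copy of $i$ in row $r$ would impose.
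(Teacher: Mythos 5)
Your proposal is correct. Note that the paper does not actually prove this proposition itself --- it is quoted from \cite{berg18} as background --- but it does prove the colored generalization, and your argument is essentially the same as the paper's treatment there. Your first step (grouping $\sum_T x^T$ by the flattened type and using the order-preserving relabeling bijection to get $N_{\alpha,\gamma}=K_{\alpha,\tilde\gamma}$) is exactly the mechanism of Proposition \ref{typenumber} and Theorem \ref{monomialexpansion}; your second step (forcing all entries equal to $i$ into row $i$ via the chain $T(1,1)<\cdots<T(r,1)$ together with the weak increase along rows, to conclude $K_{\alpha,\beta}=0$ unless $\beta\leq_{\ell}\alpha$) is the same triangularity argument the paper runs, in the colored setting and phrased via reverse lexicographic order on word lengths, in its proof that the colored dual immaculate functions form a basis. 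Your handling of the inductive step --- checking that rows $1,\ldots,i-1$ are saturated before confining the $i$'s --- is complete and closes the one place where such an argument could go wrong.
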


\begin{ex}
Let $\alpha = (2,2)$ as before.  The set of compositions $\beta$ such that $|\alpha|=|\beta|$ and $\beta \leq_{\ell} \alpha$ is $\{(2,2), (2,1,1), (1,3),(1,2,1),(1,1,2), (1,1,1,1)\}$. The two immaculate tableaux of shape $(2,2)$ and type $(1,1,2)$ are $$\tableau{1&2\\3&3}\ \quad \quad \tableau{1&3\\2&3} $$ \vspace{1mm}
Thus, $K_{(2,2),(1,1,2)}=2$, and repeating that calculation for each $\beta$ in the set above yields
$$\mathfrak{S}^*_{(2,2)} = M_{(2,2)} + M_{(2,1,1)} + M_{(1,3)} + 2M_{(1,2,1)}+2M_{(1,1,2)} + 3M_{(1,1,1,1)}.$$
\end{ex}

The expansion of the dual immaculate functions into the fundamental basis relies on the following subset of immaculate tableaux.

\begin{defn}
A \textit{standard immaculate tableau} of shape $\alpha \models n$ is an immaculate tableau in which each integer $1$ through $n$ appears exactly once.
\end{defn}

\begin{ex}
The standard immaculate tableaux of shape $\alpha = (2,3)$ are:\vspace{1mm} 
$$\tableau{1&2\\3&4&5} \quad \quad \tableau{1&3\\2&4&5} \quad \quad \tableau{1&4\\2&3&5} \quad \quad \tableau{1&5\\2&3&4}$$ \vspace{-2mm}
\end{ex}

Every immaculate tableau can be associated with a standard immaculate tableau by standardization.

\begin{defn}\label{standardization} Given an immaculate tableau $T$ of shape $\alpha$, form a standard immaculate tableau $std(T) = U$ of shape $\alpha$ by relabeling the boxes of $T$ with the integers $1$ through $n$ in the following way.  Begin with all boxes filled with 1's then continue on to the boxes filled with 2's, then 3's, and so on, ignoring boxes we have already relabelled. Starting from the lowest row containing each value, move through boxes filled with the same value from left to right and bottom to top, relabelling each with the next integer from $[n]$, starting the very first box with $1$. The resultant tableau $U$ is a standard immaculate tableau which we call the \emph{standardization} of $T$. 
\end{defn}

\begin{ex} The two immaculate tableaux below both have shape $(2,3)$ and type $(1,2,2)$ but different standardizations: $$T_1 = \tableau{1 & 2\\2&3 &3}\quad \quad \quad \ T_2 = \tableau{1&3\\2&2&3}$$\\
$$std(T_1) = \tableau{1 & 3\\2&4 &5}\quad \quad \quad \ std(T_2) = \tableau{1&5\\2&3&4}$$
\end{ex}

\begin{defn}\cite{berg18, EC2} \label{desc}
A standard immaculate tableau $U$ has a \textit{descent} in position $i$ if $(i+1)$ is in a row strictly lower than $i$ in $U$.  We denote the set of all descents in $U$ as $Des(U)$, called the \textit{descent set} of $U$.  If $Des(U)=\{d_1,\ldots ,d_{k-1}\}$ then the \textit{descent composition} of $U$ is defined as $co(U)=comp(Des(U))=(d_1,d_2-d_1,d_3-d_2,\ldots ,n-d_{k-1})$.
\end{defn} 

\begin{prop}\label{uncolor_fund}\cite{berg18}
The dual immaculate functions $\mathfrak{S}^*_{\alpha}$ are fundamental positive.  They expand as $$\mathfrak{S}^*_{\alpha} = \sum_{\beta \leq_{\ell} \alpha}L_{\alpha,\beta} F_{\beta},$$ where $L_{\alpha,\beta}$ is the number of standard immaculate tableaux with shape $\alpha$ and descent composition $\beta$.
\end{prop}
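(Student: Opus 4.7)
The approach is to refine the monomial expansion in Proposition \ref{uncolor_mon} by grouping the immaculate tableaux contributing to $\mathfrak{S}^*_\alpha = \sum_U x^U$ according to their standardizations. For each standard immaculate tableau $T$ of shape $\alpha$, I collect the set of immaculate tableaux $U$ with $std(U) = T$ and aim to show $\sum_{U:\,std(U)=T} x^U = F_{co(T)}$; summing over all such $T$ and then grouping by descent composition yields the stated formula, with $L_{\alpha,\beta}$ counting standard immaculate tableaux of shape $\alpha$ with descent composition $\beta$.

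To establish the inner equality, I would set up a bijection between immaculate tableaux $U$ with $std(U)=T$ and weakly increasing sequences $v_1 \leq v_2 \leq \cdots \leq v_n$ of positive integers (where $n = |\alpha|$) satisfying $v_k < v_{k+1}$ whenever $k \in Des(T)$, by placing the value $v_k$ in the cell of $T$ labeled $k$. The critical observation is that by the bottom-up, left-to-right convention of Definition \ref{standardization}, two consecutive labels $k$ and $k+1$ can be assigned the same value precisely when $k+1$ lies in the same row as $k$ or in a strictly higher row, i.e.\ exactly when $k \notin Des(T)$. Since $F_{co(T)}$ is by definition the generating function for weakly increasing sequences with strict increases forced at the positions of $Des(T)$, matching this sum to $\sum_{U} x^U$ is then immediate.

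The main subtlety is verifying that each such assignment always produces a valid immaculate tableau. The row condition follows because each row of $T$ is strictly increasing in labels and $v$ is weakly increasing. The strict-increase condition in the first column is the crux: if $\ell_1 < \ell_2$ are the labels of two adjacent cells in the first column of $T$, lying in rows $i$ and $i+1$ respectively, then some index in $[\ell_1, \ell_2)$ must mark a transition from a cell in row $\leq i$ to a cell in row $\geq i+1$, producing a descent $d$ in that interval. The forced strict inequality $v_d < v_{d+1}$ then propagates through the weak chain to give $v_{\ell_1} < v_{\ell_2}$. I expect this first-column automaticity check to be the main obstacle of the argument.

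Finally, to restrict the indexing to $\beta \leq_\ell \alpha$, I would show that $co(T) \leq_\ell \alpha$ for every standard immaculate tableau $T$ of shape $\alpha$: since $1$ is forced into cell $(1,1)$ and no descent can occur while consecutive labels stay in row $1$, the integers $1, \ldots, d_1$ up to the first descent all lie in row $1$, so $(co(T))_1 = d_1 \leq \alpha_1$; a row-by-row induction then extends the comparison whenever equality persists.
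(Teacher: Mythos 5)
This proposition is quoted from \cite{berg18} without proof, but your argument is correct and is essentially the same standardization-based argument the paper gives for its colored analogue (Propositions \ref{samestandardization}, \ref{standardrefinement}, \ref{coeffsum} and Theorem \ref{fund_exp}): partition the immaculate tableaux by standardization and identify the generating function of each class with $F_{co(T)}$, including the two genuinely necessary checks (first-column strictness is automatic because a descent must occur between consecutive first-column labels, and $co(T)\leq_{\ell}\alpha$ by the row-saturation induction). The only cosmetic difference is that you verify $\sum_{U:\,std(U)=T}x^U=F_{co(T)}$ directly via the weakly-increasing-sequence description of $F_{\beta}$, whereas the paper packages the same bijection as the coefficient identity $K_{\alpha,\beta}=\sum_{\gamma\succeq\beta}L_{\alpha,\gamma}$ and then applies M\"obius inversion to the monomial expansion.
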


\begin{ex}
Let $\alpha = (2,2)$.  The standard immaculate tableaux of shape $(2,2)$, listed with their descent sets and descent compositions, are 

$$
S_1 = \tableau{1&2\\3&4}\ \qquad \ \ S_2 = \tableau{1&3\\2&4}\ \qquad \ \ S_3 = \tableau{1&4\\2&3}$$

$$ \ \ \ \substack{Des(S_1)=\{2\},\\ co(S_1)=(2,2)} \qquad \ \substack{Des(S_2)=\{1,3\},\\ co(S_2)=(1,2,1)} \qquad \ \
\substack{Des(S_3)=\{1\},\\ co(S_3)=(1,3)}
\quad $$\vspace{0mm}

Therefore, $L_{(2,2)(2,2)}=1$, $L_{(2,2),(1,2,1)} = 1$, and $L_{(2,2),(1,3)}=1$, meaning $\mathfrak{S}^*_{(2,2)} = F_{(1,2,1)} + F_{(1,3)}+ F_{(2,2)}.$
\end{ex}

\subsection{The immaculate noncommutative symmetric functions}

The dual immaculate functions were originally developed as the duals to the immaculate functions in $NSym$ \cite{berg18}.   The immaculate functions are defined constructively by creation operators that generalize the Bernstein operators used to define the Schur functions. 

For $F \in QSym$, the operator $F^{\perp}$ acts on elements $H \in NSym$ based on the relation $\langle H,FG \rangle = \langle F^{\perp}H,G\rangle$.  This expands as $F^{\perp}(H) = \sum_{\alpha}\langle H,FA_{\alpha}\rangle B_{\alpha}$ for dual bases $\{A_{\alpha}\}_{\alpha}$ of $QSym$ and $\{B_{\alpha}\}_{\alpha}$ of $NSym$. Most important for our purposes is the specialization of this operator to the fundamental basis \cite{berg18}, where the expansion of $F^{\perp}_{1^i}$ acting on $H_{\alpha}$ is
$$F^{\perp}_{1^i}(H_{\alpha}) = \sum_{\substack{\beta \in \mathbb{N}^m \\ |\beta|=|\alpha|-i \\ \alpha_j -1 \leq \beta_j \leq \alpha_j}} H_{\tilde{\beta}}.$$ 
We interpret the action of this operator on the indices of $H_{\alpha}$. This operator acts on the composition $\alpha$ by taking a diagram of shape $\alpha$ and returning the sum of all diagrams (as indices of the $H$'s) whose shape is obtained by removing $i$ boxes from the right-hand side with no more than 1 box being removed from each row.

\begin{ex}
For instance, $F^{\perp}_{(1,1)}H_{(2,1,1,2)} = H_{(2,2)} + 2H_{(2,1,1)} + 2H_{(1,1,2)} + H_{(1,1,1,1)}$ can be visualized with the following tableaux, where the gray blocks are removed and all rows are moved up to fill any entirely empty rows.
$$
\scalebox{0.75}{
\begin{ytableau}
\ & \   \\
*(lightgray)    \\
*(lightgray) \\
\ & \ \\
\end{ytableau}
} \quad \quad
\scalebox{0.75}{
\begin{ytableau}
*(white) & *(white)   \\
*(lightgray)    \\
*(white) \\
*(white) & *(lightgray) \\
\end{ytableau}
} \quad \quad
\scalebox{0.75}{
\begin{ytableau}
*(white) & *(white)   \\
*(white)    \\
*(lightgray) \\
*(white) & *(lightgray) \\
\end{ytableau}
} \quad \quad
\scalebox{0.75}{
\begin{ytableau}
*(white) & *(lightgray)   \\
*(lightgray)    \\
*(white) \\
*(white) & *(white) \\
\end{ytableau}
} \quad \quad 
\scalebox{0.75}{
\begin{ytableau}
*(white) & *(lightgray)   \\
*(white)    \\
*(lightgray) \\
*(white) & *(white) \\
\end{ytableau}
} \quad \quad
\scalebox{0.75}{
\begin{ytableau}
*(white) & *(lightgray)   \\
*(white)    \\
*(white) \\
*(white) & *(lightgray) \\
\end{ytableau}
}
$$\vspace{-1mm}
$$\ \ \  H_{2,2} \ \ \quad \quad \  H_{2,1,1}  \quad \quad \  H_{2,1,1} \quad \quad \  H_{1,1,2} \quad \ \ \ \  H_{1,1,2} \quad \ \ \  H_{1,1,1,1}$$
\end{ex}

\begin{defn} For $m \in \mathbb{Z},$ the \textit{noncommutative Bernstein operator} $\mathbb{B}_m$ is defined as $$\mathbb{B}_m = \sum_{i \geq 0}(-1)^iH_{m+i}F_{1^i}^{\perp}.$$ 
\end{defn}

These operators generalize the Bernstein operators used to construct the Schur functions \cite{EC2} and thus allow for the construction of a noncommutative generalization of the Schur functions.

\begin{defn} For $\alpha = [\alpha_1, \ldots , \alpha_m] \in \mathbb{Z}^m,$ the \textit{immaculate noncommutative symmetric function} $\mathfrak{S}_{\alpha}$ is defined as $$\mathfrak{S}_{\alpha} = \mathbb{B}_{\alpha_1}\cdots\mathbb{B}_{\alpha_m}(1).$$ The \emph{immaculate basis} of $QSym$ is the set of immaculate functions $\{\mathfrak{S}_{\alpha}\}_{\alpha}$ where $\alpha \models n$ for $n \in \mathbb{Z}_{>0}$.
\end{defn}

\begin{ex}
If $\alpha = (\alpha_1, \alpha_2)$, then $\mathfrak{S}_{(\alpha_1, \alpha_2)} = \mathbb{B}_{\alpha_1}(H_{\alpha_2}) = H_{\alpha_1}H_{\alpha_2} - H_{\alpha_1+1}H_{\alpha_2-1}$.
\end{ex}

Properties of these Bernstein operators lead to a right Pieri rule for the immaculate functions.
\begin{thm} \cite{berg18} For a composition $\alpha$ and an integer $s$,
 $$\mathfrak{S}_{\alpha}H_s = \sum_{\alpha \subset_s \beta}\mathfrak{S}_{\beta},$$ where the sum runs over all compositions $\beta$ such that $\alpha \subset_s \beta$. 
\end{thm}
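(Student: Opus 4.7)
The plan is to prove this by induction on $\ell(\alpha)$, with the engine being a commutation relation between the Bernstein operator $\mathbb{B}_m$ and right multiplication by $H_s$. Concretely, I will first establish the identity
\begin{equation*}
\mathbb{B}_m(X)\, H_s \;=\; \sum_{k=0}^{s} \mathbb{B}_{m+k}(X\, H_{s-k})
\end{equation*}
for any $X \in NSym$, interpreting $H_0 = 1$. This is the key technical lemma; once it is in hand, the Pieri rule follows cleanly by induction.

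To prove this identity I will exploit the general fact that for any $F \in QSym$, the perp operator $F^{\perp}$ is a ``generalized derivation'' via the coproduct: $F^{\perp}(X_1 X_2) = \sum F_{(1)}^{\perp}(X_1)\, F_{(2)}^{\perp}(X_2)$ where $\Delta(F) = \sum F_{(1)} \otimes F_{(2)}$. This is a direct consequence of the duality axioms in Definition \ref{hopf_dual_mult} together with the fact that $\Delta$ is an algebra homomorphism. Specializing to $F = F_{1^i} = M_{1^i}$, whose coproduct is $\Delta(F_{1^i}) = \sum_{j+k=i} F_{1^j} \otimes F_{1^k}$, and using the explicit formula for $F_{1^k}^{\perp}(H_s)$ (which equals $H_s$ for $k=0$, equals $H_{s-1}$ for $k=1$, and vanishes for $k \geq 2$), I obtain $F_{1^i}^{\perp}(X H_s) = F_{1^i}^{\perp}(X) H_s + F_{1^{i-1}}^{\perp}(X) H_{s-1}$. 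Substituting into the definition of $\mathbb{B}_m$ and reindexing the resulting telescoping sum yields the one-step commutation $\mathbb{B}_m(X H_s) = \mathbb{B}_m(X) H_s - \mathbb{B}_{m+1}(X) H_{s-1}$. Rearranging and iterating this $s$ times (absorbing the final $\mathbb{B}_{m+s}(X) H_0$ as $\mathbb{B}_{m+s}(X H_0)$) produces the displayed identity.

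With the identity in hand, I induct on $\ell(\alpha)$. The base case $\alpha = ()$ is immediate: $\mathfrak{S}_{()}\, H_s = H_s = \mathfrak{S}_{(s)}$, and $(s)$ is the unique composition with $() \subset_s \beta$. For the inductive step, write $\alpha = (\alpha_1, \alpha')$ and apply the identity with $X = \mathfrak{S}_{\alpha'}$, so that
\begin{equation*}
\mathfrak{S}_{\alpha}\, H_s = \mathbb{B}_{\alpha_1}(\mathfrak{S}_{\alpha'})\, H_s = \sum_{k=0}^{s} \mathbb{B}_{\alpha_1 + k}(\mathfrak{S}_{\alpha'}\, H_{s-k}) = \sum_{k=0}^{s} \sum_{\alpha' \subset_{s-k} \gamma} \mathfrak{S}_{(\alpha_1 + k,\, \gamma)},
\end{equation*}
using the inductive hypothesis at the last step. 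Finally I verify that $(k, \gamma) \mapsto (\alpha_1 + k, \gamma)$ is a bijection onto the set of $\beta$ with $\alpha \subset_s \beta$: given such a $\beta$, set $k = \beta_1 - \alpha_1 \in \{0, \ldots, s\}$ and $\gamma = (\beta_2, \ldots, \beta_{\ell(\beta)})$; the conditions $\alpha_j \leq \beta_j$ for $j \geq 2$ and $\ell(\beta) \leq \ell(\alpha) + 1$ translate exactly into $\alpha' \subset_{s-k} \gamma$, and the constraint $k \leq s$ is automatic since $|\gamma| \geq |\alpha'|$.

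The main obstacle is the commutation relation itself: one must carefully justify that $F^{\perp}$ is a coderivation (in the sense above) and correctly evaluate $F_{1^k}^{\perp}(H_s)$ using the formula given in the excerpt. The induction and the combinatorial identification of the indexing set at the end are routine bookkeeping once the algebraic identity is established.
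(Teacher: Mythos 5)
Your proof is correct and is essentially the same argument the paper relies on: the statement itself is only cited from \cite{berg18}, but the paper's proof of the colored analogue (Theorem \ref{rightpieriI}) runs through exactly your commutation relation --- Proposition \ref{bernop} is the colored version of $\mathbb{B}_m(X)H_s=\sum_{k=0}^{s}\mathbb{B}_{m+k}(XH_{s-k})$, derived from the coderivation property of the perp operators (Proposition \ref{perpmult}) --- followed by the same induction. Your uncolored specialization, including the evaluation of $F_{1^k}^{\perp}(H_s)$, the telescoping, and the final reindexing bijection onto $\{\beta : \alpha \subset_s \beta\}$, is sound.
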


\begin{ex} Applying the Pieri rule for $\alpha = (2,1)$ and $s = 2$ yields
$$\mathfrak{S}_{(2,1)}H_{(2)} = \mathfrak{S}_{(2,1,2)} + \mathfrak{S}_{(2,2,1)} + \mathfrak{S}_{(3,1,1)} + \mathfrak{S}_{(2,3)} + \mathfrak{S}_{(3,2)} + \mathfrak{S}_{(4,1)} .$$
\end{ex}

Iteration of this Pieri rule yields the following positive expansions of the complete homogeneous and ribbon bases in terms of the immaculate basis:  
$$H_{\beta} = \sum_{\alpha \geq_{\ell} \beta} K_{\alpha,\beta}\mathfrak{S}_{\alpha} \qquad\text{ and } \qquad R_{\beta} = \sum_{\alpha \geq_{\ell} \beta}L_{\alpha,\beta}\mathfrak{S}_{\alpha}.$$

Notice that these expansions relate to those in Propositions \ref{uncolor_mon} and \ref{uncolor_fund} via Proposition \ref{hopf_dual}.  The expansion of the immaculate functions into the complete homogeneous basis follows a \emph{Jacobi-Trudi rule}.

\begin{thm}\cite{berg18}
For $\alpha = [\alpha_1, \ldots, \alpha_m]\in \mathbb{Z}^m$, $$\mathfrak{S}_{\alpha} = \sum_{\sigma \in S_m} (-1)^{\sigma} H_{\alpha_1 + \sigma_1 - 1, \alpha_2+ \sigma_2-2,\ldots, \alpha_m+\sigma_m-m},$$ with $H_0 = 1$ and $H_{-m} = 0$ for $m>0$. This is equivalent to taking the noncommutative analogue of the determinant of the matrix below obtained by expanding the determinant of the matrix along the first row and multiplying those elements on the left:\\
\[
\left[{\begin{array}{cccc}
H_{\alpha_1} & H_{\alpha_1+1} & \cdots & H_{\alpha_1+ \ell - 1}\\
H_{\alpha_2-1} & H_{\alpha_2} & \cdots & H_{\alpha_2+ \ell - 2}\\
\vdots & \vdots & \ddots & \vdots\\
H_{\alpha_{\ell}- \ell + 1} & H_{\alpha_{\ell}- \ell + 2}& \cdots & H_{\alpha_{\ell}}

\end{array}}\right].
\]
\end{thm}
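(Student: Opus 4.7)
The plan is to prove the Jacobi-Trudi formula by induction on the length $m=\ell(\alpha)$, using the recursive form $\mathfrak{S}_{\alpha} = \mathbb{B}_{\alpha_1}\mathfrak{S}_{(\alpha_2,\ldots,\alpha_m)}$ coming from the definition and matching it against the stated first-row cofactor expansion of the noncommutative determinant. The base case $m=1$ is immediate: since $F^{\perp}_{1^i}$ strictly lowers degree, only the $i=0$ summand survives in $\mathbb{B}_{\alpha_1}(1) = \sum_{i\geq 0}(-1)^iH_{\alpha_1+i}F^{\perp}_{1^i}(1)$, yielding $H_{\alpha_1}$, which equals the lone term of the size-$1$ Jacobi-Trudi sum.

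For the inductive step, let $D_{m-1} := \sum_{\tau\in S_{m-1}}(-1)^{\tau}\prod_{k=1}^{m-1}H_{\alpha_{k+1}+\tau(k)-k}$, so $\mathfrak{S}_{(\alpha_2,\ldots,\alpha_m)} = D_{m-1}$ by the inductive hypothesis. Then
\[
\mathfrak{S}_{\alpha} = \mathbb{B}_{\alpha_1}(D_{m-1}) = \sum_{i\geq 0}(-1)^i H_{\alpha_1+i}\, F^{\perp}_{1^i}(D_{m-1}),
\]
while the first-row expansion defining the size-$m$ determinant reads $\sum_{j=1}^{m}(-1)^{j-1}H_{\alpha_1+j-1}M_{1,j}$, where $M_{1,j}$ is the $(m-1)\times(m-1)$ minor obtained by striking row $1$ and column $j$. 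Matching coefficients of $H_{\alpha_1+i}$ term by term with $j=i+1$ reduces the problem to proving
\[
F^{\perp}_{1^i}(D_{m-1}) = M_{1,\,i+1} \quad \text{for } 0\leq i\leq m-1,
\]
together with $F^{\perp}_{1^i}(D_{m-1})=0$ for $i\geq m$; the latter is immediate because $F^{\perp}_{1^i}$ can subtract a $1$ from at most $m-1$ row indices.

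To establish the main identity, I would first use the coproduct $\Delta(F_{1^i}) = \sum_{j+k=i}F_{1^j}\otimes F_{1^k}$ (coming from the elementary symmetric function $e_i$) to derive the product formula
\[
F^{\perp}_{1^i}(H_{\beta_1}\cdots H_{\beta_{m-1}}) = \sum_{\substack{S\subseteq[m-1]\\|S|=i}}\prod_{k=1}^{m-1} H_{\beta_k - \chi_S(k)},
\]
using $F^{\perp}_1(H_n) = H_{n-1}$ and $F^{\perp}_{1^j}(H_n) = 0$ for $j\geq 2$. Applying this to every summand of $D_{m-1}$ produces a signed double sum indexed by pairs $(\tau,S)$ with $\tau\in S_{m-1}$ and $|S|=i$. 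Setting $\pi(k) := \tau(k)+1-\chi_S(k)$, the non-degenerate terms are those for which $\pi\colon[m-1]\to[m]$ is injective: a direct computation $\sum_k \pi(k) = \binom{m}{2}+(m-1)-i$ forces the value omitted by $\pi$ to equal exactly $i+1$, and the sign $(-1)^{\tau}$ matches the sign of the corresponding permutation in the Leibniz expansion of $M_{1,i+1}$.

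The main obstacle is showing that the remaining degenerate terms (those with $\pi(k_1)=\pi(k_2)$ for some $k_1\neq k_2$) cancel in the signed sum. A short analysis shows a collision forces $\{\tau(k_1),\tau(k_2)\} = \{a,a+1\}$ with the positions having opposite $S$-memberships; I would construct a sign-reversing involution by acting on the lexicographically least such colliding pair, simultaneously swapping $\tau(k_1)\leftrightarrow\tau(k_2)$ and toggling both of their memberships in $S$. A careful check shows that this operation preserves both the resulting noncommutative product of $H$'s and the cardinality $|S|=i$, flips the sign of $\tau$, and leaves $(k_1,k_2)$ as the lex-least colliding pair in the new configuration, so the involution squares to the identity and the degenerate contributions cancel in pairs. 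The surviving terms are then in signed bijection with the summands of $M_{1,i+1}$, completing the inductive step.
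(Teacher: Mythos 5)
This theorem appears in the paper only as quoted background, attributed to \cite{berg18} with no proof supplied, so there is no internal argument to compare yours against; I can only assess your proposal on its own terms, and it is correct. The skeleton — induction on $m$ via $\mathfrak{S}_{\alpha}=\mathbb{B}_{\alpha_1}(D_{m-1})$, reduction to $F^{\perp}_{1^i}(D_{m-1})=M_{1,i+1}$, and the vanishing for $i\geq m$ — is sound, and the two places where the argument could break both check out. For the sign claim: writing $\pi(k)=\tau(k)+1-\chi_S(k)$, for $k<l$ one has $\pi(k)-\pi(l)=\bigl(\tau(k)-\tau(l)\bigr)+\bigl(\chi_S(l)-\chi_S(k)\bigr)$, and since the first summand is a nonzero integer and the second lies in $\{-1,0,1\}$, the two differences can disagree in sign only if $\pi(k)=\pi(l)$, which injectivity excludes; as the relabeling of the surviving columns is order-preserving, $\mathrm{inv}(\tilde\pi)=\mathrm{inv}(\tau)$ and the cofactor sign $(-1)^{i}$ is exactly the $(-1)^i$ in front of $H_{\alpha_1+i}$. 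For the degenerate terms: a collision forces $\tau(k_1)-\tau(k_2)=\chi_S(k_1)-\chi_S(k_2)=\pm 1$, your swap-and-toggle map fixes $\pi$ pointwise (so the $H$-product and the set of colliding pairs, hence the lex-least pair, are unchanged), preserves $|S|$, and negates $\operatorname{sgn}(\tau)$, so it is a genuine fixed-point-free sign-reversing involution. The only things I would ask you to make explicit in a final write-up are (i) that the recursive ``expand along the first row, multiply on the left'' definition of the noncommutative determinant does produce the row-ordered Leibniz sum, so that $M_{1,1}$ is literally the Jacobi--Trudi determinant of $(\alpha_2,\ldots,\alpha_m)$ and the induction closes, and (ii) that the conventions $H_0=1$, $H_{-k}=0$ make your formula $F^{\perp}_{1^i}(H_{\beta_1}\cdots H_{\beta_{m-1}})=\sum_{|S|=i}\prod_k H_{\beta_k-\chi_S(k)}$ consistent with the paper's description of $F^{\perp}_{1^i}$ (one cannot remove a box from an empty or negative row; the term simply dies). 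Neither is a gap, just bookkeeping worth recording.
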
\vspace{5pt}

Certain classes of immaculate functions also have simpler expansions in terms of the complete homogeneous basis \cite{berg18}. For instance, for a positive integer $n$,
$$\mathfrak{S}_{1^n} = \sum_{\alpha \models n}(-1)^{n-\ell(\alpha)}H_{\alpha}.$$

There is another right Pieri rule for multiplication by these immaculate functions. For a composition $\alpha$ and a positive integer $s$,
$$\mathfrak{S}_{\alpha}\mathfrak{S}_{1^s} = \sum_{\substack{\beta \models |\alpha|+s\\ \alpha_i \leq \beta_i \leq \alpha_i+1}} \mathfrak{S}_{\beta}.$$

\subsection{Skew dual immaculate functions and the immaculate poset}\label
{skewposetbackground}The \emph{immaculate poset} $\mathfrak{P}$, also defined in \cite{berg18}, is a labelled poset on compositions where $\alpha$ covers $\beta$ if $\beta \subset_1 \alpha$.  In other words, $\alpha$ covers $\beta$ if $\alpha$ can be obtained by adding $1$ to any part of $\beta$ or to the end of $\beta$ as a new part. In terms of diagrams, this is equivalent to adding a box to the right of any row or adding a box at the bottom of the tableau. 

In the Hasse diagram of $\mathfrak{P}$, label the arrow from $\beta$ to $\alpha$ with $m$, where $m$ is the number of the row where the new box is added. Maximal chains from $\emptyset$ to $\alpha$ are equivalent to standard immaculate tableaux of shape $\alpha$, and maximal chains from $\beta$ to $\alpha$ define skew standard immaculate tableaux of shape $\alpha/\beta$.  A path $\{\beta = \beta^{(0)} \rightarrow^{m_1} \beta^{(1)} \rightarrow^{m_2} \ldots  \rightarrow^{m_k} \beta^{(k)}=\alpha \}$ corresponds to the skew standard immaculate tableaux of shape $\alpha/\beta$ where the boxes are filled with positive integers in the order they were added following the path.

\begin{ex}
Consider two paths $\mathcal{P}_1 = \{\emptyset \xrightarrow{1} (1) \xrightarrow{2} (1,1) \xrightarrow{2} (1,2) \xrightarrow{1} (2,2)\}$ and $\mathcal{P}_2 = \{ \emptyset \xrightarrow{1} (1) \xrightarrow{1} (2) \xrightarrow{2} (2,1) \xrightarrow{2} (2,2)\}$.  These paths correspond to the standard immaculate tableaux $T_1$ and $T_2$ below, respectively.  The path $\mathcal{P}_3 = \{ (1) \xrightarrow{2} (1,1) \xrightarrow{1} (2,1) \xrightarrow{2} (2,2)\}$ corresponds to the skew standard immaculate tableau $T_3$.
$$
\scalebox{0.85}{
$T_1 =$ \begin{ytableau} 1& 4\\2&3 \end{ytableau}\ \quad \ \ $T_2 =$ \begin{ytableau} 1&2\\3&4 \end{ytableau}\ \quad \ \ $T_3$ = \begin{ytableau} *(lightgray) & 2 \\ 1&3 \end{ytableau}
}$$
\end{ex}

\begin{defn} \cite{rowstrict}
Let $\alpha$ and $\beta$ be compositions where $\beta \subseteq \alpha$. A \emph{skew immaculate tableau} of shape $\alpha/\beta$ is a skew shape $\alpha/\beta$ filled with positive integers such that the entries in the first column of $\alpha$ are strictly increasing from top to bottom and the entries in rows are weakly increasing from left to right. We say $T$ is a \textit{skew standard immaculate tableau} if it contains the entries $1, \ldots, |\alpha|-|\beta|$ with each appearing exactly once.
\end{defn}

\begin{defn} \cite{berg18} Given compositions $\alpha, \beta$ with $\beta \subseteq \alpha$, the \emph{skew dual immaculate function} is defined as $$\mathfrak{S}^*_{\alpha/\beta} = \sum_{\gamma} \langle \mathfrak{S}_{\beta}H_{\gamma}, \mathfrak{S}^*_{\alpha} \rangle M_{\gamma},$$ where the sum runs over all $\gamma \models |\alpha|-|\beta|$. 
\end{defn}

The coefficient $\langle \mathfrak{S}_{\beta}H_{\gamma}, \mathfrak{S}^*_{\alpha}\rangle$ is exactly equal to the number of skew standard immaculate tableaux of shape $\alpha/\beta$ with type $\gamma$ \cite{rowstrict}. Thus, the skew dual immaculate functions can also be defined by a sum over skew immaculate tableaux.

\begin{thm}\cite{rowstrict} Let $\alpha$ and $\beta$ be compositions with $\beta \subseteq \alpha$. Then $$\mathfrak{S}^*_{\alpha/\beta} = \sum_T x^T,$$ where the sum runs over all skew immaculate tableaux of shape $\alpha/\beta$.
\end{thm}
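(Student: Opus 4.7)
The plan is to start from the definition
$$\mathfrak{S}^*_{\alpha/\beta} = \sum_{\gamma} \langle \mathfrak{S}_{\beta} H_{\gamma}, \mathfrak{S}^*_{\alpha} \rangle M_{\gamma}$$
and prove combinatorially that $\langle \mathfrak{S}_\beta H_\gamma, \mathfrak{S}^*_\alpha \rangle$ equals the number of skew immaculate tableaux of shape $\alpha/\beta$ with type $\gamma$. Re-expanding the monomial quasisymmetric functions and reindexing then yields $\sum_T x^T$.

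First I would iterate the right Pieri rule $\mathfrak{S}_\mu H_s = \sum_{\mu \subset_s \nu} \mathfrak{S}_\nu$. Writing $\gamma = (\gamma_1, \ldots, \gamma_k)$, this gives
$$\mathfrak{S}_\beta H_\gamma = \mathfrak{S}_\beta H_{\gamma_1} H_{\gamma_2} \cdots H_{\gamma_k} = \sum \mathfrak{S}_{\mu^{(k)}},$$
where the sum runs over chains $\beta = \mu^{(0)} \subset_{\gamma_1} \mu^{(1)} \subset_{\gamma_2} \cdots \subset_{\gamma_k} \mu^{(k)}$. Because the immaculate and dual immaculate bases are dual, the pairing $\langle \mathfrak{S}_\beta H_\gamma, \mathfrak{S}^*_\alpha \rangle$ counts such chains with $\mu^{(k)} = \alpha$.

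The main combinatorial step is a bijection between those chains and skew immaculate tableaux of shape $\alpha/\beta$ and type $\gamma$. Given a chain, I fill the cells of $\mu^{(i)}/\mu^{(i-1)}$ with the label $i$. The row-weakly-increasing condition follows from $\mu^{(i-1)}_j \leq \mu^{(i)}_j$, which places labels added at stage $i$ to the right of those added earlier in row $j$, and the strictly-increasing first-column-of-$\alpha$ condition follows from $\ell(\mu^{(i)}) \leq \ell(\mu^{(i-1)}) + 1$: at most one new row is appended per stage, so the topmost entries of the successive new rows form a strictly increasing sequence of labels. Conversely, given a skew immaculate tableau $T$, I let $\mu^{(i)}$ be the shape consisting of $\beta$ together with all cells of $T$ labeled at most $i$. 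Weakly increasing rows of $T$ guarantee each $\mu^{(i)}$ is a valid composition, and strict increase in the first column guarantees the length constraint $\ell(\mu^{(i)}) \leq \ell(\mu^{(i-1)}) + 1$.

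Finally, letting $N(\gamma)$ denote the number of skew immaculate tableaux of shape $\alpha/\beta$ and type $\gamma$, I expand
$$\sum_\gamma N(\gamma) M_\gamma = \sum_\gamma N(\gamma) \sum_{i_1 < \cdots < i_k} x_{i_1}^{\gamma_1} \cdots x_{i_k}^{\gamma_k}.$$
Each pair consisting of a strictly increasing choice $i_1 < \cdots < i_k$ and a type-$\gamma$ skew immaculate tableau corresponds to a skew immaculate tableau with entries $\{i_1, \ldots, i_k\}$ via the relabeling $j \mapsto i_j$; since this relabeling is order-preserving, it preserves both the weakly-increasing-row and strictly-increasing-first-column conditions. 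Regrouping the resulting double sum as $\sum_T x^T$ over all skew immaculate tableaux of shape $\alpha/\beta$ completes the argument. I expect the main obstacle to be verifying the bijection rigorously, particularly handling the boundary between rows inside $\beta$ and the new rows that first appear below $\beta$, so that the $\subset_{\gamma_i}$ length condition corresponds exactly to the first-column strict-increase condition on $\alpha/\beta$.
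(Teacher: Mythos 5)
Your proposal is correct and follows essentially the same route the paper takes: the paper states this result as a citation from the row-strict paper, but its proof of the colored analogue (Proposition 5.6 together with the proposition following Definition 5.5) proceeds exactly as you do — iterate the right Pieri rule to express $\langle \mathfrak{S}_{\beta}H_{\gamma},\mathfrak{S}^*_{\alpha}\rangle$ as a count of chains $\beta=\mu^{(0)}\subset_{\gamma_1}\cdots\subset_{\gamma_k}\mu^{(k)}=\alpha$, biject those chains with skew immaculate tableaux of type $\gamma$ by labeling the cells of $\mu^{(i)}/\mu^{(i-1)}$ with $i$, and then re-expand each $M_{\gamma}$ into monomials matched with tableaux of arbitrary (flattened) type. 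The only point worth tightening is that the absence of internal empty rows in each $\mu^{(i)}$ in the converse direction also relies on the strict first-column condition, not just on the rows being weakly increasing.
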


Expansions of the skew dual immaculate functions into the fundamental and dual immaculate bases yield coefficients with connections to the multiplicative structure of the immaculate functions.

\begin{prop} \label{uncolor_skew_coeff_bases} \cite{berg18} Given compositions $\alpha$ and $\beta$ with $\beta \subseteq \alpha$,
$$\mathfrak{S}^*_{\alpha/\beta} = \sum_{\gamma} \langle \mathfrak{S}_{\beta}R_{\gamma}, \mathfrak{S}^*_{\alpha} \rangle F_{\gamma} = \sum_{\gamma} \langle \mathfrak{S}_{\beta}\mathfrak{S}_{\gamma}, \mathfrak{S}^*_{\alpha} \rangle \mathfrak{S}^*_{\gamma},$$ where the sums run over all $\gamma \models |\alpha|-|\beta|$. Moreover, the coefficients $c^{\alpha}_{\beta,\gamma} = \langle \mathfrak{S}_{\beta}\mathfrak{S}_{\gamma}, \mathfrak{S}^*_{\alpha} \rangle$ are the immaculate structure constants that appear in the expansion $$\mathfrak{S}_{\beta}\mathfrak{S}_{\gamma}=\sum_{\alpha}c^{\alpha}_{\beta,\gamma}\mathfrak{S}_{\alpha}.$$
\end{prop}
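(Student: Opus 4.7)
The strategy is to recognize that the defining formula for $\mathfrak{S}^*_{\alpha/\beta}$ is an instance of the general Hopf-algebraic ``perp'' construction, and then exploit the fact that any element of $QSym$ can be expanded in terms of any basis using its dual partners.

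First, define a linear operator $\mathfrak{S}^\perp_\beta : QSym \to QSym$ by the adjointness relation $\langle y, \mathfrak{S}^\perp_\beta(x) \rangle = \langle \mathfrak{S}_\beta \cdot y, x \rangle$ for all $y \in NSym$ and $x \in QSym$. Since $\{H_\gamma\}$ and $\{M_\gamma\}$ are dual bases, any $x \in QSym$ satisfies $x = \sum_\gamma \langle H_\gamma, x \rangle M_\gamma$, so
$$\mathfrak{S}^\perp_\beta(x) = \sum_\gamma \langle H_\gamma, \mathfrak{S}^\perp_\beta(x)\rangle M_\gamma = \sum_\gamma \langle \mathfrak{S}_\beta H_\gamma, x\rangle M_\gamma.$$
Setting $x = \mathfrak{S}^*_\alpha$ and comparing with the definition of the skew dual immaculate function gives $\mathfrak{S}^*_{\alpha/\beta} = \mathfrak{S}^\perp_\beta(\mathfrak{S}^*_\alpha)$, a basis-independent description.

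Next, I would re-expand this single element of $QSym$ using the other two dual pairings available in the paper: $(R_\gamma, F_\gamma)$ and $(\mathfrak{S}_\gamma, \mathfrak{S}^*_\gamma)$. For any dual pair $\{A_\gamma\} \subset NSym$, $\{B_\gamma\} \subset QSym$, the identity $\mathfrak{S}^\perp_\beta(\mathfrak{S}^*_\alpha) = \sum_\gamma \langle A_\gamma, \mathfrak{S}^\perp_\beta(\mathfrak{S}^*_\alpha)\rangle B_\gamma = \sum_\gamma \langle \mathfrak{S}_\beta A_\gamma, \mathfrak{S}^*_\alpha\rangle B_\gamma$ holds by the same argument. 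Applying this with $(A_\gamma, B_\gamma) = (R_\gamma, F_\gamma)$ yields the fundamental expansion, and applying it with $(\mathfrak{S}_\gamma, \mathfrak{S}^*_\gamma)$ yields the dual immaculate expansion. The restriction of each sum to $\gamma \models |\alpha|-|\beta|$ is forced by homogeneity: $\mathfrak{S}_\beta A_\gamma$ lives in degree $|\beta|+|\gamma|$, and the pairing with $\mathfrak{S}^*_\alpha$ vanishes unless this equals $|\alpha|$.

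For the final assertion, I would simply use the duality of $\{\mathfrak{S}_\alpha\}$ and $\{\mathfrak{S}^*_\alpha\}$ directly. Writing $\mathfrak{S}_\beta \mathfrak{S}_\gamma = \sum_{\alpha'} c^{\alpha'}_{\beta,\gamma}\mathfrak{S}_{\alpha'}$ and pairing both sides with $\mathfrak{S}^*_\alpha$ gives $\langle \mathfrak{S}_\beta \mathfrak{S}_\gamma, \mathfrak{S}^*_\alpha\rangle = c^\alpha_{\beta,\gamma}$, as claimed.

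There is no real obstacle here: the content is entirely the observation that the definition of $\mathfrak{S}^*_{\alpha/\beta}$ is a perp operator applied to $\mathfrak{S}^*_\alpha$ and is therefore intrinsic to $QSym$ rather than tied to the monomial basis. The only subtlety worth flagging explicitly is the well-definedness of $\mathfrak{S}^\perp_\beta$ on $QSym$, which holds because $NSym$ and $QSym$ are graded in finite dimensions per degree, so the adjoint is well defined when applied to homogeneous elements such as $\mathfrak{S}^*_\alpha$.
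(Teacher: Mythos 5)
Your proof is correct and follows essentially the same route the paper takes for the colored analogue (Proposition \ref{skewcoeff}): recognize $\mathfrak{S}^*_{\alpha/\beta}$ as $\mathfrak{S}^{\perp}_{\beta}(\mathfrak{S}^*_{\alpha})$ via the adjoint of left multiplication, then re-expand that single element of $QSym$ in the other dual pairs $(R_\gamma,F_\gamma)$ and $(\mathfrak{S}_\gamma,\mathfrak{S}^*_\gamma)$, with the structure-constant identification falling out of duality. Your explicit remarks on homogeneity and the well-definedness of the adjoint are fine additions but not points of divergence.
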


Additionally, the comultiplication of the dual immaculate functions can be described using skew compositions.

\begin{defn} \cite{rowstrict} Given $\alpha \models n$, the comultiplication on $\mathfrak{S}_{\alpha}^*$ is defined as $$\Delta (\mathfrak{S}^*_{\alpha}) = \sum_{\beta} \mathfrak{S}^*_{\beta} \otimes \mathfrak{S}^*_{\alpha/\beta},$$ where the sum runs over all compositions $\beta$ such that $\beta \subseteq \alpha$.
\end{defn}

The multiplication and antipode of the dual immaculate functions do not yet have combinatorial definitions in general. For more on the immaculate and dual immaculate functions see \cite{AllMas, berg_structures, bergerondualpieri, campbell_antipode, campbell2, grinberg_antipode, li, loehr}.

\section{Doliwa's colored $QSym_A$ and $NSym_A$} \label{colorsection}
The algebra of noncommutative symmetric functions, and dually the algebra of quasisymmetric functions, have natural generalizations isomorphic to algebras of sentences.  In \cite{doliwa21}, Doliwa introduces these generalizations which are built using partially commutative colored variables. 

Let $A = \{a_1, a_2, \ldots  , a_m\}$ be an alphabet of letters, which we call \textit{colors}.  \textit{Words} over $A$ are finite sequences of colors written without separating commas. Finite sequences of non-empty words are called \textit{sentences}. The empty word and the empty sentence are both denoted by $\emptyset$. A \emph{weak sentence} may include empty words. The \textit{size} of a word $w$, denoted $|w|$, is the total number of colors it contains. Note that when we refer to ``the number of colors'', we are counting repeated colors unless we say ``the number of unique colors''. The \textit{size} of a sentence $I=(w_1,w_2, \ldots , w_k)$, denoted $|I|$, is also the number of colors it contains.  The \textit{length} of a sentence $I$, denoted $\ell(I)$, is the number of words it contains. The \emph{concatenation} of two words $w = a_1\cdots a_k$ and $v=b_1\cdots b_j$ is $w \cdot v = a_1 \cdots a_kb_1 \cdots b_j$, sometimes just denoted $wv$.  The word obtained by concatenating every word in a sentence $I$ is called the \textit{maximal word} of $I$, denoted $w(I)=w_1w_2\ldots w_k$. For our purposes, we also define the \textit{word lengths} of $I$ as $w \ell(I)=(|w_1|, \ldots , |w_k|)$, which gives the underlying composition of the sentence.

\begin{ex}
Let $a,b,c \in A$ and let $w_1 = ac$, $w_2 = b$, and $w_3 = cab$ be words.  Consider the sentence $I = (w_1,w_2,w_3) = (ac,b,cab)$.  Then, $|w_1|=2$, $|w_2|=1$, $|w_3|=3$, and $|I|=6$. The length of $I$ is $\ell(I)=3$ and the word lengths of $I$ is $w \ell(I)=(2,1,3)$.  The maximal word of $I$ is $w(I)=acbcab$.  
\end{ex}

 A sentence $I$ is a refinement of a sentence $J$, written $ I \preceq J$, if $J$ can be obtained by concatenating some adjacent words of $I$. In other words, $I \preceq J$ if $w(I)=w(J)$ and $w \ell(I) \preceq w \ell(J)$. In this case, $I$ is called a \emph{refinement} of $J$ and $J$ a \emph{coarsening} of $I$. The \emph{Möbius function} on the poset of sentences ordered by refinement is given by \begin{equation}
      \mu(J,I) = (-1)^{\ell(J)-\ell(I)} \text{\quad for \quad} J \preceq I.
 \end{equation}
 Given a total order $\leq$ on $A$, define the following \emph{lexicographic order} $\preceq_{\ell}$ on words. For words $w=a_1 \ldots a_k$ and $v=b_1 \ldots b_j$, we say $w \leq_{\ell} v$ if $a_i < b_i$ for the first positive integer $i$ such that $a_i \not= b_i$. Note that if no such $i$ exists then $w = v$. 

\begin{ex} Let $A = \{a < b < c\}$ and $I = (abc)$.  The refinements of $I$ are $(abc)$, $(a,bc)$, $(ab,c)$, and $(a,b,c)$. Under lexicographic order, $abc \preceq_{\ell} acb \preceq_{\ell} bac \preceq_{\ell} bca \preceq_{\ell} cab \preceq_{\ell} cba$.
\end{ex} 

The \emph{concatenation} of two sentences $I = (w_1,\ldots ,w_k)$ and $J=(v_1,\ldots ,v_h)$ is $I \cdot J = (w_1,\ldots ,w_k,v_1,\ldots ,v_h)$.  Their \emph{near-concatenation} is $I \odot J = (w_1,\ldots ,w_kv_1,\ldots ,v_h)$ where the words $w_k$ and $v_1$ are concatenated into a single word. Given $I = (w_1,\ldots ,w_k)$ where $a_{i}$ is the $i^{\text{th}}$ entry in $I$ and $a_{i+1}$ is the $(i+1)^{\text{th}}$ entry in $I$, we say that $I$ \textit{splits} after the $i^{\text{th}}$ entry if $a_i \in w_j$ and $a_{i+1} \in w_{j+1}$ for $j \in [k]$.

\begin{ex} Let $I = (a,bc)$ and $J = (ca,b)$.  Then, $I \cdot J = (a,bc,ca,b)$ and $I \odot J = (a,bcca,b)$.  The sentence $(a,bcca,b)$ splits after the $1^{\text{st}}$ and $5^{\text{th}}$ entries.
\end{ex}

Given $I = (w_1,\ldots ,w_k)$, the \emph{reversal} of $I$ is $I^r = (w_k, w_{k-1},\ldots ,w_1)$. The \emph{complement} of $I$, denoted $I^c$, is the unique sentence such that $w(I)=w(I^c)$ and $I^c$ splits exactly where $I$ does not. Both maps are involutions on sentences.

\begin{ex}
Let $I = (abc,de)$.  Then $I^r = (de,abc)$ and $I^c = (a,b,cd,e).$
\end{ex}

The \emph{flattening} of a weak sentence $I$, denoted $\tilde{I}$, is the sentence obtained by removing all empty words from $I$. Further, for a weak sentence $J = (v_1,\ldots ,v_k)$ and a sentence $I = (w_1,\ldots ,w_k)$, we say that $J$ is  \emph{right-contained} in $I$, denoted $J \subseteq_R I$, if there exists a weak sentence $I/_R J = (u_1,\ldots ,u_k)$ such that $w_i = u_iv_i$ for every $i \in [k]$. We say that $J$ is \emph{left-contained} in $I$, denoted $J \subseteq_L I$, if there exists a weak sentence $ I/_L J=(q_1, \ldots , q_k)$ such that $w_i = v_iq_i$ for every $i \in [k]$. Note that right-containment is denoted $I/J$ in \cite{doliwa21} but here that notation is used exclusively to denote skew shapes.

\begin{ex}
Let $I = (ab,cdef)$, $J = ( b, ef)$, and $K = (a,cde)$. Then $J \subseteq_R I$ and $I/_R J = (a,cd)$, while $K \subseteq_L I$ and $I/_L K = (b,f)$. Given the weak sentence $I = (\emptyset, a, \emptyset, bc)$, the flattening of $I$ is $\tilde{I}=(a,bc)$.
\end{ex}

\subsection{The Hopf algebra of sentences and colored noncommutative symmetric functions}

The algebra of sentences (colored compositions) is a Hopf algebra with the multiplication being the concatenation of sentences, the comultiplication given by $$\Delta(I)= \sum_{J \subseteq_R I} \widetilde{I/_R J} \otimes \tilde{J},$$ the natural unity map, the counit 
\begin{equation*}
\epsilon(I) = 
\begin{cases}
    1, & \text{if } I = \emptyset,\\
    0, & \text{otherwise,} 
\end{cases}
\end{equation*}
and the antipode
$$S(I) = \sum_{J \preceq I^r} (-1)^{\ell(J)}J.$$

The algebra of sentences taken over an alphabet with only one letter is isomorphic to $NSym$.  Thus, the algebra of sentences taken over any alphabet $A$ is a natural extension of $NSym$ called \emph{the algebra of colored noncommutative symmetric functions}, denoted $NSym_A$. The linear basis of sentences $I$ is the complete homogeneous basis of $NSym_A$, denoted $\{H_I\}_I$. 

$NSym_A$ can also be defined as the algebra freely generated over noncommuting elements $H_w$ for any word in $A$. The Hopf algebra operations extend to $\{H_I\}_I$ as follows:
$$H_I \cdot H_J = H_{I \cdot J}, \quad \quad \quad \Delta(H_I) = \sum_{J \subseteq_R I}H_{\widetilde{I/_R J}} \otimes H_{\tilde{J}}, \quad \quad \quad S(H_I) = \sum_{J \preceq I^r}(-1)^{\ell(J)}H_J.$$ 
The reversal and complement operations extend as $H_I^r = H_{I^r}$ and $H^c_I=H_{I^c}$.

\begin{defn} The \emph{uncoloring} map $\upsilon : NSym_A \rightarrow NSym$ is defined $\upsilon (H_{I}) = H_{w\ell(I)}$ and extended linearly. If the alphabet $A$ only contains one color, then $\upsilon $ is an isomorphism.
\end{defn}

We say that two bases $\{B_I\}_I$ and $\{C_{\alpha}\}_{\alpha}$ in $NSym_A$ and $NSym$ respectively are \emph{analogous} if $\upsilon (B_I) = C_{w\ell(I)}$ for all sentences $I$ when $A$ is an alphabet of one color. For instance, the colored complete homogeneous basis of $NSym_A$ is analogous to the complete homogeneous basis of $NSym$. $NSym_A$ also contains analogues of the elementary and ribbon bases of $NSym$. For a sentence $I$, the \emph{colored elementary function} is defined by $$E_I = \sum_{J \preceq I}(-1)^{|I|-\ell(J)}H_J,$$ and the \emph{colored ribbon function} is defined by \begin{equation}\label{R_H_equation}
   R_I = \sum_{J \succeq I}(-1)^{\ell(J)-\ell(I)}H_J \text{\qquad and so \qquad} H_I = \sum_{J \succeq I} R_J. 
\end{equation}

Note that we use $\upsilon $ to denote the uncoloring maps on both $QSym_A$ and $NSym_A$, and often refer to these together as if they are one map.   

%%%%%%%%%%%%%%%%%%%%%%%%%%%%%%%%%%%%%%%%%%%%%%%%%%%%%%%%%%%%%%%%%%%%%%%%%%%%%%%%%%%
\subsection{The colored quasisymmetric functions and colored duality} \label{QSymA}

The colored quasisymmetric functions, which constitute the algebra dual to $NSym_A$, are constructed using partially commutative colored variables. For a color $a \in A$, define the set of infinite colored variables $x_a = \{x_{a,1}, x_{a,2}, \ldots \}$ and let $x_A = \cup_{a \in A} x_a$. These variables are assumed to be partially commutative in the sense that variables only commute if the second indices are different.  That is, for $a,b \in A$,
$$x_{a,i}x_{b,j}=x_{b,j}x_{a,i} \text{ for } i \not= j \qquad \text{and}\qquad x_{a,i}x_{b,i} \not= x_{b,i}x_{a,i} \text{ if } a \not= b.$$  As a result, every monomial in variables $x_{a,i}$ can be uniquely re-ordered so that the sequence of the second indices of the variables is weakly increasing, at which point any first indices sharing the same color can be combined into a single word. Every monomial has a sentence $(w_1, \ldots, w_m)$ defined by its re-ordered, combined form $x_{w_1,j_1}\cdots x_{w_m, j_m}$ where $j_1 < \ldots < j_m$. Similar notions of coloring with different assumptions of partial commutativity can be found in \cite{other_color, poirier}.

\begin{ex}
The monomial $x_{a,2}x_{b,3}x_{b,1}x_{c,2}$ can be reordered as $ x_{b,1}x_{a,2}x_{c,2}x_{b,3}$ and combined as $x_{b,1}x_{ac,2}x_{b,3}$.  Then, the sentence of this monomial is $(b, ac, b)$. 
\end{ex}

$QSym_A$ is a subset of $\mathbb{Q}[x_A]$ defined as the set of formal power series such that the coefficients of the monomials indexed by the same sentence are equal.

\begin{ex} The following function $f(x_A)$ is in $QSym_A$:
$$ f(x_A) = 3x_{a,1}x_{bc,2} + 3x_{a,1}x_{bc,3} + \ldots + 3x_{a,2}x_{bc,3} + 3x_{a,2}x_{bc,4} + \ldots .$$
\end{ex}

Bases in $QSym$ extend naturally to bases in $QSym_A$.  For a sentence $I = (w_1, w_2, \ldots , w_m)$, the \emph{colored monomial quasisymmetric function} $M_I$ is defined as 
$$M_I = \sum_{1 \leq j_1<j_2<\ldots <j_m}x_{w_1,j_1}x_{w_2,j_2}\ldots x_{w_m, j_m},$$ where the sums runs over strictly increasing sequences of $m$ positive integers $j_1, \ldots, j_m \in \mathbb{Z}_{>0}$.

\begin{ex} The colored monomial quasisymmetric function for the sentence $(a,bc)$ is
 $$M_{(a,bc)} = x_{a,1}x_{bc,2} + x_{a,1}x_{bc,3} + \ldots  + x_{a,2}x_{bc,3} + x_{a,2}x_{bc,4} + \ldots  + x_{a,3}x_{bc,4} + \ldots.$$ 
\end{ex}

\begin{prop} \cite{doliwa21}
The subspace $QSym_A$ of $\mathbb{Q}[x_A]$  spanned by $\{M_I\}_I$ is a subalgebra isomorphic to the graded algebra dual of $NSym_A$ such that $M_I$ is mapped to the dual of $H_I$. That is, $QSym_A$ and $NSym_A$ are Hopf algebras dually paired by the inner product $\langle H_I, M_J \rangle = \delta_{I,J}$.
\end{prop}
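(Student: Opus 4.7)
The plan is to verify the statement in two main stages: first, that $QSym_A$ is closed under the multiplication of $\mathbb{Q}[x_A]$, so it forms a subalgebra; second, that the pairing $\langle H_I, M_J \rangle = \delta_{I,J}$ is compatible with the Hopf operations on $NSym_A$ in the sense of Definition \ref{hopf_dual_mult}. Together these exhibit $QSym_A$ as the graded algebra dual of $NSym_A$ with $M_I$ going to the linear functional dual to $H_I$. Linear independence of $\{M_I\}_I$ in $\mathbb{Q}[x_A]$ is immediate: monomials in the partially commutative variables $x_{a,i}$ reorder uniquely by second index and combine by shared color, so each monomial determines a unique sentence and different $M_I$'s have disjoint supports. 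For closure, I would compute $M_I \cdot M_J$ directly. Each resulting monomial, once normalized, defines a sentence $K$ obtained by interleaving the word lists of $I$ and $J$, with the option of concatenating a word from $I$ followed by a word from $J$ whenever they land on a shared second index. Collecting terms yields $M_I \cdot M_J = \sum_K c^K_{I,J} M_K$ for nonnegative integers $c^K_{I,J}$, a colored analogue of the quasishuffle product; in particular, the span is closed under multiplication.

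With the pairing in hand, I would verify the compatibilities of Definition \ref{hopf_dual_mult}. For product/coproduct duality on the $NSym_A$ side, pairing $H_I \cdot H_J = H_{I \cdot J}$ against $M_K$ gives $\delta_{I \cdot J, K}$, which forces $\Delta(M_K) = \sum_{K = P \cdot Q} M_P \otimes M_Q$. I would verify this directly by splitting the index alphabet into two disjoint infinite sub-alphabets and recollecting: every monomial contributing to $M_K$ splits into a $P$-part (low indices) and a $Q$-part (high indices) along the cut. Dually, from $\Delta(H_I) = \sum_{L \subseteq_R I} H_{\widetilde{I/_R L}} \otimes H_{\tilde{L}}$ one computes $\langle \Delta(H_I), M_J \otimes M_K \rangle$ as the number of weak sentences $L \subseteq_R I$ with $\tilde{L} = K$ and $\widetilde{I/_R L} = J$; this must match $\langle H_I, M_J \cdot M_K \rangle = c^I_{J,K}$. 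I would establish this by a bijection: every colored quasishuffle producing $I$ from $(J,K)$ labels each letter of $I$ as coming from $J$, from $K$, or from a merged word (in which the left segment is from $J$ and the right from $K$); taking, word by word, the right-aligned $K$-contribution recovers a unique $L \subseteq_R I$, and the process inverts. The counit, unit, and antipode compatibilities then follow routinely from the explicit grade-zero projections and from the formula $S(H_I) = \sum_{J \preceq I^r}(-1)^{\ell(J)} H_J$.

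Finally, because each graded piece of $NSym_A$ is finite-dimensional (only finitely many sentences of a fixed size exist over a finite alphabet), and the Kronecker pairing is nondegenerate on each graded piece, the map $QSym_A \to NSym_A^*$ sending $M_I$ to $H_I^*$ is an isomorphism of graded Hopf algebras. The main obstacle is the combinatorial identification of $c^I_{J,K}$ with the count of right-contained $L \subseteq_R I$ satisfying $\tilde{L} = K$ and $\widetilde{I/_R L} = J$: the bijection is intuitive, but making it precise requires careful bookkeeping of how mergers interact with the right-aligned convention of $\subseteq_R$ and how empty-word positions in $L$ versus $I/_R L$ correspond to the ``from $J$ only'' and ``from $K$ only'' labels on individual letters of $I$. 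Once this correspondence is laid out explicitly, the remaining verifications are bilinear bookkeeping.
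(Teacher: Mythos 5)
Your outline is sound, but note that the paper does not actually prove this proposition: it is quoted from Doliwa's work and used as an imported fact, so there is no internal argument to compare yours against. That said, your proposal is essentially the standard proof, and every ingredient you invoke is consistent with how the paper uses the result downstream. Linear independence via disjoint monomial supports, closure under multiplication via the colored quasishuffle, and deconcatenation as the coproduct on $M_I$ all match the structure the paper records in Section~\ref{QSymA}. The one step you rightly flag as delicate --- the bijection identifying the multiplicity of $I$ in $J \Qshuffle K$ with the number of weak sentences $L \subseteq_R I$ satisfying $\tilde{L} = K$ and $\widetilde{I/_R L} = J$ --- is exactly the correspondence the paper carries out explicitly in its proof of Proposition~\ref{Mperp2}, with the same right-alignment convention (a merged word $w_i v_j$ places the first factor's contribution on the left, so the second factor is recovered as a right-contained piece), so you can be confident that the bookkeeping closes up. Two small points worth making explicit if you write this up in full: first, in the deconcatenation computation, each word of a sentence occupies a single second index, so under alphabet doubling it cannot be split between the two copies, which is what forces the coproduct to cut only between whole words; second, the antipode compatibility need not be checked by hand, since for a connected graded bialgebra pairing the antipode condition follows automatically once the product/coproduct compatibilities and the (co)unit conditions are verified.
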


$QSym_A$ and $NSym_A$ inherit the product and coproduct from the Hopf algebra of sentences. The quasishuffle $I \Qshuffle J$ is defined as the sum of all shuffles of sentences $I$ and $J$ and shuffles of sentences $I$ and $J$ with any number of pairs $w_iv_j$ of consecutive words $w_i \in I$ and $v_j \in J$  concatenated.

\begin{ex} The usual shuffle operation on $(ab,c)$ and $(d,e)$ is
$$(ab,c) \shuffle (d,e) = (ab,c,d,e) + (ab,d,c,e) + (d,ab,c,e) + (ab,d,e,c) + (d,ab,e,c) + (d,e,ab,c).$$
The quasishuffle of $(ab,c)$ and $(d,e)$ is
\begin{multline*}
    (ab,c) \Qshuffle (d,e) = (ab,c,d,e) + (ab,cd,e)+(ab,d,c,e)+ (abd,c,e)+ (ab,d,ce)+ (abd,ce) +(d,ab,c,e) + \\+ (d,ab,ce) + (ab,d,e,c) + (abd,e,c) + (d,ab,e,c) + (d,abe,c) + (d,e,ab,c).
\end{multline*}
\end{ex} 

\noindent  Multiplication in $QSym_A$ is dual to the coproduct $\Delta$ in $NSym_A$, and given by $$M_IM_J = \sum_K M_K,$$ where  the sum runs over all summands $K$ in $I \Qshuffle J$. Similarly, comultiplication is dual to the concatenation product in $NSym_A$ using the deconcatenation product 
\begin{equation}\label{comult_m}
    \Delta(M_I) = \sum_{I=J\cdot K}M_J \otimes M_K,
\end{equation}
where the sum runs over all sentences $J$ and $K$ such that $I = J \cdot K$. Finally, the antipode $S^*$ in $QSym_A$ is given by $$S^*(M_I) = (-1)^{\ell(I)}\sum_{J^r \succeq I}M_J.$$ 

\begin{defn} The \emph{uncoloring} map $\upsilon : QSym_A \rightarrow QSym$ is defined by $\upsilon (x_{w_1,1}\cdots x_{w_k,k}) = x_1^{|w_1|}\cdots x_k^{|w_k|}$ and extends linearly. If the alphabet $A$ contains only one color, $\upsilon $ is an isomorphism.
\end{defn}

We say two bases $\{B_I\}_I$ and $\{C_{\alpha}\}_{\alpha}$ of $QSym_A$ and $QSym$ are analogous if $\upsilon (B_I)=C_{w\ell(I)}$ for all sentences $I$ when $A$ is an alphabet of one color. By definition, the colored monomial functions are analogues for the monomial quasisymmetric functions.  The fundamental quasisymmetric functions have a colored analogue, called the \emph{colored fundamental quasisymmetric functions}, that are defined as 
\begin{equation}\label{m_f_defs}
    F_I = \sum_{J \preceq I}M_J \text{\quad and \quad} M_I = \sum_{J \preceq I}(-1)^{\ell(J)-\ell(I)}F_J,
\end{equation} where the sums run over all sentences $J$ that are refinements of $I$.  The colored fundamental basis is dual to the colored ribbon basis with $\langle R_I, F_J \rangle = \delta_{I,J}$.

%%%%%%%%%%%%%%%%%%%%%%%%%%%%%%%%%%%%%%%%%%%%%%%%%%%%%%%%%%%%%%%%%%%%%%%%%%%%%%%%%%%%%%%%%
\section{A partially commutative generalization of the dual immaculate functions} \label{colordualsection}

To generalize the dual immaculate functions to $QSym_A$, we first define a colored generalization of tableaux.  These allow for a combinatorial definition of the colored dual immaculate functions, which then expand positively into the colored monomial and colored fundamental bases. Additionally, we define the colored immaculate descent graph and use it to give an expansion of the colored fundamental functions into the colored dual immaculate functions. In \cite{MasSeaLift}, Mason and Searles study a lift of the dual immaculate functions to the full polynomial ring. Our generalization of the dual immaculate functions is more aligned with the Hopf algebra-related aspects of the original functions whereas Mason and Searles' lift relates closely to slide polynomials, key polynomials, and Demazure atoms. The dual immaculate functions are the stable limit of their lifts while they are isomorphic to a special case of our lift.

\subsection{The colored dual immaculate basis of $QSym_A$}

\begin{defn}
For a sentence $J = (w_1, \ldots, w_k)$, the \textit{colored composition diagram} of shape $J$ is a composition diagram of $w\ell(J)$ where the $j^{\text{th}}$ box in row $i$ is colored, or filled, with the $j^{\text{th}}$ color in $w_i$.
\end{defn}

\begin{ex}
The colored composition diagram of shape $J=(aba,cb)$ for $a,b,c \in A$ is
$$\begin{ytableau}
    a & b & a\\
    c & b
\end{ytableau}$$
\end{ex}

\begin{defn}
For a sentence $I$, a \textit{colored immaculate tableau} (CIT) of shape $I$ is a colored composition diagram of $I$ filled with positive integers such that the integer entries in each row are weakly increasing from left to right and the entries in the first column are strictly increasing from top to bottom.
\end{defn}

\begin{defn}
    The \emph{type} of a CIT $T$ is a sentence $B=(u_1,\ldots ,u_j)$ that indicates how many boxes of each color are filled with each integer and in what order those boxes appear. That is, each word $u_i$ in $B$ is defined by starting in the lowest box containing an $i$ and reading the colors of all boxes containing $i$'s going from left to right, bottom to top. If no box is filled with the number $i$, then $u_i=\emptyset$. The flat type of $T$ is given by the flattening of $B$, denoted again by $\tilde{B}$.
\end{defn}

For a CIT $T$ of type $B=(u_1, \ldots, u_j)$, the monomial $x_T$ is defined $x_T = x_{u_1,1}x_{u_2,2} \cdots x_{u_j,j}$, which may also be denoted $x_B$.

\begin{ex}
    The colored immaculate tableaux of shape $J = (aba,cb)$ and type $B = (a,c, \emptyset,b, ba)$ are\vspace{2mm}
    $$
    \scalebox{.75}{
    \ytableausetup{boxsize=8mm}
    \begin{ytableau}
        a, 1 & b, 5 & a, 5\\
        c, 2 & b, 4
    \end{ytableau}
    \qquad \qquad
    \begin{ytableau}
        a, 1 & b, 4 & a, 5\\
        c, 2 & b, 5
    \end{ytableau}}
    $$
    Both tableaux are associated with the monomial $x_{a,1}x_{c,2}x_{b,4}x_{ba,5}$ and have the flat type $\tilde{B} = (a,c,b,ba)$.
\end{ex} \vspace{0mm}

\begin{defn}
For a sentence $J$, the \textit{colored dual immaculate function} is defined as $$\mathfrak{S}^*_J = \sum_T x_T,$$ where the sum is taken over all colored immaculate tableaux $T$ of shape $J$. 
\end{defn}

\begin{ex} For $J = (aba,cb)$, the colored dual immaculate function is $$\mathfrak{S}^*_{aba,cb} = x_{aba,1}x_{cb,2} + x_{ab,1}x_{cba,2} + x_{aba,1}x_{c,2}x_{b,3} + \ldots + 2x_{a,1}x_{c,2}x_{b,3}x_{ba,4} + \ldots.$$
\end{ex}

The colored dual immaculate functions map to the dual immaculate functions in $QSym$ under the uncoloring map $\upsilon $, thus we say the two bases are analogous.  

\begin{prop}
Let $A$ be an alphabet of one color, $A = \{a\}$, and $I$ be a sentence. Then, $$\upsilon (\mathfrak{S}^*_I) = \mathfrak{S}^*_{w\ell(I)}.$$ Moreover, $\{\mathfrak{S}^*_I\}_I$ in $QSym_A$ is analogous to $\{\mathfrak{S}^*_{\alpha}\}_{\alpha}$ in $QSym$.
\end{prop}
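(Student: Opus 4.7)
The plan is to verify this directly from the combinatorial definitions of both sides, using the bijection between colored immaculate tableaux of shape $I$ and ordinary immaculate tableaux of shape $w\ell(I)$ that is essentially forced when the alphabet has a single color.

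First I would unpack the one-color setting. With $A = \{a\}$, every word is of the form $a^m$ for some $m \geq 0$, so every sentence $I = (w_1,\ldots,w_k)$ is completely determined by $w\ell(I) = (|w_1|,\ldots,|w_k|)$. In particular, the colored composition diagram of shape $I$ is just the composition diagram of $w\ell(I)$ with every cell labeled $a$. Therefore a colored immaculate tableau $T$ of shape $I$ is nothing more than a filling of the composition diagram of $w\ell(I)$ with positive integers, subject to the same row-weakly-increasing and first-column-strictly-increasing conditions that define an ordinary immaculate tableau. This gives a natural bijection $\Phi$ between CITs of shape $I$ and immaculate tableaux of shape $w\ell(I)$.

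Next I would track monomials under $\Phi$ and $\upsilon$. If $T$ has type $B = (u_1,\ldots,u_j)$, then each word $u_i$ is $a^{\beta_i}$ for $\beta_i = |u_i|$, and the associated monomial is $x_T = x_{u_1,1}x_{u_2,2}\cdots x_{u_j,j}$. Applying the uncoloring map gives
\[
\upsilon(x_T) = x_1^{|u_1|}x_2^{|u_2|}\cdots x_j^{|u_j|} = x_1^{\beta_1}\cdots x_j^{\beta_j},
\]
which is precisely the monomial $x^{\Phi(T)}$ associated to the immaculate tableau $\Phi(T)$, whose type is $(\beta_1,\ldots,\beta_j) = w\ell(B)$. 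Since $\upsilon$ is linear and $\Phi$ is a bijection on the indexing sets of the two sums,
\[
\upsilon(\mathfrak{S}^*_I) = \sum_{T} \upsilon(x_T) = \sum_{T} x^{\Phi(T)} = \sum_{U} x^U = \mathfrak{S}^*_{w\ell(I)},
\]
where $T$ ranges over CITs of shape $I$ and $U$ over immaculate tableaux of shape $w\ell(I)$. The ``moreover'' clause is then immediate from the definition of analogous bases given just before the statement, since this equality is exactly the condition $\upsilon(B_I) = C_{w\ell(I)}$ with $B_I = \mathfrak{S}^*_I$ and $C_\alpha = \mathfrak{S}^*_\alpha$.

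There is no real obstacle here; the proof is essentially a bookkeeping check. The only point that warrants care is making sure the monomial assignment for CITs matches what $\upsilon$ produces, i.e.\ verifying that the types behave correctly under the bijection so that the exponents of $x_i$ in $\upsilon(x_T)$ really are the parts of the type of $\Phi(T)$. Once that verification is in hand, the two sums coincide term by term.
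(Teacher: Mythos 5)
Your proposal is correct and follows essentially the same route as the paper's proof: both observe that $\upsilon$ sends the monomial $x_T$ of a colored immaculate tableau $T$ to the monomial $x^{T'}$ of the underlying immaculate tableau of shape $w\ell(I)$ with the same integer entries, and then sum over the resulting bijection. Your write-up just makes the bijection $\Phi$ and the type bookkeeping more explicit than the paper does.
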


\begin{proof}
Observe that $\upsilon $ acts on a monomial $x_T$ where $T$ is a colored immaculate tableau of shape $I$ by mapping it to the monomial $x^{T'}$ where $T'$ is the immaculate tableau of shape $w\ell(I)$ with the same integer entries as $T$. Thus, $\upsilon (\mathfrak{S}^*_I) = \mathfrak{S}^*_{w\ell(I)}$ for all alphabets $A$ and more specifically alphabets $A$ containing only one color. 
\end{proof}

We now introduce further results on colored immaculate tableaux to provide a foundation for the expansions of the colored dual immaculate functions into other bases of $QSym$.

\begin{defn} \label{colorstd}
A \textit{standard colored immaculate tableau} (SCIT) of size $n$ is a colored immaculate tableau in which the integers $1$ through $n$ each appear exactly once.  The \emph{standardization} of a CIT $T$, denoted $std(T)$, is a standard colored immaculate tableau obtained by renumbering the boxes of $T$ in the order they appear in its type.
\end{defn}

\begin{ex} A few colored immaculate tableaux of shape $J = (ab,cb)$ together with their standardizations, which are the only standard colored immaculate tableaux of shape $J = (ab,cb)$, are:

$$
\scalebox{.75}{
\quad \quad
\text{$T_1$ =}
\begin{ytableau}
    a,1 & b, 2 \\
    c, 3 & b, 3
\end{ytableau}
\qquad \qquad 
\text{$T_2 =$}
\begin{ytableau}
    a,1 & b, 2 \\
    c, 2 & b, 3
\end{ytableau}
\qquad \qquad 
\text{$T_3 =$} 
\begin{ytableau}
    a,1 & b, 3 \\
    c, 2 & b, 2
\end{ytableau}
}
$$

\vspace{2mm}

$$\scalebox{.75}{
$std(T_1)=$
\begin{ytableau}
    a,1 & b, 2 \\
    c, 3 & b, 4
\end{ytableau}
\qquad 
$std(T_2)=$
\begin{ytableau}
    a,1 & b, 3 \\
    c, 2 & b, 4
\end{ytableau}
\qquad 
$std(T_3)=$
\begin{ytableau}
    a,1 & b, 4 \\
    c, 2 & b, 3
\end{ytableau}}
$$\vspace{-2mm}
\end{ex}

Standard colored immaculate tableaux share certain statistics and properties with non-colored standard immaculate tableaux. The number of SCIT of shape $J$ is the same as the number of standard immaculate tableaux of shape $w\ell(J)$, meaning both are counted by the same hook length formula in \cite{berg18}. Additionally, the notions of \emph{descent} and \emph{descent composition} for SCIT are the same as those in Definition \ref{desc}, simply disregarding color. However, we define an additional concept of the colored descent composition.

\begin{defn} \label{color_desc_comp}
 Let $T$ be a standard colored immaculate tableau of type $B$ with descent set $Des(T)=\{i_1,\ldots ,i_k\}$ for some $k \in \mathbb{Z}_{>0}$. The \textit{colored descent composition} of $T$, denoted $co_A(T)$, is the unique sentence obtained by splitting $w(B)$ after the $i_j^{\text{th}}$ entry for each $j \in [k]$.
\end{defn}

The colored descent composition can also be defined as the sentence obtained by reading through the colors of the tableau in the order that the boxes are numbered and splitting into a new word each time the next box is in a strictly lower row.  Note that for a SCIT $T$ of type $B$, the colored descent composition is the unique sentence for which $w\ell(co_A(T))=co(T)$ and $w(co_A(T)) = w(B)$.

\begin{ex}\label{color_std_ex} The standard colored immaculate tableaux of shape $(ab,cb)$, along with their descent sets and colored descent compositions, are:

$$
\scalebox{.75}{
$T_1 =$
\begin{ytableau}
    a, 1 & b, 2 \\
    c, 3 & b, 4
\end{ytableau}
\qquad \qquad
$T_2 =$ 
\begin{ytableau}
    a,1 & b, 3 \\
    c, 2 & b, 4
\end{ytableau}
\qquad \qquad
$T_3 =$
\begin{ytableau}
    a, 1 & b, 4 \\
    c, 2 & b, 3
\end{ytableau}}
$$

$$
\scalebox{.75}{
\ \quad \quad $\substack{Des(T_1) = \{2\},\\ co_A(T_1) = (ab,cb)}$ \qquad \ \quad \qquad
$\substack{Des(T_1) = \{1, 3\},\\
co_A(T_2)=(a,cb,b)}$
\qquad \ \ \ \ \qquad
$\substack{Des(T_3) = \{1\},\\ co_A(T_3)=(a,cbb)}$}$$
\end{ex}

\begin{prop}\label{samestandardization}
Let $T_1$ and $T_2$ be colored immaculate tableaux of shape $J$ and type $B$. Then, $T_1 = T_2$ if and only if $std(T_1)=std(T_2)$. 
\end{prop}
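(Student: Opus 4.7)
The plan is to prove the two directions of the biconditional separately. The forward direction $T_1 = T_2 \Rightarrow std(T_1) = std(T_2)$ is immediate, since the standardization procedure in Definition \ref{colorstd} is a deterministic function of the input tableau; I would dispatch it in one sentence.

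For the converse, my approach is to realize standardization as a procedure that is invertible once the type $B = (u_1, \ldots, u_j)$ is fixed. I would first unwind Definition \ref{colorstd} into a clean block description. Setting $n_0 = 0$ and $n_i = |u_1| + \cdots + |u_i|$, the standardization of a CIT $T$ of type $B$ assigns the consecutive integers $n_{i-1}+1, n_{i-1}+2, \ldots, n_i$ to the boxes of $T$ originally containing the entry $i$ (in the bottom-to-top, left-to-right order prescribed by $u_i$). Equivalently, in $std(T)$ the boxes carrying an integer in the interval $(n_{i-1}, n_i]$ are exactly the boxes that were labelled $i$ in $T$.

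This block description produces an explicit recipe for reconstructing the integer filling of $T$ from $std(T)$ and $B$. With $U = std(T_1) = std(T_2)$ and both tableaux of common type $B$, for each $i \in [j]$ the set of boxes bearing the entry $i$ in $T_1$ coincides with the corresponding set in $T_2$, since both must equal the set of positions of $U$ containing an integer in $(n_{i-1}, n_i]$. The colors in every box are fixed by the shape $J$ via the colored composition diagram, so the colorings of $T_1$ and $T_2$ also agree, and we conclude $T_1 = T_2$.

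I do not expect any substantive obstacle; the statement is essentially the bookkeeping assertion that standardization is injective on each fibre of the type map. The only care required is in articulating the reversibility of the standardization procedure cleanly, which is accomplished by the block description above and does not involve any combinatorial subtlety beyond what is already present in Definition \ref{colorstd}.
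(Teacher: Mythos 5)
Your proposal is correct and follows essentially the same route as the paper: both directions are handled by observing that the standardization together with the fixed type $B$ determines the integer entry of every box (the colors being already fixed by the shape). Your block description with the intervals $(n_{i-1}, n_i]$ just makes explicit the inversion step that the paper's proof states more tersely.
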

\begin{proof} It is trivial that $T_1 = T_2$ implies $std(T_1)=std(T_2)$.  Now, let $std(T_1) = std(T_2) = U$, meaning by definition that the boxes of $T_1$ appear in $B$ in the same order as the boxes of $T_2$.  The box $(i,j)$ in row $i$ and column $j$ in both tableaux is filled with the same integer $k$ and with the $k^{\text{th}}$ color in $w(B)$, thus $T_1 = T_2$.
\end{proof}

\begin{prop}\label{standardrefinement}
Let $U$ be a standard colored immaculate tableau of shape $J$. For a weak sentence $B$, there exists a colored immaculate tableau $T$ of shape $J$ and type $B$ that standardizes to $U$ if and only if $\tilde{B} \preceq co_A(U)$.
\end{prop}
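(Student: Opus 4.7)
The plan is to analyze how the standardization operation partitions the labels $1, 2, \ldots, |J|$ of $U$ into blocks, each of which must become a single value of $T$. In one direction, assume $T$ of shape $J$ and type $B$ standardizes to $U$. Reading the colors of $T$ in standardization order gives $w(\tilde{B})$, by the definition of type, and it also produces $w(co_A(U))$, by the definition of colored descent composition, so the word condition $w(\tilde{B}) = w(co_A(U))$ holds. To obtain the composition refinement $w\ell(\tilde{B}) \preceq co(U)$, equivalently $Des(U) \subseteq set(w\ell(\tilde{B}))$, suppose positions $i$ and $i+1$ of $U$ share a value in $T$. The standardization rule places earlier labels within a single value in weakly lower rows, so $i$ lies weakly below $i+1$ and therefore $i \notin Des(U)$; the contrapositive is exactly the required containment.

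For the converse, assume $\tilde{B} \preceq co_A(U)$. Write $\tilde{B} = (u_1, \ldots, u_j)$, set $\beta = w\ell(\tilde{B})$ with partial sums $s_i = \beta_1 + \cdots + \beta_i$, and let $\phi : [j] \to [m]$ be the strictly increasing inclusion that records the positions of the nonempty words of $B = (v_1, \ldots, v_m)$. Construct $T$ of shape $J$ by keeping all colors of $U$ and replacing each label $k$ of $U$ with the integer $\phi(i)$, where $i$ is determined by $s_{i-1} < k \leq s_i$. Row-weak monotonicity of $T$ is immediate from the strict row monotonicity of $U$ and the monotonicity of $\phi$. Because $Des(U) \subseteq set(\beta)$ forces no descents of $U$ inside a single block, the labels $s_{i-1}+1, \ldots, s_i$ of $U$ occupy weakly decreasing row indices and, within any one row, appear in left-to-right order; this is precisely the standardization order of the value-$\phi(i)$ boxes of $T$, so standardizing $T$ returns $U$ and reading colors in that order spells out $u_i = v_{\phi(i)}$, giving type $B$.

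The main obstacle is establishing strict increase in the first column of $T$. Let $c_1 < c_2 < \cdots < c_h$ denote the first-column labels of $U$, so that $c_i$ sits in row $i$. For each $i \geq 2$, every entry in a row of index $\geq i$ is bounded below by the first-column entry $c_i$, so the label $c_i - 1$ must lie in a row of index $\leq i-1$. Hence $c_i$ sits strictly below $c_i - 1$ in $U$, giving $c_i - 1 \in Des(U) \subseteq set(\beta)$, which places a block boundary between $c_i - 1$ and $c_i$. Since $c_{i-1} \leq c_i - 1$, the labels $c_{i-1}$ and $c_i$ fall into distinct blocks, and strict monotonicity of $\phi$ translates these into strictly increasing values in the first column of $T$, completing the verification.
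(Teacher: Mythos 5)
Your proposal is correct and follows essentially the same route as the paper: the forward direction shows that a block boundary of $\tilde{B}$ must occur at every descent of $U$ (you phrase it via the contrapositive), and the backward direction uses the same construction of $T$ by replacing the labels of $U$ blockwise according to the partial sums of $w\ell(\tilde{B})$. Your explicit verification that the first column of $T$ is strictly increasing — via the observation that each first-column label $c_i$ forces $c_i - 1 \in Des(U)$ and hence a block boundary — is a detail the paper dismisses with ``by construction,'' and it is a worthwhile addition.
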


\begin{proof}
$(\Rightarrow)$ Let $T$ be a colored immaculate tableau of shape $J$ and type $B$ such that $std(T)=U$.  Both $B$ and $co_A(U)$ are defined by the order that boxes appear in the type of $T$, thus they have the same maximum words $w(B)=w(co_A(U))$. Note that this also means the $i^{\text{th}}$ letter in $\tilde{B}$ and the $i^{\text{th}}$ letter in $co_A(U)$ correspond to the same box in $J$. Recall that $co_A(U)$ splits only after descents, and suppose that $co_A(U)$ splits after the $i^{\text{th}}$ letter. Then the $(i+1)^{\text{th}}$ letter is on a strictly lower row. Given that these entries correspond exactly to the $i^{\text{th}}$ and $(i+1)^{\text{th}}$ letter in $\tilde{B}$, this tells us that $\tilde{B}$ must also split since the following entry is on a lower row.  Thus $\tilde{B}$ also splits after every descent which implies that $\tilde{B} \preceq co_A(U)$.

$(\Leftarrow)$ Let $\tilde{B} = (v_1, \ldots , v_j) \preceq co_A(U)$ and let $v_i$ be the $n_i^{\text{th}}$ word in $B$. We create a colored immaculate tableau $T$ of shape $J$ and type $B$ that standardizes to $U$ by filling the boxes of $T$ in the order they are numbered in $U$.  The first $|v_1|$ boxes are labeled with $n_1$'s, the next $|v_2|$ boxes are labeled with $n_2$'s, and continue this process until the last $|v_j|$ boxes are labeled with $n_j$'s.  Since $\tilde{B} \preceq co_A(U)$, each time there is a descent in $U$ the number being filled in must increase. This maintains the order of the boxes in the type from $U$, meaning $T$ standardizes to $U$. This filling also maintains the strictly increasing condition on the first column and the weakly increasing condition on each row by construction.  Therefore, $T$ is a colored immaculate tableau of shape $J$ and type $B$ with $std(T)=U$. 
\end{proof}

%%%%%%%%%%%%%%%%%%%%%%%%%%%%%%%%%%%%%%%%%%%%%%%%%%%%%%%%%%%%%%%%%
\subsection{Expansion into the colored monomial and colored fundamental bases}

The colored dual immaculate functions have positive expansions into the colored monomial and colored fundamental bases. Their coefficients are determined combinatorially using colored immaculate tableaux.

\subsubsection{Expansion into the colored monomial functions}

First, we establish the relationship between the colored monomial quasisymmetric functions and colored immaculate tableaux. Then, we define coefficients counting colored immaculate tableaux and prove our expansion.  Finally, the transition matrix of these coefficients leads to a proof that the colored dual immaculate functions are indeed a basis of $QSym_A$.

\begin{prop}\label{monomialtab} For a sentence $B$, consider a standard colored immaculate tableau $U$ such that $B \preceq co_A(U)$. Then, $$M_B = \sum_T x_T,$$  where the sum runs over all colored immaculate tableaux $T$ such that $std(T)=U$ and $\widetilde{type(T)}=B$. 
\end{prop}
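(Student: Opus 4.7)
The plan is to expand $M_B$ via its definition as a sum over strictly increasing index sequences and to match each term bijectively with a CIT $T$ satisfying $std(T) = U$ and $\widetilde{type(T)} = B$. Writing $B = (u_1, \ldots, u_m)$, the definition of the colored monomial function gives
$$M_B = \sum_{1 \le i_1 < i_2 < \cdots < i_m} x_{u_1, i_1} x_{u_2, i_2} \cdots x_{u_m, i_m},$$
so it suffices to produce a monomial-preserving bijection between strictly increasing sequences $(i_1, \ldots, i_m)$ and colored immaculate tableaux $T$ of shape $J$ with $std(T) = U$ and $\widetilde{type(T)} = B$.

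First I would parametrize the CITs appearing on the right-hand side by the positions occupied in their (weak) type. If $\widetilde{type(T)} = B$, then $type(T)$ is a weak sentence whose flattening equals $B$, and such a weak sentence is determined by choosing positions $1 \le i_1 < i_2 < \cdots < i_m$ for the non-empty words $u_1, \ldots, u_m$, with $\emptyset$ inserted in every other slot. By the definition of $x_T$, the empty words contribute trivially to the monomial, so $T$ contributes precisely $x_{u_1, i_1} \cdots x_{u_m, i_m}$, matching the corresponding term of $M_B$.

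The heart of the argument is that for each such weak sentence $B'$ with $\tilde{B'} = B$, there exists a unique CIT $T$ of shape $J$ with $type(T) = B'$ and $std(T) = U$. Existence is Proposition \ref{standardrefinement} applied to the weak sentence $B'$, which applies because $\tilde{B'} = B \preceq co_A(U)$ by hypothesis. Uniqueness follows from Proposition \ref{samestandardization}, since any two such CITs would share shape, type, and standardization.

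Combining these observations yields a monomial-preserving bijection, and summing gives the claimed identity. There is no genuinely hard step — the main care required is the bookkeeping between weak sentences and their flattenings, ensuring that the parameterization of CITs by choices of position-sets matches exactly the index sequences in the definition of $M_B$, and noting that the hypothesis $B \preceq co_A(U)$ transfers unchanged to each enlargement $B'$ because refinement ignores the inserted empty words.
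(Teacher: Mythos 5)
Your proposal is correct and follows essentially the same route as the paper: expand $M_B$ by definition, identify each increasing index sequence with a weak sentence flattening to $B$, and match it to the unique CIT of that type standardizing to $U$, with uniqueness from Proposition \ref{samestandardization}. You are in fact slightly more explicit than the paper in citing Proposition \ref{standardrefinement} for the existence half of the correspondence, which the paper leaves implicit.
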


\begin{proof}
Consider a standard colored immaculate tableau $U$ and a sentence $B = (v_1,\ldots,v_h)$ such that $B \preceq co_A(U)$. By definition,  $$M_B = \sum_{1 \leq j_1 < \ldots < j_h} x_{v_1,j_1}\ldots x_{v_h,j_h}.$$ Each monomial $x_{v_1,j_1}\ldots x_{v_h,j_h}$ is equal to $x_T$ where $T$ is the unique (by Proposition \ref{samestandardization}) colored immaculate tableau such that $std(T)=U$ and its type $C = (u_1, \ldots, u_g)$ is the sentence where word $u_{j_i}$ is equal to $v_i$ for $1 \leq i \leq h$ and all other words are empty. This includes a tableau $T$ for every sentence $C$ such that $\tilde{C}=B$.  Thus, the above sum is equivalent to summing $x_T$ over all CIT $T$ with type $C$ such that $std(T)=U$ and $\tilde{C}=B$.
\end{proof}

\begin{ex}\label{idk}
The colored immaculate tableaux of shape $J=(ab,cb)$ and type $B=(a,cb,b)$ are

$$\scalebox{.75}{ 
$T_1 =$ \begin{ytableau}
    a, 1 & b, 3 \\
    c, 2 & b, 2
\end{ytableau}
\quad \quad
$T_2 =$ \begin{ytableau}
    a, 1 & b, 2 \\
    c, 2 & b, 3
\end{ytableau}
}$$ \vspace{0mm}

The tableaux $T_1$ and $T_2$ have the same shape and type, but different standardizations (see Example \ref{color_std_ex}). Now, consider all tableaux with types that flatten to $B$ and standardizations equal to $std(T_1)$:

$$
\scalebox{.75}{
\begin{ytableau}
    a, 1 & b, 3 \\
    c, 2 & b, 2
\end{ytableau}
\quad 
\begin{ytableau}
    a, 1 & b, 4 \\
    c, 2 & b, 2
\end{ytableau}
\quad $\cdots$ \quad
\begin{ytableau}
    a, 1 & b, 4 \\
    c, 3 & b, 3
\end{ytableau}
\quad $\cdots$ \quad
\begin{ytableau}
    a, 2 & b, 5 \\
    c, 4 & b, 4
\end{ytableau}
\quad $\cdots$ \quad
\begin{ytableau}
    a, 5 & b, 9 \\
    c,7 & b, 7
\end{ytableau}
}
$$\vspace{0mm}

A single monomial function $M_{(a,cb,b)}$ can be associated with this set of tableaux.  The tableaux of flat type $B$ that standardize to $T_2$ are also represented by a function $M_{(a,cb,b)}$. Thus, when finding the overall monomial expansion for $\mathfrak{S}^*_{(ab,cb)}$, the tableaux of flat type $(a,cb,b)$ contribute to the sum as the term $2M_{(a,cb,b)}$. 
\end{ex}

\begin{defn} For a sentence $J$ and weak sentence $B$, define the \emph{colored immaculate Kostka coefficient} $K_{J,B}$ as the number of colored immaculate tableaux of shape $J$ and type $B$. 
\end{defn}

\begin{prop}\label{typenumber}
Let $J=(w_1, \ldots , w_k)$ and $C = (u_1,\ldots ,u_g)$ be sentences.  Then, $K_{J,C} = K_{J,\tilde{C}}$.
\end{prop}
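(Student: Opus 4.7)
The plan is to construct an explicit bijection between the set of CITs of shape $J$ and type $C$ and the set of CITs of shape $J$ and type $\tilde{C}$. Since flattening only removes the empty words of $C$, the idea is that a CIT of type $C$ and a CIT of type $\tilde{C}$ should differ only by an order-preserving relabeling of integer entries.

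First I would set up notation: write $C = (u_1, \ldots, u_g)$ and let $i_1 < i_2 < \cdots < i_h$ be the indices for which $u_{i_j} \neq \emptyset$, so that $\tilde{C} = (u_{i_1}, u_{i_2}, \ldots, u_{i_h})$. Observe that a CIT $T$ of shape $J$ and type $C$ must have no box labeled with any integer $k \notin \{i_1, \ldots, i_h\}$, because the word $u_k$ is empty. I would then define $\phi(T)$ to be the filling of the colored composition diagram of $J$ obtained from $T$ by replacing every entry $i_j$ with $j$, leaving the colors untouched.

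Next I would verify that $\phi(T)$ is a colored immaculate tableau of shape $J$ and type $\tilde{C}$. Since the map $i_j \mapsto j$ is strictly order-preserving on $\{i_1, \ldots, i_h\}$, the weak increase of entries along rows and the strict increase of entries along the first column are both preserved. For the type, the set of boxes of $\phi(T)$ labeled $j$ coincides with the set of boxes of $T$ labeled $i_j$, read in the same bottom-to-top, left-to-right order; these contribute the word $u_{i_j}$, which is the $j$-th word of $\tilde{C}$. Hence $\phi(T)$ has type exactly $\tilde{C}$.

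Finally I would exhibit the inverse: given a CIT $T'$ of shape $J$ and type $\tilde{C}$, define $\psi(T')$ by replacing each entry $j \in \{1, \ldots, h\}$ with $i_j$. The same order-preserving argument shows $\psi(T')$ is a CIT of shape $J$ and type $C$, and $\phi$ and $\psi$ are clearly mutually inverse. The bijection gives $K_{J,C} = K_{J,\tilde{C}}$. There is no real obstacle here; the only thing to be careful about is verifying that the relabeling preserves the row and column conditions, which follows immediately from the strict monotonicity of $i_1 < \cdots < i_h$.
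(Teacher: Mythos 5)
Your proof is correct and is essentially the same argument the paper gives: an order-preserving relabeling of integer entries between the indices of the nonempty words of $C$ and $\{1,\dots,h\}$, checked to preserve the row, column, and type conditions. You simply present the bijection in the opposite direction (from type $C$ to type $\tilde{C}$) and spell out the verification a bit more explicitly.
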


\begin{proof}
Suppose $\tilde{C}=(v_1,\ldots,v_h)$ where $u_{i_1} = v_1$, \ldots  , $u_{i_h}=v_h$ for some $i_1 < \ldots  < i_h$, with all other $u_j = \emptyset$. We define a map from the colored immaculate tableaux of shape $J$ and type $\tilde{C}$ to the colored immaculate tableaux of shape $J$ and type $C$. Given a colored immaculate tableau $T$ of shape $J$ and type $\tilde{C}$, replace each 1 with $i_1$, 2 with $i_2$, \ldots, and $h$ with $i_h$.  This produces a tableau $T'$ of shape $J$ and type $C$. The inverse map takes a tableau $T'$ of shape $J$ and type $C$ and changes each $i_1$ to 1, $i_2$ to 2, $\ldots$, and $i_h$ to $h$, which yields the initial tableau $T$ of shape $J$ and type $\tilde{C}$. This is a bijection, meaning $K_{J,C} = K_{J,\tilde{C}}$. 
\end{proof} 

\begin{ex}\label{equaltypenumber}
Let $J=(ab,cb)$ and $B=(\emptyset,a,\emptyset,cb,b)$.  Then, $K_{J,B} = 2$ because the colored immaculate tableaux of shape $J$ and type $B$ are:
$$\bigtableau{2,a&5,b\\4,c&4,b}\ \ \ \bigtableau{2,a&4,b\\4,c&5,b}$$\\
Notice that $K_{J, \tilde{B}} =2$ as well, since the colored immaculate tableaux of shape $J$ and type $\tilde{B}$ are:
$$\bigtableau{1,a&3,b\\2,c&2,b}\ \ \ \bigtableau{1,a&2,b\\2,c&3,b}$$
\end{ex}

\begin{thm}\label{monomialexpansion}
For a sentence $J$, the colored dual immaculate function $\mathfrak{S}^*_J$ expands positively into the colored monomial basis as
$$\mathfrak{S}^*_J = \sum_{B}K_{J,B}M_{B},$$ where the sum is taken over all sentences $B$ such that $|B|=|J|$. 
\end{thm}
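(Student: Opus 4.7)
The plan is to start from the combinatorial definition $\mathfrak{S}^*_J = \sum_T x_T$ (sum over all CIT $T$ of shape $J$), regroup the tableaux according to their standardizations, and then apply Proposition \ref{monomialtab} to replace each group by a colored monomial function. Specifically, I would partition the set of CIT of shape $J$ by standardization, writing
$$\mathfrak{S}^*_J = \sum_{U} \sum_{\substack{T:\; std(T) = U}} x_T,$$
where $U$ ranges over standard colored immaculate tableaux of shape $J$. For each fixed $U$, I would further partition the inner sum according to the flat type $\widetilde{type(T)}$. By Proposition \ref{standardrefinement}, the flat types that actually occur are precisely the sentences $B$ with $B \preceq co_A(U)$, and by Proposition \ref{monomialtab} each such class contributes exactly one $M_B$. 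This gives
$$\mathfrak{S}^*_J = \sum_{U} \sum_{B \preceq co_A(U)} M_B.$$

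The next step is to swap the order of summation to read off the coefficient of a given sentence $B$:
$$\mathfrak{S}^*_J = \sum_{B} \#\{U : B \preceq co_A(U)\}\, M_B.$$
To finish, I need to identify $\#\{U : B \preceq co_A(U)\}$ with the colored immaculate Kostka coefficient $K_{J,B}$. Here I would use the bijection furnished by Propositions \ref{samestandardization} and \ref{standardrefinement}: since $B$ is an honest sentence, $\tilde{B} = B$, so Proposition \ref{standardrefinement} says that there exists a CIT $T$ of shape $J$ and type $B$ standardizing to $U$ if and only if $B \preceq co_A(U)$, and Proposition \ref{samestandardization} ensures that such $T$ is unique when it exists. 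Thus the map $T \mapsto std(T)$ is a bijection between CIT of shape $J$ and type $B$ on one side, and SCIT $U$ of shape $J$ with $B \preceq co_A(U)$ on the other, so the coefficient is exactly $K_{J,B}$.

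Finally, I should verify that the sum only runs over sentences $B$ with $|B|=|J|$. This is immediate, since any CIT of shape $J$ has exactly $|J|$ boxes, so its type (and therefore flat type) has total size $|J|$. Positivity is automatic from the counting interpretation of $K_{J,B}$.

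The argument is essentially a bookkeeping exercise once the propositions are in hand; the only subtle point is the last identification of the coefficient, which hinges on recognizing that summing over pairs $(U,B)$ with $B \preceq co_A(U)$ is the same as summing directly over CIT of type $B$, via the standardization bijection. I would expect that to be the step most worth writing out carefully, but it follows cleanly from the two propositions already proved.
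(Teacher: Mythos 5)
Your proof is correct, but it takes a genuinely different route from the paper's. The paper partitions the tableaux in $\mathfrak{S}^*_J=\sum_T x_T$ by \emph{flat type} and never mentions standardization: it uses Proposition \ref{typenumber} (that $K_{J,C}=K_{J,\tilde C}$) together with the identity $M_B=\sum_{\tilde C=B}x_C$ to collapse each flat-type class directly to $K_{J,B}M_B$. You instead partition by \emph{standardization} first, invoke Propositions \ref{standardrefinement} and \ref{monomialtab} to get the intermediate identity $\mathfrak{S}^*_J=\sum_U\sum_{B\preceq co_A(U)}M_B$, and then swap summation orders; your final step, identifying $\#\{U: B\preceq co_A(U)\}$ with $K_{J,B}$ via the standardization bijection, is precisely the content of the paper's Proposition \ref{coeffsum} ($K_{J,B}=\sum_{C\succeq B}L_{J,C}$), which the paper only proves afterward in order to derive the fundamental expansion. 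So your argument effectively establishes the fundamental-basis expansion $\mathfrak{S}^*_J=\sum_U F_{co_A(U)}$ en route to the monomial one (since $F_{co_A(U)}=\sum_{B\preceq co_A(U)}M_B$), reversing the paper's logical order; the trade-off is that your proof leans on the heavier Propositions \ref{samestandardization} and \ref{standardrefinement}, whereas the paper's is more self-contained and elementary, needing only the flattening invariance of the Kostka numbers. Both are complete and correct.
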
 

\begin{proof}
Let $B_1,\ldots ,B_j$ be all possible flat types of colored immaculate tableaux of shape $J$. Then arrange the sum  $\mathfrak{S}^*_J = \sum_T x_T$ into parts based on the flat types of the tableaux $T$ as 
$$\mathfrak{S}^*_J = \sum_{\ \widetilde{type(T)}=B_1}x_T + \ldots  + \sum_{ \widetilde{type(T)}=B_j}x_T.$$  
Consider the sum  of $x_T$ over $T$ such that $\widetilde{type(T)}=B_i$.   By Proposition \ref{typenumber}, for any $C$ such that $\tilde{C}=B$ we have $K_{J,B_i} = K_{J,C}$. By definition, for any flat sentence B, $$M_{B}=\sum_{\tilde{C}=B}x_C.$$  Thus, we can write $$\sum_{\widetilde{type(T)}=B_i}x_T = 
\sum_{\tilde{C}=B_i} K_{J,C}x_C = K_{J,B_i}\left(\sum_{\tilde{C}=B_i}x_C\right) = K_{J,B_i}M_{B_i}.$$ 
Therefore the overall sum becomes
$$\mathfrak{S}^*_J = K_{J,B_1}M_{B_1} + \ldots  + K_{J,B_j}M_{B_j} = \sum_B K_{J,B}M_B,$$ where the sum runs over all flat types  $B$ of the colored immaculate tableaux of shape $J$. For all other $B$ such that $|B|=|J|$, we have $K_{J,B}=0$ and we can extend this sum to be over all sentences $B$ such that $|B|=|J|$.
\end{proof}

\begin{thm}
The set of colored dual immaculate functions forms a basis for $QSym_A$.
\end{thm}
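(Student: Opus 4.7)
My plan is to show that the transition matrix from the colored dual immaculate family to the colored monomial basis is unitriangular with respect to a suitable ordering of sentences. Since $\{M_B\}_B$ is a basis of $QSym_A$ and the algebra is graded by size (so each graded piece is finite-dimensional), it suffices to exhibit a total order on sentences of a fixed size under which $\mathfrak{S}^*_J = M_J + \sum_{B < J} c_{J,B}\, M_B$.

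Starting from Theorem \ref{monomialexpansion}, write $\mathfrak{S}^*_J = \sum_B K_{J,B} M_B$, summed over sentences $B$ with $|B|=|J|$. I would first verify that $K_{J,J}=1$: for $J=(w_1,\ldots,w_k)$, any CIT of shape $J$ and type $J$ must have all $1$'s in row $1$ (since the strict increase down the first column prevents $1$ from appearing below row $1$), which forces row $1$ to be entirely filled with $1$'s; then all $2$'s must fill row $2$ for the same reason, and so on. The resulting tableau has colors in row $i$ matching $w_i$, so reading off its type reproduces $J$ exactly.

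Next I would establish that $K_{J,B}=0$ unless $w\ell(B) \leq_{\ell} w\ell(J)$. The point is that forgetting colors from a CIT of shape $J$ and type $B$ yields an ordinary immaculate tableau of shape $w\ell(J)$ and type $w\ell(B)$, whose existence already forces $w\ell(B) \leq_{\ell} w\ell(J)$ by Proposition \ref{uncolor_mon}. Moreover, when $w\ell(B) = w\ell(J)$, the rigidity argument from the preceding paragraph shows that the underlying uncolored tableau is the unique one with row $i$ filled by the label $i$; reading off the colors of its type then gives back precisely $J$, so $K_{J,B}=0$ whenever $w\ell(B)=w\ell(J)$ and $B \neq J$.

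Finally, I would choose any total order on sentences of a fixed size refining the partial order defined by $B < J$ iff $w\ell(B) <_{\ell} w\ell(J)$. The preceding observations then say that, in this order, the transition matrix from $\{\mathfrak{S}^*_J\}$ to $\{M_B\}$ on each graded piece is triangular with $1$'s on the diagonal, hence invertible; consequently $\{\mathfrak{S}^*_J\}_J$ is a basis of $QSym_A$. The main (mild) obstacle is the rigidity step confirming that $w\ell(B)=w\ell(J)$ forces $B=J$, but this is essentially the same argument that pins down the unique immaculate tableau of a given shape whose type equals that shape.
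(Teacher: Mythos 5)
Your proposal is correct and follows essentially the same route as the paper: both establish that the transition matrix to the colored monomial basis is unitriangular, using the same rigidity argument to show $K_{J,J}=1$ and $K_{J,B}=0$ when $w\ell(B)=w\ell(J)$ but $B\neq J$. The only cosmetic differences are that you obtain the off-diagonal vanishing by uncoloring and citing Proposition \ref{uncolor_mon} rather than rerunning that argument directly, and that you order by lexicographic order on word lengths where the paper uses reverse lexicographic order --- for compositions of equal size these impose the identical vanishing condition.
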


\begin{proof}
Let $A$ be an alphabet with a total ordering, and consider the transition matrix $\mathcal{K}$ from $\{\mathfrak{S}^*_I\}_I$ to $\{M_I\}_I$. By Theorem \ref{monomialexpansion}, the entry of $\mathcal{K}$ in row $J$ and column $C$ is $K_{J,C}$. We want to prove that $\mathcal{K}$ is upper unitriangular and thus invertible when the rows and columns are ordered first by the reverse lexicographic order of compositions applied to word lengths, then by lexicographic order on words. 
%For example, row $(a_1a_2a_3,a_4a_5)$ would come before row $(a_1a_2,a_3,a_4a_5)$ because $(3,2)\preceq_{r \ell} (2,1,2)$, and, given $a_1 < a_2 < \ldots <a_5$, row $(a_1a_2a_2,a_1a_3)$ would come before row $(a_1a_2a_3,a_4a_5)$ because $(3,2)=(3,2)$ and $(a_1a_2a_2) \preceq_{\ell} (a_1a_2a_3)$,

Let $J=(w_1, \ldots, w_k)$ and $C=(v_1,\ldots,v_h)$ be sentences with $|J|=|C|$.  We claim that if $wl(J)\succeq_{rl} wl(C)$ and $K_{J,C}\not=0$ then $J=C$. Assume there exists a tableau $T$ of shape $J$ and type $C$ with $wl(J)\succeq_{rl} wl(C)$ and $wl(J) \not= wl(C)$.  Then $|w_1|\leq|v_1|$.  Observe that the first row of the tableau has $|w_1|$ boxes and so if $|w_1|<|v_1|$, there would have to be a 1 placed in a box somewhere below row 1.  This is impossible by the conditions on colored immaculate tableaux so $|w_1|=|v_1|$ and every box in row 1 is filled with 1's.  Next, $|w_2| \leq |v_2|$ and so the second row must start with a 2 for any 2's to exist in $T$. This implies that the first entry in each subsequent row is greater than 2 meaning that no other row can contain 2's.  If every 2 is in the second row then and the number of 2's is at least $w_2$, then $|w_2|=|v_2|$.  Continuing this reasoning, $|w_i|=|v_i|$ for $1 \leq i \leq k$.  Thus, $wl(J)=wl(C)$.
Further, by this method, we have filled the first row with 1's, the second row with 2's, the $i^{\text{th}}$ row with $i$'s, etc. to construct a colored immaculate tableau such that $w_i=v_i$ for all $i$.  Therefore, $J=C$.  By construction, this is the only tableau of shape $J$ and type $J$ so $K_{J,J}=1$. To summarize, we have shown that $K_{J,C} = 0$ when $wl(J)\succeq_{l} wl(C)$ unless $J=C$, in which case the entry of the matrix lies on the diagonal and $K_{J,J}=1$.  Thus, we have proved $\mathcal{K}$ is upper unitriangular.  
\end{proof} 

\subsubsection{Expansion into the colored fundamental functions}
To expand the colored dual immaculate functions into the colored fundamental basis we first define coefficients counting SCIT. Relating these to our earlier coefficients counting colored immaculate tableaux, we reformulate our expansion in Theorem \ref{monomialexpansion} to an expansion in terms of the colored fundamental basis.

\begin{defn} \label{colorL}
For sentences $J$ and $C$, define $L_{J,C}$ as the number of standard colored immaculate tableaux of shape $J$ that have colored descent composition $C$.
\end{defn}

\begin{ex}\label{exL2}
Let $J=(ab,cb,b)$ and $C=(a,cb,bb)$.  The standard colored immaculate tableaux of shape $J$ with colored descent composition $C$ are 

$$
\scalebox{.75}{
$U_1 =$ 
\begin{ytableau}
    a, 1 & b, 3 \\
    c, 2 & b, 5 \\
    b, 4
\end{ytableau}
\quad \quad \quad
$U_2 =$ 
\begin{ytableau}
    a, 1 & b, 5 \\
    c, 2 & b, 3 \\
    b, 4
\end{ytableau}}
$$
Thus, $L_{(ab,cb,b),(a,cb,bb)}=2$.
\end{ex}

\begin{prop}\label{coeffsum}
For sentences $J$ and $B$, $$K_{J,B}=\sum_{C \succeq B}L_{J,C}.$$
\end{prop}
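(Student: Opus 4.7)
The plan is to set up a bijection between colored immaculate tableaux of shape $J$ and type $B$ and standard colored immaculate tableaux of shape $J$ whose colored descent composition refines to $B$ (or rather, is a coarsening of $B$). The key tools are Propositions \ref{samestandardization} and \ref{standardrefinement}, which between them assert that the standardization map, restricted to tableaux of a fixed shape and type, is injective and has image exactly the SCIT whose colored descent composition coarsens $B$.

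First I would assume without loss of generality that $B$ is a flat sentence, since by Proposition \ref{typenumber} we have $K_{J,B} = K_{J,\tilde{B}}$ and the refinement order only involves flat sentences anyway. Then the forward direction of Proposition \ref{standardrefinement} says that if $T$ is a CIT of shape $J$ and type $B$, then $U = \mathrm{std}(T)$ is a SCIT of shape $J$ satisfying $B \preceq co_A(U)$. Conversely, the backward direction guarantees that for any SCIT $U$ of shape $J$ with $B \preceq co_A(U)$, there exists at least one CIT $T$ of shape $J$ and type $B$ with $\mathrm{std}(T) = U$, and Proposition \ref{samestandardization} ensures this $T$ is unique. This gives a bijection
\[
\{T : \mathrm{shape}(T) = J,\ \mathrm{type}(T) = B\} \longleftrightarrow \{U \text{ SCIT of shape } J : co_A(U) \succeq B\}.
\]

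From here the conclusion is immediate: partitioning the set on the right according to the value $C = co_A(U)$ gives
\[
K_{J,B} = \sum_{C \succeq B} \#\{U \text{ SCIT of shape } J : co_A(U) = C\} = \sum_{C \succeq B} L_{J,C},
\]
as desired. There is no real obstacle here beyond assembling the two previously established propositions in the right order; the only care needed is to confirm that Proposition \ref{standardrefinement} is applied to a flat $B$ (so that $\tilde{B} = B$) and that the uniqueness in the backward direction is exactly the content of Proposition \ref{samestandardization}.
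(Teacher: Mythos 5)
Your proposal is correct and follows essentially the same route as the paper: both establish that the standardization map is a bijection from CIT of shape $J$ and type $B$ onto SCIT of shape $J$ whose colored descent composition coarsens $B$, using Proposition \ref{samestandardization} for injectivity and Proposition \ref{standardrefinement} for surjectivity, then partition by descent composition. Your explicit reduction to flat $B$ via Proposition \ref{typenumber} and your use of the forward direction of Proposition \ref{standardrefinement} to check the map is well-defined are minor refinements of the same argument.
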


\begin{proof}
Recall that $K_{J,B}$ is the number of colored immaculate tableaux of shape $J$ and type $B$.  We want to show that $K_{J,B}$ is equal to the sum of $L_{J,C}$, the number of standard colored immaculate tableaux of shape $J$ and descent composition $C$, overall $C \succeq B$. For this proof, let $\mathcal{T}$ be the set of all colored immaculate tableaux of shape $J$ and type $B$, and let $\mathcal{U}$ be the set of standard colored immaculate tableaux $U$ of shape $J$ and descent composition $C$ with $C \succeq B$. We need to show that the map $std: \mathcal{T} \rightarrow \mathcal{U}$, where $std$ is the standardization map from Definition \ref{colorstd}, is a bijection on these sets. By Proposition \ref{samestandardization}, colored immaculate tableaux with the same shape and type must have different standardizations or they would be the same tableau, thus our map is injective.  By Proposition \ref{standardrefinement}, the map is surjective.  This makes our map a bijection and so $\mathcal{T}$ and $\mathcal{U}$ have the same size.  Thus, we have shown that $K_{J,B}=\sum_{C \succeq B}L_{J,C}$.
\end{proof}

\begin{thm} \label{fund_exp}
For a sentence $J$, the colored dual immaculate function $\mathfrak{S}^*_J$ expands positively into the fundamental basis as $$ \mathfrak{S}^*_J = \sum_{C} L_{J,C} F_{C},$$
where the sum runs over sentences $C$ such that $|C|=|J|$. 
\end{thm}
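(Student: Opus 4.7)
The plan is to combine Theorem \ref{monomialexpansion} with the defining relation between the colored monomial and colored fundamental bases, using Proposition \ref{coeffsum} as the bridge. All the combinatorial content has already been established: the coefficients $K_{J,B}$ and $L_{J,C}$ have been identified with counts of colored immaculate tableaux and standard colored immaculate tableaux respectively, and the bijection between these sets (under standardization) has been carried out in the proof of Proposition \ref{coeffsum}. So at this stage the theorem is really a bookkeeping identity and the proof should be a short computation.

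Concretely, I would start from the monomial expansion
\[
\mathfrak{S}^*_J \;=\; \sum_{B} K_{J,B} M_B,
\]
where $B$ ranges over sentences with $|B|=|J|$, and substitute $K_{J,B} = \sum_{C \succeq B} L_{J,C}$ from Proposition \ref{coeffsum} to get
\[
\mathfrak{S}^*_J \;=\; \sum_{B} \Bigl(\sum_{C \succeq B} L_{J,C}\Bigr) M_B.
\]
Then I would interchange the order of summation, grouping terms by the coarsening $C$ rather than by the refinement $B$:
\[
\mathfrak{S}^*_J \;=\; \sum_{C} L_{J,C} \sum_{B \preceq C} M_B.
\]
By the definition of the colored fundamental basis in equation \eqref{m_f_defs}, the inner sum equals $F_C$, which yields the desired expansion $\mathfrak{S}^*_J = \sum_C L_{J,C} F_C$, with $C$ ranging over sentences of size $|J|$ (for any $C$ not arising as a colored descent composition of a SCIT of shape $J$ the coefficient $L_{J,C}$ is simply $0$, so extending the sum to all such $C$ is harmless).

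There is essentially no obstacle in this argument; the only point that requires a moment of care is verifying that the swap of summation is legitimate, which amounts to the observation that the set $\{(B,C) : |B|=|J|,\ C\succeq B\}$ is the same as $\{(B,C) : |C|=|J|,\ B\preceq C\}$, together with the fact that both defining sums are finite once $|J|$ is fixed. Positivity of the expansion is then immediate from the fact that the $L_{J,C}$ are nonnegative integers by Definition \ref{colorL}.
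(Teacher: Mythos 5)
Your proposal is correct and uses essentially the same ingredients as the paper: Theorem \ref{monomialexpansion}, Proposition \ref{coeffsum}, and the relation \eqref{m_f_defs} between the colored monomial and colored fundamental bases, combined with an interchange of summation. The only (cosmetic) difference is direction: you substitute $K_{J,B}=\sum_{C\succeq B}L_{J,C}$ directly and use the sign-free identity $F_C=\sum_{B\preceq C}M_B$, whereas the paper first applies M\"obius inversion to Proposition \ref{coeffsum} and expands each $M_B$ in the fundamental basis with signs; the two computations are inverse to one another and equally valid.
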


\begin{proof}
Let $J$ be a sentence. First, observe that applying the Möbius inversion to Proposition \ref{coeffsum} yields $$L_{J,C} = \sum_{C \preceq B} (-1)^{\ell(C)-\ell(B)}K_{J,B}.$$

Then, by Theorem \ref{monomialexpansion} and Equation \eqref{m_f_defs}, $$\mathfrak{S}^*_J = \sum_B K_{J,B}M_B = \sum_B K_{J,B} \left(\sum_{C \preceq B} (-1)^{\ell(C) - \ell(B)} F_C\right) = \sum_C \left( \sum_{C \preceq B} (-1)^{\ell(C)-\ell(B)}K_{J,B} \right) F_C = \sum_C L_{J,C} F_C.$$
\end{proof}

This expansion can be written as a sum over all standard colored immaculate tableaux of a certain shape instead of using coefficients to count tableaux based on their colored descent compositions.

\begin{cor}
For a sentence $J$, 
$$\mathfrak{S}^*_J = \sum_U F_{co_A(U)},$$ where the sum runs over all standard colored immaculate tableaux $U$ of shape $J$.
\end{cor}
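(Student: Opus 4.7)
The plan is to deduce the corollary directly from Theorem \ref{fund_exp} by reindexing the sum. Theorem \ref{fund_exp} gives
\[
\mathfrak{S}^*_J \;=\; \sum_{C} L_{J,C}\, F_C,
\]
where the sum ranges over sentences $C$ with $|C|=|J|$, and the coefficient $L_{J,C}$ (Definition \ref{colorL}) is defined as the number of standard colored immaculate tableaux $U$ of shape $J$ with $co_A(U)=C$. So the first step is simply to unpack this definition: write $L_{J,C} = \#\{U \text{ SCIT of shape } J : co_A(U)=C\}$ and move the sum inside.

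Concretely, I would write
\[
\mathfrak{S}^*_J \;=\; \sum_{C} L_{J,C}\, F_C \;=\; \sum_{C} \sum_{\substack{U \text{ SCIT of shape } J \\ co_A(U) = C}} F_C \;=\; \sum_{U} F_{co_A(U)},
\]
where the last sum ranges over all standard colored immaculate tableaux $U$ of shape $J$. The interchange is justified because every SCIT $U$ of shape $J$ has a unique colored descent composition $co_A(U)$ (this is built into Definition \ref{color_desc_comp}), so the SCITs of shape $J$ partition into blocks indexed by $C$, and each $U$ contributes exactly one term $F_{co_A(U)}$ on the right-hand side.

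There is essentially no obstacle here; the corollary is just a repackaging of Theorem \ref{fund_exp} that replaces the coefficient $L_{J,C}$ by the explicit tableau enumeration it counts. The only thing worth being careful about is making sure that every SCIT of shape $J$ appears exactly once and that the indexing set of $C$ on the right is implicitly extended to all sentences of size $|J|$ (for $C$ not of the form $co_A(U)$, $L_{J,C}=0$ and such terms disappear from the sum automatically).
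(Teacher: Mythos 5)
Your proposal is correct and matches the paper's (implicit) argument exactly: the corollary is stated as an immediate repackaging of Theorem \ref{fund_exp}, obtained by unpacking the definition of $L_{J,C}$ as a count of standard colored immaculate tableaux and reindexing the sum over tableaux rather than over colored descent compositions. Nothing further is needed.
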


\subsection{The colored immaculate descent graph} \label{cidg}

We define the colored immaculate descent graph to directly determine the expansion of the colored fundamental functions into the colored dual immaculate basis. Additionally, our result specializes to a new combinatorial expansion of the fundamental quasisymmetric functions into the dual immaculate functions. 

\begin{defn} \label{color_imm_desc_graph}
Define the \emph{colored immaculate descent graph}, denoted $\mathfrak{D}_{A}^n$, as an edge-weighted directed simple graph such that the vertex set is the set of sentences in $A$ of size $n$, and there is a directed edge from $I$ to $J$, denoted $I \rightarrow J$ or $J \leftarrow I$, if there exists a standard colored immaculate tableau of shape $I$ with colored descent composition $J$. The edge from $I$ to $J$ is weighted with the coefficient $L_{I,J}$ from Definition \ref{colorL}. For a path $\mathcal{P}$ in $\mathfrak{D}_{A}^n$, let $prod(\mathcal{P})$ denote the product of the edge-weights in $\mathcal{P}$ and let $prod(\mathcal{\emptyset})=1$.
\end{defn}

\begin{ex} In Figure 1 we illustrate the subgraph of $\mathfrak{D}^5_{\{a,b,c\}}$ with top vertex $(ab,cbb)$. In this subgraph, all edges are weighted $1$ because $L_{I,J} = 1$ for each $I$ and $J$ (and thus $prod(\mathcal{P})=1$ for all paths) but, for example, the edge from $(ab,cb,b)$ to $(a,cb,bb)$ would be $2$ since $L_{(ab,cb,b)(a,cb,bb)}=2$ as in Example \ref{exL2}.

\begin{figure}[h!]
\tikzset{every picture/.style={line width=0.75pt}} %set default line width to 0.75pt        
\scalebox{.75}{
\begin{tikzpicture}[x=0.75pt,y=0.75pt,yscale=-1,xscale=1]
%uncomment if require: \path (0,615); %set diagram left start at 0, and has height of 615

%Shape: Rectangle [id:dp9746015143292242] 
\draw   (291,30) -- (311,30) -- (311,50) -- (291,50) -- cycle ;
%Shape: Rectangle [id:dp2975849923553231] 
\draw   (311,30) -- (331,30) -- (331,50) -- (311,50) -- cycle ;
%Shape: Rectangle [id:dp30314133301193014] 
\draw   (311,50) -- (331,50) -- (331,70) -- (311,70) -- cycle ;
%Shape: Rectangle [id:dp6581535212995715] 
\draw   (291,50) -- (311,50) -- (311,70) -- (291,70) -- cycle ;
%Shape: Rectangle [id:dp9037568112022336] 
\draw   (331,50) -- (351,50) -- (351,70) -- (331,70) -- cycle ;
%Shape: Rectangle [id:dp5762206208429126] 
\draw   (189.5,111) -- (209.5,111) -- (209.5,131) -- (189.5,131) -- cycle ;
%Shape: Rectangle [id:dp2838379314935835] 
\draw   (189.5,131) -- (209.5,131) -- (209.5,151) -- (189.5,151) -- cycle ;
%Shape: Rectangle [id:dp8950192129270562] 
\draw   (189.5,151) -- (209.5,151) -- (209.5,171) -- (189.5,171) -- cycle ;
%Shape: Rectangle [id:dp34259616509318813] 
\draw   (209.5,131) -- (229.5,131) -- (229.5,151) -- (209.5,151) -- cycle ;
%Shape: Rectangle [id:dp6756401510475543] 
\draw   (229.5,131) -- (249.5,131) -- (249.5,151) -- (229.5,151) -- cycle ;
%Shape: Rectangle [id:dp871587154072083] 
\draw   (391,111) -- (411,111) -- (411,131) -- (391,131) -- cycle ;
%Shape: Rectangle [id:dp10913225543138982] 
\draw   (391,131) -- (411,131) -- (411,151) -- (391,151) -- cycle ;
%Shape: Rectangle [id:dp8510006848240161] 
\draw   (411,131) -- (431,131) -- (431,151) -- (411,151) -- cycle ;
%Shape: Rectangle [id:dp5833780773901733] 
\draw   (431,131) -- (451,131) -- (451,151) -- (431,151) -- cycle ;
%Shape: Rectangle [id:dp12208768237454248] 
\draw   (300,211) -- (320,211) -- (320,231) -- (300,231) -- cycle ;
%Shape: Rectangle [id:dp15612630554147677] 
\draw   (300,231) -- (320,231) -- (320,251) -- (300,251) -- cycle ;
%Shape: Rectangle [id:dp2856197042323023] 
\draw   (300,251) -- (320,251) -- (320,271) -- (300,271) -- cycle ;
%Shape: Rectangle [id:dp35279735058702877] 
\draw   (320,231) -- (340,231) -- (340,251) -- (320,251) -- cycle ;
%Shape: Rectangle [id:dp8124373113166679] 
\draw   (320,251) -- (340,251) -- (340,271) -- (320,271) -- cycle ;
%Shape: Rectangle [id:dp6309153038105233] 
\draw   (390,295) -- (410,295) -- (410,315) -- (390,315) -- cycle ;
%Shape: Rectangle [id:dp17665055751411307] 
\draw   (390,315) -- (410,315) -- (410,335) -- (390,335) -- cycle ;
%Shape: Rectangle [id:dp694169643293634] 
\draw   (390,335) -- (410,335) -- (410,355) -- (390,355) -- cycle ;
%Shape: Rectangle [id:dp7217961052404935] 
\draw   (410,335) -- (430,335) -- (430,355) -- (410,355) -- cycle ;
%Shape: Rectangle [id:dp543747572291549] 
\draw   (390,355) -- (410,355) -- (410,375) -- (390,375) -- cycle ;
%Shape: Rectangle [id:dp941217117640911] 
\draw   (301,430) -- (321,430) -- (321,450) -- (301,450) -- cycle ;
%Shape: Rectangle [id:dp3485056535868607] 
\draw   (301,450) -- (321,450) -- (321,470) -- (301,470) -- cycle ;
%Shape: Rectangle [id:dp4189205073232498] 
\draw   (301,470) -- (321,470) -- (321,490) -- (301,490) -- cycle ;
%Shape: Rectangle [id:dp021175860106261712] 
\draw   (301,490) -- (321,490) -- (321,510) -- (301,510) -- cycle ;
%Shape: Rectangle [id:dp9217228684034253] 
\draw   (321,490) -- (341,490) -- (341,510) -- (321,510) -- cycle ;
%Shape: Rectangle [id:dp885227711667715] 
\draw   (189,310) -- (209,310) -- (209,330) -- (189,330) -- cycle ;
%Shape: Rectangle [id:dp11637030279490501] 
\draw   (189,330) -- (209,330) -- (209,350) -- (189,350) -- cycle ;
%Shape: Rectangle [id:dp4494899590666239] 
\draw   (189,350) -- (209,350) -- (209,370) -- (189,370) -- cycle ;
%Shape: Rectangle [id:dp6997256894266561] 
\draw   (209,350) -- (229,350) -- (229,370) -- (209,370) -- cycle ;
%Shape: Rectangle [id:dp5570685599395608] 
\draw   (229,350) -- (249,350) -- (249,370) -- (229,370) -- cycle ;

\draw    (286,76.5) -- (246.91,115.59) ;
\draw [shift={(245.5,117)}, rotate = 315] [color={rgb, 255:red, 0; green, 0; blue, 0 }  ][line width=0.75]    (10.93,-4.9) .. controls (6.95,-2.3) and (3.31,-0.67) .. (0,0) .. controls (3.31,0.67) and (6.95,2.3) .. (10.93,4.9)   ;
%Straight Lines [id:da7263684485202386] 
\draw    (360,79.5) -- (379.08,98.34) ;
\draw [shift={(380.5,99.75)}, rotate = 224.65] [color={rgb, 255:red, 0; green, 0; blue, 0 }  ][line width=0.75]    (10.93,-4.9) .. controls (6.95,-2.3) and (3.31,-0.67) .. (0,0) .. controls (3.31,0.67) and (6.95,2.3) .. (10.93,4.9)   ;
%Straight Lines [id:da2042970315806194] 
\draw    (320,80.5) -- (320,197.5) ;
\draw [shift={(320,199.5)}, rotate = 270] [color={rgb, 255:red, 0; green, 0; blue, 0 }  ][line width=0.75]    (10.93,-4.9) .. controls (6.95,-2.3) and (3.31,-0.67) .. (0,0) .. controls (3.31,0.67) and (6.95,2.3) .. (10.93,4.9)   ;
%Straight Lines [id:da7174749420352136] 
\draw    (250.5,160.25) -- (289.09,198.84) ;
\draw [shift={(290.5,200.25)}, rotate = 225] [color={rgb, 255:red, 0; green, 0; blue, 0 }  ][line width=0.75]    (10.93,-4.9) .. controls (6.95,-2.3) and (3.31,-0.67) .. (0,0) .. controls (3.31,0.67) and (6.95,2.3) .. (10.93,4.9)   ;
%Straight Lines [id:da3453043428608944] 
\draw    (349,279.5) -- (377.62,309.55) ;
\draw [shift={(379,311)}, rotate = 226.4] [color={rgb, 255:red, 0; green, 0; blue, 0 }  ][line width=0.75]    (10.93,-4.9) .. controls (6.95,-2.3) and (3.31,-0.67) .. (0,0) .. controls (3.31,0.67) and (6.95,2.3) .. (10.93,4.9)   ;
%Straight Lines [id:da5957106468272875] 
\draw    (290.5,280.25) -- (232.28,329.21) ;
\draw [shift={(230.75,330.5)}, rotate = 319.94] [color={rgb, 255:red, 0; green, 0; blue, 0 }  ][line width=0.75]    (10.93,-4.9) .. controls (6.95,-2.3) and (3.31,-0.67) .. (0,0) .. controls (3.31,0.67) and (6.95,2.3) .. (10.93,4.9)   ;
%Straight Lines [id:da12419212275295899] 
\draw    (380,380.75) -- (332.07,418.27) ;
\draw [shift={(330.5,419.5)}, rotate = 321.95] [color={rgb, 255:red, 0; green, 0; blue, 0 }  ][line width=0.75]    (10.93,-4.9) .. controls (6.95,-2.3) and (3.31,-0.67) .. (0,0) .. controls (3.31,0.67) and (6.95,2.3) .. (10.93,4.9)   ;
%Shape: Rectangle [id:dp6960160441531655] 
\draw   (451,131) -- (471,131) -- (471,151) -- (451,151) -- cycle ;

%Straight Lines [id:da45691290369904936] 
\draw    (210.02,284) -- (211,189) ;
\draw [shift={(210,286)}, rotate = 270.59] [color={rgb, 255:red, 0; green, 0; blue, 0 }  ][line width=0.75]    (10.93,-4.9) .. controls (6.95,-2.3) and (3.31,-0.67) .. (0,0) .. controls (3.31,0.67) and (6.95,2.3) .. (10.93,4.9)   ;
%\draw   (451,131) -- (471,131) -- (471,151) -- (451,151) -- cycle ;

% Text Node
\draw (194.25,315) node [anchor=north west][inner sep=0.75pt]   [align=left] {a};
% Text Node
\draw (296.25,34.5) node [anchor=north west][inner sep=0.75pt]   [align=left] {a};
% Text Node
\draw (395.25,299.5) node [anchor=north west][inner sep=0.75pt]   [align=left] {a};
% Text Node
\draw (305.25,215.5) node [anchor=north west][inner sep=0.75pt]   [align=left] {a};
% Text Node
\draw (306.25,434.5) node [anchor=north west][inner sep=0.75pt]   [align=left] {a};
% Text Node
\draw (396,116) node [anchor=north west][inner sep=0.75pt]   [align=left] {a};
% Text Node
\draw (195,116) node [anchor=north west][inner sep=0.75pt]   [align=left] {a};
% Text Node
\draw (395.5,320.5) node [anchor=north west][inner sep=0.75pt]   [align=left] {c};
% Text Node
\draw (306.5,455.5) node [anchor=north west][inner sep=0.75pt]   [align=left] {c};
% Text Node
\draw (195,136.5) node [anchor=north west][inner sep=0.75pt]   [align=left] {c};
% Text Node
\draw (305,237) node [anchor=north west][inner sep=0.75pt]   [align=left] {c};
% Text Node
\draw (194,336) node [anchor=north west][inner sep=0.75pt]   [align=left] {c};
% Text Node
\draw (296.5,55.5) node [anchor=north west][inner sep=0.75pt]   [align=left] {c};
% Text Node
\draw (396,137.5) node [anchor=north west][inner sep=0.75pt]   [align=left] {c};
% Text Node
\draw (316.5,32.5) node [anchor=north west][inner sep=0.75pt]   [align=left] {b};
% Text Node
\draw (235.5,133.5) node [anchor=north west][inner sep=0.75pt]   [align=left] {b};
% Text Node
\draw (316.67,52.33) node [anchor=north west][inner sep=0.75pt]   [align=left] {b};
% Text Node
\draw (336.5,52.5) node [anchor=north west][inner sep=0.75pt]   [align=left] {b};
% Text Node
\draw (215.5,133.5) node [anchor=north west][inner sep=0.75pt]   [align=left] {b};
% Text Node
\draw (416.5,133.5) node [anchor=north west][inner sep=0.75pt]   [align=left] {b};
% Text Node
\draw (195.5,153) node [anchor=north west][inner sep=0.75pt]   [align=left] {b};
% Text Node
\draw (436.5,133.5) node [anchor=north west][inner sep=0.75pt]   [align=left] {b};
% Text Node
\draw (456.5,133.5) node [anchor=north west][inner sep=0.75pt]   [align=left] {b};
% Text Node
\draw (326,234) node [anchor=north west][inner sep=0.75pt]   [align=left] {b};
% Text Node
\draw (306,253.5) node [anchor=north west][inner sep=0.75pt]   [align=left] {b};
% Text Node
\draw (326,253.5) node [anchor=north west][inner sep=0.75pt]   [align=left] {b};
% Text Node
\draw (194.5,352.5) node [anchor=north west][inner sep=0.75pt]   [align=left] {b};
% Text Node
\draw (214,353) node [anchor=north west][inner sep=0.75pt]   [align=left] {b};
% Text Node
\draw (234.5,352.5) node [anchor=north west][inner sep=0.75pt]   [align=left] {b};
% Text Node
\draw (395.5,337.5) node [anchor=north west][inner sep=0.75pt]   [align=left] {b};
% Text Node
\draw (395.5,357.5) node [anchor=north west][inner sep=0.75pt]   [align=left] {b};
% Text Node
\draw (415.5,337.5) node [anchor=north west][inner sep=0.75pt]   [align=left] {b};
% Text Node
\draw (306.5,472.5) node [anchor=north west][inner sep=0.75pt]   [align=left] {b};
% Text Node
\draw (306.5,492.5) node [anchor=north west][inner sep=0.75pt]   [align=left] {b};
% Text Node
\draw (326.5,492.5) node [anchor=north west][inner sep=0.75pt]   [align=left] {b};
\draw (252,73) node [anchor=north west][inner sep=0.75pt]   [align=left] {1};
% Text Node
\draw (374,69) node [anchor=north west][inner sep=0.75pt]   [align=left] {1};
% Text Node
\draw (328,127) node [anchor=north west][inner sep=0.75pt]   [align=left] {1};
% Text Node
\draw (274,155) node [anchor=north west][inner sep=0.75pt]   [align=left] {1};
% Text Node
\draw (189,224) node [anchor=north west][inner sep=0.75pt]   [align=left] {1};
% Text Node
\draw (250,280) node [anchor=north west][inner sep=0.75pt]   [align=left] {1};
% Text Node
\draw (366,274) node [anchor=north west][inner sep=0.75pt]   [align=left] {1};
% Text Node
\draw (340,380) node [anchor=north west][inner sep=0.75pt]   [align=left] {1};
\end{tikzpicture}}
\caption{A subgraph of $\mathfrak{D}^5_{\{a,b,c\}}$.}
\end{figure}
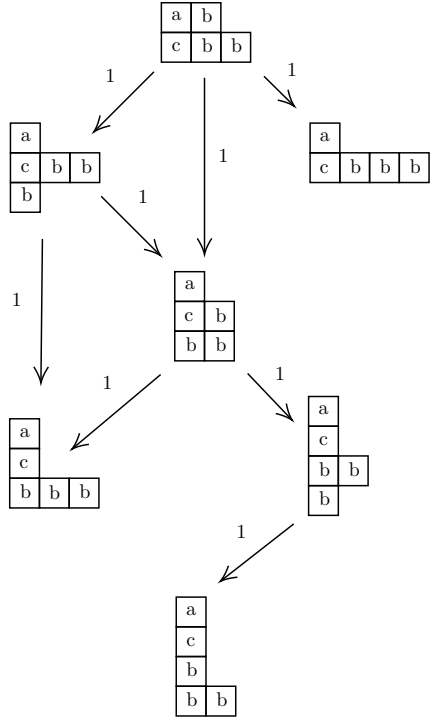

The element $(ab,cbb) \in \mathfrak{D}^5_{\{a,b,c\}}$ has edges going down to elements $(a,cbb,b)$, $(a,cb,bb)$, and $(a,cbbb)$ because these sentences represent possible descent compositions (with the exception of $(ab,cbb)$ itself) of colored standard immaculate tableaux of shape $(ab,cbb)$ as shown below.

$$
\scalebox{.75}{ 
\begin{ytableau}
    a, 1 & b, 4 \\
    c, 2 & b, 3 & b, 5 \\
\end{ytableau}
\quad \quad \quad
\begin{ytableau}
    a, 1 & b, 3 \\
    c, 2 & b, 4 & b, 5 \\
\end{ytableau}
\quad \quad \quad
\begin{ytableau}
    a, 1 & b, 5 \\
    c, 2 & b, 3 & b, 4 \\
\end{ytableau}
\quad \quad \quad
\begin{ytableau}
    a, 1 & b, 2 \\
    c, 3 & b, 4 & b, 5 \\
\end{ytableau}
}
$$\vspace{0mm}    
$$ (a,cbb,b) \text{ \qquad \qquad} (a,cb,bb) \text{ \quad \qquad} (a,cbbb) \text{ \qquad \qquad} (ab,cbb) \text{\quad}$$
\end{ex}
\vspace{1mm}

We say a sentence $K$ is \emph{reachable} from a sentence $I$ if there is a directed path from $I$ to $K$. This includes the empty path, meaning that $I$ is reachable from itself.

\begin{thm} \label{color_fund_to_imm}
    For a sentence $I$ of size $n$, the colored fundamental functions expand into the colored dual immaculate basis as $$F_I = \sum_{K}L^{-1}_{I,K} \mathfrak{S}^*_K \text{\qquad with coefficients \qquad} L^{-1}_{I,K} = \sum_{\mathcal{P}} (-1)^{\ell(\mathcal{P})}prod(\mathcal{P}),$$ where the sums run over all sentences $K$ reachable from $I$ in 
    $\mathfrak{D}^n_{A}$ and directed paths $\mathcal{P}$ from $I$ to $K$ in $\mathfrak{D}^n_A$.
\end{thm}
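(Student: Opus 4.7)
The plan is to reinterpret the statement as a matrix inversion. By Theorem~\ref{fund_exp}, the transition from the colored dual immaculate basis to the colored fundamental basis, restricted to sentences of size $n$, is encoded by the square matrix $\mathcal{L} = (L_{I,J})$. The desired expansion asserts that $F_I = \sum_K L^{-1}_{I,K}\mathfrak{S}^*_K$ where $(L^{-1}_{I,K})$ is the inverse matrix, and the combinatorial formula will follow from a standard Neumann-series inversion once $\mathcal{L}$ is shown to be upper unitriangular with respect to a suitable total order on sentences of size $n$.

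First I would verify that $\mathcal{L}$ is upper unitriangular when sentences are ordered by decreasing lexicographic order on their word lengths (with ties broken by any linear extension). For the diagonal entry $L_{I,I}$, a rigidity argument shows that the row-by-row filling is the unique SCIT of shape $I = (w_1,\ldots,w_k)$ with $co_A(T) = I$: the condition $co(T) = w\ell(I)$ forces integers $1,\ldots,|w_1|$ to lie in row $1$ (since $1$ sits at $(1,1)$, the absence of descents in this run keeps each successor in a weakly higher row, and first-column strict-increase together with row weak-increase then rule out any placement in rows $\geq 2$), and inductively each subsequent run fills the next row in order. Reading colors in numerical order returns $I$, so $L_{I,I}=1$. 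For $J \neq I$ with $L_{I,J} \neq 0$, any SCIT witnessing this has an underlying SIT of shape $w\ell(I)$ with descent composition $w\ell(J)$, hence $w\ell(J) \leq_\ell w\ell(I)$ by Proposition~\ref{uncolor_fund}, while the rigidity argument forbids equality unless $J = I$. Thus $J \neq I$ forces $w\ell(J) <_\ell w\ell(I)$ strictly, so $\mathcal{L}$ is upper unitriangular and $\mathfrak{D}^n_A$ is a finite DAG.

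Writing $\mathcal{L} = \mathrm{Id} + N$ with $N_{I,J} = L_{I,J}$ for $I \neq J$ and $N_{I,I} = 0$, the matrix $N$ is nilpotent, so $\mathcal{L}^{-1} = \sum_{k \geq 0}(-1)^k N^k$ has only finitely many nonzero contributions to each entry. The $(I,K)$-entry of $N^k$ equals $\sum_\mathcal{P} prod(\mathcal{P})$ summed over directed paths $\mathcal{P}$ of length $k$ from $I$ to $K$ in $\mathfrak{D}^n_A$, since each edge $I' \to J'$ contributes the weight $L_{I',J'} = N_{I',J'}$ and matrix multiplication assembles these along paths. Summing over $k$ with alternating signs produces $L^{-1}_{I,K} = \sum_\mathcal{P}(-1)^{\ell(\mathcal{P})}prod(\mathcal{P})$, with the $k=0$ term corresponding to the empty path and contributing $\delta_{I,K}$ under the convention $prod(\emptyset)=1$. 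Substituting into $F_I = \sum_K L^{-1}_{I,K}\mathfrak{S}^*_K$ yields the claimed identity, with the sum effectively restricted to $K$ reachable from $I$ since all other terms vanish.

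The main obstacle is the rigidity argument establishing $L_{I,I} = 1$ and the strict lex-inequality $w\ell(J) <_\ell w\ell(I)$ when $J \neq I$; the Neumann-series inversion and its reinterpretation as a signed weighted path count is then a formal step, and the bijection between length-$k$ matrix products and length-$k$ weighted paths follows by induction on path length.
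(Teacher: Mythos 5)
Your proof is correct, but it takes a genuinely different route from the paper's. The paper argues by induction on the length of the longest directed path starting at $I$ in $\mathfrak{D}^n_A$: it rewrites Theorem \ref{fund_exp} as $F_I = \mathfrak{S}^*_I - \sum_{J\neq I} L_{I,J}F_J$, substitutes the inductive expansion of each $F_J$, and assembles the signed path weights one edge at a time. You instead invert the whole transition matrix at once: you show $\mathcal{L}=(L_{I,J})$ is upper unitriangular for decreasing lexicographic order on word lengths (the rigidity of the row-by-row filling gives $L_{I,I}=1$ and kills all other entries with $w\ell(J)=w\ell(I)$, while Proposition \ref{uncolor_fund} gives $w\ell(J)\leq_\ell w\ell(I)$ whenever $L_{I,J}\neq 0$), and then expand $\mathcal{L}^{-1}=(\mathrm{Id}+N)^{-1}$ as a Neumann series whose $k$-th term is the weighted count of length-$k$ paths. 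The two arguments compute the same alternating sum, and each buys something: your triangularity lemma makes explicit the acyclicity and finiteness of path lengths in $\mathfrak{D}^n_A$ (edges strictly decrease $w\ell$ in lexicographic order), which the paper's induction on ``longest path length'' tacitly assumes when it takes that quantity to be finite and to drop along each edge; the paper's recursion, conversely, needs no global total order or matrix formalism and localizes the computation to the subgraph reachable from $I$. One point worth stating explicitly in your write-up: since $L_{I,I}=1$, the literal edge condition would create a self-loop at every vertex, and it is only because $\mathfrak{D}^n_A$ is declared a \emph{simple} graph (and you set $N_{I,I}=0$) that the entries of $N^k$ match the loop-free path count in the graph.
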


\begin{proof}
We proceed by induction on the length of the longest path starting at $I$ in $\mathfrak{D}^n_{A}$, denoted here with $k$. If $k=0$, there are no elements reachable from $I$ so $F_I = \mathfrak{S}^*_I$ which agrees with Theorem \ref{fund_exp}. Now for some positive integer $k$, assume the statement is true for any path of length $\leq k$. Consider a sentence $I$ where the length of the longest path starting at $I$ is $k+1$. By Theorem \ref{fund_exp}, $$F_I = \mathfrak{S}^*_I - \sum_J L_{I,J} F_{J},$$ where the sum runs over all sentences $J \not= I$ such that $|J|=|I|$. We only need to consider, however, sentences $J$ that are descent compositions of a SCIT of shape $I$ because otherwise $L_{I,J}=0$. Since there is an edge from $I$ to each of these $J$'s, the length of the longest path starting at any $J$ is at most $k$.  Thus, by induction,
$$F_I = \mathfrak{S}^*_I - \sum_J L_{I,J} \sum_K L^{-1}_{J,K} \mathfrak{S}^*_K,$$  for all sentences $K$ reachable from $J$ and $L^{-1}_{J,K} = \sum_{\mathcal{P}} (-1)^{\ell(\mathcal{P})}L_{K_1,K_2} \cdots L_{K_{j-1}, K_{j}}$ for paths $\mathcal{P} = \{K = K_j \leftarrow K_{j-1} \leftarrow \ldots \leftarrow K_1=J\}$ from $K$ to $J$. Note that $$ - \sum_J L_{I,J} \sum_K L^{-1}_{J,K} = \sum_{\mathcal{P}} (-1)^{\ell(\mathcal{P})}L_{I,J}L_{K_1,K_2}L_{K_2,K_3} \cdots L_{K_{j-1},K_{j}} = L^{-1}_{I,K},$$ where the sum runs over all paths $\mathcal{P} = \{K = K_j \leftarrow \ldots \leftarrow K_1 = J \leftarrow I\}$ from $K$ to $I$. Then, $$F_I = \sum_K L^{-1}_{I,K}\mathfrak{S}^*_K,$$ summing over all sentences $K $ reachable from $I$.
\end{proof}

\begin{ex} The subgraph in Figure 1 yields the following expansion of $F_{(ab,cbb)}$:
$$F_{(ab,cbb)} = \mathfrak{S}^*_{(ab,cbb)} - \mathfrak{S}^*_{(a,cbb,b)} + \mathfrak{S}^*_{(a,c,bbb)} - \mathfrak{S}^*_{(a,cbbb)}.$$
\end{ex}

Similarly, the (non-colored) immaculate descent graph $\mathfrak{D}^n$ can be defined as the graph with a vertex set of compositions of size $n$ where there is an edge from $\alpha$ to $\beta$ if there exists an s standard immaculate tableau of shape $\alpha$ with descent composition $\beta$. The edge from $\alpha$ to $\beta$ will be weighted with coefficient $L_{\alpha,\beta}$.  This leads to an analogous result that follows from the proof above.

\begin{cor} \label{noncol_fund_to_imm}
For a composition $\alpha \models n$, the fundamental quasisymmetric functions expand into the dual immaculate functions as $$F_{\alpha} = \sum_{\beta} L^{-1}_{\alpha, \beta} \mathfrak{S}^*_{\beta} \text{\qquad with coefficients \qquad} L^{-1}_{\alpha, \beta} = \sum_{\mathcal{P}} (-1)^{\ell(\mathcal{P})}prod(\mathcal{P}),$$ where the sums runs over all $\beta$ reachable from $\alpha$ in $\mathfrak{D}^n$ and over paths $\mathcal{P}$ going from $\alpha$ to $\beta$ in $\mathfrak{D}^n$.
\end{cor}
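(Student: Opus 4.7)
The plan is to follow the same inductive structure as the proof of Theorem \ref{color_fund_to_imm}, working entirely in the uncolored setting with $\mathfrak{D}^n$ in place of $\mathfrak{D}^n_A$ and Proposition \ref{uncolor_fund} in place of Theorem \ref{fund_exp}. I would induct on $k$, the length of the longest directed path in $\mathfrak{D}^n$ starting at $\alpha$. For $k=0$, the vertex $\alpha$ has no outgoing edges, which means every standard immaculate tableau of shape $\alpha$ has descent composition $\alpha$ itself, so $L_{\alpha,\alpha}=1$ and $L_{\alpha,\beta}=0$ for $\beta\neq\alpha$. By Proposition \ref{uncolor_fund}, $\mathfrak{S}^*_\alpha = F_\alpha$, matching the claimed formula since the only path from $\alpha$ to itself is the empty path, contributing $(-1)^0 \cdot 1 = 1$.

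For the inductive step, I would rewrite Proposition \ref{uncolor_fund} in the form
$$F_\alpha = \mathfrak{S}^*_\alpha - \sum_{\beta} L_{\alpha,\beta} F_\beta,$$
where $\beta$ ranges over the out-neighbors of $\alpha$ in $\mathfrak{D}^n$ (equivalently, the $\beta\neq\alpha$ with $L_{\alpha,\beta}>0$). Each such $\beta$ has a strictly shorter longest outgoing path, so the inductive hypothesis gives $F_\beta = \sum_\gamma L^{-1}_{\beta,\gamma}\, \mathfrak{S}^*_\gamma$ summed over $\gamma$ reachable from $\beta$. Substituting and collecting the coefficient of each $\mathfrak{S}^*_\gamma$, a directed path $\mathcal{P}$ from $\alpha$ to $\gamma$ with second vertex $\beta$ contributes
$$-L_{\alpha,\beta} \cdot (-1)^{\ell(\mathcal{P}')}\,prod(\mathcal{P}') \;=\; (-1)^{\ell(\mathcal{P})}\,prod(\mathcal{P}),$$
where $\mathcal{P}'$ is the tail of $\mathcal{P}$ starting at $\beta$; the case $\gamma=\alpha$ picks up the extra contribution of the empty path from the leading $\mathfrak{S}^*_\alpha$ term.

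The only delicate bookkeeping is verifying that these contributions reassemble into a sum over \emph{all} directed paths from $\alpha$ to $\gamma$ in $\mathfrak{D}^n$, but this is a telescoping of sign changes under extending a path by one edge at its initial vertex, and is identical to the final manipulation in the proof of Theorem \ref{color_fund_to_imm}. As the corollary notes, the result ``follows from the proof above'': equivalently, one can specialize Theorem \ref{color_fund_to_imm} to the one-letter alphabet $A=\{a\}$, where sentences correspond bijectively to compositions via $I \mapsto w\ell(I)$, the uncoloring map $\upsilon$ is an isomorphism, and the colored immaculate descent graph $\mathfrak{D}^n_{\{a\}}$ is identified with $\mathfrak{D}^n$ with matching edge weights $L_{I,J} = L_{w\ell(I),w\ell(J)}$. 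The main obstacle is purely notational: keeping the path-extension signs aligned with the minus sign coming from the $-\sum_\beta L_{\alpha,\beta} F_\beta$ recursion, but no new combinatorial input is needed beyond Proposition \ref{uncolor_fund}.
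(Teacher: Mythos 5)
Your proposal is correct and matches the paper's approach: the paper proves this corollary simply by observing that the inductive argument for Theorem \ref{color_fund_to_imm} carries over verbatim to the uncolored descent graph $\mathfrak{D}^n$, with Proposition \ref{uncolor_fund} playing the role of Theorem \ref{fund_exp}, which is exactly the induction you carry out. Your closing remark that one may instead specialize the colored theorem to a one-letter alphabet via $\upsilon$ is also consistent with how the paper treats analogous bases.
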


%%%%%%%%%%%%%%%%%%%%%%%%%%%%%%%%%%%%%%%%%%%%%%%%%%%%%%%%%%%%%%%%%%%%%%%%%%%%%%%%%%%%%%%%%
\section{A colored generalization of the immaculate functions in $NSym_A$}\label{colorimmsection} 

A colored generalization of the immaculate basis can be defined by first introducing a colored version of noncommutative Bernstein creation operators. Various properties of these operators and extensions of our earlier results via duality lead to results on the colored immaculate functions.  These notably include a right Pieri rule and an expansion of the colored immaculate functions into the colored ribbon functions. As a corollary, we provide a new combinatorial model for the expansion of an immaculate function into the ribbon basis. It remains an open problem to find a cancellation-free expansion of the immaculate functions into the ribbon functions, but our formula does provide a straightforward and explicit way to compute the entries in the transition matrix between the immaculate and ribbon bases. Applying the forgetful map to our expression also yields a new expansion of Schur functions into the ribbon Schur functions.

The process for constructing our generalization of the noncommutative Bernstein operators mirrors that done in \cite{berg18} with some adjustments to account for the use of sentences in place of compositions. 

\begin{defn}\label{mperp1}
For $M \in QSym_A$, define the action of the linear \emph{perp operator} $M^{\perp}$ on $H\in NSym_A$ by $\langle M^{\perp} H, G \rangle = \langle H, MG \rangle$ for all $G \in QSym_A$. We define the action of the linear  \emph{right perp operator} $M^{\rperp}$ on $H \in NSym_A$ as $\langle M^{\rperp}H,G \rangle= \langle H, GM \rangle $ for all $G \in QSym_A$.  Thus, for dual bases $\{A_I\}_I$ of $QSym_A$ and $\{B_I\}_I$ of $NSym_A$, we have $$ M^{\perp}(H) = \sum_I \langle H, MA_I \rangle B_I \text{\qquad and \qquad} M^{\rperp}(H)= \sum_I \langle H, A_{I}M\rangle B_I.$$
\end{defn}

These operators are dual to the left and right multiplication by $M$ in $QSym_A$. Note that the analogues to these operators in $QSym$ are equivalent due to commutativity.

\begin{prop}\label{Mperp2} For sentences $I = (w_1,\ldots,w_k)$ and $J = (v_1,\ldots,v_h)$,  
$$M^{\rperp}_I(H_J) = \sum_{K} H_{\widetilde{J/_R K}},$$
where the sum runs over all sentences $K$ such that $\tilde{K} = I$ and $K \subseteq_R J$. Moreover, each $\widetilde{J/_R K}$ appearing in this sum is equivalent to the shape of a colored composition diagram originally of shape $J$ with boxes corresponding to each word in $I$ uniquely removed from its righthand side such that each word $w_j$ is removed from a single row strictly lower than the row from which $w_{j+1}$ is removed. 
\end{prop}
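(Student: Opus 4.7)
The plan is to derive the formula from the defining property of the right perp operator (Definition \ref{mperp1}) combined with the Hopf duality between multiplication in $QSym_A$ and comultiplication in $NSym_A$. First I would write the unknown expansion $M^{\rperp}_I(H_J) = \sum_K c_K H_K$ in the complete homogeneous basis. Since $\{H_K\}_K$ is dual to $\{M_K\}_K$, the coefficient $c_K$ equals $\langle M^{\rperp}_I(H_J), M_K\rangle$, which by Definition \ref{mperp1} equals $\langle H_J, M_K M_I\rangle$.

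Next I would invoke the duality identity from Definition \ref{hopf_dual_mult}, which gives
\[
\langle H_J,\, M_K M_I\rangle \;=\; \langle \Delta(H_J),\, M_K \otimes M_I\rangle,
\]
and substitute the explicit coproduct recorded in Section \ref{colorsection},
\[
\Delta(H_J) \;=\; \sum_{P \subseteq_R J} H_{\widetilde{J/_R P}} \otimes H_{\tilde{P}}.
\]
Expanding and applying the pairing $\langle H_L, M_N\rangle = \delta_{L,N}$ term-by-term collapses the double sum to the count
\[
c_K \;=\; \#\bigl\{\, P \subseteq_R J \;:\; \widetilde{J/_R P} = K,\ \tilde{P} = I \,\bigr\}.
\]
Summing over $K$ and renaming $P$ as $K$ yields precisely $M^{\rperp}_I(H_J) = \sum_{K} H_{\widetilde{J/_R K}}$ with the index set claimed in the statement.

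For the diagrammatic clause, I would unpack the two conditions on $K = (k_1,\ldots,k_h)$. The relation $K \subseteq_R J$ requires that each $k_i$ be a suffix of $v_i$, so the weak sentence $J/_R K$ is obtained by deleting the rightmost $|k_i|$ boxes from row $i$ of the colored diagram of $J$. The condition $\tilde{K} = I = (w_1, \ldots, w_k)$ forces the nonempty entries of $K$ to be exactly $w_1, \ldots, w_k$ in this order; equivalently, there exist row indices $p_1 < p_2 < \cdots < p_k$ with $k_{p_j} = w_j$ and all other $k_i = \emptyset$. Thus each $w_j$ is removed, as a suffix, from a single distinct row $p_j$ of $J$, and the rows are traversed in the same order as the words of $I$, giving exactly the geometric description stated.

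There is no real obstacle here: the entire argument is a direct dualization of the coproduct formula, and the combinatorial rephrasing is a matter of reading the definitions of $\subseteq_R$ and flattening. The only care required is bookkeeping: distinguishing the length-$h$ weak sentence $K$ (indexed by rows of $J$) from its flattening $\tilde{K} = I$ (indexed by the order in which the removals occur), and verifying that $\widetilde{J/_R K}$ then matches the flattened shape obtained after the row deletions.
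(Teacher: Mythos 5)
Your argument is correct, and it takes a genuinely different route from the paper's. The paper works entirely on the $QSym_A$ side: it expands $M^{\rperp}_I(H_J) = \sum_L \langle H_J, M_L M_I\rangle H_L$, computes $M_L M_I$ via the quasishuffle, counts the occurrences (with multiplicity) of $J$ as a summand of $L \Qshuffle I$, and then constructs by hand the bijection between those occurrences and the weak sentences $K$ with $\tilde{K}=I$ and $K \subseteq_R J$. You instead dualize immediately, writing $\langle H_J, M_K M_I\rangle = \langle \Delta(H_J), M_K\otimes M_I\rangle$ and reading the coefficient off the already-recorded coproduct $\Delta(H_J)=\sum_{P\subseteq_R J} H_{\widetilde{J/_R P}}\otimes H_{\tilde{P}}$. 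Since that coproduct formula is precisely the dual encoding of the quasishuffle product, the two proofs rest on the same underlying fact; yours buys a shorter derivation in which the right-containment structure appears for free, while the paper's makes the combinatorial bijection explicit from first principles rather than quoting the coproduct. Your bookkeeping for the diagrammatic clause (each $k_i$ a suffix of $v_i$, the nonempty entries of $K$ forced to be $w_1,\ldots,w_k$ at rows $p_1<\cdots<p_k$) is the same unpacking the paper performs, and it agrees with the paper's worked example of $M^{\rperp}_{(c,ab)}$; just note that the row-order convention you derive ($w_j$ removed from a smaller-indexed, i.e.\ visually higher, row than $w_{j+1}$) is what the example supports, so be aware that the word ``lower'' in the statement must be read as ``lower-numbered.''
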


\begin{proof}
Let $I = (w_1, \ldots, w_k)$ and $J = (v_1, \ldots, v_h)$.  We have that $$M^{\rperp}_I(H_J) = \sum_L \langle H_J, M_LM_I \rangle H_L= \sum_L \langle H_J, \sum_R M_R \rangle H_L = \sum_L \sum_R \langle H_J, M_R \rangle H_L,$$
where the sums run over all sentences $L$ of size $|J|-|I|$ and each summand $R$ in $L \Qshuffle I$, respectively. Note that each sentence $R$ may occur multiple times in $L \Qshuffle I$ and we account for the multiplicity in the summations. The sum $\sum_R \langle H_J, M_R\rangle$ is equal to the number of times that $J$ appears as a summand in $L \Qshuffle I$.  Recall that in $L \Qshuffle I$, each summand is a sentence made up of words from $L$, words from $I$, and concatenated pairs of words from $L$ and $I$ (in that order) where all words from $L$ and all words from $I$ are present and in the same relative order, respectively.  For each time $J$ is a summand in $L \Qshuffle I$ there exists a unique weak sentence $K'$ such that $\tilde{K'}=I$ and $\widetilde{J/_R K'}=L$. Further, the set of all $K'$ obtained for $J$ in $L \Qshuffle I$ considered across every possible $L$ is simply the set of weak sentences $K$ such that $\tilde{K}=I$ and $K \subseteq_R J$, and so we can rewrite $$M^{\rperp}_I(H_J) = \sum_L \sum_{K'} H_L = \sum_{K} H_{\widetilde{J/_RK}},$$ where the sums run over all sentences $L$ of size $|J|-|I|$, all weak sentences $K'$ such that $\tilde{K'}=I$ and $\widetilde{J/_RK'}=L$, and all weak sentences $K$ such that $\tilde{K}= I$ and $K \subseteq_R J$, respectively.

Visualizing sentences as colored composition diagrams, we see that each weak sentence $K$ can be viewed as a unique set of boxes being removed from the right-hand side of the colored composition diagram of $J$ where the first word in $K$ (including empty words) is removed from the first row of $J$ and so on. Thus, the set of indices $\widetilde{J/_R K}$ of $H$ in the sum can also be viewed as the set of colored composition diagrams resulting from all possible ways of removing boxes corresponding to $I$ from a colored composition diagram of shape $J$ then moving rows up to fill empty rows, where each $w_j$ in $I$ is removed from a single row strictly lower than the single row from which $w_{j+1}$ in $I$ is removed.
\end{proof}

\begin{ex} In this example we show the action of $M^{\rperp}_{c,ab}$ on colored diagrams:
$$M^{\rperp}_{(c,ab)}(H_{(ac,bc,ab,cab)})=H_{(a,bc,cab)} + H_{(a,bc,ab,c)} + H_{(ac,b,cab)} + H_{(ac,b,ab,c)}.$$
\ytableausetup{boxsize=1.5em}
$$
\scalebox{.75}{
\begin{ytableau}
*(white) a & *(lightgray) c \\
*(white) b & *(white) c \\
*(lightgray) a & *(lightgray) b\\
*(white) c & *(white) a & *(white) b \\
\end{ytableau}
\quad \quad \quad
\begin{ytableau}
*(white) a & *(lightgray) c \\
*(white) b & *(white) c \\
*(white) a & *(white) b\\
*(white) c & *(lightgray) a & *(lightgray) b \\
\end{ytableau}
\quad \quad \quad
\begin{ytableau}
*(white) a & *(white) c \\
*(white) b & *(lightgray) c \\
*(lightgray) a & *(lightgray) b\\
*(white) c & *(white) a & *(white) b \\
\end{ytableau}
\quad \quad \quad
\begin{ytableau}
*(white) a & *(white) c \\
*(white) b & *(lightgray) c \\
*(white) a & *(white) b\\
*(white) c & *(lightgray) a & *(lightgray) b \\
\end{ytableau}}
$$
\end{ex}

Next, we prove various properties of the $M^{\rperp}$ operator that will be key in constructing creation operators for the colored immaculate basis.

\begin{lem}\label{tensorperp}
Let $J, K$ be sentences, $A_I \in QSym_A$ and $f, H \in NSym_A$. Then, $$\langle f \otimes H, \Delta(A_I)(M_J \otimes M_K) \rangle = \langle M^{\rperp}_J(f) \otimes M^{\rperp}_K(H), \Delta(A_I) \rangle.$$
\end{lem}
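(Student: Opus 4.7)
The plan is to unpack both sides using Sweedler notation for $\Delta(A_I)$ and then apply the defining property of the right perp operator one tensor factor at a time. Writing $\Delta(A_I) = \sum A_I^{(1)} \otimes A_I^{(2)}$, the right-hand side of the equation of interest is
\[
\langle f \otimes H,\, \Delta(A_I)(M_J \otimes M_K) \rangle
= \Big\langle f \otimes H,\, \sum A_I^{(1)} M_J \otimes A_I^{(2)} M_K \Big\rangle,
\]
since multiplication in $QSym_A \otimes QSym_A$ is defined componentwise. By the standard property of the inner product on a tensor product of dually paired algebras, this factors as
\[
\sum \langle f, A_I^{(1)} M_J \rangle \, \langle H, A_I^{(2)} M_K \rangle.
\]

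Next I apply Definition \ref{mperp1}, which says $\langle g, XM \rangle = \langle M^{\rperp}(g), X \rangle$ for any $g \in NSym_A$ and $X,M \in QSym_A$. Applying this with $g=f$, $X = A_I^{(1)}$, $M = M_J$, and then with $g=H$, $X = A_I^{(2)}$, $M = M_K$, each summand becomes
\[
\langle M^{\rperp}_J(f),\, A_I^{(1)} \rangle \, \langle M^{\rperp}_K(H),\, A_I^{(2)} \rangle,
\]
and reassembling the tensor pairing gives
\[
\Big\langle M^{\rperp}_J(f) \otimes M^{\rperp}_K(H),\, \sum A_I^{(1)} \otimes A_I^{(2)} \Big\rangle
= \langle M^{\rperp}_J(f) \otimes M^{\rperp}_K(H),\, \Delta(A_I) \rangle,
\]
which is the desired identity.

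The whole argument is essentially formal — it is just the statement that the adjoint of multiplication on each tensor factor is the corresponding perp operator, combined with the fact that the inner product on the tensor product is the product of the inner products. There is no real obstacle; the only things to be careful about are: (i) the right perp is adjoint to multiplication on the \emph{right} (so the order $GM$ in Definition \ref{mperp1} matches the order $A_I^{(j)} M_{\cdot}$ here), and (ii) the identity is linear in $A_I$, so it suffices to verify it on basis elements, allowing the Sweedler-style finite-sum manipulation without convergence concerns. A brief remark to this effect can be included to justify extending the argument from a single basis element $A_I$ to the statement as written.
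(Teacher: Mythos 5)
Your argument is correct and is essentially identical to the paper's own proof: both expand $\Delta(A_I)$ in Sweedler notation, use that the tensor-product pairing is the product of the pairings, apply Definition \ref{mperp1} in each tensor factor, and reassemble. (The only quibble is a harmless slip of wording where you call $\langle f \otimes H, \Delta(A_I)(M_J \otimes M_K)\rangle$ the right-hand side; it is the left-hand side.)
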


\begin{proof}
Let $a,b \in NSym_A$ and $c,d \in QSym_A$. The inner product on $NSym_A \times QSym_A$ extends to $NSym_A \otimes NSym_A \times QSym_A \otimes QSym_A$ as $$\langle \cdot , \cdot \rangle: NSym_A \otimes NSym_A \times QSym_A \otimes QSym_A \rightarrow \mathbb{Q} \text{\quad where \quad} \langle a \otimes b, c \otimes d \rangle \rightarrow \langle a, c \rangle \langle b, d \rangle$$
In Sweedler notation, $\Delta(A_I) = \sum_i A^{(i)}\otimes A_{(i)}$.  Thus, we write
    \begin{align*}
    \langle f \otimes H, \Delta(A_I)(M_J \otimes M_K) \rangle &= \left\langle f \otimes H, \sum_i A^{(i)}M_J \otimes A_{(i)}M_K \right\rangle = \sum_i \langle f \otimes H, A^{(i)}M_J \otimes A_{(i)}M_K \rangle\\
    &= \sum_i \langle f, A^{(i)}M_J \rangle \langle H, A_{(i)}M_K \rangle\\
    &= \sum_i \langle M^{\rperp}_J(f), A^{(i)} \rangle \langle M^{\rperp}_K(H), A_{(i)} \rangle \text{\quad by Definition \ref{mperp1}}\\
    &= \sum_i \langle M^{\rperp}_J(f) \otimes M^{\rperp}_K(H), A^{(i)} \otimes A_{(i)} \rangle = \left\langle M^{\rperp}_J(f) \otimes M^{\rperp}_K(H), \sum_i A^{(i)} \otimes A_{(i)} \right\rangle\\
    &= \langle M^{\rperp}_J(f) \otimes M^{\rperp}_K(H), \Delta(A_I) \rangle. \qedhere
    \end{align*}
\end{proof}

\begin{prop}\label{perpmult}
For a sentence $Q = (q_1,\ldots,q_i)$ and $f, H \in NSym_A$,
$$M_{Q}^{\rperp}(fH) = \sum_{0 \leq j \leq i} M^{\rperp}_{(q_1,\ldots,q_j)}(f)M^{\rperp}_{(q_{j+1},\ldots,q_i)}(H).$$  In particular, for a word $w$,

$$M_{Q}^{\rperp}(fH_w) = M^{\rperp}_{Q}(f)H_w + M^{\rperp}_{(q_1,\ldots,q_{i-1})}(f)M^{\rperp}_{q_i}(H_w).$$
\end{prop}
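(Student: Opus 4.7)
The plan is to prove the general identity using the duality that defines $M_Q^{\rperp}$ together with the fact that the comultiplication in $NSym_A$ is dual to multiplication in $QSym_A$, and vice versa. The special case for $fH_w$ will then fall out immediately.

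First I would fix an arbitrary $G \in QSym_A$ and compute $\langle M_Q^{\rperp}(fH), G\rangle$ in two different ways, aiming to show both sides of the desired identity pair equally against every $G$. By Definition \ref{mperp1},
\[
\langle M_Q^{\rperp}(fH), G\rangle = \langle fH, GM_Q\rangle.
\]
Since $\langle \mu_{NSym_A}(a,b), c\rangle = \langle a \otimes b, \Delta(c)\rangle$ (from the duality of the Hopf algebras, Definition \ref{hopf_dual_mult}), this rewrites as $\langle f \otimes H, \Delta(GM_Q)\rangle$. Because $\Delta$ is an algebra homomorphism, $\Delta(GM_Q) = \Delta(G)\,\Delta(M_Q)$. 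Next I would invoke the explicit deconcatenation formula \eqref{comult_m}, namely
\[
\Delta(M_Q) = \sum_{j=0}^{i} M_{(q_1,\ldots,q_j)} \otimes M_{(q_{j+1},\ldots,q_i)},
\]
so that the pairing becomes
\[
\sum_{j=0}^{i} \bigl\langle f \otimes H,\, \Delta(G)\bigl(M_{(q_1,\ldots,q_j)} \otimes M_{(q_{j+1},\ldots,q_i)}\bigr)\bigr\rangle.
\]

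Now I would apply Lemma \ref{tensorperp} term by term, with $A_I$ replaced by $G$ and $(M_J, M_K)$ replaced by $(M_{(q_1,\ldots,q_j)},\, M_{(q_{j+1},\ldots,q_i)})$, to move the monomial factors across the pairing as right-perp operators on $f$ and $H$ respectively. This yields
\[
\sum_{j=0}^{i} \bigl\langle M^{\rperp}_{(q_1,\ldots,q_j)}(f) \otimes M^{\rperp}_{(q_{j+1},\ldots,q_i)}(H),\, \Delta(G)\bigr\rangle.
\]
Using the dual identity $\langle a\otimes b, \Delta(G)\rangle = \langle ab, G\rangle$ one more time, this collapses to
\[
\Bigl\langle \sum_{j=0}^{i} M^{\rperp}_{(q_1,\ldots,q_j)}(f)\,M^{\rperp}_{(q_{j+1},\ldots,q_i)}(H),\, G\Bigr\rangle.
\]
Since $G$ was arbitrary and the pairing between $NSym_A$ and $QSym_A$ is nondegenerate, the two elements of $NSym_A$ must agree, giving the claimed identity.

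For the specialization $H = H_w$, I would note that by Proposition \ref{Mperp2} the operator $M^{\rperp}_{(q_{j+1},\ldots,q_i)}(H_w)$ removes a sequence of $i-j$ boxes from the diagram of the one-row shape $(w)$ with each removed block lying strictly below the next; since there is only one row available, this vanishes unless $i-j \leq 1$. Only the terms $j = i$ and $j = i-1$ survive, giving $M^{\rperp}_Q(f)H_w + M^{\rperp}_{(q_1,\ldots,q_{i-1})}(f)\,M^{\rperp}_{q_i}(H_w)$ as claimed. The whole argument is duality-bookkeeping, so the only place where care is needed is verifying that Lemma \ref{tensorperp} applies in the form stated (with $A_I = G$ and the correct assignment of $M_J, M_K$), which is routine.
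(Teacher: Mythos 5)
Your proposal is correct and follows essentially the same route as the paper's proof: the same chain of duality steps (Definition \ref{mperp1}, the product--coproduct adjunction, the deconcatenation coproduct of $M_Q$, and Lemma \ref{tensorperp}), with the only cosmetic difference that you pair against an arbitrary $G$ and invoke nondegeneracy, whereas the paper expands $M_Q^{\rperp}(fH)$ in a dual basis $\{B_I\}$ and manipulates the coefficients $\langle fH, A_I M_Q\rangle$ directly. Your handling of the special case $H=H_w$ via Proposition \ref{Mperp2} also matches the paper's argument.
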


\begin{proof}
Let $\{A_I\}$ and $\{B_I\}$ be dual bases of $QSym_A$ and $NSym_A$ respectively, and let $Q = (q_1,\ldots,q_i)$. Then, 
\begin{align*}
    M_Q^{\rperp}(fH) &= \sum_I \langle fH, A_IM_Q \rangle B_I \text{\quad by Definition \ref{mperp1}} \\ 
    &= \sum_I \langle f \otimes H, \Delta(A_IM_Q)\rangle B_I
    = \sum_I \langle f \otimes H, \Delta(A_I)\Delta(M_Q)\rangle B_I \text{\quad by Definition \ref{hopf_dual_mult}}\\ &= \sum_I \sum_{Q=J \cdot K}\langle f \otimes H , \Delta(A_I)(M_J \otimes M_K)\rangle B_I \text{\quad by Equation \eqref{comult_m}}\\ 
    &= \sum_I \sum_{Q=J \cdot K} \langle M_J^{\rperp}(f)\otimes M_K^{\rperp}(H), \Delta(A_I) \rangle B_I \text{\quad by Lemma \ref{tensorperp}} \\ &= \sum_I \sum_{Q=J \cdot K} \langle M_J^{\rperp}(f)M_K^{\rperp}(H), A_I \rangle B_I \text{\quad by Definition \ref{hopf_dual_mult}}\\
     &= \sum_I  \langle \sum_{Q=J \cdot K} M_J^{\rperp}(f)M_K^{\rperp}(H), A_I \rangle B_I = \sum_{Q = J \cdot K} M_J^{\rperp}(f)M_K^{\rperp}(H) \text{\quad by Definition \ref{mperp1}}  \\ &= \sum_{j=0}^i M^{\rperp}_{(q_1,\ldots,q_j)}(f)M^{\rperp}_{(q_{j+1},\ldots,q_{i})}(H).
\end{align*}

In the case of $H=H_w$, the term $M^{\rperp}_{(q_{j+1}, \ldots, q_i)}(H_w)$ is 0 whenever $i-(j+1) > 0$ because boxes corresponding to $(q_{j}, \ldots, q_{i})$ must each be removed from separate rows but $w$ has only one row. Thus, the equation simplifies as

$$ M^{\rperp}_Q(fH_w) = M^{\rperp}_Q(f)H_w + M^{\rperp}_{(q_1, \ldots, q_{i-1})}(f)M^{\rperp}_{q_i}(H_w). \eqno \qedhere $$  \end{proof}

\begin{defn} \label{colorbern} For a word $v$, the \textit{colored noncommutative Bernstein operator} $\mathbb{B}_v$ is defined to be
$$\mathbb{B}_v = \sum_{u} \sum_{w(Q^r)=u}(-1)^i H_{v \cdot u} \left( \sum_{Q \preceq S}M^{ \rperp}_{S}\right),$$ where the sums run over all words $u$, all sentences $Q = (q_1,\ldots,q_i)$ such that $q_i \cdot \ldots \cdot q_1 = u$, and all sentences $S$ that are coarsenings of $Q$. 
\end{defn}

Notice that, by the definition of $M^{\rperp}$, the only values of $u$ that could yield a nonzero summand in $\mathbb{B}_v(H_I)$ for a sentence $I$ are those for which there is some permutation of the letters in $u$ that yields a subword of $w(I)$. Thus, this sum always has a finite number of terms.

\begin{defn}\label{color_imm_def} For a sentence $J = (v_1,\ldots,v_h)$, we define the \textit{colored immaculate function} $\mathfrak{S}_J$ as $$\mathfrak{S}_J = \mathbb{B}_{v_1}\mathbb{B}_{v_2}\ldots\mathbb{B}_{v_h}(1).$$
\end{defn}

\begin{ex} The colored immaculate functions $\mathfrak{S}_{(def)}$ and $\mathfrak{S}_{(abc,def)}$ are obtained using creation operators as follows: \vspace{-2mm}
\begin{align*}
\mathfrak{S}_{(def)} &= \mathbb{B}_{def}(1) = \sum_u \sum_{w(Q^r)=u} (-1)^{i}H_{(def \cdot u)}\left(\sum_{Q \preceq S}M^{\rperp}_S(1)\right)= (-1)^0 H_{(def)}M^{\rperp}_{\emptyset}(1)= H_{(def)}.\\ 
\mathfrak{S}_{(abc,def)} &= \mathbb{B}_{abc}(\mathfrak{S}_{(def)}) =  \mathbb{B}_{abc}(H_{(def)}) = \sum_u \sum_{w(Q^r)=u} (-1)^i H_{(abc \cdot u)} \left(\sum_{Q \preceq S} M^{\rperp}_S (H_{(def)})\right)\\ 
&= (-1)^0 H_{(abc)}M^{\rperp}_{\emptyset}(H_{(def)}) + (-1)^1 H_{(abcf)}M^{\rperp}_{(f)}(H_{(def)}) + (-1)^1 H_{(abcef)}M^{\rperp}_{(ef)}(H_{(def)})\\ & \quad + (-1)^2 H_{(abcfe)}M^{\rperp}_{(ef)}(H_{(def)}) 
 + (-1)^1 H_{(abcdef)}M^{\rperp}_{(def)}(H_{(def)}) + (-1)^2H_{(abcefd)}M^{\rperp}_{(def)}(H_{(def)})\\ & \quad +(-1)^2H_{(abcfde)}M^{\rperp}_{(def)}(H_{(def)}) + (-1)^3H_{(abcfed)}M^{\rperp}_{(def)}(H_{(def)})\\[1mm]
&= H_{(abc,def)} - H_{(abcf,de)}  - H_{(abcef,d)} + H_{(abcfe,d)} - H_{(abcdef)} + H_{(abcefd)} + H_{(abcfde)} - H_{(abcfed)}.
\end{align*}\vspace{-4mm}

To get the term $H_{(abcfe,d)}$, for example, we look at $u=fe$.  The possible values of $Q$ for this $u$ are $Q = (fe)$ and $Q = (e,f)$, meaning the possible $S$ values are $S= (fe)$, $S=(e,f)$, and $S=(ef)$.  Observe that $M^{\rperp}_{(fe)}(H_{(def)})$ and $M^{\rperp}_{(e,f)}(H_{(def)})$ are both zero because $S$ is not right-contained in $def$. Thus, the only remaining term for these values is $S=(ef)$ for which $M^{\rperp}_{(ef)}(H_{(def)})=H_{(d)}$.  Thus the term of the sum given by $u=fe$, $Q=(e,f)$, and $S=ef$ is $(-1)^2H_{(abcfe,d)}$, which is also the only term for $u=fe$.  Many values of $u$ will yield entirely zero terms.
\end{ex}

Before proving that this basis is indeed analogous to the immaculate functions in $NSym$, we must prove that it is dual to the colored dual immaculate basis.  The following property of the colored noncommutative Bernstein operators leads to a right Pieri rule which illuminates the structure of the colored immaculate functions to this end.

\begin{prop}\label{bernop} Let $w = a_1\ldots a_k$ and $f,H \in NSym_A$, then
$$\mathbb{B}_v(f)H_w = \sum_{0\leq j \leq k}\mathbb{B}_{(v \cdot (a_{j+1}\cdots a_{k}))}(f H_{(a_1\cdots a_j)}).$$
\end{prop}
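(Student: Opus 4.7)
The plan is to expand both sides using the definition of $\mathbb{B}$ and Proposition \ref{perpmult}, and then exhibit an explicit sign-reversing bijection between the resulting terms. Write $w^j = a_{j+1}\cdots a_k$ and $w_j = a_1\cdots a_j$, so the claim reads $\mathbb{B}_v(f)H_w = \sum_{j=0}^k \mathbb{B}_{vw^j}(fH_{w_j})$; after isolating the $j=k$ summand this is equivalent to
\begin{equation*}
\mathbb{B}_v(f)H_w - \mathbb{B}_v(fH_w) \;=\; \sum_{j=0}^{k-1} \mathbb{B}_{vw^j}(fH_{w_j}).
\end{equation*}

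The key first step is to derive, by inserting Proposition \ref{perpmult} into the definition of $\mathbb{B}_v(fH_w)$, the expansion
\begin{equation*}
\mathbb{B}_v(fH_w) \;=\; \mathbb{B}_v(f)H_w \;+\; \sum_{\ell=0}^{k-1}\sum_{\substack{R\\ w(R)=a_{\ell+1}\cdots a_k}}(-1)^{\ell(R)}\,\mathbb{B}_{v\cdot w(R^r)}(f)\,H_{w_\ell}. \tag{$*$}
\end{equation*}
Since $H_w$ is a single-row sentence, Proposition \ref{perpmult} simplifies to $M^\rperp_{S'}(fH_w) = M^\rperp_{S'}(f)H_w + M^\rperp_{S'^-}(f)M^\rperp_{(s'_{\mathrm{last}})}(H_w)$, where $S'=S'^-\cdot(s'_{\mathrm{last}})$, and $M^\rperp_{(s'_{\mathrm{last}})}(H_w)$ is nonzero only when $s'_{\mathrm{last}}$ is a suffix of $w$, say $s'_{\mathrm{last}} = a_{\ell+1}\cdots a_k$, giving $H_{w_\ell}$. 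Decomposing each outer refinement as $Q' = P \cdot R$ with $w(R) = s'_{\mathrm{last}}$ and $P \preceq S'^-$, the inner sum reassembles into $(-1)^{\ell(R)}\mathbb{B}_{v\cdot w(R^r)}(f)$ via the definition of $\mathbb{B}$ with $v$ replaced by $v\cdot w(R^r)$.

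Applying $(*)$ to each $\mathbb{B}_{vw^j}(fH_{w_j})$ on the right-hand side (with $vw^j$ and $w_j$ playing the roles of $v$ and $w$) produces a \emph{main} piece $\mathbb{B}_{vw^j}(f)H_{w_j}$ plus \emph{correction} pieces indexed by refinements $R'$ with $w(R') = a_{\ell'+1}\cdots a_j$ for some $\ell' < j$. Substituting these and using $(*)$ for $\mathbb{B}_v(fH_w)$ reduces the claim to matching, term by term, the collection of main and correction contributions on the left against the single sum $\sum_{\ell < k}\sum_{R:\, w(R)=a_{\ell+1}\cdots a_k}(-1)^{\ell(R)+1}\mathbb{B}_{v\cdot w(R^r)}(f)H_{w_\ell}$ on the right. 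For each such $R = (r_1,\dots,r_m)$, the last word $r_m$ is a suffix of $a_{\ell+1}\cdots a_k$ and hence equals $a_{j+1}\cdots a_k$ for a unique $j$ with $\ell \le j \le k-1$. Setting $\ell' = \ell$ and $R' = (r_1,\dots,r_{m-1})$ (possibly empty) yields a bijection: the case $m=1$ (i.e.\ $R' = \emptyset$) corresponds to the main term $\mathbb{B}_{vw^j}(f)H_{w_j}$, while $m \ge 2$ corresponds to a correction term $(j, \ell', R')$. The reverse-concatenation identity $w(R^r) = r_m \cdot w(R'^r) = w^j \cdot w(R'^r)$ makes the $\mathbb{B}$-subscripts agree, and $\ell(R) = \ell(R') + 1$ makes $(-1)^{\ell(R)+1} = (-1)^{\ell(R')}$, so the signs agree as well.

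The main obstacle is verifying this bijection together with the sign bookkeeping and the reverse-concatenation identity for $w(R^r)$ — in particular, confirming that reading off the last word $r_m$ of a refinement $R$ uniquely determines the triple $(j, \ell', R')$ and that every such triple arises exactly once. Once this is established, every term on both sides is matched pairwise, and the proposition follows without any induction.
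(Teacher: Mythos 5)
Your proposal is correct and follows essentially the same route as the paper: your identity $(*)$ is precisely the intermediate expansion the paper obtains by inserting Proposition \ref{perpmult} into the definition of $\mathbb{B}_v(fH_w)$ and isolating the condition that the last word of the coarsening be a suffix of $w$. The only difference is organizational --- where the paper reassembles the correction sum back into $\sum_{j}\mathbb{B}_{v\cdot(a_{j+1}\cdots a_k)}(fH_{(a_1\cdots a_j)})$ by running Proposition \ref{perpmult} in reverse, you apply $(*)$ to those terms as well and match summands through the sign-preserving bijection that splits a refinement $R$ at its last word; both arguments rest on the same decomposition and neither requires induction.
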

\begin{proof}
Given a sentence $Q=(q_1, \ldots, q_i),$ we write $Q' = (q_1, \ldots, q_{i-1})$. Let $f \in NSym_A$ and let $v$ and $w=a_1 \ldots a_k$ be words. Then,
\begin{align*}
\mathbb{B}_{v}(fH_{w}) &= \sum_u \sum_{w(Q^r)=u} (-1)^i H_{v \cdot u}\left(\sum_{Q \preceq S}M^{\rperp}_{S}(fH_w)\right) \text{ by Definition \ref{colorbern}} \\
&= \sum_u \sum_{w(Q^r)=u} (-1)^i H_{v \cdot u}\left(\sum_{Q \preceq S} \left[ M^{\rperp}_{S}(f)H_w + M^{\rperp}_{S'}(f)M^{\rperp}_{s_t}(H_w)\right] \right) \text{by Proposition \ref{perpmult}}\\
&= \mathbb{B}_v(f)H_w + \sum_u \sum_{w(Q^r)=u} (-1)^i H_{v \cdot u}\left(\sum_{Q \preceq S} M^{\rperp}_{S'}(f)M^{\rperp}_{s_t}(H_w) \right) \text{by factoring.} \\ 
\intertext{ We want to consider the cases in which $M^{\rperp}_{s_t}(H_w)$ is non-zero.  This only happens whenever $s_t \subseteq_R w$ because in our combinatorial interpretation, we visualize $M^{\rperp}_{s_t}$ as removing $s_t$ from the righthand side of $w = a_1 \cdots a_k$ to get $H_{(a_1 \ldots a_h)}$ for some $h \leq k$. Note that because $Q \preceq S$ and $q_i$ and $s_t$ are the final words in $Q$ and $S$ respectively, $q_i \subseteq_R s_t$.  It follows that $q_i \subseteq_R w$ and thus $q_i = a_{j+1}\cdots a_k$ for a non-negative integer $j < k$. Recalling that $u = q_{i} \cdots q_1$, let $u' = q_{i-1} \cdots q_1$ so that we can write $u = a_{j+1} \cdots a_k \cdot u'$.  Rewriting the last equation in terms of $u'$ and $Q'$ yields}
\mathbb{B}_{v}(fH_{w})&= \mathbb{B}_v(f)H_w + \sum_{0 \leq j < k}\sum_{u'} \sum_{Q'} (-1)^i H_{(v \cdot (a_{j+1}\cdots a_k) \cdot u')}\left(\sum_{(Q' \cdot (a_{j+1}\cdots a_k)) \preceq S} M^{\rperp}_{S'}(f)M^{\rperp}_{s_t}(H_{(a_1\cdots a_j)}) \right).\\ 
\intertext{Next, the sum can be split into two parts by separating out the cases where $q_i = s_t$ and those where $q_i \not= s_t$. 
If $q_i=s_t$ for $q_i = a_{j+1} \cdots a_k$ then $M^{\rperp}_{s_t}(H_w) = M^{\rperp}_{(a_{j+1}\cdots a_{k})}(H_w) = H_{(a_1 \cdots a_j)}$.  Otherwise, there must exist a non-negative integer $\iota < i-1$ such that $s_t = q_{\iota +1}  \cdots q_{i-1}q_i$. We can rearrange the part of the sum by substituting $s_t$ with $q_{\iota+1}\cdots q_i$ and summing over the possible $\iota$. Then,} 
\mathbb{B}_{v}(fH_{w}) &= \mathbb{B}_v(f)H_w - \sum_{0 \leq j < k}\sum_{u'} \sum_{Q'} \left( (-1)^{i-1}H_{(v(a_{j+1}\cdots a_k) u')}  \Bigg[\sum_{Q' \preceq S'} M^{\rperp}_{S'}(f)H_{(a_1\cdots a_j)}\Bigg]\right. \\
& \left. \quad \quad + \Bigg[\sum_{0 \leq \iota < i-1} \sum _{(q_1,\ldots,q_{\iota}) \preceq S'} M^{\rperp}_{S'}(f)M^{\rperp}_{(q_{\iota+1}\cdot\ \cdots\ \cdot q_{i-1} \cdot (a_{j+1}\cdots a_k))}(H_{w})\Bigg] \right). \\ \intertext{Again thinking of the combinatorial visualization for the $M^{\rperp}$ operator, observe that 
$M^{\rperp}_{(q_{\iota +1} \cdots q_{i-1} (a_{j+1} \cdots a_k))} (H_w) = M^{\rperp}_{(q_{\iota +1} \cdots q_{i-1})} (H_{(a_1 \cdots a_j)})$ and so}
\mathbb{B}_{v }(fH_{w})&= \mathbb{B}_v(f)H_w - \sum_{0 \leq j < k}\sum_{u'} \sum_{Q'} \left( (-1)^{i-1}H_{(v (a_{j+1}\ldots a_k) u')}  \Bigg[\sum_{Q' \preceq S'} M^{\rperp}_{S'}(f)H_{(a_1\ldots a_j)}\Bigg] \right.\\
& \left. \quad \quad + \Bigg[\sum_{0 \leq \iota < i-1} \sum _{(q_1,\ldots,q_{\iota}) \preceq S'} M^{\rperp}_{S'}(f)M^{\rperp}_{(q_{\iota+1}\cdots q_{i-1})}(H_{(a_1\cdots a_j)})\Bigg]\right). \\ 
\intertext{Next, rename every $S'$ to $R = (r_1,\ldots ,r_{\tau})$ in the first section of the sum.  In the second section, rename $S'$ to $R' =(r_1, \ldots , r_{\tau - 1})$ and let $q_{\iota +1}\cdots q_{i-1} = r_{\tau}$.}
\mathbb{B}_{v }(fH_{w})&= \mathbb{B}_v(f)H_w - \sum_{0 \leq j < k}\sum_{u'} \sum_{Q'} \left( (-1)^{i-1}H_{(v (a_{j+1}\ldots a_k) u')}  \Bigg[\sum_{Q' \preceq R} M^{\rperp}_{R}(f)H_{(a_1\ldots a_j)}\Bigg]\right.\\
& \left. \quad \quad + \Bigg[\sum_{0 \leq \iota < i-1} \sum _{(q_1,\ldots,q_{\iota}) \preceq R'} M^{\rperp}_{R'}(f)M^{\rperp}_{r_{\tau}}(H_{(a_1\cdots a_j)})\Bigg] \right).\\ 
\intertext{ In the second part of the sum, notice that considering $R' \cdot r_{\tau}$ where $R' = (q_1, \ldots, q_{\iota})$ and $r_{\tau} = q_{\iota + 1} \cdots q_{i-1}$ for $1 \leq \iota \leq i-1$ is equivalent to considering $R' \cdot r_{\tau} = R \succeq (q_1, \ldots , q_{i-1})=Q'$. Then,}
\mathbb{B}_{v }(fH_{w})&= \mathbb{B}_v(f)H_w - \sum_{0 \leq j < k}\sum_{u'} \sum_{Q'} \left( (-1)^{i-1}H_{(v (a_{j+1}\ldots a_k) u')}  \Bigg[\sum_{Q' \preceq R} M^{\rperp}_{R}(f)H_{(a_1\ldots a_j)}\Bigg] \right. \\
& \left. \quad \quad + \Bigg[\sum _{Q' \preceq R} M^{\rperp}_{R'}(f)M^{\rperp}_{r_{\tau}}(H_{(a_1\cdots a_j)})\Bigg] \right). \\ \intertext{Now in both parts of the sum, we are looking at sentences $R$ such that $Q' \preceq R$, and combining them we get}
\mathbb{B}_{v}(fH_{w})&= \mathbb{B}_v(f)H_w - \sum_{0 \leq j < k}\sum_{u'} \sum_{Q'} (-1)^{i-1}H_{(v (a_{j+1}\ldots a_k) u')} \Bigg( \sum_{Q' \preceq R} \Bigg[ M^{\rperp}_{R}(f)H_{(a_1 \cdots a_j)} + M^{\rperp}_{R'}(f)M^{\rperp}_{r_{\tau}}(H_{(a_1 \cdots a_j)})\Bigg] \Bigg) \\
&= \mathbb{B}_v(f)H_w - \sum_{0 \leq j < k}\sum_{u'} \sum_{Q'} (-1)^{i-1}H_{(v(a_{j+1}\ldots a_k) u')}\left( \sum_{Q' \preceq R} M^{\rperp}_{R}(fH_{(a_1\ldots a_j)}) \right) \text{by Proposition \ref{perpmult}}\\
&= \mathbb{B}_v(f)H_w - \sum_{0 \leq j < k}\mathbb{B}_{(v \cdot (a_{j+1}\cdots a_k)}(fH_{(a_1\cdots a_j)}) \text{ by Definition \ref{colorbern}. } \qedhere 
\end{align*}
\end{proof}

\begin{thm}[Right Pieri Rule]\label{rightpieriI} For the sentence $J = (v_1,\ldots,v_h)$ and the word $w =a_1\ldots a_i$, $$\mathfrak{S}_J H_w = \sum_{J \subset_{w} K}\mathfrak{S}_K,$$ where $J \subset_{w} K = (u_1, \ldots, u_g)$ if $u_j = v_j \cdot q_j$  for $1 \leq j \leq g$ such that $q_g \cdot q_{g-1} \cdot \ldots\cdot q_2 \cdot q_1 = w$ and $g \leq h+1$ where $v_{h+1}= \emptyset$.
\end{thm}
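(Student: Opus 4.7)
The plan is to prove the right Pieri rule by induction on $h = \ell(J)$, with Proposition~\ref{bernop} acting as the engine that separates one ``layer'' of the immaculate function at a time. The base case $h=0$ is immediate: $\mathfrak{S}_\emptyset = 1$, so $\mathfrak{S}_\emptyset H_w = H_w$, and the condition $g \leq h+1 = 1$ together with $q_g \cdots q_1 = w$ forces $g=1$ and $q_1 = w$. The only $K$ with $\emptyset \subset_w K$ is therefore $(w)$, and $\mathfrak{S}_{(w)} = \mathbb{B}_w(1) = H_{(w)}$ directly from Definition~\ref{colorbern} (only the empty-$u$ term survives).

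For the inductive step, assume the statement for sentences of length $h-1$, let $J = (v_1,\dots,v_h)$, write $J' = (v_2,\dots,v_h)$, and put $w = a_1\cdots a_i$. Since $\mathfrak{S}_J = \mathbb{B}_{v_1}(\mathfrak{S}_{J'})$ by Definition~\ref{color_imm_def}, Proposition~\ref{bernop} with $f = \mathfrak{S}_{J'}$ gives
$$\mathfrak{S}_J H_w \;=\; \sum_{j=0}^{i}\mathbb{B}_{v_1 \cdot (a_{j+1}\cdots a_i)}\bigl(\mathfrak{S}_{J'}\,H_{(a_1\cdots a_j)}\bigr).$$
Applying the inductive hypothesis to each inner expression replaces $\mathfrak{S}_{J'}H_{(a_1\cdots a_j)}$ by $\sum_{J' \subset_{a_1\cdots a_j} K'}\mathfrak{S}_{K'}$, and another application of Definition~\ref{color_imm_def} shows that $\mathbb{B}_{v_1\cdot(a_{j+1}\cdots a_i)}$ simply prepends the word $v_1\cdot(a_{j+1}\cdots a_i)$ to the sentence $K'$. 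So the right-hand side becomes $\sum_{j,K'} \mathfrak{S}_{(v_1\cdot(a_{j+1}\cdots a_i))\cdot K'}$.

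It remains to verify that the map $(j, K') \mapsto (v_1\cdot(a_{j+1}\cdots a_i))\cdot K'$ is a bijection onto $\{K : J \subset_w K\}$. Given such a $K = (u_1,\dots,u_g)$ with $u_s = v_s\cdot q_s$ and $q_g\cdots q_1 = w$, recover $q_1$ as the suffix of $u_1$ past $v_1$, set $j = i - |q_1|$, and let $K' = (u_2,\dots,u_g)$; the remaining words $q_2,\dots,q_g$ concatenate (in reverse) to $a_1\cdots a_j$, so $J' \subset_{a_1\cdots a_j} K'$. The condition $g \leq h+1$ for $K$ corresponds to $g - 1 \leq h = (h-1)+1$ for $K'$, and $v_{h+1} = \emptyset$ is preserved. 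Conversely, every pair $(j, K')$ manifestly yields a $K$ in the desired set. This bijection converts the double sum into $\sum_{J \subset_w K}\mathfrak{S}_K$, completing the induction.

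The main obstacle I anticipate is purely bookkeeping: the convention that the $q_s$ concatenate in reverse order ($q_g\cdots q_1 = w$) must be threaded carefully through the recursion, since Proposition~\ref{bernop} peels off the suffix $a_{j+1}\cdots a_i$ to attach to the \emph{first} word of the new sentence while routing the prefix $a_1\cdots a_j$ into the tail. Once one checks that this suffix is precisely the $q_1$ of the full index $K$ and that the remaining $q_2,\dots,q_g$ from the inductive hypothesis recombine as $a_1\cdots a_j$, the argument assembles cleanly; no further identities beyond Proposition~\ref{bernop} and Definition~\ref{color_imm_def} are needed.
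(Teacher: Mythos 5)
Your proof is correct and follows essentially the same route as the paper's: both arguments induct (the paper on $|w|+\ell(J)$, you on $\ell(J)$ alone, which works equally well since the word length never needs to decrease), peel off $\mathbb{B}_{v_1}$ via Proposition \ref{bernop}, apply the inductive hypothesis to $\mathfrak{S}_{(v_2,\ldots,v_h)}H_{(a_1\cdots a_j)}$, and reindex the resulting double sum. Your explicit bijection $(j,K')\mapsto (v_1\cdot(a_{j+1}\cdots a_i))\cdot K'$ merely spells out the final reindexing step that the paper asserts without detail.
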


For a sentence $I$ and word $w$, the product $\mathfrak{S}_IH_w$ given by the Pieri rule can be visualized in terms of the indices of colored immaculate functions in the resulting sum. The indices correspond to all diagrams obtained by adding colored boxes below or to the right of the diagram of $I$, such that when reading the colors of boxes left to right from bottom to top they correspond exactly to $w$.

\ytableausetup{boxsize=1.5em} 
\begin{ex} The product below can be visualized using the following tableaux:

$$\scalebox{.75}{
\begin{ytableau}
    *(white) a & *(white) b \\
    *(white) b & *(white) c \\
    *(gray!20) c & *(gray!20) a
\end{ytableau}
\quad \quad
\begin{ytableau}
    *(white) a & *(white) b \\
    *(white) b & *(white) c & *(gray!20) a \\
    *(gray!20) c 
\end{ytableau}
\quad \quad
\begin{ytableau}
    *(white) a & *(white) b & *(gray!20) a\\
    *(white) b & *(white) c \\
    *(gray!20) c
\end{ytableau}
\quad \quad
\begin{ytableau}
    *(white) a & *(white) b \\
    *(white) b & *(white) c & *(gray!20) c & *(gray!20) a
\end{ytableau}
\quad \quad
\begin{ytableau}
    *(white) a & *(white) b & *(gray!20) a\\
    *(white) b & *(white) c & *(gray!20) c
\end{ytableau}
\quad \quad
\begin{ytableau}
    *(white) a & *(white) b & *(gray!20)c & *(gray!20) a\\
    *(white) b & *(white) c
\end{ytableau}}
$$\vspace{0mm}
    $$\mathfrak{S}_{(ab,bc)}H_{(ca)} = \mathfrak{S}_{(ab,bc,ca)} +\mathfrak{S}_{(ab,bca,c)} + \mathfrak{S}_{(aba,bc,c)} + \mathfrak{S}_{(ab,bcca)} +\mathfrak{S}_{(aba,bcc)} + \mathfrak{S}_{(abca,bc)}.$$
\end{ex}

\begin{proof}[Proof of Theorem \ref{rightpieriI}]
We proceed by induction on $|w| + \ell{(J)}$. There are two base cases where $|w|+\ell(J) =1$. 
\begin{enumerate}
    \item If $|w|=1$ and $\ell(J)=0$, then $\mathfrak{S}_{\emptyset}H_{w}= \sum_{1 \subset_w K} \mathfrak{S}_K = \mathfrak{S}_{w} = H_w$.
    \item If $\ell(J)=1$ and $|w|=0$ then $\mathfrak{S}_{J}H_{\emptyset} = \sum_{J \subset_{\emptyset} K}\mathfrak{S}_K = \mathfrak{S}_J$. 
\end{enumerate}
Next, assume the statement is true when $|w| + \ell(J) \leq k$ and let $\bar{J}=(v_2,\ldots,v_h)$. Let $|w| + \ell(J)=k+1$. 
\begin{align*}
\mathfrak{S}_JH_w &= \mathbb{B}_{v_1}(\mathfrak{S}_{\bar{J}})H_w = \sum_{0\leq j < i} \mathbb{B}_{(v_1 \cdot (a_{j+1} \cdots a_i))}\left(\mathfrak{S}_{\bar{J}}H_{a_1\ldots a_j}\right) \text{ by Definition \ref{color_imm_def} and Proposition \ref{bernop},}\\
&= \sum_{0\leq j < i} \mathbb{B}_{(v_1 \cdot (a_{j+1} \cdots a_{i}))}\left(\sum_{\bar{J}\subset_{a_1\ldots a_j}G}\mathfrak{S}_G\right)\text{\quad \quad by induction,}\\
&= \sum_{0\leq j < i} \left( \sum_{\bar{J}\subset_{a_1\ldots a_j}G} \mathfrak{S}_{((v_1 (a_{j+1}\cdots a_{i})) \cdot G)} \right) = \sum_{J\subset_w K} \mathfrak{S}_K. \qedhere
\end{align*}\vspace{-2mm}
\end{proof}

The expansion of the colored complete homogeneous functions into the colored immaculate functions follows from repeated application of the right Pieri rule.

\begin{thm} \label{H_to_S} For a sentence $C$, the colored complete homogeneous function expands positively into the colored immaculate basis as 
$$H_C = \sum_J K_{J,C}\mathfrak{S}_J,$$ where the sum runs over sentences $J$ such that $|J|=|C|$.
\end{thm}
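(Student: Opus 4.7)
The plan is to induct on $\ell(C)$ and use the right Pieri rule (Theorem \ref{rightpieriI}) at each step, matching it against the iterative construction of colored immaculate tableaux one integer label at a time. The base case $C = \emptyset$ is immediate: $H_\emptyset = 1 = \mathfrak{S}_\emptyset$ and $K_{\emptyset,\emptyset} = 1$.

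For the inductive step, I would write $C = \bar{C} \cdot (w)$ with $\bar{C} = (w_1, \ldots, w_{k-1})$ and $w = w_k$, so $H_C = H_{\bar{C}} H_w$. Applying the induction hypothesis to $H_{\bar{C}}$ and then Theorem \ref{rightpieriI} to each product $\mathfrak{S}_I H_w$ gives
\[
H_C = \sum_J \left( \sum_{I \subset_w J} K_{I, \bar{C}} \right) \mathfrak{S}_J,
\]
so it suffices to exhibit, for each sentence $J$, a bijection between colored immaculate tableaux of shape $J$ and type $C$ and pairs $(I, T')$ where $I \subset_w J$ and $T'$ is a CIT of shape $I$ and type $\bar{C}$.

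The bijection I have in mind sends a CIT $T$ of shape $J$ and type $C$ to the tableau $T'$ obtained by stripping all boxes of $T$ labeled $k$. Since rows of $T$ are weakly increasing and $k$ is the maximum label, these boxes form a right-justified suffix of each row, so stripping them leaves a valid CIT $T'$ of shape $I$ and type $\bar{C}$. The word $q_j$ formed by the colors of the stripped boxes in row $j$, read left to right, satisfies $q_g \cdot q_{g-1} \cdots q_1 = w$ because the type-reading convention for $w_k$ in Definition of type is left to right, bottom to top; this is precisely the $I \subset_w J$ condition from Theorem \ref{rightpieriI}. The main technical point I expect will be handling the case where the bottom row of $T$ consists entirely of $k$-labels, so that stripping removes an entire row. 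Here I would argue that since the first column of $T$ is strictly increasing and all labels lie in $[k]$, any row whose leftmost entry equals $k$ must be the bottom row of $T$; this yields the inequality $\ell(J) \le \ell(I)+1$ required by $\subset_w$ and confirms that the inverse map (gluing colored $k$-boxes $q_j$ onto row $j$ of $T'$, possibly creating a new bottom row) preserves the CIT row and first-column conditions and produces an honest CIT of shape $J$ and type $C$.
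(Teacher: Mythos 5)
Your proposal is correct and follows essentially the same route as the paper: induct on the length of $C$, peel off the last word $w$, apply the right Pieri rule (Theorem \ref{rightpieriI}), and justify the recursion $K_{J,C} = \sum_{I \subset_w J} K_{I,\bar{C}}$ by stripping the boxes labeled $k$ from a colored immaculate tableau. If anything, your treatment of the bijection (the reading order matching the $\subset_w$ condition and the case where an entire bottom row is removed) is more explicit than the paper's, which simply asserts the removal claim.
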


\begin{proof}
Let $C= (t_1,\ldots,t_k)$ and $C'=(t_1,\ldots,t_{k-1})$. First we claim that $K_{J,C} = \sum_{G \subset_{t_k}J}K_{G,C'}$ where the sum runs over sentences $G$ such that $G \subset_{t_k} J$.  For any colored immaculate tableau of shape $J$ and type $C$, we can remove the boxes of $T$ filled with the number k, all of which will be on the right-hand side of $T$, to obtain a colored immaculate tableau of shape $G$ with type $C'$. Thus the sum of $K_{G,C'}$ for all the $G \subset_{t_k} J$ gives $K_{J,C}$. With this fact, we proceed by induction on the length of $C$.
\begin{align*}
H_C &= H_{C'}H_{t_k} = \left( \sum_G K_{G,C'}\mathfrak{S}_G \right)H_{t_k} \text{\quad by induction}\\
&= \sum_G K_{G,C'}\mathfrak{S}_GH_{t_k} = \sum_G K_{G,C'}\sum_{G \subset_{t_k}J}\mathfrak{S}_J \text{ \quad by Theorem \ref{rightpieriI}}\\
&= \sum_J \left( \sum_{G \subset_{t_k}J}K_{G,C'}\right) \mathfrak{S}_J \text{\quad by rearranging the sums} \\
&= \sum_J K_{J,C}\mathfrak{S}_J,
\end{align*}
where the final two sums run over all sentences $J$ such that there exists a colored immaculate tableau of shape $J$ and type $C$. If there is no such CIT of shape $J$ and type $C$ then $K_{J,C}=0$, and it is equivalent to taking this sum over all sentences $J$ such that $|J|=|C|$.
\end{proof}

Note that this unique expansion satisfies Proposition \ref{hopf_dual} and in fact verifies the duality of the colored immaculate and colored dual immaculate bases.

\begin{cor}\label{color_dual} The colored immaculate basis is dual to the colored dual immaculate basis.  
\end{cor}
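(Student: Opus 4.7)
The plan is to deduce the corollary directly from Proposition \ref{hopf_dual} applied to the pair of dual bases $\{H_I\}_I$ and $\{M_I\}_I$, using the two expansion formulas already in hand. Specifically, Theorem \ref{monomialexpansion} gives
$$\mathfrak{S}^*_J = \sum_B K_{J,B}\, M_B,$$
and Theorem \ref{H_to_S} gives
$$H_C = \sum_J K_{J,C}\, \mathfrak{S}_J,$$
with the \emph{same} coefficients $K_{J,C}$ appearing in both expansions. This is exactly the compatibility condition needed in Proposition \ref{hopf_dual}, taking $a_J = \mathfrak{S}^*_J$, $c_B = M_B$, $d_C = H_C$, $b_J = \mathfrak{S}_J$, and $k_{J,C} = K_{J,C}$.

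Before invoking Proposition \ref{hopf_dual}, I would briefly justify that $\{\mathfrak{S}_J\}_J$ is indeed a basis of $NSym_A$. In the proof that $\{\mathfrak{S}^*_J\}_J$ is a basis of $QSym_A$, the transition matrix $\mathcal{K} = (K_{J,B})$ from $\{\mathfrak{S}^*_J\}$ to $\{M_B\}$ was shown to be upper unitriangular, hence invertible, with respect to a specified ordering on sentences. The same matrix $\mathcal{K}$ is the transition matrix from $\{\mathfrak{S}_J\}$ to $\{H_C\}$ (once we write Theorem \ref{H_to_S} in matrix form), so invertibility of $\mathcal{K}$ immediately yields that $\{\mathfrak{S}_J\}_J$ is a basis of $NSym_A$.

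With both expansions in place and $\{\mathfrak{S}_J\}_J$ known to be a basis, Proposition \ref{hopf_dual} applies verbatim and identifies $\{\mathfrak{S}_J\}_J$ as the unique basis of $NSym_A$ dual to $\{\mathfrak{S}^*_J\}_J$ under the pairing $\langle H_I, M_J\rangle = \delta_{I,J}$, which is the statement of the corollary.

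I don't anticipate a real obstacle here: all the combinatorial work is done in Theorems \ref{monomialexpansion} and \ref{H_to_S}, and the creation-operator construction in Definition \ref{color_imm_def} was designed precisely so that the coefficients in the two transition matrices would coincide. The only mild point to be careful about is the bookkeeping of indices (making sure the same coefficient $K_{J,C}$ plays the roles of $k_{i,j}$ on both sides of Proposition \ref{hopf_dual}), and verifying that $\{\mathfrak{S}_J\}_J$ is a basis rather than just a spanning set, but the unitriangularity of $\mathcal{K}$ takes care of that at once.
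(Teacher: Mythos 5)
Your proposal is correct and matches the paper's own (very terse) justification: immediately after Theorem \ref{H_to_S} the paper simply notes that the expansions $\mathfrak{S}^*_J = \sum_B K_{J,B} M_B$ and $H_C = \sum_J K_{J,C}\mathfrak{S}_J$ share the same coefficients and hence satisfy Proposition \ref{hopf_dual}, which is exactly your argument. Your added remark that invertibility of the unitriangular matrix $\mathcal{K}$ guarantees $\{\mathfrak{S}_J\}_J$ is genuinely a basis is a point the paper leaves implicit, but it is consistent with and completes the same line of reasoning.
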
 

% \begin{proof} Recall that any $G\in QSym_A$ can be expanded into the colored monomial basis  as $G = \sum_I \langle H_I, G \rangle M_I$. In particular, for $G = \mathfrak{S}^*_K$, 
% \begin{align*}
%     \mathfrak{S}^*_K &= \sum_I \langle H_I, \mathfrak{S}^*_K \rangle M_{I} = \sum_I \langle \sum_J K_{J,I} \mathfrak{S}_J, \mathfrak{S}^*_K \rangle M_I \text{\quad by Theorem \ref{H_to_S}.}\\
%     &= \sum_I \sum_J K_{J,I} \langle \mathfrak{S}_J, \mathfrak{S}^*_K \rangle M_I = \sum_J \langle \mathfrak{S}_J, \mathfrak{S}^*_K \rangle \sum_I K_{J,I} M_I\\
%     &= \sum_J \langle \mathfrak{S}_J, \mathfrak{S}^*_K \rangle \mathfrak{S}^*_J. \text{\quad by Theorem \ref{monomialexpansion}.}
% \end{align*}
% Therefore, $\langle \mathfrak{S}_J, \mathfrak{S}^*_K \rangle = \delta_{J,K}$.
% \end{proof}

With this duality verified, we can prove that the colored immaculate functions are analogous to the original noncommutative Bernstein operators because they are isomorphic under $\upsilon $ in the case of a unary alphabet $A$.  

\begin{prop} \label{uncoloring_dual} Let $G \in NSym_A$ and $F \in QSym_A$. If $A = \{a\}$, then $\langle G, F\rangle = \langle \upsilon(G), \upsilon(F) \rangle.$
\end{prop}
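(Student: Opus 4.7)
The plan is to verify the claim on the complete homogeneous and colored monomial bases and then extend by bilinearity. The essential point is that when $A = \{a\}$, every word over $A$ is simply $a^k$ for some $k \geq 0$, so a sentence $I$ is uniquely determined by its word lengths $w\ell(I)$. Consequently, the map $I \mapsto w\ell(I)$ is a bijection between sentences over $A = \{a\}$ and compositions, which is exactly what makes $\upsilon$ an isomorphism in this case.

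Concretely, I will first reduce to basis elements. Write $G = \sum_I c_I H_I$ and $F = \sum_J d_J M_J$, where the sums are finite and indexed by sentences over $A = \{a\}$. By bilinearity of both inner products, it suffices to show that $\langle H_I, M_J \rangle = \langle \upsilon(H_I), \upsilon(M_J) \rangle$ for every pair of sentences $I, J$ over $A = \{a\}$. On the left, the duality of $H$ and $M$ in $NSym_A$ and $QSym_A$ gives $\langle H_I, M_J \rangle = \delta_{I,J}$. On the right, $\upsilon(H_I) = H_{w\ell(I)}$ and $\upsilon(M_J) = M_{w\ell(J)}$, and the duality of the complete homogeneous and monomial bases in $NSym$ and $QSym$ gives $\langle H_{w\ell(I)}, M_{w\ell(J)} \rangle = \delta_{w\ell(I), w\ell(J)}$.

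The only thing left is to see that $\delta_{I,J} = \delta_{w\ell(I), w\ell(J)}$ when $A = \{a\}$, and this is precisely where the single-color hypothesis is used: two sentences over a unary alphabet are equal if and only if their word lengths agree. Combining these observations,
\[
\langle G, F \rangle = \sum_{I,J} c_I d_J \langle H_I, M_J \rangle = \sum_{I,J} c_I d_J \langle \upsilon(H_I), \upsilon(M_J) \rangle = \langle \upsilon(G), \upsilon(F) \rangle.
\]
There is no real obstacle here; the argument is essentially bookkeeping that uses only the duality pairing on each side and the bijection between unary sentences and compositions. The slightly subtle point worth mentioning explicitly is that one must argue on a basis (rather than, say, on generators) because the pairing is nondegenerate rather than multiplicative, but once that is done the computation is immediate.
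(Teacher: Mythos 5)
Your proof is correct and follows essentially the same route as the paper's: expand $G$ and $F$ in the dual bases $\{H_I\}_I$ and $\{M_J\}_J$, use bilinearity and the pairings $\langle H_I, M_J\rangle = \delta_{I,J}$ and $\langle H_{w\ell(I)}, M_{w\ell(J)}\rangle = \delta_{w\ell(I),w\ell(J)}$, and observe that over a unary alphabet $I \mapsto w\ell(I)$ is a bijection onto compositions, so the two deltas coincide. No issues.
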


\begin{proof} Let $A = \{a\}$, and let $G = \sum_J c_J H_J$ and $F = \sum_I b_I M_I$ where the sums run over all sentences $I, J$, respectively.  Then, $$\langle G, F \rangle =  \left\langle \sum_J c_J H_J,  \sum_I b_I M_I \right\rangle = \sum_{I,J}c_Jb_I \left\langle H_J, M_I \right\rangle = \sum_I c_Ib_I.$$
Next, for $\upsilon (G) \in NSym$ and $\upsilon (F) \in QSym$, we have that $$\langle \upsilon (G), \upsilon (F) \rangle = \left\langle \sum_J c_J \upsilon (H_J), \sum_I b_I \upsilon (M_I) \right\rangle = \left\langle \sum_J c_J H_{w\ell(J)}, \sum_I b_I M_{w\ell(I)} \right\rangle = \sum_{I,J} c_Jb_I \langle H_{w\ell(J)}, M_{w\ell(I)}\rangle.$$
The inner product $\langle H_{w\ell(J)}, M_{w\ell(I)}\rangle$ is zero unless $w\ell(I)=w\ell(J)$ which happens exactly when $I = J$ because the alphabet $A$ is made up of only one color.  In other words, there is exactly one sentence $I$ such that $w\ell(I)=\alpha$ for each composition $\alpha$ in this case.  Thus,  $$\langle \upsilon (G), \upsilon (F) \rangle = \sum_I c_Ib_I = \langle G, F \rangle. \eqno \qedhere $$ \end{proof}

\begin{prop}\label{color_imm_analog} Let $A = \{a\}$, and let $I$ be a sentence. Then, $\upsilon (\mathfrak{S}_I) = \mathfrak{S}_{w\ell(I)}.$ Moreover, $\{\mathfrak{S}_I\}_I$ in $NSym_A$ is analogous to 
$\{\mathfrak{S}_{\alpha}\}_{\alpha}$ in $NSym$. \end{prop}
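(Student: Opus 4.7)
The plan is to derive this analogy via duality, leveraging the already-established analogy for the colored dual immaculate basis (the proposition just after Definition \ref{color_imm_def}'s combinatorial setup) together with Corollary \ref{color_dual}, which states that $\{\mathfrak{S}_I\}_I$ and $\{\mathfrak{S}^*_I\}_I$ are dual bases.

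First, I would set up the bijection between sentences and compositions in the single-letter case. When $A=\{a\}$, a sentence $I$ is determined by its word-length composition, so writing $I_\alpha$ for the unique sentence with $w\ell(I_\alpha)=\alpha$, the map $\alpha\mapsto I_\alpha$ is a bijection. Under this identification, the earlier analog statement reads $\upsilon(\mathfrak{S}^*_{I_\alpha})=\mathfrak{S}^*_\alpha$, and in particular $\upsilon:QSym_A\to QSym$ is a linear isomorphism whose inverse sends $\mathfrak{S}^*_\alpha$ to $\mathfrak{S}^*_{I_\alpha}$.

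Next, since $\upsilon:NSym_A\to NSym$ is also an isomorphism in this case, the element $\upsilon(\mathfrak{S}_I)\in NSym$ admits a unique expansion in the immaculate basis
$$\upsilon(\mathfrak{S}_I)=\sum_\alpha c_{I,\alpha}\,\mathfrak{S}_\alpha,$$
and the coefficients are extracted by pairing against the dual immaculate basis in $QSym$, i.e. $c_{I,\alpha}=\langle\upsilon(\mathfrak{S}_I),\mathfrak{S}^*_\alpha\rangle$. Applying Proposition \ref{uncoloring_dual} in the backward direction—permissible precisely because $\upsilon$ is invertible when $|A|=1$—this becomes
$$c_{I,\alpha}=\langle\upsilon(\mathfrak{S}_I),\upsilon(\mathfrak{S}^*_{I_\alpha})\rangle=\langle\mathfrak{S}_I,\mathfrak{S}^*_{I_\alpha}\rangle.$$
By Corollary \ref{color_dual} this inner product is $\delta_{I,I_\alpha}$, which collapses to $\delta_{w\ell(I),\alpha}$ under the sentence/composition bijection above. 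Therefore $\upsilon(\mathfrak{S}_I)=\mathfrak{S}_{w\ell(I)}$, which is exactly the analog statement.

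The argument is short because all of the combinatorial work is already packaged into the uncolored/colored dual immaculate comparison and into the duality of the colored bases; the only step that requires care is the use of Proposition \ref{uncoloring_dual} in the reverse direction, where one must invoke the $|A|=1$ hypothesis to guarantee a preimage $\upsilon^{-1}(\mathfrak{S}^*_\alpha)=\mathfrak{S}^*_{I_\alpha}$ exists. I expect no genuine obstacle; any direct approach via the colored Bernstein operators $\mathbb{B}_v$ of Definition \ref{colorbern} would force one to reprove a uncolored limit of Definition \ref{colorbern}, which is strictly more work than the duality route.
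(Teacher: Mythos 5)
Your proposal is correct and follows essentially the same route as the paper: both arguments combine Corollary \ref{color_dual} (duality of $\{\mathfrak{S}_I\}$ and $\{\mathfrak{S}^*_I\}$), Proposition \ref{uncoloring_dual} (invariance of the pairing under $\upsilon$ for unary $A$), and the already-proved analog statement for the colored dual immaculate functions, together with the sentence/composition bijection $I \leftrightarrow w\ell(I)$. The only cosmetic difference is that you phrase the final step as extracting coefficients of $\upsilon(\mathfrak{S}_I)$ in the immaculate basis, whereas the paper shows directly that $\upsilon(\mathfrak{S}_I)$ and $\mathfrak{S}_{w\ell(I)}$ have identical pairings against every $\mathfrak{S}^*_{w\ell(J)}$; these are the same computation.
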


\begin{proof} Let $A = \{a\}$ and let $I$ and $J$ be sentences.  By Proposition \ref{uncoloring_dual},  $$\langle \mathfrak{S}_I, \mathfrak{S}^*_J \rangle = \langle \upsilon (\mathfrak{S}_I), \upsilon (\mathfrak{S}^*_J) \rangle = \langle \upsilon (\mathfrak{S}_I), \mathfrak{S}^*_{w\ell(J)} \rangle.$$ Because $A$ is unary, $I= J$ if and only if $w\ell(I)=w\ell(J)$ and thus $\delta_{I,J} = \delta_{w\ell(I), w\ell(J)}$. As a result, $$\langle \mathfrak{S}_I, \mathfrak{S}^*_J \rangle  = \langle \mathfrak{S}_{w\ell(I)}, \mathfrak{S}^*_{w\ell(J)} \rangle = \langle \upsilon (\mathfrak{S}_I), \mathfrak{S}^*_{w\ell(J)} \rangle,$$ for all sentences $I$ and $J$. Therefore, $\upsilon (\mathfrak{S}_I) = \mathfrak{S}_{w\ell(I)}$.
\end{proof}

The expansion of the colored ribbon functions into the colored immaculate functions now follows from the application of Proposition \ref{hopf_dual} to Theorem \ref{fund_exp}.

\begin{cor} \label{color_r_into_imm}
For a sentence $C$, the colored ribbon noncommutative symmetric functions expand positively into the colored immaculate functions as
$$R_C = \sum_J L_{J,C}\mathfrak{S}_J,$$ where the sum runs over all sentences $J$ such that $|J|=|C|$.
\end{cor}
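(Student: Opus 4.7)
The plan is to derive this expansion as a direct application of Proposition \ref{hopf_dual} (the change-of-basis formula for dual bases) to the expansion of the colored dual immaculate functions into the colored fundamental basis given in Theorem \ref{fund_exp}. The corollary is really a ``free'' consequence of the duality setup built up in the preceding subsections, and the proof amounts to packaging the known ingredients correctly.

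First, I would identify the two pairs of dual bases living in the dually paired Hopf algebras $QSym_A$ and $NSym_A$. On the one hand, $\{\mathfrak{S}^*_J\}_J \subset QSym_A$ and $\{\mathfrak{S}_J\}_J \subset NSym_A$ are dual by Corollary \ref{color_dual}. On the other hand, $\{F_C\}_C \subset QSym_A$ and $\{R_C\}_C \subset NSym_A$ are dual by the definition of the colored ribbon basis (that is, $\langle R_C, F_D\rangle = \delta_{C,D}$). This is exactly the setup required to apply Proposition \ref{hopf_dual}.

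Next, I would invoke Theorem \ref{fund_exp}, which says
\[
\mathfrak{S}^*_J = \sum_C L_{J,C}\, F_C.
\]
Applying Proposition \ref{hopf_dual} with $a_J = \mathfrak{S}^*_J$, $c_C = F_C$, $b_J = \mathfrak{S}_J$, $d_C = R_C$, and coefficients $k_{J,C} = L_{J,C}$ yields the dual expansion
\[
R_C = \sum_J L_{J,C}\, \mathfrak{S}_J,
\]
which is the claimed formula. Positivity is automatic since $L_{J,C}$ is by Definition \ref{colorL} a count of standard colored immaculate tableaux and hence a nonnegative integer.

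The only minor bookkeeping points are to note that both $R_C$ and $\mathfrak{S}_J$ are homogeneous of degree $|C|$ and $|J|$ respectively (so $L_{J,C}=0$ whenever $|J|\neq|C|$, which lets us restrict the sum to $|J|=|C|$), and to confirm that Proposition \ref{hopf_dual} applies as stated in infinitely many graded pieces simultaneously; this is standard for graded dually paired Hopf algebras. There is no real obstacle: all of the substantive work has already been absorbed into Theorem \ref{fund_exp} (the combinatorial expansion of $\mathfrak{S}^*_J$ into $F_C$) and Corollary \ref{color_dual} (the duality of the two immaculate bases), the latter of which in turn rested on Theorem \ref{H_to_S} and the right Pieri rule.
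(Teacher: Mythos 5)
Your proposal is correct and matches the paper's own derivation: the paper obtains this corollary precisely by applying Proposition \ref{hopf_dual} to the expansion $\mathfrak{S}^*_J = \sum_C L_{J,C} F_C$ of Theorem \ref{fund_exp}, using the duality of $\{\mathfrak{S}_J\}_J$ with $\{\mathfrak{S}^*_J\}_J$ from Corollary \ref{color_dual} and of $\{R_C\}_C$ with $\{F_C\}_C$. Your bookkeeping remarks on homogeneity and positivity are consistent with the paper's framing and introduce no gaps.
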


This corollary allows us to define the expansion of the colored immaculate function indexed by a sentence of the form $(a_1, \ldots, a_k)$ in terms of the $\{H_I \}$ basis.

\begin{prop} For a sentence $(a_1, \ldots a_k)$ where $a_1, \ldots, a_k \in A$ are colors, $$\mathfrak{S}_{(a_1, \ldots, a_k)} = \sum_{(a_1, \ldots, a_k) \preceq J} (-1)^{k-\ell(J)}H_J.$$  
\end{prop}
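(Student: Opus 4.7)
The plan is to show that $\mathfrak{S}_{(a_1,\ldots,a_k)}$ coincides with the colored ribbon function $R_{(a_1,\ldots,a_k)}$ and then invoke the inverse relation in Equation \eqref{R_H_equation}, which already expresses any colored ribbon function as an alternating sum of $H_J$'s over coarsenings. Concretely, I will apply Corollary \ref{color_r_into_imm} to the sentence $C=(a_1,\ldots,a_k)$ and argue that the only colored immaculate function appearing on the right-hand side is $\mathfrak{S}_{(a_1,\ldots,a_k)}$ itself, with coefficient $1$.

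The key combinatorial step is to verify that $L_{J,(a_1,\ldots,a_k)}=\delta_{J,(a_1,\ldots,a_k)}$. Suppose $U$ is a standard colored immaculate tableau of shape $J$ whose colored descent composition is $(a_1,\ldots,a_k)$. By Definition \ref{color_desc_comp}, the sentence $(a_1,\ldots,a_k)$ splits between every consecutive pair of entries, so every index $i\in\{1,\dots,k-1\}$ is a descent of $U$, meaning the box labelled $i+1$ lies strictly below the box labelled $i$. Since rows of $J$ are read left to right and entries within a row must weakly increase, this forces each row of $J$ to contain exactly one box and the labels $1,2,\ldots,k$ to descend down the single column. The strictly increasing first-column condition is then automatic, and the color read off in row $i$ must be the $i$th letter of $w((a_1,\ldots,a_k))=a_1a_2\cdots a_k$. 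Hence $J=(a_1,\ldots,a_k)$ and $U$ is uniquely determined, giving $L_{(a_1,\ldots,a_k),(a_1,\ldots,a_k)}=1$ and $L_{J,(a_1,\ldots,a_k)}=0$ for all other $J$.

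Applying Corollary \ref{color_r_into_imm} with $C=(a_1,\ldots,a_k)$ now yields
$$R_{(a_1,\ldots,a_k)}=\sum_{J}L_{J,(a_1,\ldots,a_k)}\,\mathfrak{S}_J=\mathfrak{S}_{(a_1,\ldots,a_k)}.$$
Combining this with the right-hand identity in Equation \eqref{R_H_equation}, namely $R_I=\sum_{J\succeq I}(-1)^{\ell(J)-\ell(I)}H_J$, and noting that $\ell((a_1,\ldots,a_k))=k$ because each word is a single letter, gives
$$\mathfrak{S}_{(a_1,\ldots,a_k)}=\sum_{J\succeq(a_1,\ldots,a_k)}(-1)^{\ell(J)-k}H_J=\sum_{(a_1,\ldots,a_k)\preceq J}(-1)^{k-\ell(J)}H_J,$$
which is the claimed formula.

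The only real obstacle is the characterization of $L_{J,(a_1,\ldots,a_k)}$, and once that is in hand the statement reduces to the already-established ribbon-to-$H$ inversion, so the rest of the argument is essentially bookkeeping with signs and the identification $(-1)^{\ell(J)-k}=(-1)^{k-\ell(J)}$.
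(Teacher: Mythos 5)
Your proposal is correct and follows essentially the same route as the paper: both establish $L_{J,(a_1,\ldots,a_k)}=\delta_{J,(a_1,\ldots,a_k)}$, conclude $\mathfrak{S}_{(a_1,\ldots,a_k)}=R_{(a_1,\ldots,a_k)}$ via Corollary \ref{color_r_into_imm}, and then apply the ribbon-to-$H$ expansion. The only difference is that you spell out the combinatorial verification that every index being a descent forces a single-box-per-row shape, which the paper simply asserts.
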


\begin{proof} 
Let $C = (a_1, \ldots, a_k)$, and notice that $L_{J, (a_1, \ldots, a_k)}=0$ unless $J=(a_1, \ldots, a_k)$ in which case $L_{(a_1, \ldots, a_k), (a_1, \ldots, a_k)} = 1$. Then by Corollary \ref{color_r_into_imm}, we have $\mathfrak{S}_{(a_1, \ldots, a_k)} = R_{(a_1, \ldots, a_k)}$. Then, expanding $R_{(a_1, \ldots, a_k)}$ into the $\{ H_I \}_I$ basis yields the desired formula.
\end{proof}

Applying Proposition \ref{hopf_dual} to Theorem \ref{color_fund_to_imm} also yields an expansion of the colored immaculate functions into the colored ribbon basis using the colored immaculate descent graph of Definition \ref{color_imm_desc_graph}.

\begin{cor} \label{color_imm_to_R} For a sentence $J$, the colored immaculate functions expand into colored ribbon functions as $$\mathfrak{S}_J = \sum_I L^{-1}_{I,J}R_I \text{\qquad with coefficients \qquad} L^{-1}_{I,J} = \sum_{\mathcal{P}}(-1)^{\ell(\mathcal{P})}prod(\mathcal{P}),$$ where the  first sum runs over sentences $I$ and the second runs over directed paths $\mathcal{P}$ from $I$ to $J$ in $\mathfrak{D}^n_A$.
\end{cor}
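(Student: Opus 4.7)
The plan is to derive this expansion by dualizing Theorem \ref{color_fund_to_imm} via Proposition \ref{hopf_dual}. The earlier material furnishes two pairs of dual bases between $NSym_A$ and $QSym_A$: the colored fundamental basis $\{F_I\}_I$ in $QSym_A$ is dual to the colored ribbon basis $\{R_I\}_I$ in $NSym_A$ (by construction of the colored ribbon functions), and by Corollary \ref{color_dual} the colored dual immaculate basis $\{\mathfrak{S}^*_I\}_I$ is dual to the colored immaculate basis $\{\mathfrak{S}_I\}_I$. Theorem \ref{color_fund_to_imm} supplies the change-of-basis $F_I = \sum_K L^{-1}_{I,K}\mathfrak{S}^*_K$ within $QSym_A$, with the coefficients $L^{-1}_{I,K}$ given explicitly as signed path sums in the colored immaculate descent graph $\mathfrak{D}^n_A$.

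Proposition \ref{hopf_dual} is precisely the statement that if a basis of $\mathcal{A}$ is expanded in terms of another basis of $\mathcal{A}$ with transition matrix $(k_{i,j})$, then the corresponding dual bases of $\mathcal{B}$ are related by the transposed matrix. Setting $a_I = F_I$, $b_I = R_I$, $c_K = \mathfrak{S}^*_K$, $d_K = \mathfrak{S}_K$, and $k_{I,K} = L^{-1}_{I,K}$ in Proposition \ref{hopf_dual}, the conclusion $d_K = \sum_I k_{I,K} b_I$ becomes exactly
\[
\mathfrak{S}_K = \sum_I L^{-1}_{I,K} R_I,
\]
which, after renaming $K$ to $J$, is the stated identity. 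Since the coefficients $L^{-1}_{I,J}$ are literally the same objects appearing in Theorem \ref{color_fund_to_imm}, the combinatorial description as an alternating sum $\sum_{\mathcal{P}} (-1)^{\ell(\mathcal{P})} \mathrm{prod}(\mathcal{P})$ over directed paths $\mathcal{P}$ from $I$ to $J$ in $\mathfrak{D}^n_A$ transfers verbatim, with no further computation required.

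There is essentially no obstacle: the substantive work was carried out in proving Theorem \ref{color_fund_to_imm} (the inductive unfolding of the fundamental-to-dual-immaculate change of basis via the descent graph) and in establishing Corollary \ref{color_dual} (the duality of the immaculate and dual immaculate bases, obtained from Theorem \ref{H_to_S} via Proposition \ref{hopf_dual}). The one bookkeeping point worth flagging in the write-up is the transposition of indices implicit in Proposition \ref{hopf_dual}: in Theorem \ref{color_fund_to_imm} the coefficient $L^{-1}_{I,K}$ is attached to $\mathfrak{S}^*_K$ as one sums over $K$ reachable from $I$, whereas here $L^{-1}_{I,J}$ is attached to $R_I$ as one sums over $I$ from which there is a path to $J$. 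This is exactly the swap of row and column expected when passing to the dual, and it justifies both the direction of the path sum and the range of summation in the final statement.
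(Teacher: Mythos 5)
Your proposal is correct and matches the paper's own derivation: the corollary is obtained by applying Proposition \ref{hopf_dual} to Theorem \ref{color_fund_to_imm}, using the dual pairs $\{F_I\}/\{R_I\}$ and $\{\mathfrak{S}^*_I\}/\{\mathfrak{S}_I\}$ (the latter from Corollary \ref{color_dual}) to transpose the transition matrix $L^{-1}_{I,K}$. Your remark on the index swap is exactly the bookkeeping the paper leaves implicit.
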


\begin{ex} The colored immaculate function $\mathfrak{S}_{(a,cb,b)}$ expands in to the colored ribbon functions as $$\mathfrak{S}_{(a,cb,b)} = R_{(a,cb,b)}-R_{(ab,cb)}+R_{(abb,c)}-R_{(ab,c,b)}.$$

The term $R_{(abb,c)}$, for example,  has a coefficient of $1$ because the only path from $(abb,c)$ to $(a,cb,b)$ is $$(abb,c) \xrightarrow{1} (ab,cb) \xrightarrow{1} (a,cb,b).$$
\end{ex}

Proposition \ref{hopf_dual} can be applied to Corollary \ref{noncol_fund_to_imm} to get a result in $NSym$ analogous to Corollary \ref{color_imm_to_R}. It is actually an open question to find a cancellation-free combinatorial way of expanding immaculate functions into the ribbon basis. Campbell defines formulas for a few special cases in \cite{Camp}. In \cite{AllMas}, Allen and Mason give a complete combinatorial description of the expansion of immaculate functions into the complete homogeneous basis in terms of tunnel hooks, which generalize the special rim hooks of E\u{g}ecio\u{g}lu and Remmel \cite{EgeRem}. This becomes a somewhat complicated expansion of any immaculate function into the ribbon basis, but for certain immaculate functions, the expression simplifies to a Jacobi-Trudi-Like formula. While our formula is not cancellation-free, it does provide a concise way to compute the coefficients in the expansion for every case. Additionally, it is relatively easy to compute a single coefficient without calculating the entire expression or the entire transition matrix.

\begin{cor} \label{imm_to_R} For a composition $\beta \models n$, the immaculate functions expand into the ribbon functions as $$\mathfrak{S}_{\beta} = 
\sum_{\alpha \models n} L^{-1}_{\alpha, \beta} R_{\alpha} \text{\qquad with coefficients \qquad} L^{-1}_{\alpha, \beta} = \sum_{\mathcal{P}}(-1)^{\ell(\mathcal{P})}prod(\mathcal{P}),$$ where the first sum runs over compositions $\alpha$ and the second runs over directed paths $\mathcal{P}$ from $\alpha$ to $\beta$ in $\mathfrak{D}^n$.
\end{cor}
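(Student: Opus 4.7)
The plan is to derive this corollary from Corollary \ref{noncol_fund_to_imm} purely by duality, exactly mirroring how Corollary \ref{color_imm_to_R} was obtained from Theorem \ref{color_fund_to_imm}. Specifically, Corollary \ref{noncol_fund_to_imm} gives the expansion
\[
F_{\alpha} = \sum_{\beta \models n} L^{-1}_{\alpha,\beta} \, \mathfrak{S}^*_{\beta},
\]
with the coefficients $L^{-1}_{\alpha,\beta}$ defined by signed path-products in the immaculate descent graph $\mathfrak{D}^n$. These coefficients are exactly the scalars $k_{\alpha,\beta}$ appearing in Proposition \ref{hopf_dual}, where the roles of the four bases are played by $\{F_{\alpha}\} \subset QSym$, $\{\mathfrak{S}^*_{\beta}\} \subset QSym$, $\{R_{\alpha}\} \subset NSym$, and $\{\mathfrak{S}_{\beta}\} \subset NSym$.

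First I would recall the two duality pairings needed to invoke Proposition \ref{hopf_dual}: the pairing $\langle R_{\alpha}, F_{\beta}\rangle = \delta_{\alpha,\beta}$ between ribbon and fundamental functions from Equation \eqref{R_H_equation} and the surrounding discussion, and the pairing $\langle \mathfrak{S}_{\beta}, \mathfrak{S}^*_{\gamma}\rangle = \delta_{\beta,\gamma}$ between immaculate and dual immaculate functions from \cite{berg18} (this is the uncolored specialization of Corollary \ref{color_dual}). With these two pairs of dual bases identified, Proposition \ref{hopf_dual} asserts that the expansion of $F_{\alpha}$ in the dual immaculate basis and the expansion of $\mathfrak{S}_{\beta}$ in the ribbon basis use the same coefficient matrix, merely with the indices swapped between rows and columns.

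Applying this principle to Corollary \ref{noncol_fund_to_imm} immediately yields
\[
\mathfrak{S}_{\beta} = \sum_{\alpha \models n} L^{-1}_{\alpha,\beta} \, R_{\alpha},
\]
with exactly the same coefficients $L^{-1}_{\alpha,\beta} = \sum_{\mathcal{P}} (-1)^{\ell(\mathcal{P})} prod(\mathcal{P})$ summed over directed paths $\mathcal{P}$ from $\alpha$ to $\beta$ in $\mathfrak{D}^n$. There is no real obstacle here: the only thing to be careful about is confirming that the sum over $\alpha$ is finite, which holds because $L^{-1}_{\alpha,\beta}$ vanishes unless $\alpha$ reaches $\beta$ in $\mathfrak{D}^n$, and in particular $|\alpha| = |\beta| = n$. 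Alternatively, one could give a completely parallel inductive argument on the length of the longest path starting at $\beta$ in $\mathfrak{D}^n$, using the uncolored Pieri-type relation $\mathfrak{S}_{\beta} = R_{\beta} + \sum_{\alpha \neq \beta} L_{\alpha,\beta}(\text{lower terms})$ implicit in Theorem \ref{fund_exp}, but the duality route is cleaner and directly matches the argument already given in the colored case.
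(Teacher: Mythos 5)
Your proposal is correct and matches the paper's own derivation: the paper obtains this corollary precisely by applying Proposition \ref{hopf_dual} to Corollary \ref{noncol_fund_to_imm}, using the dual pairs $\{F_{\alpha}\},\{R_{\alpha}\}$ and $\{\mathfrak{S}^*_{\beta}\},\{\mathfrak{S}_{\beta}\}$, exactly as you describe (and in parallel with how Corollary \ref{color_imm_to_R} follows from Theorem \ref{color_fund_to_imm}).
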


Applying the forgetful map $\chi$ to Corollary \ref{imm_to_R} produces a new expansion of the Schur functions into ribbon Schur functions. The question of expressing Schur functions in terms of ribbon Schur functions was notably studied by Lascoux and Pragacz in \cite{LasPra} as well as Hamel and Goulden in \cite{HamGou}. One advantage of this expression compared to the matrix determinant expressions of Lascoux and Pragacz or Hamel and Goulden is that we can compute single coefficients without computing the entire expansion.
\begin{cor} For a partition $\lambda \vdash n$, a Schur function can be decomposed into ribbon Schur functions as $$s_{\lambda} = \sum_{\alpha \models n} L^{-1}_{\alpha, \lambda} r_{\alpha} \text{\quad with \quad} L^{-1}_{\alpha, \lambda} = \sum_{\mathcal{P}}(-1)^{\ell(\mathcal{P})}prod(\mathcal{P}), $$ where the first sum runs over compositions $\alpha$ and the second runs over directed paths $\mathcal{P}$ from $\alpha$ to $\lambda$ in $\mathfrak{D}^n$. 
\end{cor}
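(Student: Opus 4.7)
The plan is to apply the forgetful map $\chi : NSym \to Sym$ to both sides of the expansion in Corollary \ref{imm_to_R}, specialized to the case $\beta = \lambda$ a partition. I would rely on two standard specialization properties of $\chi$: first, that $\chi(\mathfrak{S}_\lambda) = s_\lambda$ for every partition $\lambda$ (the Schur-like property of the immaculate basis emphasized in the introduction and established in \cite{berg18}); and second, that $\chi(R_\alpha) = r_\alpha$ for every composition $\alpha$, which is immediate from the definition of the ribbon noncommutative symmetric function via Equation \eqref{R_H_equation} and the parallel definition of $r_\alpha$ as the same alternating sum of complete homogeneous symmetric functions in $Sym$.

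With these in hand, the argument is essentially one line. Starting from $\mathfrak{S}_\lambda = \sum_{\alpha \models n} L^{-1}_{\alpha,\lambda} R_\alpha$ and applying $\chi$ by linearity yields
$$s_\lambda \;=\; \chi(\mathfrak{S}_\lambda) \;=\; \sum_{\alpha \models n} L^{-1}_{\alpha,\lambda}\, \chi(R_\alpha) \;=\; \sum_{\alpha \models n} L^{-1}_{\alpha,\lambda}\, r_\alpha.$$
The coefficients $L^{-1}_{\alpha,\lambda} = \sum_{\mathcal{P}}(-1)^{\ell(\mathcal{P})} prod(\mathcal{P})$ pass through $\chi$ unaltered since they are rational scalars defined purely combinatorially in $\mathfrak{D}^n$, independent of the ambient algebra in which the expansion is interpreted.

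I do not anticipate any real obstacle; the only subtlety worth flagging is that the summation on the right ranges over all compositions $\alpha \models n$ reachable from $\lambda$ in $\mathfrak{D}^n$ (with $L^{-1}_{\alpha,\lambda} = 0$ for unreachable $\alpha$), not only over partitions, but this causes no issue since $r_\alpha$ is defined for arbitrary compositions. If I wanted to elaborate, I could additionally remark that this gives a direct way to read off any single coefficient of $s_\lambda$ in the ribbon Schur expansion by enumerating only the directed paths ending at $\lambda$ that start at the composition of interest, without computing the full transition matrix.
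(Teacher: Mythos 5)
Your proposal is correct and matches the paper's approach exactly: the paper likewise obtains this corollary by applying the forgetful map $\chi$ to Corollary \ref{imm_to_R}, using $\chi(\mathfrak{S}_{\lambda}) = s_{\lambda}$ for partitions and $\chi(R_{\alpha}) = r_{\alpha}$. Your additional remarks on the scalar coefficients passing through $\chi$ and on the range of summation are accurate but not needed beyond what the paper states.
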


While the colored immaculate functions mirror many of the properties of the immaculate functions, the Jacobi-Trudi formula does not generalize naturally. This is in part due to the challenges of a deletion operation on words which would be needed to generalize integer subtraction.  Future work may investigate such a formula.

\section{The colored immaculate poset and 
 skew colored immaculate tableaux}  \label{posetsection}

Colored composition diagrams admit a natural partial ordering similar to that of Young's lattice and the immaculate poset. The elements of this poset can be thought of as sentences or colored composition diagrams, which gives a more visual representation. This poset has a combinatorial relationship with standard colored immaculate tableaux and leads to a natural definition of skew colored immaculate tableaux which in turn leads to the skew colored dual immaculate functions. Additionally, the right Pieri rule on colored immaculate functions connects this poset and these skew functions to the structure constants of the colored immaculate functions as it does in the non-colored case.

\begin{defn}
The \emph{colored immaculate poset $\mathfrak{P}_A$} is the set of all sentences on $A$ with the partial order defined by the cover relation that $I$ covers $J$ if $J \subset_{a} I$ for some $a \in A$. 
\end{defn}

This cover relation means that $I$ covers $J$ if $I$ differs from $J$ by the addition of a box colored with $a$ placed on the right side of, or below, $J$. In this case, arrows from $J$ to $I$ in the Hasse diagram of $\mathfrak{P}_A$ are labeled with $(m,a)$ where $m$ is the number of the row to which $a$ is added in $J$. 

\tikzset{every picture/.style={line width=0.75pt}} %set default line width to 0.75pt      

\begin{figure}[h!]
\centering
\scalebox{0.65}{
\begin{tikzpicture}[x=0.75pt,y=0.75pt,yscale=-1,xscale=1]

%uncomment if require: \path (0,556); %set diagram left start at 0, and has height of 556

%Shape: Rectangle [id:dp575965922614204] 
\draw  [line width=.5]  (164,242) -- (194,242) -- (194,272) -- (164,272) -- cycle ;
%Shape: Rectangle [id:dp8044898863012624] 
\draw  [line width=.5]  (277,403) -- (307,403) -- (307,433) -- (277,433) -- cycle ;
%Shape: Rectangle [id:dp7905547104716522] 
\draw  [line width=.5]  (397,404) -- (427,404) -- (427,434) -- (397,434) -- cycle ;
%Shape: Rectangle [id:dp8647261706177158] 
\draw  [line width=.5]  (77,225) -- (107,225) -- (107,255) -- (77,255) -- cycle ;
%Shape: Rectangle [id:dp8596816641566041] 
\draw  [line width=.5]  (77,255) -- (107,255) -- (107,285) -- (77,285) -- cycle ;
%Shape: Rectangle [id:dp20927213114675958] 
\draw  [line width=.5]  (220,227) -- (250,227) -- (250,257) -- (220,257) -- cycle ;
%Shape: Rectangle [id:dp6586471682862116] 
\draw  [line width=.5]  (220,257) -- (250,257) -- (250,287) -- (220,287) -- cycle ;
%Shape: Rectangle [id:dp4688322013783064] 
\draw  [line width=.5]  (366,228) -- (396,228) -- (396,258) -- (366,258) -- cycle ;
%Shape: Rectangle [id:dp9684081264459412] 
\draw  [line width=.5]  (366,258) -- (396,258) -- (396,288) -- (366,288) -- cycle ;
%Shape: Rectangle [id:dp21890790077669253] 
\draw  [line width=.5]  (418,228) -- (448,228) -- (448,258) -- (418,258) -- cycle ;
%Shape: Rectangle [id:dp2759477139217428] 
\draw  [line width=.5]  (418,258) -- (448,258) -- (448,288) -- (418,288) -- cycle ;
%Shape: Rectangle [id:dp4666793998158161] 
\draw  [line width=.5]  (134,242) -- (164,242) -- (164,272) -- (134,272) -- cycle ;
%Shape: Rectangle [id:dp9144394614950948] 
\draw  [line width=.5]  (306,242) -- (336,242) -- (336,272) -- (306,272) -- cycle ;
%Shape: Rectangle [id:dp179758949332528] 
\draw  [line width=.5]  (276,242) -- (306,242) -- (306,272) -- (276,272) -- cycle ;
%Shape: Rectangle [id:dp993119704417242] 
\draw  [line width=.5]  (29,34) -- (59,34) -- (59,64) -- (29,64) -- cycle ;
%Shape: Rectangle [id:dp20725453643577607] 
\draw  [line width=.5]  (29,64) -- (59,64) -- (59,94) -- (29,94) -- cycle ;
%Shape: Rectangle [id:dp8782089146601351] 
\draw  [line width=.5]  (87,51) -- (117,51) -- (117,81) -- (87,81) -- cycle ;
%Shape: Rectangle [id:dp42348593295052894] 
\draw  [line width=.5]  (87,81) -- (117,81) -- (117,111) -- (87,111) -- cycle ;
%Shape: Rectangle [id:dp29503917266779056] 
\draw  [line width=.5]  (506,66) -- (536,66) -- (536,96) -- (506,96) -- cycle ;
%Shape: Rectangle [id:dp8425995873889967] 
\draw  [line width=.5]  (476,66) -- (506,66) -- (506,96) -- (476,96) -- cycle ;
%Shape: Rectangle [id:dp2167212554018969] 
\draw  [line width=.5]  (29,94) -- (59,94) -- (59,124) -- (29,124) -- cycle ;
%Shape: Rectangle [id:dp4870845648312154] 
\draw  [line width=.5]  (117,81) -- (147,81) -- (147,111) -- (117,111) -- cycle ;
%Shape: Rectangle [id:dp8720272286366499] 
\draw  [line width=.5]  (536,66) -- (566,66) -- (566,96) -- (536,96) -- cycle ;
%Shape: Rectangle [id:dp8427842952010649] 
\draw  [line width=.5]  (177,34) -- (207,34) -- (207,64) -- (177,64) -- cycle ;
%Shape: Rectangle [id:dp6209357871208898] 
\draw  [line width=.5]  (177,64) -- (207,64) -- (207,94) -- (177,94) -- cycle ;
%Shape: Rectangle [id:dp627007474058193] 
\draw  [line width=.5]  (237,52) -- (267,52) -- (267,82) -- (237,82) -- cycle ;
%Shape: Rectangle [id:dp052225851993351835] 
\draw  [line width=.5]  (237,82) -- (267,82) -- (267,112) -- (237,112) -- cycle ;
%Shape: Rectangle [id:dp1772894266753322] 
\draw  [line width=.5]  (625,65) -- (655,65) -- (655,95) -- (625,95) -- cycle ;
%Shape: Rectangle [id:dp2187131753926035] 
\draw  [line width=.5]  (595,65) -- (625,65) -- (625,95) -- (595,95) -- cycle ;
%Shape: Rectangle [id:dp34667173126984285] 
\draw  [line width=.5]  (177,94) -- (207,94) -- (207,124) -- (177,124) -- cycle ;
%Shape: Rectangle [id:dp379658379008994] 
\draw  [line width=.5]  (267,52) -- (297,52) -- (297,82) -- (267,82) -- cycle ;
%Shape: Rectangle [id:dp07343214760304262] 
\draw  [line width=.5]  (655,65) -- (685,65) -- (685,95) -- (655,95) -- cycle ;
%Shape: Rectangle [id:dp39327869651417147] 
\draw  [line width=.5]  (328,33) -- (358,33) -- (358,63) -- (328,63) -- cycle ;
%Shape: Rectangle [id:dp9442314809207009] 
\draw  [line width=.5]  (328,63) -- (358,63) -- (358,93) -- (328,93) -- cycle ;
%Shape: Rectangle [id:dp12361211082599866] 
\draw  [line width=.5]  (389,52) -- (419,52) -- (419,82) -- (389,82) -- cycle ;
%Shape: Rectangle [id:dp168024631740892] 
\draw  [line width=.5]  (389,82) -- (419,82) -- (419,112) -- (389,112) -- cycle ;
%Shape: Rectangle [id:dp73913653649088] 
\draw  [line width=.5]  (328,93) -- (358,93) -- (358,123) -- (328,123) -- cycle ;
%Shape: Rectangle [id:dp31165221069914906] 
\draw  [line width=.5]  (419,52) -- (449,52) -- (449,82) -- (419,82) -- cycle ;
%Straight Lines [id:da013697030856572479] 
\draw [color={rgb, 255:red, 0; green, 0; blue, 0 }  ,draw opacity=1 ][line width=.5]    (343,487) -- (311.7,441.47) ;
\draw [shift={(310,439)}, rotate = 55.49] [color={rgb, 255:red, 0; green, 0; blue, 0 }  ,draw opacity=1 ][line width=.5]    (14.21,-4.28) .. controls (9.04,-1.82) and (4.3,-0.39) .. (0,0) .. controls (4.3,0.39) and (9.04,1.82) .. (14.21,4.28)   ;
%Straight Lines [id:da566827956828055] 
\draw [color={rgb, 255:red, 0; green, 0; blue, 0 }  ,draw opacity=1 ][line width=.5]    (360,487) -- (391.25,443.44) ;
\draw [shift={(393,441)}, rotate = 125.66] [color={rgb, 255:red, 0; green, 0; blue, 0 }  ,draw opacity=1 ][line width=.5]    (14.21,-4.28) .. controls (9.04,-1.82) and (4.3,-0.39) .. (0,0) .. controls (4.3,0.39) and (9.04,1.82) .. (14.21,4.28)   ;
%Straight Lines [id:da9970360480736631] 
\draw [color={rgb, 255:red, 0; green, 0; blue, 0 }  ,draw opacity=1 ][line width=.5]    (287,395) -- (149.43,295.76) ;
\draw [shift={(147,294)}, rotate = 35.81] [color={rgb, 255:red, 0; green, 0; blue, 0 }  ,draw opacity=1 ][line width=.5]    (14.21,-4.28) .. controls (9.04,-1.82) and (4.3,-0.39) .. (0,0) .. controls (4.3,0.39) and (9.04,1.82) .. (14.21,4.28)   ;
%Shape: Rectangle [id:dp6183191013664859] 
\draw  [line width=.5]  (505,239) -- (535,239) -- (535,269) -- (505,269) -- cycle ;
%Shape: Rectangle [id:dp9972582945775195] 
\draw  [line width=.5]  (475,239) -- (505,239) -- (505,269) -- (475,269) -- cycle ;
%Shape: Rectangle [id:dp6933384912425269] 
\draw  [line width=.5]  (588,239) -- (618,239) -- (618,269) -- (588,269) -- cycle ;
%Shape: Rectangle [id:dp30062854419368] 
\draw  [line width=.5]  (558,239) -- (588,239) -- (588,269) -- (558,269) -- cycle ;
%Straight Lines [id:da4236162517104267] 
\draw [color={rgb, 255:red, 0; green, 0; blue, 0 }  ,draw opacity=1 ][line width=.5]    (287,395) -- (94.66,295.38) ;
\draw [shift={(92,294)}, rotate = 27.38] [color={rgb, 255:red, 0; green, 0; blue, 0 }  ,draw opacity=1 ][line width=.5]    (14.21,-4.28) .. controls (9.04,-1.82) and (4.3,-0.39) .. (0,0) .. controls (4.3,0.39) and (9.04,1.82) .. (14.21,4.28)   ;
%Straight Lines [id:da22090322030915743] 
\draw [color={rgb, 255:red, 0; green, 0; blue, 0 }  ,draw opacity=1 ][line width=.5]    (287,395) -- (224.62,297.53) ;
\draw [shift={(223,295)}, rotate = 57.38] [color={rgb, 255:red, 0; green, 0; blue, 0 }  ,draw opacity=1 ][line width=.5]    (14.21,-4.28) .. controls (9.04,-1.82) and (4.3,-0.39) .. (0,0) .. controls (4.3,0.39) and (9.04,1.82) .. (14.21,4.28)   ;
%Straight Lines [id:da7785593062942968] 
\draw [color={rgb, 255:red, 0; green, 0; blue, 0 }  ,draw opacity=1 ][line width=.5]    (287,395) -- (305.93,297.94) ;
\draw [shift={(306.5,295)}, rotate = 101.03] [color={rgb, 255:red, 0; green, 0; blue, 0 }  ,draw opacity=1 ][line width=.5]    (14.21,-4.28) .. controls (9.04,-1.82) and (4.3,-0.39) .. (0,0) .. controls (4.3,0.39) and (9.04,1.82) .. (14.21,4.28)   ;
%Straight Lines [id:da4160419512010869] 
\draw [color={rgb, 255:red, 0; green, 0; blue, 0 }  ,draw opacity=1 ][line width=.5]    (412,395) -- (379.96,299.84) ;
\draw [shift={(379,297)}, rotate = 71.39] [color={rgb, 255:red, 0; green, 0; blue, 0 }  ,draw opacity=1 ][line width=.5]    (14.21,-4.28) .. controls (9.04,-1.82) and (4.3,-0.39) .. (0,0) .. controls (4.3,0.39) and (9.04,1.82) .. (14.21,4.28)   ;
%Straight Lines [id:da9430452435986705] 
\draw [color={rgb, 255:red, 0; green, 0; blue, 0 }  ,draw opacity=1 ][line width=.5]    (412,395) -- (431.41,297.94) ;
\draw [shift={(432,295)}, rotate = 101.31] [color={rgb, 255:red, 0; green, 0; blue, 0 }  ,draw opacity=1 ][line width=.5]    (14.21,-4.28) .. controls (9.04,-1.82) and (4.3,-0.39) .. (0,0) .. controls (4.3,0.39) and (9.04,1.82) .. (14.21,4.28)   ;
%Straight Lines [id:da8629009471688103] 
\draw [color={rgb, 255:red, 0; green, 0; blue, 0 }  ,draw opacity=1 ][line width=.5]    (412,395) -- (499.98,298.22) ;
\draw [shift={(502,296)}, rotate = 132.27] [color={rgb, 255:red, 0; green, 0; blue, 0 }  ,draw opacity=1 ][line width=.5]    (14.21,-4.28) .. controls (9.04,-1.82) and (4.3,-0.39) .. (0,0) .. controls (4.3,0.39) and (9.04,1.82) .. (14.21,4.28)   ;
%Straight Lines [id:da9243886494638835] 
\draw [color={rgb, 255:red, 0; green, 0; blue, 0 }  ,draw opacity=1 ][line width=.5]    (412,395) -- (586.39,295.49) ;
\draw [shift={(589,294)}, rotate = 150.29] [color={rgb, 255:red, 0; green, 0; blue, 0 }  ,draw opacity=1 ][line width=.5]    (14.21,-4.28) .. controls (9.04,-1.82) and (4.3,-0.39) .. (0,0) .. controls (4.3,0.39) and (9.04,1.82) .. (14.21,4.28)   ;
%Straight Lines [id:da47057991331036386] 
\draw [color={rgb, 255:red, 0; green, 0; blue, 0 }  ,draw opacity=1 ][line width=.5]    (91,213.5) -- (46.52,137.59) ;
\draw [shift={(45,135)}, rotate = 59.63] [color={rgb, 255:red, 0; green, 0; blue, 0 }  ,draw opacity=1 ][line width=.5]    (14.21,-4.28) .. controls (9.04,-1.82) and (4.3,-0.39) .. (0,0) .. controls (4.3,0.39) and (9.04,1.82) .. (14.21,4.28)   ;
%Straight Lines [id:da9904323319764552] 
\draw [color={rgb, 255:red, 0; green, 0; blue, 0 }  ,draw opacity=1 ][line width=.5]    (588,229) -- (628.02,112.84) ;
\draw [shift={(629,110)}, rotate = 109.01] [color={rgb, 255:red, 0; green, 0; blue, 0 }  ,draw opacity=1 ][line width=.5]    (14.21,-4.28) .. controls (9.04,-1.82) and (4.3,-0.39) .. (0,0) .. controls (4.3,0.39) and (9.04,1.82) .. (14.21,4.28)   ;
%Straight Lines [id:da20408619878087064] 
\draw [color={rgb, 255:red, 0; green, 0; blue, 0 }  ,draw opacity=1 ][line width=.5]    (588,229) -- (523.46,112.62) ;
\draw [shift={(522,110)}, rotate = 60.99] [color={rgb, 255:red, 0; green, 0; blue, 0 }  ,draw opacity=1 ][line width=.5]    (14.21,-4.28) .. controls (9.04,-1.82) and (4.3,-0.39) .. (0,0) .. controls (4.3,0.39) and (9.04,1.82) .. (14.21,4.28)   ;
%Straight Lines [id:da8921746008252889] 
\draw [color={rgb, 255:red, 0; green, 0; blue, 0 }  ,draw opacity=1 ][line width=.5]    (503,225) -- (418.94,126.28) ;
\draw [shift={(417,124)}, rotate = 49.59] [color={rgb, 255:red, 0; green, 0; blue, 0 }  ,draw opacity=1 ][line width=.5]    (14.21,-4.28) .. controls (9.04,-1.82) and (4.3,-0.39) .. (0,0) .. controls (4.3,0.39) and (9.04,1.82) .. (14.21,4.28)   ;
%Straight Lines [id:da021342442309055798] 
\draw [color={rgb, 255:red, 0; green, 0; blue, 0 }  ,draw opacity=1 ][line width=.5]    (226.5,214) -- (197.09,138.79) ;
\draw [shift={(196,136)}, rotate = 68.64] [color={rgb, 255:red, 0; green, 0; blue, 0 }  ,draw opacity=1 ][line width=.5]    (14.21,-4.28) .. controls (9.04,-1.82) and (4.3,-0.39) .. (0,0) .. controls (4.3,0.39) and (9.04,1.82) .. (14.21,4.28)   ;
%Straight Lines [id:da5486117794094123] 
\draw [color={rgb, 255:red, 0; green, 0; blue, 0 }  ,draw opacity=1 ][line width=.5]    (226.5,214) -- (264.79,126.75) ;
\draw [shift={(266,124)}, rotate = 113.7] [color={rgb, 255:red, 0; green, 0; blue, 0 }  ,draw opacity=1 ][line width=.5]    (14.21,-4.28) .. controls (9.04,-1.82) and (4.3,-0.39) .. (0,0) .. controls (4.3,0.39) and (9.04,1.82) .. (14.21,4.28)   ;
%Straight Lines [id:da8957834845348327] 
%\draw [color={rgb, 255:red, 155; green, 155; blue, 155 }  ,draw opacity=1 ][line width=.5.5]    (164,214) -- (109.6,127.54) ;
%\draw [shift={(108,125)}, rotate = 57.82] [color={rgb, 255:red, 155; green, 155; blue, 155 }  ,draw opacity=1 ][line width=.5.5]    (14.21,-4.28) .. controls (9.04,-1.82) and (4.3,-0.39) .. (0,0) .. controls (4.3,0.39) and (9.04,1.82) .. (14.21,4.28)   ;
%Straight Lines [id:da8482129987269036] 
\draw [color={rgb, 255:red, 0; green, 0; blue, 0 }  ,draw opacity=1 ][line width=.5]    (91,213.5) -- (96.8,127.99) ;
\draw [shift={(97,125)}, rotate = 93.88] [color={rgb, 255:red, 0; green, 0; blue, 0 }  ,draw opacity=1 ][line width=.5]    (14.21,-4.28) .. controls (9.04,-1.82) and (4.3,-0.39) .. (0,0) .. controls (4.3,0.39) and (9.04,1.82) .. (14.21,4.28)   ;
%Straight Lines [id:da8511787378572135] 
\draw [color={rgb, 255:red, 0; green, 0; blue, 0 }  ,draw opacity=1 ][line width=.5]    (306,225) -- (277.83,126.88) ;
\draw [shift={(277,124)}, rotate = 73.98] [color={rgb, 255:red, 0; green, 0; blue, 0 }  ,draw opacity=1 ][line width=.5]    (14.21,-4.28) .. controls (9.04,-1.82) and (4.3,-0.39) .. (0,0) .. controls (4.3,0.39) and (9.04,1.82) .. (14.21,4.28)   ;
%Straight Lines [id:da030808171366762505] 
\draw [color={rgb, 255:red, 0; green, 0; blue, 0 }  ,draw opacity=1 ][line width=.5]    (377,214) -- (357.73,136.91) ;
\draw [shift={(357,134)}, rotate = 75.96] [color={rgb, 255:red, 0; green, 0; blue, 0 }  ,draw opacity=1 ][line width=.5]    (14.21,-4.28) .. controls (9.04,-1.82) and (4.3,-0.39) .. (0,0) .. controls (4.3,0.39) and (9.04,1.82) .. (14.21,4.28)   ;
%Straight Lines [id:da12960979974430398] 
\draw [color={rgb, 255:red, 0; green, 0; blue, 0 }  ,draw opacity=1 ][line width=.5]    (428,215) -- (408.65,127.93) ;
\draw [shift={(408,125)}, rotate = 77.47] [color={rgb, 255:red, 0; green, 0; blue, 0 }  ,draw opacity=1 ][line width=.5]    (14.21,-4.28) .. controls (9.04,-1.82) and (4.3,-0.39) .. (0,0) .. controls (4.3,0.39) and (9.04,1.82) .. (14.21,4.28)   ;

% Text Node
\draw (347,494) node [anchor=north west][inner sep=0.75pt]  [font=\large] [align=left] {$\emptyset$};
% Text Node
\draw (286,420) node [anchor=west][inner sep=0.75pt]  [font= \large] [align=left] {a};
% Text Node
\draw (86,242) node [anchor= west][inner sep=0.75pt]  [font=\large] [align=left] {a};
% Text Node
\draw (229,244) node [anchor= west][inner sep=0.75pt]  [font=\large] [align=left] {a};
% Text Node
\draw (375,274) node [anchor= west][inner sep=0.75pt]  [font=\large] [align=left] {a};
% Text Node
\draw (86,273) node [anchor= west][inner sep=0.75pt]  [font=\large] [align=left] {a};
% Text Node
\draw (142,258) node [anchor= west][inner sep=0.75pt]  [font=\large] [align=left] {a};
% Text Node
\draw (284,259) node [anchor= west][inner sep=0.75pt]  [font=\large] [align=left] {a};
% Text Node
\draw (513,255) node [anchor= west][inner sep=0.75pt]  [font=\large] [align=left] {a};
% Text Node
\draw (406,419) node [anchor= west][inner sep=0.75pt]  [font=\large] [align=left] {b};
% Text Node
\draw (229,273) node [anchor= west][inner sep=0.75pt]  [font=\large] [align=left] {b};
% Text Node %%%%%%%
\draw (315,257) node [anchor= west][inner sep=0.75pt]  [font=\large] [align=left] {b};
% Text Node
\draw (375,243) node [anchor= west][inner sep=0.75pt]  [font=\large] [align=left] {b};
% Text Node
\draw (427,242) node [anchor= west][inner sep=0.75pt]  [font=\large] [align=left] {b};
% Text Node
\draw (484,254) node [anchor= west][inner sep=0.75pt]  [font=\large] [align=left] {b};
% Text Node
\draw (566,254) node [anchor= west][inner sep=0.75pt]  [font=\large] [align=left] {b};
% Text Node
\draw (597,254) node [anchor= west][inner sep=0.75pt]  [font=\large] [align=left] {b};
% Text Node
\draw (426,273) node [anchor= west][inner sep=0.75pt]  [font=\large] [align=left] {b};
% Text Node
\draw (173,258) node [anchor= west][inner sep=0.75pt]  [font=\large] [align=left] {a};
% Text Node
\draw (38,52) node [anchor= west][inner sep=0.75pt]  [font=\large] [align=left] {a};
% Text Node
\draw (37,81) node [anchor= west][inner sep=0.75pt]  [font=\large] [align=left] {a};
% Text Node
\draw (185,50) node [anchor= west][inner sep=0.75pt]  [font=\large] [align=left] {a};
% Text Node
\draw (185,111) node [anchor= west][inner sep=0.75pt]  [font=\large] [align=left] {a};
% Text Node
\draw (96,68) node [anchor= west][inner sep=0.75pt]  [font=\large] [align=left] {a};
% Text Node
\draw (96,98) node [anchor= west][inner sep=0.75pt]  [font=\large] [align=left] {a};
% Text Node
\draw (125,98) node [anchor= west][inner sep=0.75pt]  [font=\large] [align=left] {a};
% Text Node %%%%%%
\draw (246,70) node [anchor= west][inner sep=0.75pt]  [font=\large] [align=left] {a};
% Text Node
\draw (337,80) node [anchor= west][inner sep=0.75pt]  [font=\large] [align=left] {a};
% Text Node
\draw (428,68) node [anchor= west][inner sep=0.75pt]  [font=\large] [align=left] {a};
% Text Node
\draw (664,81) node [anchor= west][inner sep=0.75pt]  [font=\large] [align=left] {a};
% Text Node
\draw (604,80) node [anchor= west][inner sep=0.75pt]  [font=\large] [align=left] {b};
% Text Node
\draw (635,80) node [anchor= west][inner sep=0.75pt]  [font=\large] [align=left] {b};
% Text Node
\draw (545,81) node [anchor= west][inner sep=0.75pt]  [font=\large] [align=left] {b};
% Text Node
\draw (515,81) node [anchor= west][inner sep=0.75pt]  [font=\large] [align=left] {b};
% Text Node
\draw (485,80) node [anchor= west][inner sep=0.75pt]  [font=\large] [align=left] {b};
% Text Node
\draw (397,67) node [anchor= west][inner sep=0.75pt]  [font=\large] [align=left] {b};
% Text Node
\draw (398,97) node [anchor= west][inner sep=0.75pt]  [font=\large] [align=left] {b};
% Text Node
\draw (337,48) node [anchor= west][inner sep=0.75pt]  [font=\large] [align=left] {b};
% Text Node
\draw (338,108) node [anchor= west][inner sep=0.75pt]  [font=\large] [align=left] {b};
% Text Node
\draw (275,67) node [anchor= west][inner sep=0.75pt]  [font=\large] [align=left] {b};
% Text Node
\draw (246,97) node [anchor= west][inner sep=0.75pt]  [font=\large] [align=left] {b};
% Text Node
\draw (185,79) node [anchor= west][inner sep=0.75pt]  [font=\large] [align=left] {b};
% Text Node
\draw (38,109) node [anchor= west][inner sep=0.75pt]  [font=\large] [align=left] {b};
% Text Node
\draw (278,471) node [anchor= west][inner sep=0.75pt]   [align=left] {(1, a)};
% Text Node
\draw (392,471) node [anchor= west][inner sep=0.75pt]   [align=left] {(1, b)};
% Text Node
\draw (128,345.5) node [anchor= west][inner sep=0.75pt]   [align=left] {(2, a)};
% Text Node
\draw (256,326.5) node [anchor= west][inner sep=0.75pt]   [align=left] {(2, b)};
% Text Node
\draw (305,346.5) node [anchor= west][inner sep=0.75pt]   [align=left] {(1, b)};
% Text Node
\draw (190,315.5) node [anchor= west][inner sep=0.75pt]   [align=left] {(1, a)};
% Text Node
\draw (358,370) node [anchor= west][inner sep=0.75pt]   [align=left] {(2, a)};
% Text Node
\draw (432,317.5) node [anchor= west][inner sep=0.75pt]   [align=left] {(2, b)};
% Text Node
\draw (490,320) node [anchor= west][inner sep=0.75pt]   [align=left] {(1, a)};
% Text Node
\draw (522,346.5) node [anchor= west][inner sep=0.75pt]   [align=left] {(1, b)};
% Text Node
\draw (32,183.5) node [anchor= west][inner sep=0.75pt]   [align=left] {(3, b)};
% Text Node
\draw (100,187) node [anchor= west][inner sep=0.75pt]   [align=left] {(2, a)};
% Text Node
\draw (175,186.5) node [anchor= west][inner sep=0.75pt]   [align=left] {(3, a)};
% Text Node
\draw (295,156.5) node [anchor= west][inner sep=0.75pt]   [align=left] {(2, b)};
% Text Node
%\draw (138,156.5) node [anchor= west][inner sep=0.75pt]   [align=left] {2a};
% Text Node
\draw (330,186.5) node [anchor= west][inner sep=0.75pt]   [align=left] {(3, b)};
% Text Node
\draw (246,180) node [anchor= west][inner sep=0.75pt]   [align=left] {(1, b)};
% Text Node
\draw (378,165) node [anchor= west][inner sep=0.75pt]   [align=left] {(1, a)};
% Text Node
\draw (483,185.5) node [anchor= west][inner sep=0.75pt]   [align=left] {(2, b)};
% Text Node
\draw (555,145.5) node [anchor= west][inner sep=0.75pt]   [align=left] {(1, b)};
% Text Node
\draw (614,186.5) node [anchor= west][inner sep=0.75pt]   [align=left] {(1, a)};

\end{tikzpicture}}

\caption{A portion of the colored immaculate poset $\mathfrak{P}_{A}$ on the alphabet $A = \{a,b\}$.}
\end{figure}
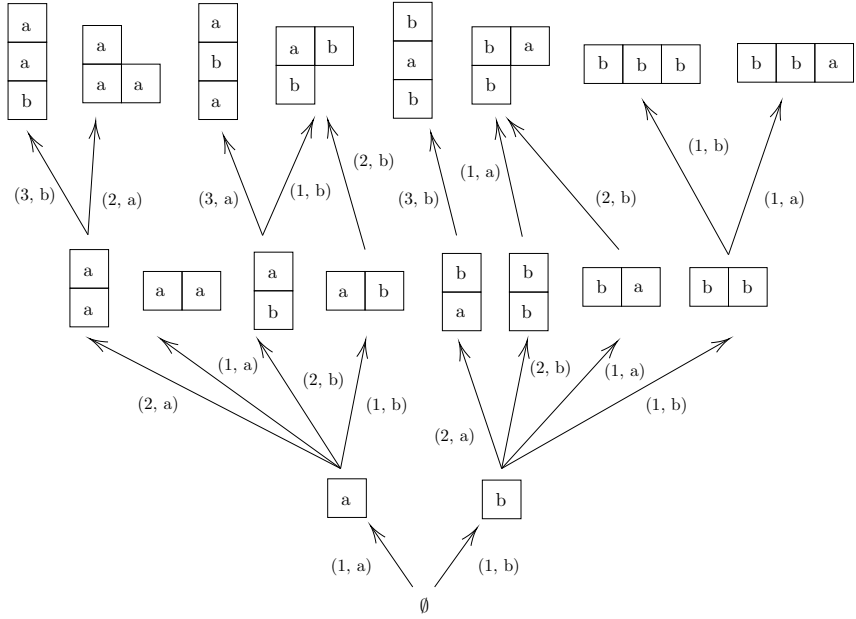

The maximal chains on $\mathfrak{P}_A$ from $\emptyset$ to $I$ are equivalent to the standard colored immaculate tableaux of shape $I$. The maximal chain $C = \{\emptyset = J_0 \xrightarrow{(m_1,a_1)} J_1 \xrightarrow{(m_2, a_2)} \cdots \xrightarrow{(m_k,a_k)} J_k = I  \}$ is associated with the standard colored immaculate tableau of shape $I$ whose boxes are filled with the integers $1$ through $n$ in the order they appear in the path. That is, the box added from $J_{j} \xrightarrow{(m_{j+1}, a_{j+1})} J_{j+1}$, which is added to row $m_{j+1}$ and colored with $a_{j+1}$, is filled with the integer $j+1$.

\begin{ex} The maximal chain
$C = \{\emptyset \xrightarrow{(1,a)}[a] \xrightarrow{(2,d)} [a,d] \xrightarrow{(2,e)} [a,de] \xrightarrow{(1,b)} [ab,de] \xrightarrow{(2,f)} [ab,def] \xrightarrow{(1,c)} [abc,def] \}$ is associated with the following tableaux: \vspace{1mm} 
\ytableausetup{boxsize=8mm}
$$\scalebox{.75}{
\begin{ytableau}
    \small a,1 & \small b,4 & \small c,6\\
    \small d,2 & \small e,3 & \small f,5
\end{ytableau}

}\vspace{2mm}$$
\end{ex}

Maximal chains starting from a non-empty sentence $J$ going to $I$ lead to a natural definition of \textit{skew standard colored immaculate tableaux}. 

\begin{defn}
    For sentences $I$ and $J=(v_1, \ldots, v_h)$ with $J \subseteq_L I$, the \emph{colored skew shape} $I/J$ is the colored composition diagram of $I$ where, for $1 \leq i \leq h$, the first $|v_i|$ boxes of the $i^{\text{th}}$ row are inactive. The inactive boxes are shaded gray to indicate that they have in a sense been ``removed'', however the colors filling them are still relevant.
\end{defn}

\begin{defn}
For sentences $I$ and $J$ with $J \subseteq_L I$, a \emph{skew colored immaculate tableau} of shape $I/J$ is a colored skew shape $I/J$ filled with integers such that the sequence of integer entries in the first column is strictly increasing from top to bottom and the sequence of integer entries in each row is weakly increasing from left to right. Here the inactive boxes of $I/J$ are not filled, and we consider the first column of a colored skew shape $I/J$ to be the column corresponding to the first column of $I$.
\end{defn}

The maximal chain $C = \{J = J_0 \xrightarrow{(m_1,a_1)} J_1 \xrightarrow{(m_2, a_2)} \cdots \xrightarrow{(m_k,a_k)} J_k = I  \}$ is associated with the skew standard colored immaculate tableau of shape $I/J$ whose boxes are filled with the integers $1, 
\ldots, k$ in the order they appear in the path. 

\begin{ex} The maximal chain 
$ C = \{[a,de] \xrightarrow{(1,b)} [ab,de] \xrightarrow{(2,f)} [ab,def] \xrightarrow{(1,c)} [abc,def] \}$ is associated with the following skew colored immaculate tableau:\vspace{1mm}
\ytableausetup{boxsize=8mm}
$$
\scalebox{.75}{

\begin{ytableau}
    *(gray!80) \small a & \small b,1 & \small c,3\\
    *(gray!80) \small d & *(gray!80) \small e & \small f,2
\end{ytableau}}$$\vspace{-1mm}
\end{ex}

\begin{defn}\label{dualskewdef}
For sentences $I,J$ such that $J \subseteq_L I$, define the \emph{skew colored dual immaculate function} as $$\mathfrak{S}^*_{I/J} = 
\sum_K \langle \mathfrak{S}_JH_K, \mathfrak{S}^*_I \rangle M_K,$$ where the sum runs over all sentences $K \in \mathfrak{P}_A$ such that $|I|-|J| = |K|$.
\end{defn}

\begin{prop}\label{coeff_skew_number}
The coefficient $\langle \mathfrak{S}_JH_K, \mathfrak{S}^*_I \rangle$ is equal to the number of skew colored immaculate tableaux of shape $I/J$ with type $K$. 
\end{prop}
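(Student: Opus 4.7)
The plan is to iterate the right Pieri rule (Theorem \ref{rightpieriI}) and then exhibit a bijection between iterated Pieri chains and skew colored immaculate tableaux. Write $K = (w_1,\ldots,w_\ell)$ so that $H_K = H_{w_1}H_{w_2}\cdots H_{w_\ell}$ in $NSym_A$. Applying Theorem \ref{rightpieriI} successively gives
\[
\mathfrak{S}_J H_K \;=\; \sum_{J = L_0 \subset_{w_1} L_1 \subset_{w_2} L_2 \subset_{w_3} \cdots \subset_{w_\ell} L_\ell} \mathfrak{S}_{L_\ell}.
\]
By Corollary \ref{color_dual} (duality of the colored immaculate and colored dual immaculate bases), the inner product $\langle \mathfrak{S}_J H_K, \mathfrak{S}^*_I \rangle$ equals the number of chains $J = L_0 \subset_{w_1} L_1 \subset_{w_2} \cdots \subset_{w_\ell} L_\ell = I$. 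So the proposition reduces to showing that such chains are in bijection with skew colored immaculate tableaux of shape $I/J$ and type $K$.

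For the forward direction, given such a chain, fill each box of $L_i \setminus L_{i-1}$ with the integer $i$, producing a filling $T$ of the active cells of $I/J$. Rows of $T$ are weakly increasing because at each step the new boxes labelled $i$ are appended as a suffix of an existing row (this is exactly the condition $u_j = v_j \cdot q_j$ in the Pieri rule). The first column of $T$ is strictly increasing from top to bottom because the Pieri condition $g \leq h+1$ means each step creates at most one new bottom row, so each integer $i$ contributes at most one active first-column entry; the entries encountered going down the first column of $I$ are then labelled by the steps in which the rows were appended, in strictly increasing order. Finally, the condition $q_g \cdot q_{g-1} \cdots q_1 = w_i$ in the Pieri rule ensures that reading the $i$-boxes left-to-right, bottom-to-top, produces $w_i$, so the type of $T$ is exactly $K$.

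For the inverse, given a skew CIT $T$ of shape $I/J$ and type $K$, define $L_i$ to be the colored sub-diagram of $I$ consisting of the inactive cells (those belonging to $J$) together with all active cells carrying a label $\leq i$. Three checks are needed, and this is the main obstacle since everything must match a valid sentence at each stage: first, $L_i$ really is a colored composition diagram of a sentence (no empty interior rows)—this follows from the strictly increasing condition on the first column, which forces any row of $I$ whose leading active entry is $> i$ to lie strictly below any row whose leading active entry is $\leq i$; second, the boxes of $L_i \setminus L_{i-1}$ lying in each row form a suffix of that row of $L_i$, which follows from the row weakly increasing condition; third, these suffixes concatenated bottom-to-top spell $w_i$, which is precisely the type condition in Definition \ref{dualskewdef}'s surrounding context. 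Together these verify $L_{i-1} \subset_{w_i} L_i$ with the decomposition pieces $q_j^{(i)}$ read off directly from $T$, and that at most one new row appears at step $i$ (so $g^{(i)} \leq \ell(L_{i-1}) + 1$).

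The two constructions are mutually inverse by definition: the chain-to-tableau map records which step introduced each box, and the tableau-to-chain map partitions the tableau by label. Thus chains of length $\ell$ from $J$ to $I$ with step-$i$ pieces dictated by $w_i$ biject with skew colored immaculate tableaux of shape $I/J$ and type $K$, establishing $\langle \mathfrak{S}_J H_K, \mathfrak{S}^*_I \rangle$ equals the number of such tableaux.
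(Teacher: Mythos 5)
Your proposal is correct and follows essentially the same route as the paper: iterate the right Pieri rule to write $\mathfrak{S}_J H_K$ as a sum over chains $J \subset_{w_1} L_1 \subset_{w_2} \cdots \subset_{w_\ell} I$, use duality to identify the inner product with the number of such chains, and biject chains with skew colored immaculate tableaux by labelling the boxes of $L_i \setminus L_{i-1}$ with $i$. Your write-up is in fact somewhat more careful than the paper's in explicitly verifying that the resulting filling satisfies the row and first-column conditions of a skew CIT and that the inverse map lands on valid sentences at each stage.
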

\begin{proof} 
Let $K = (u_1, \ldots, u_g)$ be a sentence.  Notice that $\mathfrak{S}_JH_K= (((\mathfrak{S}_JH_{u_1})H_{u_2})\cdots H_{u_g})$ and by Theorem \ref{rightpieriI}, we have $$\mathfrak{S}_JH_K = \sum_{J \subseteq_{u_1} J_1 \subseteq_{u_2} \ldots J_{g-1} \subseteq_{u_g} L}  \mathfrak{S}_L,$$ for some sentences $J_1, \ldots J_{g-1}$. Thus, $$\langle \mathfrak{S}_JH_K, \mathfrak{S}^*_I \rangle = \left\langle \sum_{J \subseteq_{u_1} J_1 \subseteq_{u_2} \ldots J_{g-1} \subseteq_{u_g} L} \mathfrak{S}_L, \mathfrak{S}^*_I \right\rangle = \sum_{J \subseteq_{u_1} J_1 \subseteq_{u_2} \ldots J_{g-1} \subseteq_{u_g} L} \langle \mathfrak{S}_L, \mathfrak{S}^*_I \rangle$$ for some sentences $J_1, \ldots, J_{g-1}$. Therefore, for $J_1, \ldots, J_{g-1}$, this inner product is equivalent to the number of times that the sentence $I$ appears when summing over all sentences $L$ such that $J \subseteq_{u_1} J_1 \subseteq_{u_2} \ldots J_{g-1} \subseteq_{u_g} L$.  Each occurrence of $I$ can be associated with a unique sequence of sentences $(J, J_1, \ldots, J_{g-1})$ that appear in the sum, and each sequence can be associated with a unique skew colored immaculate tableau of shape $I/J$ and type $K$.  Starting with the colored skew shape $I/J$, first fill the boxes corresponding to those in $J_1 / J$ with $1$'s.  Then fill the boxes corresponding to $J_2 / J_1$ with 2's and continue repeating this process until the remaining boxes in $I/J_{g-1}$ are filled with $(g-1)$'s.  Note that because $J \subseteq_{u_1} J_1 \subseteq_{u_2} \ldots J_{g-1} \subseteq_{u_g} I$, the colors of the boxes filled with each number $j$, read from left to right and bottom to top, correspond exactly to the word $u_j$.  Through this construction, each sequence $J, J_1, \ldots, J_{g-1}$ corresponds to a unique skew colored immaculate tableau of shape $I/J$ and type $K$.  Additionally, each skew CIT $T$ of shape $I/J$ and type $K$ can be associated with a unique sequence $J, J_1, \ldots, J_{g-1}$ such that $J \subseteq_{u_1} J_1 \subseteq_{u_2} \ldots J_{g-1} \subseteq_{u_g} I$ by taking $T$ and removing all boxes filled with integers greater than $j$, for each $1 \leq j <g$, to get a colored tableau of shape $J_j$. Therefore, $\langle \mathfrak{S}_JH_K, \mathfrak{S}^*_I \rangle$ counts the number of skew CIT with shape $I/J$ and type $K$. 
\end{proof}

The use of linear functionals and properties of duality allows for the expansions of the skew colored dual immaculate functions into the colored fundamental basis and the colored dual immaculate basis with inner product coefficients.
\begin{prop} \label{skewcoeff} For an interval $[J,I]$ in $\mathfrak{P}_A,$

$$\mathfrak{S}^*_{I/J} = \sum_K \langle \mathfrak{S}_JR_K, \mathfrak{S}^*_I \rangle F_K = \sum_K \langle \mathfrak{S}_J\mathfrak{S}_K, \mathfrak{S}^*_I \rangle \mathfrak{S}^*_K,$$ where the sums run over all sentences $K$ such that $|I|-|J|=|K|$. The coefficients $\langle \mathfrak{S}_J\mathfrak{S}_K, \mathfrak{S}^*_I \rangle$ are equal to the structure coefficients $c^I_{J,K}$ for colored immaculate multiplication, $$\mathfrak{S}_J\mathfrak{S}_K = \sum_I c^{I}_{J,K}\mathfrak{S}_I = \sum_I \langle \mathfrak{S}_J\mathfrak{S}_K, \mathfrak{S}^*_I \rangle \mathfrak{S}_I,$$ where the sums run over all sentences $I$.
\end{prop}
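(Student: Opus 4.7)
The plan is to begin from the defining expression of $\mathfrak{S}^*_{I/J}$ in Definition \ref{dualskewdef} and convert the $\{H_K\}/\{M_K\}$ expansion into $\{R_K\}/\{F_K\}$ and $\{\mathfrak{S}_K\}/\{\mathfrak{S}^*_K\}$ expansions by applying known change-of-basis formulas and swapping summation orders. The structure-constants claim will then follow immediately from Corollary \ref{color_dual}.

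First, for the colored fundamental identity, I would apply equation \eqref{R_H_equation} to write $H_K = \sum_{L \succeq K} R_L$, so that $\langle \mathfrak{S}_J H_K, \mathfrak{S}^*_I\rangle = \sum_{L \succeq K} \langle \mathfrak{S}_J R_L, \mathfrak{S}^*_I\rangle$ by linearity. Substituting into $\mathfrak{S}^*_{I/J} = \sum_K \langle \mathfrak{S}_J H_K, \mathfrak{S}^*_I\rangle M_K$ and exchanging the order of summation yields $\sum_L \langle \mathfrak{S}_J R_L, \mathfrak{S}^*_I\rangle \sum_{K \preceq L} M_K$, which collapses to $\sum_L \langle \mathfrak{S}_J R_L, \mathfrak{S}^*_I\rangle F_L$ by the definition of the colored fundamental basis in \eqref{m_f_defs}.

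Next, for the colored dual immaculate identity, I would invoke Theorem \ref{H_to_S} to write $H_K = \sum_L K_{L,K}\, \mathfrak{S}_L$, giving $\langle \mathfrak{S}_J H_K, \mathfrak{S}^*_I\rangle = \sum_L K_{L,K}\, \langle \mathfrak{S}_J \mathfrak{S}_L, \mathfrak{S}^*_I\rangle$. Substituting and swapping sums produces $\sum_L \langle \mathfrak{S}_J \mathfrak{S}_L, \mathfrak{S}^*_I\rangle \sum_K K_{L,K}\, M_K$, which simplifies to $\sum_L \langle \mathfrak{S}_J \mathfrak{S}_L, \mathfrak{S}^*_I\rangle \mathfrak{S}^*_L$ by Theorem \ref{monomialexpansion}. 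One could equivalently observe that Proposition \ref{hopf_dual} guarantees that the expansion of any element of $QSym_A$ in a basis is obtained by pairing with the dual basis; hence the same linear functional $X \mapsto \langle \mathfrak{S}_J X, \mathfrak{S}^*_I\rangle$ on $NSym_A$ produces each of the three identities upon evaluating on $H_K$, $R_K$, or $\mathfrak{S}_K$ and pairing the coefficient with the dual basis element $M_K$, $F_K$, or $\mathfrak{S}^*_K$ respectively.

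Finally, for the structure-constant statement, I would expand $\mathfrak{S}_J \mathfrak{S}_K = \sum_L c^L_{J,K}\, \mathfrak{S}_L$ and pair with $\mathfrak{S}^*_I$, so that duality in Corollary \ref{color_dual} immediately gives $c^I_{J,K} = \langle \mathfrak{S}_J \mathfrak{S}_K, \mathfrak{S}^*_I\rangle$. This entire proof amounts to formal bookkeeping once the relevant change-of-basis results and duality are in place, so I do not anticipate a genuine obstacle; the only point that requires care is the index swap $\{(K,L): L \succeq K\} = \{(L,K): K \preceq L\}$, and the dual role of the coefficient $K_{L,K}$ in Theorems \ref{H_to_S} and \ref{monomialexpansion}.
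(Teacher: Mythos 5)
Your proof is correct and is essentially the paper's argument: the paper compresses all three identities into one line by noting that $\sum_K \langle \mathfrak{S}_J A_K, \mathfrak{S}^*_I\rangle B_K = \mathfrak{S}_J^{\perp}(\mathfrak{S}^*_I)$ is independent of the choice of dual basis pair $\{A_K\},\{B_K\}$ (Definition \ref{mperp1}), which is exactly the ``same linear functional'' observation you make at the end of your second paragraph. Your explicit change-of-basis computations via \eqref{R_H_equation}, Theorem \ref{H_to_S}, \eqref{m_f_defs}, and Theorem \ref{monomialexpansion} are a correct hands-on verification of the same fact, and the structure-constant claim follows from Corollary \ref{color_dual} as you say.
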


\begin{proof}
Observe that by Definition $\ref{mperp1}$, $$\mathfrak{S}^*_{I/J} = \sum_K \langle \mathfrak{S}_J H_K, \mathfrak{S}^*_I \rangle M_K = \mathfrak{S}^{\perp}_J(\mathfrak{S}^*_I) = \sum_K \langle \mathfrak{S}_JR_K, \mathfrak{S}^*_I \rangle F_K = \sum_K \langle \mathfrak{S}_J\mathfrak{S}_K, \mathfrak{S}^*_I \rangle \mathfrak{S}^*_K. \qedhere$$ 
\end{proof}

The skew colored dual immaculate functions can also be defined explicitly in terms of skew colored immaculate tableaux following Definition \ref{dualskewdef}.

\begin{prop}
Let $I=(w_1,\ldots ,w_k)$ and $J=(v_1,\ldots ,v_h)$ be sentences such that $J \subseteq_L I$. Then
$$\mathfrak{S}^*_{I/J} = \sum_T x_T,$$ where the sum is taken over all skew colored immaculate tableaux of shape $I/J$.
\end{prop}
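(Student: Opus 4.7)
The plan is to start from Definition \ref{dualskewdef}, which gives
$$\mathfrak{S}^*_{I/J} = \sum_K \langle \mathfrak{S}_J H_K, \mathfrak{S}^*_I \rangle M_K,$$
and invoke Proposition \ref{coeff_skew_number} to identify each coefficient $\langle \mathfrak{S}_J H_K, \mathfrak{S}^*_I \rangle$ with $N_{I/J,K}$, the number of skew colored immaculate tableaux of shape $I/J$ and type $K$. The problem then reduces to showing that
$$\sum_K N_{I/J,K}\, M_K \;=\; \sum_T x_T,$$
where the right-hand sum runs over all skew CIT of shape $I/J$.

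I would prove this by adapting the argument used for Theorem \ref{monomialexpansion} to the skew setting. First, group the skew CIT of shape $I/J$ by their flat type: for each sentence $B$ with $|B|=|I|-|J|$, collect all $T$ with $\widetilde{type(T)}=B$. The straight-shape bijection of Proposition \ref{typenumber} carries over verbatim to skew shapes (relabelling $1,\ldots,h$ by $i_1,\ldots,i_h$ according to which positions of a weak sentence $C$ are empty), so the number of skew CIT of shape $I/J$ and type $C$ equals $N_{I/J,\tilde{C}}$. The analogues of Propositions \ref{samestandardization} and \ref{standardrefinement} also go through unchanged on the active cells of $I/J$, since the defining inequalities of a skew CIT only constrain those cells, and standardization proceeds in exactly the same way.

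Using $M_B = \sum_{\tilde{C}=B} x^C$, I can then rewrite the contribution of flat type $B$ as
$$\sum_{\widetilde{type(T)}=B} x_T \;=\; \sum_{\tilde{C}=B} N_{I/J,C}\, x^C \;=\; N_{I/J,B}\sum_{\tilde{C}=B} x^C \;=\; N_{I/J,B}\, M_B.$$
Summing over all flat types $B$ that actually appear gives $\sum_T x_T = \sum_B N_{I/J,B} M_B$. Since any $B$ of the correct size that is not a flat type contributes $N_{I/J,B}=0$, we may extend the index set to all sentences $B$ with $|B|=|I|-|J|$, matching the monomial expansion of $\mathfrak{S}^*_{I/J}$ obtained above.

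The only substantive thing to check is that the standardization machinery from Section \ref{colordualsection} localizes correctly to skew shapes, i.e., that the standardization of a skew CIT $T$ is a standard skew CIT whose ``allowed refinements'' parametrize exactly the skew CIT with that standardization; this is the main (but essentially bookkeeping) obstacle, and once verified the rest of the argument is a direct transcription of the straight-shape proof.
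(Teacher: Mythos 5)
Your proposal is correct and follows essentially the same route as the paper: start from Definition \ref{dualskewdef}, apply Proposition \ref{coeff_skew_number} to identify the coefficients with counts of skew colored immaculate tableaux, and then use the flattening bijection of Proposition \ref{typenumber} (as in Theorem \ref{monomialexpansion}) to convert $\langle \mathfrak{S}_J H_K, \mathfrak{S}^*_I\rangle M_K$ into a sum of monomials $x_T$ over skew tableaux of flat type $K$. The only difference is that you also invoke the skew analogues of the standardization results (Propositions \ref{samestandardization} and \ref{standardrefinement}), which are not actually needed for this step; the flattening bijection alone suffices, as in the paper's argument.
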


\begin{proof} By Definition \ref{dualskewdef}, $\mathfrak{S}^*_{I/J} = \sum_K \langle \mathfrak{S}_JH_K, \mathfrak{S}^*_I \rangle M_K,$ where the sum runs over all sentences $K \in \mathfrak{P}_A$.  By Proposition \ref{coeff_skew_number}, $\langle \mathfrak{S}_JH_K, \mathfrak{S}^*_I \rangle$ is equal to the number of skew colored immaculate tableaux of shape $I/J$ and type $K$. Thus, following Proposition \ref{typenumber}, $\langle \mathfrak{S}_JH_K, \mathfrak{S}^*_I \rangle M_K = \sum_{T'} x_{T'}$ where the sum runs over all skew CIT $T'$ of shape $I/J$ and flat type $K$. Therefore, $$\mathfrak{S}^*_{I/J} = \sum_K \sum_{T'}x_{T'}= \sum_{T}x_T$$ where the sums run over sentences $K$ such that $|I|-|J|=|K|$, skew CIT $T'$ of shape $I/J$ and flat type $K$, and all skew CIT $T$ of shape $I/J$ and type $T$.
\end{proof}

Additionally, comultiplication on the colored dual immaculate basis can be defined in terms of skew functions following Propositions \ref{hopf_dual_coproduct} and \ref{skewcoeff}.

\begin{prop} \label{color_comult} For a sentence $I$,
$$\Delta (\mathfrak{S}^*_I) = \sum_J \mathfrak{S}^*_J \otimes \mathfrak{S}^*_{I/J},$$ where the sum runs over all sentences $J$ such that $J \subseteq_L I$. 

\end{prop}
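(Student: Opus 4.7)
The plan is to combine the general duality formula for coproducts (Proposition \ref{hopf_dual_coproduct}) with the description of skew colored dual immaculate functions as generating series of structure constants (Proposition \ref{skewcoeff}). Write $\mathfrak{S}_J \mathfrak{S}_K = \sum_I c^I_{J,K} \mathfrak{S}_I$ for the multiplicative structure constants of the colored immaculate basis. Since the colored immaculate and colored dual immaculate bases are dual (Corollary \ref{color_dual}), Proposition \ref{hopf_dual_coproduct} gives immediately
\[
\Delta(\mathfrak{S}^*_I) \;=\; \sum_{(J,K)} c^I_{J,K}\, \mathfrak{S}^*_J \otimes \mathfrak{S}^*_K,
\]
where the sum runs over all pairs of sentences $(J,K)$ with $|J|+|K|=|I|$.

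Next, I would invoke Proposition \ref{skewcoeff}, which rewrites the skew colored dual immaculate function as
\[
\mathfrak{S}^*_{I/J} \;=\; \sum_K \langle \mathfrak{S}_J \mathfrak{S}_K, \mathfrak{S}^*_I \rangle\, \mathfrak{S}^*_K \;=\; \sum_K c^I_{J,K}\, \mathfrak{S}^*_K,
\]
valid whenever $J \subseteq_L I$. Substituting this back, the right-hand side $\sum_J \mathfrak{S}^*_J \otimes \mathfrak{S}^*_{I/J}$ equals $\sum_{J,K} c^I_{J,K}\,\mathfrak{S}^*_J \otimes \mathfrak{S}^*_K$, which matches $\Delta(\mathfrak{S}^*_I)$ term-for-term.

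The one point that needs a short verification is that the sum on the right can really be taken over sentences $J$ with $J \subseteq_L I$ (as in the statement), rather than over all sentences. For this I would argue that $c^I_{J,K} = 0$ whenever $J \not\subseteq_L I$. Expanding $\mathfrak{S}_K$ in the complete homogeneous basis and applying the right Pieri rule (Theorem \ref{rightpieriI}) iteratively, every sentence $L$ appearing in $\mathfrak{S}_J \mathfrak{S}_K$ satisfies $J \subseteq_L L$, because the Pieri rule only produces sentences that contain $J$ as a left-prefix rowwise. Hence $c^I_{J,K}$ can be nonzero only when $J \subseteq_L I$, and the pairs $(J,K)$ with $J \not\subseteq_L I$ contribute $0$ to the coproduct expansion. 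This reduction is the only mildly subtle step; the rest of the argument is a direct unwinding of the two cited propositions.
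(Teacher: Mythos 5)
Your proof is correct and follows essentially the same route as the paper's: apply Proposition \ref{hopf_dual_coproduct} to the duality of the colored immaculate and colored dual immaculate bases, then identify the inner sum $\sum_K \langle \mathfrak{S}_J\mathfrak{S}_K, \mathfrak{S}^*_I\rangle\,\mathfrak{S}^*_K$ with $\mathfrak{S}^*_{I/J}$ via Proposition \ref{skewcoeff}. The only difference is that you explicitly check, using the right Pieri rule, that $c^I_{J,K}=0$ unless $J \subseteq_L I$ so that the sum may be restricted as in the statement --- a point the paper leaves implicit --- and that verification is sound.
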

\begin{proof} Let $J$ and $K$ be sentences, and observe that $\mathfrak{S}_J\mathfrak{S}_K = \sum_I \langle \mathfrak{S}_J\mathfrak{S}_K, \mathfrak{S}^*_I \rangle \mathfrak{S}_I$. By Proposition \ref{hopf_dual_coproduct}, this implies 
\begin{align*}
    \Delta(\mathfrak{S}^*_{I}) &= \sum_{J,K} \langle \mathfrak{S}_J\mathfrak{S}_K, \mathfrak{S}^*_I \rangle \mathfrak{S}^*_J \otimes \mathfrak{S}^*_K = \sum_{J} \left(  \mathfrak{S}^*_J \otimes \sum_K \langle \mathfrak{S}_J\mathfrak{S}_K, \mathfrak{S}^*_I \rangle \mathfrak{S}^*_K \right)\\
    &= \sum_J \mathfrak{S}^*_J \otimes \mathfrak{S}^*_{I/J} \text{\quad by Proposition \ref{skewcoeff}.} \qedhere
\end{align*} 
\end{proof}

As in the non-colored case, finding general combinatorial formulas for multiplication and the antipode of the colored dual immaculate functions remains an open problem. As shown in the example below, the product of two colored dual immaculate functions does not have exclusively positive structure constants, and their combinatorial description is not yet evident. $$ \mathfrak{S}^*_{(ab)} \mathfrak{S}^*_{(c)} =  \mathfrak{S}^*_{(abc)} + \mathfrak{S}^*_{(c,ab)} + \mathfrak{S}^*_{(ac,b)} - \mathfrak{S}^*_{(a,bc)}$$

%%%%%%%%%%%%%%%
\section{A partially commutative generalization of the row-strict dual immaculate functions} \label{rowstrictsection}

Niese, Sundaram, Van Willigenburg, Vega, and Wang define a pair of dual bases in $QSym$ and $NSym$  in \cite{rowstrict} by applying an involution $\psi$ to the immaculate and dual immaculate bases.  The row-strict dual immaculate basis has extensive representation theoretic applications, specifically to 0-Hecke algebras \cite{row0hecke}. The combinatorics of this basis involve a variation of immaculate tableaux with different conditions on the rows and columns. Note that the original paper uses French notation for diagrams (the bottom row is row 1) so the definitions here have been adapted to English notation. We first review the theory of these two bases, then define their colored generalizations, and finally extend our earlier results using a lift of the original $\psi$.

\subsection{Row-strict immaculate and dual immaculate functions}

We begin by recalling several definitions and results from \cite{rowstrict}.

\begin{defn} Given a composition $\alpha$, a \emph{row-strict immaculate tableau} $T$ is a filling of the diagram of $\alpha$ such that the entries in the leftmost column weakly increase from top to bottom and the entries in each row strictly increase from left to right. A row-strict immaculate tableau with $n$ boxes is \emph{standard} if each integer $1$ through $n$ appears exactly once. The \emph{type} of a row-strict immaculate tableau is defined the same way as the type of an immaculate tableau. 
\end{defn}

Monomials are associated with row-strict immaculate tableaux according to their type in the same fashion as immaculate tableaux. 

\begin{defn}
For a composition $\alpha$, the \emph{row-strict dual immaculate function} is defined as $$\mathfrak{RS}^*_{\alpha} = \sum_T x^T,$$ where the sum runs over all row-strict immaculate tableaux $T$ of shape $\alpha$.
\end{defn}

 To standardize a row-strict immaculate tableau $T$, replace the $1$'s in $T$ with $1,2,\ldots$ moving left to right and top to bottom, then continue with the $2$'s, etc. Note also that the set of standard row-strict immaculate tableaux is the same as the set of standard immaculate tableaux.

\begin{defn}  A positive integer $i$ is a \emph{row-strict descent} of a standard row-strict immaculate tableau $U$ if $U$ contains the entry $i+1$ in a weakly higher row than entry $i$. The \textit{row-strict descent set} of a standard row-strict immaculate tableau $U$ is $$Des^{rs}(U) = \{i : i+1 \text{ is weakly above } i \text{ in } U\}.$$ The \emph{row-strict descent composition} of a standard row-strict immaculate tableaux $U$ is defined as $$co^{rs}(U) = comp(Des^{rs}(U)).$$
\end{defn}

Following these definitions, the row-strict dual immaculate function for a composition $\alpha$ can also be defined as
$$\mathfrak{RS}^*_{\alpha} = \sum_S F_{co^{rs}(S)},$$ where the sum is taken over all standard row-strict immaculate tableaux of shape $\alpha$.

\begin{defn}
The dual involutions $\psi: QSym \rightarrow QSym$ and $\psi: NSym \rightarrow NSym$ are defined

$$\psi(F_{\alpha})=F_{\alpha^c} \text{\quad and \quad} \psi(R_{\alpha})=R_{\alpha^c}.$$
\end{defn}

Note that there are two separate $\psi$ involutions, although they are often referred to together as if they are one map.

\begin{thm}\cite{rowstrict}
 Let $\alpha$ be a composition. Then,
$\psi(\mathfrak{S}^*_{\alpha})=\mathfrak{RS}^*_{\alpha^c}.$
\end{thm}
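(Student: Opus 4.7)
The plan is to reduce the identity to a match between fundamental expansions on both sides, using the complementary nature of the descent and row‑strict descent statistics. By Proposition~\ref{uncolor_fund}, $\mathfrak{S}^*_\alpha = \sum_U F_{co(U)}$ summed over standard immaculate tableaux $U$ of shape $\alpha$. Applying $\psi$ linearly and invoking $\psi(F_\beta) = F_{\beta^c}$ yields
$$\psi(\mathfrak{S}^*_\alpha) = \sum_U F_{co(U)^c}.$$
The combinatorial definition of the row‑strict dual immaculate functions, read in the fundamental basis, gives
$$\mathfrak{RS}^*_{\alpha^c} = \sum_T F_{co^{rs}(T)}$$
summed over standard row‑strict immaculate tableaux $T$ of shape $\alpha^c$, and the task is to identify these two expressions.

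The central observation I would use is that a standard immaculate tableau of any shape $\beta$ is the same combinatorial object as a standard row‑strict immaculate tableau of shape $\beta$: when the entries $1,\dots,n$ are all distinct, the weak/strict row conditions collapse to strict, and the strict/weak column conditions collapse to strict. For such a filling $U$, the condition defining $Des(U)$ (namely, $i{+}1$ strictly below $i$) and the condition defining $Des^{rs}(U)$ (namely, $i{+}1$ weakly above $i$) are exhaustive and mutually exclusive on $[n-1]$. Consequently $Des^{rs}(U) = [n-1]\setminus Des(U)$, which at the level of compositions translates to $co^{rs}(U) = co(U)^c$. After this identification, $\psi(\mathfrak{S}^*_\alpha) = \sum_U F_{co^{rs}(U)}$ with $U$ still ranging over standard immaculate tableaux of shape $\alpha$.

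The remaining and main step is to set up a bijection $\Phi$ between standard immaculate tableaux of shape $\alpha$ and standard row‑strict immaculate tableaux of shape $\alpha^c$ satisfying $co^{rs}(\Phi(U)) = co^{rs}(U)$. Granting such a $\Phi$, the proof closes with
$$\psi(\mathfrak{S}^*_\alpha) = \sum_U F_{co^{rs}(U)} = \sum_T F_{co^{rs}(T)} = \mathfrak{RS}^*_{\alpha^c}.$$
Constructing $\Phi$ is where the work lies. My approach would be to read off the entries of $U$ in the order prescribed by its standardization and reinsert them row‑by‑row into the diagram of $\alpha^c$, using the duality $set(\alpha^c) = [n-1]\setminus set(\alpha)$ to route each new entry into a row according to whether the index currently sits in $Des(U)$ or $Des^{rs}(U)$. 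The main obstacle will be verifying that this routing always produces a valid standard row‑strict immaculate tableau of shape exactly $\alpha^c$ and that it preserves $co^{rs}$; an alternative fallback is to prove the identity algebraically by dualizing, checking that $\psi$ sends the immaculate basis of $NSym$ to the row‑strict immaculate basis (with indices swapped under complement) via matching Pieri rules, and then transferring to $QSym$ by Proposition~\ref{hopf_dual}.
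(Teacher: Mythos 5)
Your reduction to the fundamental basis is exactly the argument the paper itself uses for the colored analogue (Theorem \ref{involutionbasis}): expand $\mathfrak{S}^*_{\alpha}=\sum_U F_{co(U)}$ over standard immaculate tableaux $U$ of shape $\alpha$, apply $\psi(F_{\beta})=F_{\beta^c}$, and observe that for standard fillings the immaculate and row-strict conditions coincide while $Des^{rs}(U)=[n-1]\setminus Des(U)$, hence $co(U)^c=co^{rs}(U)$. At that point you have already shown
$$\psi(\mathfrak{S}^*_{\alpha})=\sum_U F_{co^{rs}(U)},$$
with $U$ ranging over standard (row-strict) immaculate tableaux of shape $\alpha$, and by the definition of the row-strict dual immaculate functions this \emph{is} $\mathfrak{RS}^*_{\alpha}$. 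The proof terminates there; no change of shape and no further bijection is required.

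The step you identify as the main remaining work --- a $co^{rs}$-preserving bijection $\Phi$ between standard immaculate tableaux of shape $\alpha$ and standard row-strict immaculate tableaux of shape $\alpha^c$ --- cannot be carried out, because the two sets generally have different cardinalities. For $\alpha=(1,2)$ there is a single standard immaculate tableau, with $co=(1,2)$, so $\psi(\mathfrak{S}^*_{(1,2)})=F_{(2,1)}$; but $\alpha^c=(2,1)$ admits two standard row-strict immaculate tableaux, giving $\mathfrak{RS}^*_{(2,1)}=F_{(1,2)}+F_{(2,1)}$. So the identity with $\alpha^c$ on the right-hand side is false under the conventions of this paper: the index should be $\alpha$ on both sides, consistent with the colored statement $\psi(\mathfrak{S}^*_J)=\mathfrak{RS}^*_J$ of Theorem \ref{involutionbasis} and with part (6) of Theorem \ref{rowstrictresults}. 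In short, your argument is correct and complete through the descent-complementation step, and the obstruction you anticipated in constructing $\Phi$ is real --- it signals a misprint in the quoted statement rather than missing mathematics. (The dual/Pieri-rule fallback you mention would run into the same index mismatch.)
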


 Since $\psi$ is an involution and $\mathfrak{S}^*_{\alpha}$ is a basis, $\{\mathfrak{RS}^*_{\alpha} \}_{\alpha}$ is a basis for $QSym$.

 \begin{defn} For a composition $\alpha$ and weak composition $\beta$, let 
 $K^{rs}_{\alpha,\beta}$ be the number of row-strict immaculate tableaux of shape $\alpha$ and type $\beta$, and $L^{rs}_{\alpha,\beta}$ be the number of standard row-strict immaculate tableaux of shape $\alpha$ with row-strict descent composition $\beta$. 
 \end{defn}

\begin{thm} \cite{rowstrict}
For a composition $\alpha$, the row-strict dual immaculate function expands as $$\mathfrak{RS}^*_{\alpha} = \sum_{\beta}K^{rs}_{\alpha,\beta} M_{\beta} = \sum_{\gamma} L^{rs}_{\alpha, \gamma}F_{\gamma}. $$  where the sums run over compositions $\beta$ and $\gamma$ such that $|\beta|=|\alpha|$ and $|\gamma|=|\alpha|$.
\end{thm}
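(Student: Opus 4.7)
The plan is to prove both expansions combinatorially, paralleling the arguments used for the dual immaculate case in Propositions \ref{uncolor_mon} and \ref{uncolor_fund} but adapted to the row-strict setting.

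For the monomial expansion, I would first establish a row-strict analogue of Proposition \ref{typenumber}: for any composition $\alpha$ and weak composition $\beta$, $K^{rs}_{\alpha,\beta} = K^{rs}_{\alpha,\tilde{\beta}}$. The bijection relabels entries $i$ in a tableau of type $\tilde{\beta}$ according to the positions of the nonzero parts of $\beta$, and both the strict-increase-in-rows condition and the weakly-increasing first column are preserved by this relabeling. Starting from $\mathfrak{RS}^*_\alpha = \sum_T x^T$ and grouping tableaux by their flat type, the contribution of all tableaux whose type flattens to $\beta$ collapses to $K^{rs}_{\alpha,\beta} M_\beta$ via the definition $M_\beta = \sum_{\tilde{\gamma}=\beta} x^\gamma$, giving the desired expansion.

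For the fundamental expansion, I would establish a bijection via standardization between row-strict immaculate tableaux of shape $\alpha$ with flat type $\delta$, and standard row-strict immaculate tableaux $U$ of shape $\alpha$ with $\delta \preceq co^{rs}(U)$. The key observation is that in a SRSIT, whenever labels $i$ and $i+1$ arise from the same $T$-value under standardization, $i+1$ must lie in a strictly lower row than $i$ — this is because the standardization processes same-valued cells top to bottom and each row of a row-strict tableau contains at most one cell of a given value. Equivalently, $i$ being a row-strict descent of $U$ (i.e.\ $i+1$ weakly higher than $i$) forces the type of $T$ to split between positions $i$ and $i+1$. Conversely, given $U$ and a composition $\delta \preceq co^{rs}(U)$, merging consecutive standardization labels into blocks of sizes $\delta_1, \delta_2, \ldots$ yields a well-defined RSIT because the refinement condition ensures no block contains two labels in the same row of $U$. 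This produces the identity $K^{rs}_{\alpha,\delta} = \sum_{\gamma \succeq \delta} L^{rs}_{\alpha,\gamma}$, and substituting into the monomial expansion together with $F_\gamma = \sum_{\delta \preceq \gamma} M_\delta$ rearranges to $\mathfrak{RS}^*_\alpha = \sum_\gamma L^{rs}_{\alpha,\gamma} F_\gamma$.

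The main obstacle is getting the direction of the refinement condition correct, since the row-strict descent convention (``$i+1$ weakly higher than $i$'') is essentially opposite to the usual immaculate descent. Careful bookkeeping is required to confirm that a row-strict descent of $U$ corresponds precisely to a forced split in the type of any row-strict immaculate tableau standardizing to $U$, rather than a forbidden split. Once this correspondence is properly set up, the remainder of the argument is the same formal Möbius-style manipulation used in the uncolored dual immaculate setting.
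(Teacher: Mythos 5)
Your argument is correct, and since the paper only quotes this theorem from \cite{rowstrict} without proving it, the right comparison is with the paper's proofs of the colored analogues (Theorems \ref{monomialexpansion} and \ref{fund_exp}, via Propositions \ref{typenumber}, \ref{samestandardization}, \ref{standardrefinement}, and \ref{coeffsum}): your proposal is exactly that strategy specialized to one color with the row/column conditions swapped. You also handle the one genuinely delicate point correctly --- same-valued cells of a row-strict immaculate tableau lie in distinct rows and are standardized top to bottom, so consecutive labels arising from a single value force $i+1$ strictly below $i$, hence the row-strict descents of $std(T)$ are precisely the positions where the type must split, which yields $K^{rs}_{\alpha,\delta}=\sum_{\gamma\succeq\delta}L^{rs}_{\alpha,\gamma}$ and the $F$-expansion by interchanging summations.
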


The row-strict dual immaculate functions have a dual basis that can be constructed similarly to the immaculate basis.  

\begin{defn} For $m \in \mathbb{Z}$, the \emph{noncommutative row-strict Bernstein operator} $\mathbb{B}^{rs}_m$  is defined by $$\mathbb{B}^{rs}_m = \sum_{i \geq 0} (-1)^iE_{m+i}F^{\perp}_{(i)},\text{\qquad and \qquad}  \mathbb{B}^{rs}_{\alpha} = \mathbb{B}^{rs}_{\alpha_1}\ldots\mathbb{B}^{rs}_{\alpha_k} \text{\qquad for $\alpha \in \mathbb{Z}^k$}.$$
For a composition $\alpha$, the \emph{row-strict immaculate function} $\mathfrak{RS}_{\alpha}$ is defined as $$\mathfrak{RS}_{\alpha} = \mathbb{B}^{rs}_{\alpha}(1).$$ These functions are dual to the row-strict dual immaculate basis, $\langle \mathfrak{RS}_{\alpha}, \mathfrak{RS}^*_{\beta} \rangle = \delta_{\alpha, \beta}$, and they are the image of the immaculate basis under $\psi$.
\end{defn}

Applying $\psi$ to various results from \cite{berg18} yields similar results for the row-strict immaculate and row-strict dual immaculate bases, which are summarized in the following result.

\begin{thm}\label{rowstrictresults} \cite{rowstrict}
For compositions $\alpha, \beta \models n$, $s\in \mathbb{Z}_{\geq 0}$, $m \in \mathbb{Z}$, and $f \in NSym$, \vspace{2mm}
\begin{itemize} 
    \item[(1)] \qquad \qquad \quad $\mathbb{B}_m(f)H_s = \mathbb{B}_{m+1}(f)H_{s-1} + \mathbb{B}_m(fH_s) \xLongleftrightarrow{\psi} \mathbb{B}_{m+1}^{rs}(f)E_{s-1} + \mathbb{B}^{rs}_m(fE_s).$ \vspace{2mm}
    \item[(2)] Multiplicity-free right Pieri rule: $$\mathfrak{S}_{\alpha}H_s = 
    \sum_{\alpha \subset_s \beta} \mathfrak{S}_{\beta} \xLongleftrightarrow{\psi} \mathfrak{RS}_{\alpha}E_s = \sum_{\alpha \subset_s \beta} \mathfrak{RG}_{\beta}.$$
    \item[(3)] Multiplicity-free right Pieri rule:
    $$\mathfrak{S}_{\alpha}\mathfrak{S}_{(1^s)}=\mathfrak{S}_{\alpha}E_s = \sum_{\beta}\mathfrak{S}_{\beta} \xLongleftrightarrow{\psi} \mathfrak{RS}_{\alpha}\mathfrak{RG}_{(1^s)} = \mathfrak{RS}_{\alpha}H_s = \sum_{\beta}\mathfrak{RS}_{\beta},$$ where the sum runs over compositions $\beta \models |\alpha|+s$ such that $\alpha_i \leq \beta_i \leq \alpha_i+1$ and $\alpha_i = 0$ for $i>\ell(\alpha)$. \vspace{0mm}
    \item[(4)] \qquad \quad $\mathfrak{S}_{(1^n)} = \sum_{\alpha \models n} (-1)^{n-\ell(\alpha)}H_{\alpha} = E_n \xLongleftrightarrow{\psi} \mathfrak{RG}_{(1^n)} = \sum_{\alpha \models n}(-1)^{n - \ell(\alpha)}E_{\alpha} = H_n.$ \vspace{2mm}
    \item[(5)] Complete homogeneous and elementary expansions: $$H_{\beta} = \sum_{\alpha \geq_{\ell} \beta}K_{\alpha,\beta}\mathfrak{S}_{\alpha} \xLongleftrightarrow{\psi} E_{\beta} = \sum_{\alpha \geq_{\ell} \beta}K_{\alpha,\beta}\mathfrak{RS}_{\alpha}, \text{\quad \quad} H_{\beta} = \sum_{\alpha \geq_{\ell} \beta} K^{rs}_{\alpha,\beta}\mathfrak{RS}_{\alpha} \xLongleftrightarrow{\psi} E_{\beta} = \sum_{\alpha \geq_{\ell} \beta}K^{rs}_{\alpha,\beta}\mathfrak{S}_{\alpha}.$$
    \item[(6)] Ribbon basis expansions: $$R_{\beta} = \sum_{\alpha \geq_{\ell} \beta}L_{\alpha,\beta} \mathfrak{S}_{\alpha} \xLongleftrightarrow{\psi} R_{\beta^c} = \sum_{\alpha \geq_{\ell} \beta}L_{\alpha, \beta}\mathfrak{RS}_{\alpha}.$$
\end{itemize}
\end{thm}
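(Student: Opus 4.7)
The plan is to derive each of the six items by applying the Hopf algebra involution $\psi$ to the corresponding identity in \cite{berg18}, which appears as the left-hand side. The key preparatory step is to record how $\psi$ acts on the relevant generators and operators of $NSym$. From $\psi(R_\alpha) = R_{\alpha^c}$ together with $H_n = R_{(n)}$ and $(n)^c = (1^n)$, a short computation gives
$$\psi(H_n) = R_{(1^n)} = \sum_{\beta \models n}(-1)^{n-\ell(\beta)}H_\beta = E_n,$$
and by the algebra-homomorphism property of $\psi$ (inherited by duality from the coalgebra involution on $QSym$), one extends to $\psi(H_\alpha) = E_\alpha$ and $\psi(E_\alpha) = H_\alpha$ for every composition $\alpha$. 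Combined with the identity $\psi(\mathfrak{S}^*_\alpha) = \mathfrak{RS}^*_{\alpha^c}$ and $\psi$-invariance of the inner product, one determines the action of $\psi$ on the immaculate basis, identifying each $\psi(\mathfrak{S}_\alpha)$ as the appropriate row-strict immaculate function.

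Next, the noncommutative Bernstein operator transforms nicely under $\psi$: directly from
$$\mathbb{B}_m = \sum_{i \geq 0}(-1)^iH_{m+i}F^{\perp}_{1^i}, \qquad \mathbb{B}^{rs}_m = \sum_{i \geq 0}(-1)^iE_{m+i}F^{\perp}_{(i)},$$
and the identity $\psi \circ F^{\perp}_{1^i} = F^{\perp}_{(i)}\circ \psi$ (a short duality calculation from $\psi(F_{1^i}) = F_{(i)}$), one obtains the intertwining $\psi \circ \mathbb{B}_m = \mathbb{B}^{rs}_m \circ \psi$. This is the engine that converts identities about $\mathbb{B}_m$ (and hence about $\mathfrak{S}_\alpha = \mathbb{B}_{\alpha_1}\cdots \mathbb{B}_{\alpha_k}(1)$) into their row-strict counterparts.

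The six items then follow by applying $\psi$ termwise to each identity from \cite{berg18}. Item~(1) is an immediate consequence of the intertwining together with $\psi(H_s) = E_s$. Items~(2)--(4) follow by applying $\psi$ to the cited Pieri rules and identities, using the actions of $\psi$ on the complete homogeneous and immaculate bases, together with the observation that the summation conditions transform compatibly under composition complementation and that item~(4) reduces to $(1^n)^c = (n)$. Items~(5) and~(6) come from applying $\psi$ termwise to the $H$- and $R$-expansions of $\mathfrak{S}_\beta$, using $\psi(H_\beta) = E_\beta$ and $\psi(R_\beta) = R_{\beta^c}$; the appearance of the coefficients $K^{rs}$ and $L^{rs}$ on the row-strict side reflects the underlying tableau bijection implicit in $\psi(\mathfrak{S}^*_\alpha) = \mathfrak{RS}^*_{\alpha^c}$. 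The main obstacle throughout is the careful bookkeeping of how composition complementation interacts with the various summation conditions ($\subset_s$, $\geq_\ell$, refinement, and $\alpha_i \leq \beta_i \leq \alpha_i+1$), verifying that the reindexed sums really produce the right-hand sides stated in the theorem rather than complement-indexed variants.
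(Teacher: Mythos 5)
Your proposal is correct and follows essentially the same route the paper indicates: the theorem is quoted from \cite{rowstrict} with the one-line justification that one applies $\psi$ to the corresponding identities of \cite{berg18}, and your fleshing-out (establishing $\psi(H_\alpha)=E_\alpha$, the intertwining $\psi\circ\mathbb{B}_m=\mathbb{B}^{rs}_m\circ\psi$, hence $\psi(\mathfrak{S}_\alpha)=\mathfrak{RS}_\alpha$, then pushing $\psi$ through each identity) is exactly the intended argument. No substantive gaps.
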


The immaculate poset also represents a poset of the standard row-strict immaculate tableaux as a result of the equivalence between standard immaculate tableaux and standard row-strict immaculate tableaux, thus results for the row-strict skew case follow closely to those of the dual immaculate functions. 

\begin{defn}Let $\alpha$ and $\beta$ be compositions with $\beta \subseteq \alpha$.  A \textit{skew row-strict immaculate tableau} is a skew shape $\alpha/\beta$ filled with positive integers such that the entries in the first column are weakly increasing from top to bottom and the entries in each row strictly increase from left to right.
\end{defn}

\begin{defn} For compositions $\alpha, \beta$ such that $\beta \subseteq \alpha$, the \textit{skew row-strict dual immaculate functions} are defined as$$\mathfrak{RS}^*_{\alpha/\beta} = \sum_{\gamma} \langle \mathfrak{RS}_{\beta}H_{\gamma}, \mathfrak{RS}^*_{\alpha}\rangle M_{\gamma},$$ where the sum runs over all $\gamma \in \mathfrak{P}$ such that $|\alpha|-|\beta| = |\gamma|$.
\end{defn}
As with the skew dual immaculate functions, these functions connect to the multiplication of the row-strict immaculate functions and the comultiplication of the row-strict dual immaculate functions.
\begin{thm} \cite{rowstrict}
Let $\alpha$ and $\beta$ be compositions with $\beta \subseteq \alpha$. Then, $$\mathfrak{RS}^*_{\alpha/\beta} = \sum_T x^T,$$ where the sum runs over all skew row-strict immaculate tableaux $T$ of shape $\alpha/\beta$. Moreover, $$\mathfrak{RS}^*_{\alpha/\beta} = \psi(\mathfrak{S}^*_{\alpha/\beta}) = \sum_{\gamma} \langle \mathfrak{RS}_{\beta}R_{\gamma}, \mathfrak{RS}^*_{\alpha} \rangle F_{\gamma} = \sum_{\gamma}\langle \mathfrak{RS}_{\beta}\mathfrak{RS}_{\gamma}, \mathfrak{RS}^*_{\alpha} \rangle \mathfrak{RS}^*_{\gamma},$$ where the sums run over all compositions $\gamma \in \mathfrak{P}$ such that $|\alpha|-|\beta| = |\gamma|$.
\end{thm}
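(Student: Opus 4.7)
The plan is to mirror, in the row-strict setting, the threefold argument used in the colored case (Propositions \ref{coeff_skew_number}, \ref{skewcoeff}, and the coproduct computation in Proposition \ref{color_comult}). The three assertions — the skew tableau formula, the identification with $\psi(\mathfrak{S}^*_{\alpha/\beta})$, and the fundamental and row-strict dual immaculate expansions — decouple into essentially independent arguments, each of which relies on a single structural input from Theorem \ref{rowstrictresults}.

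For the tableau formula, I would first show that the inner product $\langle \mathfrak{RS}_\beta H_\gamma, \mathfrak{RS}^*_\alpha\rangle$ counts skew row-strict immaculate tableaux of shape $\alpha/\beta$ and type $\gamma$. Write $\gamma = (\gamma_1,\ldots,\gamma_g)$ and expand $\mathfrak{RS}_\beta H_\gamma = \mathfrak{RS}_\beta H_{\gamma_1}\cdots H_{\gamma_g}$. Iterated application of the row-strict Pieri rule from Theorem \ref{rowstrictresults}(3), which says $\mathfrak{RS}_\delta H_s = \sum_\epsilon \mathfrak{RS}_\epsilon$ summed over $\epsilon \vDash |\delta|+s$ with $\delta_i \leq \epsilon_i \leq \delta_i+1$ (each row gains at most one box), gives
\[
\mathfrak{RS}_\beta H_\gamma \;=\; \sum_{\beta = \delta^{(0)} \subsetneq \delta^{(1)} \subsetneq \cdots \subsetneq \delta^{(g)}} \mathfrak{RS}_{\delta^{(g)}},
\]
where each step $\delta^{(j-1)} \to \delta^{(j)}$ adds $\gamma_j$ boxes, at most one per row. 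Pairing with $\mathfrak{RS}^*_\alpha$ counts those chains terminating at $\alpha$, and each chain bijects with a skew row-strict immaculate tableau $T$ of shape $\alpha/\beta$ and type $\gamma$ by filling the boxes in $\delta^{(j)}/\delta^{(j-1)}$ with $j$'s; the Pieri constraint "at most one new box per row" is exactly the row-strict condition on $T$. Grouping monomials with the same flat type (as in Proposition \ref{typenumber}) and repackaging into $\sum_\gamma M_\gamma \cdot (\text{number of } T\text{'s of type }\gamma) = \sum_T x^T$ yields the skew tableau formula.

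The fundamental and row-strict dual immaculate expansions then follow formally from duality. Since $\mathfrak{RS}^*_{\alpha/\beta} = \mathfrak{RS}_\beta^{\perp}(\mathfrak{RS}^*_\alpha)$ by Definition \ref{mperp1}, for any dual basis pair $\{B_\gamma\},\{D_\gamma\}$ in $NSym$ and $QSym$ we have $\langle B_\gamma, \mathfrak{RS}^*_{\alpha/\beta}\rangle = \langle \mathfrak{RS}_\beta B_\gamma, \mathfrak{RS}^*_\alpha\rangle$, and expanding $\mathfrak{RS}^*_{\alpha/\beta}$ in the basis $\{D_\gamma\}$ produces the stated formulas — take $(B_\gamma,D_\gamma) = (R_\gamma,F_\gamma)$ for the fundamental expansion and $(B_\gamma,D_\gamma) = (\mathfrak{RS}_\gamma,\mathfrak{RS}^*_\gamma)$ for the last. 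For the identification $\psi(\mathfrak{S}^*_{\alpha/\beta}) = \mathfrak{RS}^*_{\alpha/\beta}$, I would apply $\psi \otimes \psi$ to the coproduct identity $\Delta(\mathfrak{S}^*_\alpha) = \sum_\beta \mathfrak{S}^*_\beta \otimes \mathfrak{S}^*_{\alpha/\beta}$ and use that $\psi$ is a self-dual Hopf algebra involution together with the known image $\psi(\mathfrak{S}^*_\delta) = \mathfrak{RS}^*_{\delta^c}$; comparing term-by-term with the analogous coproduct for $\mathfrak{RS}^*$ pins down the image of the skew term.

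The main obstacle will be the $\psi$-step: because $\psi$ complements indices on straight-shape functions, one must verify carefully how complementation interacts with the skew structure, i.e., show that the coefficient $\langle \mathfrak{S}_\beta H_\gamma, \mathfrak{S}^*_\alpha\rangle$ counting skew immaculate tableaux of shape $\alpha/\beta$ and the row-strict coefficient $\langle \mathfrak{RS}_\beta H_\gamma, \mathfrak{RS}^*_\alpha\rangle$ match up under $\psi$ without an unwanted complement appearing. Apart from this bookkeeping, the remaining steps are formal consequences of duality and the Pieri rule.
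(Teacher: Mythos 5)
This statement is quoted from \cite{rowstrict} and is not reproved in the paper; the closest thing to ``the paper's own proof'' is the suite of colored analogues it does prove (Propositions \ref{coeff_skew_number} and \ref{skewcoeff}, the skew-tableau expansion of $\mathfrak{S}^*_{I/J}$, Theorem \ref{rs_skew_coeff}, and the proposition $\psi(\mathfrak{S}^*_{I/J})=\mathfrak{RS}^*_{I/J}$). Your first two steps track those arguments essentially verbatim, specialized to one color: iterating the multiplicity-free Pieri rule $\mathfrak{RS}_{\delta}H_s=\sum_{\epsilon}\mathfrak{RS}_{\epsilon}$ of Theorem \ref{rowstrictresults}(3) identifies $\langle \mathfrak{RS}_{\beta}H_{\gamma},\mathfrak{RS}^*_{\alpha}\rangle$ with a count of chains and hence of skew row-strict immaculate tableaux, and the $F_{\gamma}$- and $\mathfrak{RS}^*_{\gamma}$-expansions are the formal $\mathfrak{RS}_{\beta}^{\perp}$ computation of Proposition \ref{skewcoeff}. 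Both are correct. The one detail you absorb into ``at most one new box per row'' is the weakly increasing first-column condition: it is what guarantees that each intermediate shape $\delta^{(j)}$ in your chain is a genuine composition with no gaps (new rows appear only at the bottom, in increasing order of the step index), and it should be checked explicitly in both directions of the bijection rather than left implicit.

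Where you diverge from the paper's method, and where your own flagged worry is genuine, is the identification $\psi(\mathfrak{S}^*_{\alpha/\beta})=\mathfrak{RS}^*_{\alpha/\beta}$. The coproduct route requires $\psi$ to be a coalgebra morphism on $QSym$ and then a term-by-term comparison in the first tensor factor, and if you feed it the input $\psi(\mathfrak{S}^*_{\delta})=\mathfrak{RS}^*_{\delta^c}$ (as recorded in the paper's review of \cite{rowstrict}), the comparison produces complements on both shape indices and contradicts the statement; note that setting $\beta=\emptyset$ in the theorem already forces $\psi(\mathfrak{S}^*_{\alpha})=\mathfrak{RS}^*_{\alpha}$, so before anything else you must settle that the complement lives on the descent composition and not on the shape (this is exactly what the colored Theorem \ref{involutionbasis} establishes via $co^{rs}(U)=co(U)^c$ on a common set of standard tableaux). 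Once that is fixed, the paper's own (colored) argument is shorter and safer than the coproduct comparison: write $\mathfrak{S}^*_{\alpha/\beta}=\sum_{\gamma}\langle \mathfrak{S}_{\beta}R_{\gamma},\mathfrak{S}^*_{\alpha}\rangle F_{\gamma}$, apply $\psi$, reindex $\gamma\mapsto\gamma^c$, and use that the pairing is $\psi$-invariant and that $\psi$ is an algebra morphism on $NSym$, so that $\langle \mathfrak{S}_{\beta}R_{\gamma^c},\mathfrak{S}^*_{\alpha}\rangle=\langle \mathfrak{RS}_{\beta}R_{\gamma},\mathfrak{RS}^*_{\alpha}\rangle$. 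I recommend replacing your coproduct step with this direct computation; the rest of your outline stands.
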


Comultiplication on the row-strict dual immaculate functions is also defined in terms of skew shapes.

\begin{defn} For a composition $\alpha$,
$$\Delta (\mathfrak{RS}^*_{\alpha}) = \sum_{\beta} \mathfrak{RG}^*_{\beta} \otimes \mathfrak{RS}^*_{\alpha/\beta},$$ where the sum runs over all compositions $\beta$ such that $\beta \subseteq \alpha$.
\end{defn}

\subsection{Colored row-strict dual immaculate functions in $QSym_A$}

To generalize these definitions and results to the colored case, we first define a lift of the involution $\psi$ to $QSym_A$ and $NSym_A$. Note that we technically define two separate dual involutions $\psi$, one on $QSym_A$ and one on $NSym_A$, but we treat them as a single map that works on both spaces. 

\begin{defn} \label{color_psi}
    For a sentence $J$, define the linear maps $\psi: QSym_A \rightarrow QSym_A$ and $\psi: NSym_A \rightarrow NSym_A$ by $$\psi(F_J) = F_{J^c} \text{\quad and \quad} \psi(R_J)=R_{J^c}.$$
\end{defn}

\begin{prop}\label{rowstrictdual} The maps $\psi$ are involutions, and the duality between $QSym_A$ and $NSym_A$ is invariant under $\psi$, meaning that $$\langle G,F \rangle = \langle \psi(G), \psi(H) \rangle.$$ Furthermore, the map $\psi:NSym_A \rightarrow NSym_A$ is an isomorphism. 
\end{prop}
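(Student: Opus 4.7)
The plan is to handle the three assertions in turn. For the involution property, I would use that complementation of sentences is itself an involution: the cut pattern of $J^c$ is by definition the complement (as a subset of positions between consecutive letters of $w(J)$) of the cut pattern of $J$, and double-complementing a subset returns it to itself, so $(J^c)^c = J$. Since $\psi$ is linear and acts as $F_J \mapsto F_{J^c}$ on $QSym_A$ and $R_J \mapsto R_{J^c}$ on $NSym_A$, this gives $\psi^2 = \mathrm{id}$ on both algebras.

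For the duality invariance (reading the typo in the proposition so that the right-hand side is $\langle \psi(G), \psi(F) \rangle$), I would expand $G = \sum_I c_I R_I$ and $F = \sum_J d_J F_J$ in the dual bases. Since $\langle R_I, F_J \rangle = \delta_{I,J}$, the pairing reduces to $\langle G, F \rangle = \sum_I c_I d_I$. Complementation being a bijection on sentences, we have $\langle R_{I^c}, F_{J^c} \rangle = \delta_{I^c, J^c} = \delta_{I,J}$, so the same sum $\sum_I c_I d_I$ emerges for $\langle \psi(G), \psi(F) \rangle$.

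For the isomorphism claim on $NSym_A$, a linear bijection is immediate from $\psi$ being an involution. To promote this to an algebra isomorphism, I would first establish the colored ribbon multiplication rule $R_I R_J = R_{I \cdot J} + R_{I \odot J}$, which is the direct colored analogue of the classical $NSym$ identity and follows from $H_I H_J = H_{I \cdot J}$ combined with Equation \eqref{R_H_equation} via Möbius inversion on the refinement poset of sentences. Next I would verify the two combinatorial identities $(I \cdot J)^c = I^c \odot J^c$ and $(I \odot J)^c = I^c \cdot J^c$ by tracking cut-positions along the maximal word: concatenation forces a cut between $w(I)$ and $w(J)$ while near-concatenation forbids one, and complementation toggles exactly this cut. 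Together these give
\[
\psi(R_I R_J) = R_{(I \cdot J)^c} + R_{(I \odot J)^c} = R_{I^c \odot J^c} + R_{I^c \cdot J^c} = R_{I^c} R_{J^c} = \psi(R_I)\psi(R_J),
\]
and the unit is preserved because the empty sentence is self-complementary.

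The main obstacle will be setting up the colored ribbon multiplication rule; once that is in hand, the rest is a short cut-position bookkeeping exercise. If a full Hopf algebra isomorphism is ultimately desired for the translation to the row-strict setting, compatibility with $\Delta$, $\epsilon$, and $S$ can be verified on the ribbon basis by the same complementation-of-cuts technique, but the algebra statement proved above is what is required to transfer the Pieri and expansion results of the preceding sections through $\psi$.
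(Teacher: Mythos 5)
Your proposal is correct and follows essentially the same route as the paper: involutivity from $(J^c)^c = J$ plus linearity, duality invariance from $\langle R_{I^c}, F_{J^c}\rangle = \delta_{I^c,J^c} = \delta_{I,J}$, and the algebra-morphism property from $R_I R_J = R_{I\cdot J} + R_{I\odot J}$ combined with the identities $(I\cdot J)^c = I^c \odot J^c$ and $(I\odot J)^c = I^c \cdot J^c$. The only difference is that the paper simply cites Doliwa for the colored ribbon multiplication rule rather than rederiving it, so the extra work you anticipate there is unnecessary.
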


\begin{proof}
To see that $\psi$ is invariant under duality, it suffices to observe that $\langle R_I, F_J \rangle = \langle R_{I^c}, F_{J^c} \rangle = \langle \psi(R_I), \psi(F_J) \rangle$. The map $\psi$ is an involution because $\psi(\psi(F_I)) =F_{(I^c)^c}=F_I$ and $\psi(\psi(R_I)) = R_{(I^c)^c}=R_I$ and the map extends linearly. 
Next, we show that $\psi$ is an isomorphism on $NSym_A$.  For sentences $I$ and $J$, we have $R_I R_J = R_{I \cdot J} + R_{I \odot J}$ \cite{doliwa21} and thus $\psi(R_IR_J) = \psi(R_{I \cdot J}) + \psi(R_{I \odot J})$. Observe that $(I \cdot J)^c = I^c \odot J^c$ and $(I \odot J)^c = I^c \cdot J^c$. Therefore, $\psi(R_IR_J) = R_{I^c \odot J^c} + R_{I^c \cdot J^c} = R_{I^c}R_{J^c} = \psi(R_I)\psi(R_J).$
\end{proof}

Note that $\psi: QSym_A \rightarrow QSym_A$ is not an isomorphism because it fails to preserve multiplication. Now, we prove that $\psi$ maps the complete homogenous basis to the elementary basis in $NSym$ and vice versa, which will allow us to apply $\psi$ to both these bases.

\begin{prop}
For a sentence $J$, $\psi(E_J)=H_J$. 
\end{prop}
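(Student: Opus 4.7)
The plan is to reduce the claim to the case of a single word $w$, then compute $\psi(E_w)$ directly through the ribbon basis. First, I would observe that the colored elementary basis is multiplicative along the sentence structure: for $J = (w_1, \dots, w_k)$, one has $E_J = E_{(w_1)} \, E_{(w_2)} \cdots E_{(w_k)}$. This follows from the definition $E_I = \sum_{K \preceq I}(-1)^{|I| - \ell(K)}H_K$, the fact that refinements of $(w_1,\dots,w_k)$ are precisely concatenations $K_1 \cdot K_2 \cdots K_k$ of refinements of each $(w_i)$, and $H_{K_1}H_{K_2}\cdots H_{K_k} = H_{K_1 \cdot K_2 \cdots K_k}$. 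Since $\psi$ is an algebra isomorphism on $NSym_A$ by Proposition \ref{rowstrictdual}, and $H_J = H_{w_1}\cdots H_{w_k}$ by definition, the identity $\psi(E_J)=H_J$ follows once we establish $\psi(E_w) = H_w$ for every single word $w$.

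For the single-word case, write $w = a_1 a_2 \cdots a_n$ and expand $E_w$ in the ribbon basis. Substituting $H_K = \sum_{L \succeq K} R_L$ from Equation (\ref{R_H_equation}) into the definition of $E_w$ and exchanging the order of summation gives
\[
E_w = \sum_{L} R_L \sum_{K \preceq (w),\, K \preceq L}(-1)^{|w|-\ell(K)}.
\]
For the inner sum to be nonempty we need $w(L)=w$, in which case $L \preceq (w)$ automatically and the condition reduces to $K \preceq L$. If $m := |w|-\ell(L)$ denotes the number of non-split positions of $L$ inside $w$, then refinements of $L$ are obtained by independently splitting at any subset of those $m$ positions, giving
\[
\sum_{K \preceq L}(-1)^{|w|-\ell(K)} = \sum_{r=0}^{m}\binom{m}{r}(-1)^{m-r} = 0^m,
\]
which vanishes unless $m=0$. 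Hence the only surviving $L$ is the finest refinement $L^* = (a_1, a_2, \dots, a_n)$, and $E_w = R_{L^*}$.

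To finish, note that $L^*$ splits at every interior position of $w$, so its complement $(L^*)^c = (w)$ splits nowhere, while $H_w = R_{(w)}$ directly from (\ref{R_H_equation}) since $(w)$ has no proper coarsening. Applying Definition \ref{color_psi},
\[
\psi(E_w) \;=\; \psi(R_{L^*}) \;=\; R_{(L^*)^c} \;=\; R_{(w)} \;=\; H_w,
\]
and then multiplicativity of $\psi$ and of both $E$ and $H$ along words extends this to arbitrary sentences $J$.

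The only real computation is the ribbon expansion of $E_w$, which collapses thanks to a binomial identity; the rest is bookkeeping. The mild subtlety to watch is that the meet of $(w)$ and $L$ in the refinement poset is well-behaved only when $w(L) = w$, so the index set of the outer sum must be restricted accordingly before invoking the binomial collapse.
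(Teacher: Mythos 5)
Your proof is correct, but it takes a genuinely different route from the paper's. The paper works directly with the full sentence $J$: it expands $E_J$ into the colored ribbon basis, splits the resulting sum according to whether the ribbon index $I$ satisfies $I \succ J^c$ or $I \preceq J^c$, kills the first piece with a M\"obius-function argument, and arrives at the intermediate identity $E_J = \sum_{I \preceq J^c} R_I$, from which $\psi(E_J) = H_J$ follows by reindexing. You instead factor $E_J = E_{(w_1)}\cdots E_{(w_k)}$ along the sentence structure, reduce to the single-word case where the binomial collapse $\sum_r \binom{m}{r}(-1)^{m-r} = 0^m$ shows $E_w$ is the \emph{single} ribbon $R_{(a_1,\ldots,a_n)}$, and then propagate via the multiplicativity of $\psi$ established in Proposition \ref{rowstrictdual}. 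Both arguments are sound; your factorization of $E_J$ and the binomial identity check out, and your use of $\psi$ as an algebra morphism on $NSym_A$ is legitimate since that proposition precedes this one. The trade-off is that the paper's computation yields the full ribbon expansion $E_J = \sum_{I \preceq J^c} R_I$ as a byproduct (a colored analogue of the classical expansion of the elementary functions into ribbons), whereas your argument is more elementary --- it avoids the case split and the M\"obius cancellation on the full refinement poset --- but only exhibits $E_w$ as a single ribbon for one-word sentences; recovering the general ribbon expansion from your approach would require an extra step multiplying out $R_I R_J = R_{I\cdot J} + R_{I\odot J}$.
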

\begin{proof}
First, for a sentence $J$, we expand $E_J$ in terms of the colored ribbon basis as
\begin{align*}
    E_J &= \sum_{K \preceq J} (-1)^{|J| - \ell(K)}H_K = \sum_{K \preceq J} (-1)^{|J|-\ell(K)} \left[ \sum_{I \succeq K} R_I \right]  = \sum_{I} \left[ \sum_{K \preceq J, I} (-1)^{|J| - \ell(K)} \right] R_I.\\\intertext{ Next, we split the sum into two pieces according to $I$: one where $I \succ J^c$ and the other where $I \preceq J^c$,}
    E_J &= \sum_{I \succ J^c} \left[ \sum_{K \preceq J, I} (-1)^{|J| - \ell(K)} \right] R_I +  \sum_{I \preceq J^c}  \left[ \sum_{K \preceq J, I} (-1)^{|J| - \ell(K)} \right] R_I.\\
    \intertext{ In the first case, observe that $I \succ J^c$ implies that $J \succ I$. Thus, $K \preceq J, I$ becomes $K \preceq J$. Also notice that because $J$ is constant we can write $ (-1)^{|J|} = (-1)^{|J|-\ell(J)}(-1)^{\ell(J)}$ and factor the first term out of the sum.  In the second case, $I \preceq J^c$ means that $K \preceq I,J$ becomes $K \preceq J, J^c$. The only way for $K$ to be a refinement of a sentence and its complement is if $K$ is a sentence made up of only single letters. That is, $|K|=\ell(K)$.  Thus the inner sum has only one summand, which is $(-1)^{|J|-\ell(K)}=(-1)^{|J|-|K|}=1$.  As a result, the equation simplifies as } 
    E_J &= \sum_{I \succ J^c} (-1)^{|J|-\ell(J)} \left[ \sum_{J^c \preceq K \preceq I} (-1)^{\ell(J) - \ell(K)} \right] R_I + \sum_{I \preceq J^c} R_I. \\ \intertext{By properties of the Möbius function \cite{doliwa21}, the coefficient of first section is 0 for all $K$ and we are left with}
    E_J &= \sum_{I \preceq J^c} R_I.\\ \intertext{Therefore, applying $\psi$  to $E_J $ and noticing that $I \preceq J^c$ if and only if $J \preceq I^c$, yields}
    \psi(E_J)& = \sum_{I \preceq J^c} \psi(R_I) = \sum_{I \preceq J^c} R_{I^c} = \sum_{J \preceq I^c} R_{I^c} = H_J. \qedhere
\end{align*}
\end{proof}

We continue by defining and studying colored row-strict immaculate tableaux.  Their combinatorics in relation to those of the colored immaculate tableaux will allow us to define the colored row-strict dual immaculate basis and verify its relationship to the colored dual immaculate basis via $\psi$.

\begin{defn}
A \textit{colored row-strict immaculate tableau} (CRSIT) of shape $I$ is a colored composition diagram of shape $I$ in which the sequence of integer entries is strictly increasing from left to right in each row, and weakly increasing top to bottom in the leftmost column. The \emph{type} of a colored row-strict immaculate tableau $T$ is the sentence $C = (u_1, \ldots, u_g)$ such that for each $i \in [g]$ the word $u_i$ lists the colors of all boxes in $T$ filled with the integer $i$ in the order they appear when entries in $T$ are read from left to right and top to bottom. A \textit{standard colored row-strict immaculate tableau} is a colored row-strict immaculate tableau of size $n$ with the integer entries $1, \ldots, n$ each appearing exactly once. To \emph{standardize} a colored row-strict tableau, replace its integer entries with the numbers $1,2, \ldots$ based on the order they appear in the type, first replacing all entries equal to $1$, then $2$, etc. just as in the standardization of non-colored row-strict immaculate tableaux.
\end{defn}

We also use the same notion of \textit{row-strict descents} and the \textit{row-strict descent set} $Des^{rs}$ from row-strict immaculate tableaux, but define an additional concept of colored row-strict descent composition.

\begin{defn}
The \emph{colored row-strict descent composition} of a standard colored row-strict immaculate tableau $U$, denoted $co^{rs}_A(U)$, is the sentence obtained by reading the colors in each box in order of their number and splitting into a new word after each row-strict descent.
 \end{defn}

\begin{ex} A few CRSIT of shape $(ab,bca)$ along with their types and standardization, as well as the row-strict descent sets and colored row-strict descent compositions of these standardizations, are:\vspace{1mm}
$$
\scalebox{.75}{
$T_1 =$
\begin{ytableau}
    a, 1 & b, 2 \\
    b, 1 & c, 3 & a, 4 
\end{ytableau}
\quad \quad \quad 
$T_2 =$
\begin{ytableau}
    a, 1 & b, 3 \\
    b, 1 & c, 2 & a, 4 
\end{ytableau}

\quad \quad \quad 
$T_3 =$
\begin{ytableau}
    a, 1 & b, 4 \\
    b, 2 & c, 3 & a, 3 
\end{ytableau}
}$$
 \vspace{0mm}
$$ \quad \ \ (ab,b,c,a) \quad \quad \quad \quad \quad (ab,c,b,a) \quad \quad \quad \quad \quad \ (a,b,bc,a)$$ 

$$
\scalebox{.75}{
$U_2 = $
\begin{ytableau}
    a, 1 & b, 3 \\
    b, 2 & c, 4 & a, 5 
\end{ytableau}

\quad \quad \quad 
$U_3 = $
\begin{ytableau}
    a, 1 & b, 4 \\
    b, 2 & c, 3 & a, 5 
\end{ytableau}
\quad \quad \quad 
$U_4 = $
\begin{ytableau}
    a, 1 & b, 5 \\
    b, 2 & c, 3 & a, 4 
\end{ytableau}
}$$
\vspace{1mm}
$$
 \quad \quad \quad \substack{Des^{rs}(U_2) = \{2,4\}\\co^{rs}_A(U_2) = (ab,bc,a)} \quad \quad \quad \substack{Des^{rs}(U_3) = \{2,3\}\\co^{rs}_A(U_3) = (ab,c,ba)} \quad \quad \quad \substack{Des^{rs}(U_4) = \{2,3,4\}\\co^{rs}_A(U_4) = (ab,c,a,b)}$$
\end{ex}

\begin{defn} \label {crsimm_T}
 For a sentence J, the \emph{colored row-strict dual immaculate function} is defined as $$\mathfrak{RS}^*_J = \sum_{T}x_T,$$ where the sum is taken over all colored row-strict immaculate tableaux $T$ of shape $J$. 
 \end{defn}

 \begin{ex}
     For $J = (ab,bca)$, the colored row-strict dual immaculate function is
     $$\mathfrak{RS}^*_{ab,bca} = x_{ab,1}x_{b,2}x_{c,3}x_{a,4} + x_{ab,1}x_{c,2}x_{b,3}x_{a,4} + x_{a,1}x_{b,2}x_{bc,3}x_{a,4} + \ldots + 2x_{a,1}x_{b,2}x_{b,3}x_{c,4}x_{a,5} + \ldots.$$
 \end{ex}

\begin{prop}
For a sentence $J$, $$\mathfrak{RS}^*_J = \sum_{S} F_{co^{rs}_A(S)},$$ where the sum runs over all standard colored row-strict immaculate tableaux $S$ of shape $J$.
\end{prop}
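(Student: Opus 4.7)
The plan is to prove this by mirroring the argument used for the corresponding statement about $\mathfrak{S}^*_J$ (namely Theorem \ref{fund_exp} and its corollary), adapted to row-strict tableaux. The three main ingredients I need to adapt are: a monomial expansion via colored row-strict immaculate Kostka coefficients, a standardization bijection relating CRSITs of a fixed flat type to SCRSITs whose colored row-strict descent composition coarsens that type, and a final Möbius inversion to pass from the monomial basis to the fundamental basis.

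First I would define $K^{rs}_{J,B}$ as the number of CRSITs of shape $J$ and type $B$, and prove the row-strict analogues of Propositions \ref{samestandardization}, \ref{typenumber}, and \ref{standardrefinement}. The statement analogous to Proposition \ref{samestandardization} is immediate: two CRSITs with the same shape and same type are equal iff they have the same standardization, since the type together with the standardization determines each box's color and integer. The analogue of Proposition \ref{typenumber}, that $K^{rs}_{J,C}=K^{rs}_{J,\tilde C}$, proceeds via the same relabeling bijection that ignores empty words. Then the row-strict analogue of Proposition \ref{monomialtab} gives $\mathfrak{RS}^*_J = \sum_B K^{rs}_{J,B}\, M_B$ by grouping monomials according to their flat type, exactly as in Theorem \ref{monomialexpansion}.

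The genuine work lies in the analogue of Proposition \ref{standardrefinement}: given a standard CRSIT $U$ of shape $J$, there exists a CRSIT $T$ of shape $J$ and type $B$ with $\operatorname{std}(T)=U$ if and only if $\tilde B \preceq co^{rs}_A(U)$. The forward direction uses that $w(B)=w(co^{rs}_A(U))$ (since both record the same reading of colors in the order boxes appear in the type) together with the fact that a splitting of $co^{rs}_A(U)$ occurs precisely at a row-strict descent of $U$ and must therefore also occur in $\tilde B$, because the next equal run in $T$ cannot contain the subsequent row-strict-descent pair. The reverse direction constructs $T$ by assigning integers in blocks according to $\tilde B$, filling boxes in the order dictated by $U$; the splitting condition $\tilde B \preceq co^{rs}_A(U)$ guarantees exactly that equal entries are assigned only to runs of boxes within which the row-strict column and row conditions remain satisfied. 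This is the step I expect to require the most care, since the row-strict conditions (strict in rows, weakly increasing in the first column) interact with standardization in a subtly different way than in the immaculate case, but the argument is structurally identical once one tracks row-strict rather than ordinary descents.

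With these pieces in hand, define $L^{rs}_{J,C}$ to be the number of SCRSITs of shape $J$ with colored row-strict descent composition $C$. The bijection from the previous paragraph yields $K^{rs}_{J,B}=\sum_{C\succeq B} L^{rs}_{J,C}$, exactly as in Proposition \ref{coeffsum}. Möbius inversion on the refinement poset together with equation \eqref{m_f_defs} then gives
$$\mathfrak{RS}^*_J \;=\; \sum_B K^{rs}_{J,B}\, M_B \;=\; \sum_C L^{rs}_{J,C}\, F_C.$$
Finally, rewriting this sum by replacing the weighted count $L^{rs}_{J,C}$ with an unweighted sum over SCRSITs $S$ of shape $J$, indexed by $C = co^{rs}_A(S)$, yields the stated identity. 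The entire proof is parallel to the colored dual immaculate case, with the single substantive adjustment being the use of row-strict descents (and hence the top-to-bottom, left-to-right standardization order) in place of ordinary descents.
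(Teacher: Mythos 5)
Your proposal is correct and rests on the same key ingredient as the paper's proof, namely the row-strict analogue of the standardization bijection (Proposition \ref{standardrefinement}): the CRSITs standardizing to a fixed $S$ correspond exactly to sentences whose flattening refines $co^{rs}_A(S)$. The paper uses this to identify $F_{co^{rs}_A(S)}$ directly with $\sum_{T_S} x_{T_S}$ over the fiber of the standardization map, whereas you take a slightly longer but equivalent detour through the monomial expansion with $K^{rs}_{J,B}$ coefficients and a M\"obius inversion; this is only a difference in packaging, not in substance.
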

\begin{proof}

Let $T$ be a colored row-strict immaculate tableau of shape $J$ that standardizes to the standard colored row-strict immaculate tableau $S$.  The flattening of the type of $T$ must be a refinement of the colored row-strict descent composition of $S$, which can be shown by applying the same reasoning used in the proof of Proposition \ref{standardrefinement}. In fact, each sentence $B$ that flattens to a refinement of $co^{rs}_A(S)$ corresponds to a unique colored row-strict immaculate tableau of type $B$ that standardizes to $S$. Therefore, $$F_{co^{rs}_A(S)} = \sum_{T_S} x_{T_S},$$ where the sum runs over all colored row-strict immaculate tableaux $T_S$ of shape $J$ that standardize to $S$. It follows that $$\mathfrak{RS}^*_J = \sum_T x_T = \sum_S \sum_{T_S} x_{T_S } = \sum_S F_{co^{rs}_A(S)},$$ where the sums run over all CRSIT $T$ of shape $J$, all standard CRSIT $S$ of shape $J$, and all CRSIT $T_S$ of shape $J$ that standardize to $S$.
\end{proof}

\begin{thm}\label{involutionbasis} Let J be a sentence. Then,
$$\psi(\mathfrak{S}^*_J) = \mathfrak{RS}^*_J.$$
\end{thm}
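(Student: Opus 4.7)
The plan is to combine three ingredients: the fundamental expansion of $\mathfrak{S}^*_J$ established in Theorem \ref{fund_exp}, the definition of $\psi$ on the colored fundamental basis from Definition \ref{color_psi}, and the fundamental expansion of $\mathfrak{RS}^*_J$ proven immediately before this theorem. First, I would write
$$\mathfrak{S}^*_J \;=\; \sum_{U} F_{co_A(U)},$$
where $U$ runs over all standard colored immaculate tableaux of shape $J$, and then apply $\psi$ term-by-term using $\psi(F_C) = F_{C^c}$ to obtain
$$\psi(\mathfrak{S}^*_J) \;=\; \sum_{U} F_{co_A(U)^c}.$$

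The main step is to match this sum index-by-index with $\sum_{S} F_{co^{rs}_A(S)}$, where $S$ ranges over standard colored row-strict immaculate tableaux of shape $J$. The key observation is that in a standard colored tableau each integer in $[n]$ appears exactly once, so the conditions ``weakly increasing along rows'' and ``strictly increasing along rows'' are equivalent, as are ``strictly increasing down the first column'' and ``weakly increasing down the first column.'' Consequently the set of standard colored immaculate tableaux of shape $J$ literally coincides with the set of standard colored row-strict immaculate tableaux of shape $J$, and the two standardization procedures agree on tableaux with distinct entries.

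With the index sets identified, it remains to check that $co_A(U)^c = co^{rs}_A(U)$ for each such tableau $U$. Both sentences share the same maximal word, namely the sequence of box colors read in the order $1, 2, \ldots, n$ of their integer labels, so we need only verify that they split in complementary positions of $[n-1]$. By Definition \ref{color_desc_comp} the sentence $co_A(U)$ splits precisely at the positions in $Des(U)$, while $co^{rs}_A(U)$ splits precisely at the row-strict descents of $U$. Since $i$ is a descent of $U$ iff $i{+}1$ lies strictly below $i$, which is the exact negation of the condition defining a row-strict descent, we have $Des^{rs}(U) = [n-1]\setminus Des(U)$. By the definition of the complement of a sentence this gives $co_A(U)^c = co^{rs}_A(U)$. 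Substituting back and invoking the formula $\mathfrak{RS}^*_J = \sum_{S} F_{co^{rs}_A(S)}$ yields $\psi(\mathfrak{S}^*_J) = \mathfrak{RS}^*_J$. There is no genuine obstacle here beyond the careful bookkeeping of descent sets and standardization conventions; the argument is essentially the colored lift of the uncolored descent-complementation identity.
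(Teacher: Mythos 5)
Your proposal is correct and follows essentially the same route as the paper: expand $\mathfrak{S}^*_J$ into the colored fundamental basis over standard colored immaculate tableaux, apply $\psi(F_C)=F_{C^c}$ termwise, and observe that $(co_A(U))^c = co^{rs}_A(U)$ because descents and row-strict descents are complementary subsets of $[n-1]$. The only difference is that you make explicit the identification of standard colored immaculate tableaux with standard colored row-strict immaculate tableaux, which the paper leaves implicit here (stating it elsewhere); this is a welcome clarification rather than a divergence.
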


\begin{proof} For a sentence $J$, 
$$\psi(\mathfrak{S}^*_J) = \psi(\sum_{U} F_{co_A(U)}) = \sum_U F_{(co_A(U))^c}.$$ 
The complement of the colored descent composition of a standard colored immaculate tableau $U$ splits exactly where $U$ does not have a descent.  These are exactly the locations of the row-strict descents in $U$, thus $(co_A(U))^c = co^{rs}_A(U)$,  and
$$\psi(\mathfrak{S}^*_J) = \sum_U F_{(co^{rs}_A(U))} = \mathfrak{RS}^*_J. \eqno \qedhere $$ \end{proof}
Because $\{\mathfrak{S}^*_J\}_J$ is a basis and $\psi$ is an involution, Theorem \ref{involutionbasis} also implies the following.

\begin{cor}
    $\{\mathfrak{RS}^*_J \}_J$ is a basis for $QSym_A.$
\end{cor}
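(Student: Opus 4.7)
The plan is to deduce this corollary directly from the preceding Theorem \ref{involutionbasis} together with the fact that $\psi : QSym_A \to QSym_A$ is a linear involution, as established in Proposition \ref{rowstrictdual}. The basic principle is that any linear involution on a vector space is in particular a bijective linear map, and hence sends bases to bases. Applying this principle to the colored dual immaculate basis will give the desired conclusion.

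Concretely, I would first recall that $\{\mathfrak{S}^*_J\}_J$ is a basis of $QSym_A$ by the earlier theorem established via the upper unitriangular transition matrix to the colored monomial basis. Next, I would cite Proposition \ref{rowstrictdual}, which says $\psi \circ \psi = \mathrm{id}_{QSym_A}$, so $\psi$ is invertible (its own inverse) as a $\mathbb{Q}$-linear endomorphism. Finally, I would invoke Theorem \ref{involutionbasis}, which identifies $\psi(\mathfrak{S}^*_J) = \mathfrak{RS}^*_J$ for every sentence $J$. Since a bijective linear map sends a basis to a basis indexed by the same set, the family $\{\mathfrak{RS}^*_J\}_J = \{\psi(\mathfrak{S}^*_J)\}_J$ is a basis of $QSym_A$.

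There is essentially no obstacle here: the content of the corollary is already contained in Theorem \ref{involutionbasis} and Proposition \ref{rowstrictdual}, and the only thing to verify is the elementary linear algebra fact that an involution preserves the basis property. For completeness, I would briefly spell out linear independence and spanning: if $\sum_J c_J \mathfrak{RS}^*_J = 0$, then applying $\psi$ yields $\sum_J c_J \mathfrak{S}^*_J = 0$, whence all $c_J = 0$; and for any $f \in QSym_A$, writing $\psi(f) = \sum_J d_J \mathfrak{S}^*_J$ and applying $\psi$ again gives $f = \sum_J d_J \mathfrak{RS}^*_J$. This completes the proof in one short paragraph after the two citations.
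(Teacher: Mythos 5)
Your proposal is correct and matches the paper's argument exactly: the paper deduces the corollary from the fact that $\{\mathfrak{S}^*_J\}_J$ is a basis, that $\psi$ is a linear involution (Proposition \ref{rowstrictdual}), and that $\psi(\mathfrak{S}^*_J) = \mathfrak{RS}^*_J$ (Theorem \ref{involutionbasis}). Your additional spelling out of linear independence and spanning is just the standard verification the paper leaves implicit.
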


Using $\psi$, we extend each of our results on the colored dual immaculate functions to the colored row-strict dual immaculate functions. 

\begin{defn}
For sentences $J, C$ and weak sentence $B$, define $K^{rs}_{J,B}$ as the number of colored row-strict immaculate tableaux of shape J and type B, and $L^{rs}_{J,C}$ as the number of standard colored row-strict immaculate tableaux of shape J with row-strict descent composition C.
\end{defn}

\begin{prop} \label{rs_expansion}
    For a sentence $J$, $$\mathfrak{RS}^*_J = \sum_B K^{rs}_{J,B}M_B \text{\quad \quad and \quad \quad} \mathfrak{RS}^*_J = \sum_C L^{rs}_{J,C}F_C,$$ where the sums run over sentences $B$ and $C$ such that $|B|=|J|$ and $|C|=|J|$.
\end{prop}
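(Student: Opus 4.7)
The plan is to prove each expansion separately, with the monomial expansion being a direct combinatorial mirror of the proof of Theorem~\ref{monomialexpansion} and the fundamental expansion following quickly by applying the involution $\psi$ to Theorem~\ref{fund_exp}.

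For the monomial expansion, I would first establish the row-strict analog of Proposition~\ref{typenumber}, namely $K^{rs}_{J,C} = K^{rs}_{J,\tilde C}$ for any sentence $C$. The same relabeling bijection works verbatim: replacing each entry $i_j$ by $j$ where $i_1<\cdots <i_h$ are the indices of the nonempty words in $C$ preserves the relative order of the integer entries, hence preserves both the row-strict condition on rows and the weak increase condition on the first column. Next, I would mimic Proposition~\ref{monomialtab} in the row-strict setting: for a standard colored row-strict immaculate tableau $S$ and a sentence $B$ with $B \preceq co^{rs}_A(S)$, one has $M_B = \sum_T x_T$, summed over colored row-strict immaculate tableaux $T$ of shape $J$ with $std(T)=S$ and $\widetilde{type(T)}=B$. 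Here the refinement condition $B \preceq co^{rs}_A(S)$ replaces the analogous condition used in the dual immaculate case, since a row-strict descent of $S$ between positions $i$ and $i+1$ is exactly where consecutive boxes can or must get the same integer label in $T$. With these two facts, grouping the defining sum $\mathfrak{RS}^*_J = \sum_T x_T$ by flat type exactly as in the proof of Theorem~\ref{monomialexpansion} gives $\mathfrak{RS}^*_J = \sum_B K^{rs}_{J,B} M_B$.

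For the fundamental expansion, I would apply the involution. By Theorem~\ref{involutionbasis} and Theorem~\ref{fund_exp},
$$\mathfrak{RS}^*_J = \psi(\mathfrak{S}^*_J) = \psi\Bigl(\sum_{C} L_{J,C}\, F_{C}\Bigr) = \sum_{C} L_{J,C}\, F_{C^c}.$$
It then remains to identify $L_{J,C}$ with $L^{rs}_{J,C^c}$. This relies on the observation that, since every integer label of a standard tableau is distinct, the standard colored immaculate tableaux of shape $J$ and the standard colored row-strict immaculate tableaux of shape $J$ coincide as sets: in both cases, the rows are strictly increasing and the first column is strictly increasing. Moreover, for such a tableau $U$ on $[n]$, a descent of $U$ in the immaculate sense and a row-strict descent are complementary conditions at each position $i \in [n-1]$, so $Des^{rs}(U) = [n-1]\setminus Des(U)$, which translates into $co^{rs}_A(U) = (co_A(U))^c$. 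Hence $L_{J,C} = L^{rs}_{J,C^c}$ and the substitution $C' = C^c$ yields $\mathfrak{RS}^*_J = \sum_{C'} L^{rs}_{J,C'}\, F_{C'}$.

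The main obstacle, such as it is, lies in the monomial expansion: verifying the refinement characterization of standardization for the row-strict case carefully, and confirming that the relabeling bijection used in the type-flattening step truly preserves the row-strict increase condition on rows (it does, because replacing $i_1<\cdots<i_h$ by $1<\cdots <h$ is order-preserving). The fundamental expansion is then essentially formal, requiring only the complementary-descent identification, which is immediate from the definitions.
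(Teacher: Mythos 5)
Your proposal is correct, and for the monomial expansion it is exactly the paper's (very terse) argument: the paper simply states that the result follows from Definition \ref{crsimm_T} ``in the manner of Theorem \ref{monomialexpansion}'', which is precisely the program you carry out --- the row-strict analogues of Proposition \ref{typenumber} and Proposition \ref{monomialtab}, followed by grouping the defining sum by flat type. For the fundamental expansion you take a slightly different route: you push Theorem \ref{fund_exp} through $\psi$ and use $L_{J,C}=L^{rs}_{J,C^c}$, whereas the paper has already proved directly that $\mathfrak{RS}^*_J=\sum_S F_{co^{rs}_A(S)}$ (summed over standard CRSIT of shape $J$), from which the identity $\mathfrak{RS}^*_J=\sum_C L^{rs}_{J,C}F_C$ is immediate by collecting the $S$ with a common $co^{rs}_A(S)$. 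Your $\psi$-argument is valid and has the virtue of reusing Theorem \ref{involutionbasis}, but it is a detour given that the direct statement is already available one proposition earlier. One small correction: you write that a row-strict descent of $S$ at position $i$ ``is exactly where consecutive boxes can or must get the same integer label in $T$''; it is the opposite --- at a row-strict descent the boxes numbered $i$ and $i+1$ \emph{must} receive different labels (forcing the type to split there), and it is at the non-descent positions that they \emph{may} share a label. Since this is precisely the fact that makes $\tilde{B}\preceq co^{rs}_A(S)$ the correct condition, you should state it the right way around, but the surrounding argument makes clear you are using it correctly.
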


The above proposition follows from Definition \ref{crsimm_T} in the manner of Theorem $\ref{monomialexpansion}$. The results of Section 4.3 also extend nicely to the row-strict case under the involution $\psi$.  

\begin{defn} The \emph{colored row-strict immaculate descent graph}, denoted $^{rs}\mathfrak{D}^{n}_A$ is the edge-weighted directed graph with the set of sentences on $A$ of size $n$ as its vertex set and an edge from each sentence $I$ to $J$ if there exists a standard colored row-strict immaculate tableau of shape $I$ with colored row-strict descent composition $J$. The edge from $I$ to $J$ is weighted with the coefficient $L^{rs}_{I,J}$.
\end{defn}

Due to the differing definitions of descents and descent compositions in row-strict tableaux, the neighbors of $I$ in $^{rs}\mathfrak{D}^{n}_A$ are exactly the (sentence) complements of $I$'s neighbors in $\mathfrak{D}^{n}_{A}$ and the complement of $I$ itself. Here, we say two vertices are \emph{neighbors} if they are adjacent by an edge in either direction.

\begin{ex} The standard colored row-strict immaculate tableaux of shape $(ab,cbb)$ have colored row-strict descent compositions $(a,bc,b,b)$, $(ac,bb,b)$, $(ac,b,bb)$, and $(ac,b,bb)$, so  $(ab,cbb)$ has outgoing edges to these sentences in $^{rs}\mathfrak{D}^5_{\{a,b,c\}}$. Notice that if we take the complement of each of these sentences we get $(ab,cbb)$, $(a,cb,bb)$, $(a,cbbb)$, and $(a,cbb,b)$ which are exactly $(ab,cbb)$ itself and the sentences to which it has outgoing edges to in $\mathfrak{D}^5_{\{a,b,c\}}$, as seen in Figure 1.
\end{ex}

\begin{prop}\label{rs_F_to_imm}
    For a sentence $I$, the colored fundamental functions expand into the colored row strict immaculate basis as $$F_I = 
    \sum_J L^{rs (-1)}_{I,J}\mathfrak{RS}^*_J \text{ \qquad with coefficients \qquad} L^{rs(-1)}_{I,J} = \sum_{\mathcal{P}}(-1)^{\ell(\mathcal{P})}prod(\mathcal{P}),$$ where the sums run over all sentences $J$ below $I$ in $^{rs}\mathfrak{D}^n_A$ and all directed paths $\mathcal{P}$ from $I$ to $J$ in $^{rs}\mathfrak{D}^n_A$.
\end{prop}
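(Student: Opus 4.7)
The plan is to apply the involution $\psi$ to Theorem \ref{color_fund_to_imm}. Applying that theorem to the sentence $I^c$ yields $F_{I^c} = \sum_J L^{-1}_{I^c, J}\mathfrak{S}^*_J$, where $L^{-1}_{I^c, J}$ is the signed path sum in $\mathfrak{D}^n_A$ from $I^c$ to $J$. Applying $\psi$ to both sides, and invoking $\psi(F_{I^c}) = F_I$ (Definition \ref{color_psi}) together with $\psi(\mathfrak{S}^*_J) = \mathfrak{RS}^*_J$ (Theorem \ref{involutionbasis}), we obtain immediately $F_I = \sum_J L^{-1}_{I^c, J}\mathfrak{RS}^*_J$. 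This already has the shape of the desired expansion; what remains is to identify the coefficient $L^{-1}_{I^c, J}$ with the claimed path sum $L^{rs(-1)}_{I, J}$ in $^{rs}\mathfrak{D}^n_A$.

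The essential combinatorial input is the observation that a standard colored immaculate tableau and a standard colored row-strict immaculate tableau of the same shape $I$ are literally the same underlying filling (since with distinct integer entries, both the weak and strict versions of the row and column conditions coincide), and that $Des(T)$ and $Des^{rs}(T)$ always partition $\{1,\dots,n-1\}$ (a position $i$ is a descent iff $i+1$ is strictly below $i$, and a row-strict descent iff $i+1$ is weakly above $i$). Consequently, $co^{rs}_A(T) = (co_A(T))^c$, which gives the matrix identity $L^{rs}_{I, K} = L_{I, K^c}$. In graph terms, the outgoing edges of $I$ in $^{rs}\mathfrak{D}^n_A$ are the complements of the outgoing edges of $I$ in $\mathfrak{D}^n_A$ with matching weights (as recorded in the paper's discussion of the neighbor structure preceding the proposition). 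Building on this edge-level correspondence, one constructs a length- and weight-preserving bijection between directed paths from $I^c$ in $\mathfrak{D}^n_A$ and directed paths from $I$ in $^{rs}\mathfrak{D}^n_A$, and the two signed path sums agree.

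Alternatively, one can mirror the inductive proof of Theorem \ref{color_fund_to_imm} directly, with Proposition \ref{rs_expansion} replacing Theorem \ref{fund_exp} and inducting on the length of the longest directed path out of $I$ in $^{rs}\mathfrak{D}^n_A$, rearranging $\mathfrak{RS}^*_I = \sum_C L^{rs}_{I, C}F_C$ to solve for $F_I$ at each step. The main obstacle in either strategy is the careful bookkeeping of the path correspondence: unlike $L_{I,I} = 1$ in the non-row-strict case, the diagonal entries $L^{rs}_{I,I}$ need not equal $1$, so the upper unitriangular structure of the transition matrix in the original setting is only accessible after conjugating by the complement involution on sentences, and translating between the two signed path-sum formulas requires treating the edges arising from the ``self-term'' in $\mathfrak{D}^n_A$ (the row-by-row filling giving $co_A(T) = I$) as edges $I \to I^c$ in $^{rs}\mathfrak{D}^n_A$.
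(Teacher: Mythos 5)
Your opening move is correct and is where all the real content lives: applying $\psi$ to Theorem \ref{color_fund_to_imm} at the sentence $I^c$, together with $\psi(F_{I^c})=F_I$ and Theorem \ref{involutionbasis}, gives $F_I=\sum_K L^{-1}_{I^c,K}\mathfrak{RS}^*_K$, with $L^{-1}_{I^c,K}$ the signed path sum in $\mathfrak{D}^n_A$ \emph{starting at $I^c$}. The gap is the final identification of $L^{-1}_{I^c,K}$ with a signed path sum in $^{rs}\mathfrak{D}^n_A$ starting at $I$: the ``length- and weight-preserving bijection'' you invoke does not exist. The edge-level correspondence $L^{rs}_{I,J}=L_{I,J^c}$ is right, but it does not compose along paths: a two-step walk $I\to J_1\to J_2$ in $^{rs}\mathfrak{D}^n_A$ carries weight $L_{I,J_1^c}L_{J_1,J_2^c}$, whereas a two-step path in $\mathfrak{D}^n_A$ carries $L_{I^c,K_1}L_{K_1,K_2}$, and the middle indices differ by a complementation. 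Worse, since the row-by-row standard filling of any shape $J$ has colored row-strict descent composition $J^c$, the graph $^{rs}\mathfrak{D}^n_A$ contains the two-cycle $I\leftrightarrow I^c$ for every $I$, so the set of directed paths from $I$ to a fixed target is infinite and the signed path sum is not even well defined. Concretely, for $I=(ab)$ one has $F_{(ab)}=\mathfrak{RS}^*_{(a,b)}$, while $^{rs}\mathfrak{D}^2_{\{a,b\}}$ on $\{(ab),(a,b)\}$ is a two-cycle with unit weights whose path sums either diverge or, restricted to simple paths, give $\mathfrak{RS}^*_{(ab)}-\mathfrak{RS}^*_{(a,b)}$; neither matches.

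Your alternative route --- rerunning the induction of Theorem \ref{color_fund_to_imm} with Proposition \ref{rs_expansion} in place of Theorem \ref{fund_exp}, which is exactly what the paper's one-line proof proposes --- stalls at the point you flag: to isolate $F_I$ from $\mathfrak{RS}^*_I=\sum_C L^{rs}_{I,C}F_C$ one needs $L^{rs}_{I,I}=1$, but the unit coefficient sits at $C=I^c$ (since $L^{rs}_{I,I^c}=L_{I,I}=1$), while $L^{rs}_{I,I}=L_{I,I^c}$ is typically $0$. Solving at $C=I^c$ instead gives $F_{I^c}=\mathfrak{RS}^*_I-\sum_{C\neq I^c}L^{rs}_{I,C}F_C$, and iterating reproduces precisely your $\psi$-conjugated formula $F_I=\sum_K L^{-1}_{I^c,K}\mathfrak{RS}^*_K$ over $\mathfrak{D}^n_A$, not a path sum over $^{rs}\mathfrak{D}^n_A$ rooted at $I$. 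So your first step proves a correct expansion, but the remaining identification is not a bookkeeping matter to be tidied up --- it fails in examples, and the coefficients must be read off $\mathfrak{D}^n_A$ from $I^c$ (equivalently, off $^{rs}\mathfrak{D}^n_A$ only after composing consecutive edges with the complement involution, which destroys the two-cycles), exactly as your $\psi$ argument produces.
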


The proof follows that of Theorem \ref{color_fund_to_imm} using Proposition \ref{rs_expansion} in place of Theorem \ref{fund_exp}. Similarly, this proposition specializes to the non-colored case in the same manner as Corollary \ref{noncol_fund_to_imm}.

\subsection{Colored row-strict immaculate functions}

We define the colored row-strict immaculate functions as the image of the colored immaculate functions under $\psi$, and thus also as the basis dual to the colored row-strict dual immaculate functions.

\begin{defn} For a sentence $J$, the \emph{colored row-strict immaculate function} is defined as $$\mathfrak{RS}_J = \psi(\mathfrak{S}_J).$$
Equivalently, due to the invariance of $\psi$ under duality, we have $\langle \mathfrak{RS}_I, \mathfrak{RS}^*_J \rangle = \delta_{I,J}$.
\end{defn}

Applying $\psi$ to the colored immaculate functions yields row-strict versions of our earlier results and colored generalizations of the results in Theorem \ref{rowstrictresults}. Note that certain results from Theorem \ref{rowstrictresults} are not generalized here because we lack the corresponding result on the colored immaculate functions or due to the fact that $\psi$ is not an isomorphism on $QSym_A$. The non-colored analogues of $\psi$ are automorphisms on both $QSym$ and $NSym$.

\begin{thm} For words $w$ and $v$, sentences $J$ and $C$, and $f \in NSym_A$ \vspace{2mm}
\begin{itemize}
 \item[(1)] Right Pieri rule: $$\mathfrak{S}_{J}H_w = \sum_{J \subset_w K }\mathfrak{S}_{K} \xLongleftrightarrow{\psi} \mathfrak{RS}_{J}E_{w} = \sum_{J \subset_w K} \mathfrak{RS}_K.$$
 \item[(2)] Colored complete homogeneous and colored elementary expansions: $$H_C = \sum_J K_{J,C}\mathfrak{S}_J \xLongleftrightarrow{\psi} E_C = \sum_J K_{J,C} \mathfrak{RS}_J, \text{\quad \quad} H_C = \sum_J K^{rs}_{J,C}\mathfrak{RS}_J \xLongleftrightarrow{\psi} E_C = \sum_J K^{rs}_{J,C}\mathfrak{S}_J.$$
 \item[(3)] Colored ribbon expansions: $$R_C = \sum_J L_{J,C} \mathfrak{S}_J \xLongleftrightarrow{\psi} R_{C^c} = \sum_{J} L_{J,C}\mathfrak{RS}_J.$$
\end{itemize}
\end{thm}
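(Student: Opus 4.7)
The plan is to derive each row-strict identity from its already-proved colored immaculate counterpart by applying the involution $\psi$, using two structural facts: (a) $\psi: NSym_A \to NSym_A$ is an algebra isomorphism with $\psi(H_J) = E_J$ and $\psi(E_J) = H_J$ (by Proposition \ref{rowstrictdual} together with the earlier identity $\psi(E_J) = H_J$); and (b) $\psi(\mathfrak{S}_J) = \mathfrak{RS}_J$ by definition, together with $\psi(R_J) = R_{J^c}$ from Definition \ref{color_psi}.

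For (1), I start from the colored right Pieri rule (Theorem \ref{rightpieriI}): $\mathfrak{S}_J H_w = \sum_{J \subset_w K} \mathfrak{S}_K$. Applying $\psi$ to both sides and using that $\psi$ is an algebra homomorphism on $NSym_A$, the left side becomes $\psi(\mathfrak{S}_J)\,\psi(H_w) = \mathfrak{RS}_J E_w$, while the right side becomes $\sum_{J \subset_w K} \mathfrak{RS}_K$. This yields the stated row-strict right Pieri rule.

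For (2), the first equivalence is obtained by applying $\psi$ termwise to $H_C = \sum_J K_{J,C}\mathfrak{S}_J$ from Theorem \ref{H_to_S}, giving $E_C = \sum_J K_{J,C}\mathfrak{RS}_J$. For the second equivalence, the expansion $H_C = \sum_J K^{rs}_{J,C}\mathfrak{RS}_J$ is not a direct consequence of results for $\mathfrak{S}_J$, so I would derive it from Proposition \ref{rs_expansion}, namely $\mathfrak{RS}^*_J = \sum_B K^{rs}_{J,B} M_B$, by invoking Proposition \ref{hopf_dual}: since $\{\mathfrak{RS}_J\}_J$ and $\{\mathfrak{RS}^*_J\}_J$ are dual bases and $\{H_I\}_I, \{M_I\}_I$ are dual bases, the transpose change-of-basis relation immediately forces $H_C = \sum_J K^{rs}_{J,C}\mathfrak{RS}_J$. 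Applying $\psi$ then yields $E_C = \sum_J K^{rs}_{J,C}\mathfrak{S}_J$.

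For (3), I apply $\psi$ to the expansion $R_C = \sum_J L_{J,C}\mathfrak{S}_J$ from Corollary \ref{color_r_into_imm}. Using $\psi(R_C) = R_{C^c}$ and $\psi(\mathfrak{S}_J) = \mathfrak{RS}_J$, this gives $R_{C^c} = \sum_J L_{J,C}\mathfrak{RS}_J$ as claimed. The main conceptual step throughout is having $\psi$ available as an algebra isomorphism on $NSym_A$ and knowing its action on all the relevant bases; once these are in hand each of the three equivalences is a one-line application. The only point requiring slightly more care is the second identity of (2), where the required row-strict expansion does not live under $\psi$ of a known colored immaculate identity and must instead be obtained via Hopf duality from Proposition \ref{rs_expansion}.
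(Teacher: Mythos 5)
Your proposal is correct and matches the paper's (largely implicit) argument: each identity is obtained by applying the involution $\psi$, using that it is an algebra isomorphism on $NSym_A$ interchanging the $H$ and $E$ bases and sending $\mathfrak{S}_J \mapsto \mathfrak{RS}_J$ and $R_C \mapsto R_{C^c}$. Your observation that the expansion $H_C = \sum_J K^{rs}_{J,C}\mathfrak{RS}_J$ cannot be obtained by applying $\psi$ to a colored immaculate identity and must instead come from Proposition \ref{hopf_dual} applied to Proposition \ref{rs_expansion} is exactly the right way to supply the one step the paper leaves unstated.
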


The application of Proposition \ref{hopf_dual} to Proposition \ref{rs_F_to_imm} also yields the following result.  The analogous result is also true in $NSym$, as in Corollary \ref{imm_to_R}.

\begin{cor} For a sentence $J$, the colored row-strict immaculate functions expand into the colored ribbon basis as $$\mathfrak{RS}_J = \sum_I L^{rs(-1)}_{I,J} R_I \text{\qquad with coefficients \qquad} L^{rs(-1)}_{I,J} = \sum_{\mathcal{P}}(-1)^{\ell(\mathcal{P})}prod(\mathcal{P})$$ where the sums run over all $I$ above $J$ in $^{rs}\mathfrak{D}^n_{A}$ and all paths $\mathcal{P}$ from $I$ to $J$ in $^{rs}\mathfrak{D}^n_{A}$.
\end{cor}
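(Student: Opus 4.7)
The plan is to apply Proposition \ref{hopf_dual} directly to the expansion in Proposition \ref{rs_F_to_imm}, exactly as the preceding corollary \ref{color_imm_to_R} was obtained from Theorem \ref{color_fund_to_imm}. The groundwork is already in place: by Proposition \ref{rowstrictdual} the map $\psi$ preserves the duality pairing, and by Theorem \ref{involutionbasis} we have $\psi(\mathfrak{S}^*_J) = \mathfrak{RS}^*_J$, so $\{\mathfrak{RS}_J\}_J$ and $\{\mathfrak{RS}^*_J\}_J$ form dual bases of $NSym_A$ and $QSym_A$ with respect to the standard pairing. Meanwhile $\{R_I\}_I$ and $\{F_I\}_I$ are also dual bases of the same pair of Hopf algebras.

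The main step is then to invoke Proposition \ref{hopf_dual} with $\mathcal{A} = QSym_A$, $\mathcal{B} = NSym_A$, the pair of dual bases $\{a_i\} = \{F_I\}$ and $\{b_i\} = \{R_I\}$, and the other pair $\{c_i\} = \{\mathfrak{RS}^*_J\}$ and $\{d_i\} = \{\mathfrak{RS}_J\}$. Proposition \ref{rs_F_to_imm} provides exactly the relation
\[
F_I = \sum_{J} L^{rs(-1)}_{I,J}\,\mathfrak{RS}^*_J,
\]
so Proposition \ref{hopf_dual} immediately yields the dual expansion
\[
\mathfrak{RS}_J = \sum_{I} L^{rs(-1)}_{I,J}\,R_I,
\]
with the same coefficients. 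The combinatorial description of $L^{rs(-1)}_{I,J}$ as an alternating sum over directed paths in ${}^{rs}\mathfrak{D}^n_A$ is then inherited verbatim from Proposition \ref{rs_F_to_imm}, and the support condition that $I$ lie above $J$ in ${}^{rs}\mathfrak{D}^n_A$ is transferred by the same proposition (a nonzero coefficient $L^{rs(-1)}_{I,J}$ forces the existence of at least one such directed path).

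There is no substantive obstacle here; the only things to double-check are bookkeeping: that the indexing of coefficients between the two dual expansions is transposed correctly (the $I,J$ slots do swap roles as source/target for the basis being expanded, but the coefficient $L^{rs(-1)}_{I,J}$ keeps its labels), and that the reachability condition in ${}^{rs}\mathfrak{D}^n_A$ agrees with the summation range. Both are automatic from the statement of Proposition \ref{rs_F_to_imm}, so the corollary follows in a single line once Proposition \ref{hopf_dual} is cited.
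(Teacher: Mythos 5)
Your proposal matches the paper's own derivation: the corollary is obtained by applying Proposition \ref{hopf_dual} to the expansion in Proposition \ref{rs_F_to_imm}, using the duality $\langle \mathfrak{RS}_I, \mathfrak{RS}^*_J \rangle = \delta_{I,J}$ already established via $\psi$. The bookkeeping on indices and the path-based description of the coefficients are handled exactly as in the paper's treatment of Corollary \ref{color_imm_to_R}.
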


\subsection{The colored immaculate poset and skew colored row-strict dual immaculate functions} The set of standard colored immaculate tableaux is equal to the set of standard colored row-strict immaculate tableaux, meaning that many of our results on the colored immaculate poset immediately extend to the row-strict setting.

\begin{defn}
For sentences $I$ and $J$ where $J \subseteq_L I$, a \emph{skew colored row-strict immaculate tableau} of shape $I/J$ is a colored skew shape $I/J$ filled with positive integers such that the sequences of entries in the first column is weakly increasing top to bottom and the sequence of integers in each row is strictly increasing left to right. 
\end{defn}

\begin{defn}
For sentences $I$ and $J$ where $J \subseteq_L I$, define the \textit{skew colored row-strict dual immaculate function} as $$\mathfrak{RS}^*_{I/J} = \sum_{K} \langle \mathfrak{RS}_JH_K, \mathfrak{RS}^*_I \rangle M_K,$$ where the sum runs over all sentences $K \in \mathfrak{P}_A$ such that $|I|-|J|=|K|$.
\end{defn}

Applying $\psi$ to the equations in Proposition \ref{skewcoeff} yields the following results.

\begin{thm}\label{rs_skew_coeff}
For sentences $I$ and $J$ with $J \subseteq_L I$, 
$$\mathfrak{RS}^*_{I/J} =  
\sum_K \langle \mathfrak{RS}_JR_K, \mathfrak{RS}^*_I \rangle F_K = \sum_K \langle \mathfrak{RS}_J\mathfrak{RS}_K, \mathfrak{RS}^*_{I} \rangle \mathfrak{RS}^*_K,$$ where the sums run over all sentences $K \in \mathfrak{P}_A$ such that $|I|-|J| = |K|$.
\end{thm}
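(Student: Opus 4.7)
The plan is to mirror the proof of Proposition \ref{skewcoeff} using the perp-operator construction from Definition \ref{mperp1}, extended (as in the original proof of that proposition) to elements of $NSym_A$ acting on $QSym_A$. For $G \in NSym_A$, let $G^{\perp} : QSym_A \to QSym_A$ be the linear operator characterized by $\langle H, G^{\perp}(F) \rangle = \langle GH, F \rangle$ for all $H \in NSym_A$ and $F \in QSym_A$. A short computation using the pairing shows that for any pair of dual bases $\{A_K\}_K$ of $QSym_A$ and $\{B_K\}_K$ of $NSym_A$, one has
$$G^{\perp}(F) = \sum_K \langle G B_K, F \rangle A_K,$$
since $G^{\perp}(F)$ is recovered from its pairings with the basis $\{B_K\}_K$.

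With this in hand, I first observe that the definition of $\mathfrak{RS}^*_{I/J}$ together with the duality $\langle H_K, M_L \rangle = \delta_{K,L}$ gives
$$\mathfrak{RS}^*_{I/J} = \sum_K \langle \mathfrak{RS}_J H_K, \mathfrak{RS}^*_I \rangle M_K = \mathfrak{RS}_J^{\perp}(\mathfrak{RS}^*_I),$$
which is the above expansion applied to the dual basis pair $(M_K, H_K)$ with $G = \mathfrak{RS}_J$ and $F = \mathfrak{RS}^*_I$. Expanding the same $\mathfrak{RS}_J^{\perp}(\mathfrak{RS}^*_I)$ in the dual basis pair $(F_K, R_K)$ yields
$$\mathfrak{RS}_J^{\perp}(\mathfrak{RS}^*_I) = \sum_K \langle \mathfrak{RS}_J R_K, \mathfrak{RS}^*_I \rangle F_K,$$
and in the dual basis pair $(\mathfrak{RS}^*_K, \mathfrak{RS}_K)$ — which is indeed a dual basis pair, as established in the definition of $\mathfrak{RS}_J$ earlier in this section — it yields
$$\mathfrak{RS}_J^{\perp}(\mathfrak{RS}^*_I) = \sum_K \langle \mathfrak{RS}_J \mathfrak{RS}_K, \mathfrak{RS}^*_I \rangle \mathfrak{RS}^*_K.$$
Equating the three expansions of $\mathfrak{RS}_J^{\perp}(\mathfrak{RS}^*_I)$ gives the theorem.

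I do not anticipate a real obstacle: the argument is a formal consequence of the perp-operator machinery, the definition of $\mathfrak{RS}^*_{I/J}$, and the duality $\langle \mathfrak{RS}_I, \mathfrak{RS}^*_J \rangle = \delta_{I,J}$. An equivalent alternative route would be to apply the involution $\psi$ termwise to Proposition \ref{skewcoeff}, using that $\psi$ is an $NSym_A$-algebra isomorphism that is invariant under the pairing (Proposition \ref{rowstrictdual}) and that $\psi$ sends $\mathfrak{S}_K, \mathfrak{S}^*_K$ to $\mathfrak{RS}_K, \mathfrak{RS}^*_K$ (by definition and Theorem \ref{involutionbasis}); however, this route still requires verifying $\psi(\mathfrak{S}^*_{I/J}) = \mathfrak{RS}^*_{I/J}$, which is essentially the content of the direct argument above, so the perp-operator proof is the more economical one.
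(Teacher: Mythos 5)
Your proof is correct, and it is essentially the argument the paper gives for the \emph{uncolored-row} analogue, Proposition \ref{skewcoeff}: identify $\mathfrak{RS}^*_{I/J}$ with $\mathfrak{RS}_J^{\perp}(\mathfrak{RS}^*_I)$ and re-expand that single element in the three dual-basis pairs $(M_K,H_K)$, $(F_K,R_K)$, $(\mathfrak{RS}^*_K,\mathfrak{RS}_K)$. For Theorem \ref{rs_skew_coeff} itself, however, the paper takes the other route you mention: it simply asserts that the result follows by ``applying $\psi$ to the equations in Proposition \ref{skewcoeff}.'' That route is the less economical one for exactly the reason you flag: transporting Proposition \ref{skewcoeff} through $\psi$ requires knowing $\psi(\mathfrak{S}^*_{I/J})=\mathfrak{RS}^*_{I/J}$, and the paper only proves that identity \emph{after} Theorem \ref{rs_skew_coeff}, with a proof that invokes Theorem \ref{rs_skew_coeff}. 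Your direct perp-operator argument uses only the definition of $\mathfrak{RS}^*_{I/J}$ and the duality $\langle \mathfrak{RS}_I,\mathfrak{RS}^*_J\rangle=\delta_{I,J}$, so it is self-contained and removes any appearance of circularity; in that sense it is the cleaner way to organize this part of the section. Two small housekeeping points, both of which the paper also glosses over: the expansion $G^{\perp}(F)=\sum_K\langle GB_K,F\rangle A_K$ ranges a priori over all sentences $K$, and one should note that the grading forces $\langle \mathfrak{RS}_JB_K,\mathfrak{RS}^*_I\rangle=0$ unless $|K|=|I|-|J|$, which recovers the stated range of summation; and the operator $G^{\perp}$ for $G\in NSym_A$ is the dual of Definition \ref{mperp1} rather than literally an instance of it, so your explicit characterization $\langle H,G^{\perp}(F)\rangle=\langle GH,F\rangle$ is a worthwhile addition.
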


\begin{prop} For sentences $I$ and $J$ such that $J \subseteq_L I$, $$\psi(\mathfrak{S}^*_{I/J})=\mathfrak{RS}^*_{I/J}.$$
\end{prop}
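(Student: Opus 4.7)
The plan is to compare the colored fundamental expansions of the two skew functions and use that $\psi$ behaves well with the duality pairing. By Proposition \ref{skewcoeff}, we already have the expansion
\[
\mathfrak{S}^*_{I/J} = \sum_K \langle \mathfrak{S}_J R_K, \mathfrak{S}^*_I \rangle F_K,
\]
and by Theorem \ref{rs_skew_coeff} we likewise have
\[
\mathfrak{RS}^*_{I/J} = \sum_K \langle \mathfrak{RS}_J R_K, \mathfrak{RS}^*_I \rangle F_K.
\]
So the proof reduces to showing that applying $\psi$ term-by-term to the first sum produces the second sum.

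First I would apply $\psi$ to the $F$-expansion of $\mathfrak{S}^*_{I/J}$ and use $\psi(F_K) = F_{K^c}$ (Definition \ref{color_psi}) to obtain
\[
\psi(\mathfrak{S}^*_{I/J}) = \sum_K \langle \mathfrak{S}_J R_K, \mathfrak{S}^*_I \rangle F_{K^c}.
\]
Next I would rewrite each coefficient using the two properties of $\psi$ provided by Proposition \ref{rowstrictdual}: the pairing is invariant under $\psi$, and $\psi$ on $NSym_A$ is an algebra isomorphism. Together with Theorem \ref{involutionbasis} (which gives $\psi(\mathfrak{S}^*_I) = \mathfrak{RS}^*_I$) and the definition $\mathfrak{RS}_J = \psi(\mathfrak{S}_J)$, this yields
\[
\langle \mathfrak{S}_J R_K, \mathfrak{S}^*_I \rangle = \langle \psi(\mathfrak{S}_J)\psi(R_K), \psi(\mathfrak{S}^*_I) \rangle = \langle \mathfrak{RS}_J R_{K^c}, \mathfrak{RS}^*_I \rangle.
\]

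Finally, since the complement $K \mapsto K^c$ is an involution on sentences of each fixed size, I would reindex the sum by setting $K' = K^c$. This gives
\[
\psi(\mathfrak{S}^*_{I/J}) = \sum_{K'} \langle \mathfrak{RS}_J R_{K'}, \mathfrak{RS}^*_I \rangle F_{K'} = \mathfrak{RS}^*_{I/J},
\]
by Theorem \ref{rs_skew_coeff}.

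The proof is essentially a routine chain of substitutions, so there is no real obstacle. The only subtle point to verify is that $\psi$ distributes across the product $\mathfrak{S}_J R_K$, which requires that $\psi$ is a ring homomorphism on $NSym_A$ (and not merely on $QSym_A$, where it fails to preserve multiplication). This is exactly the content of Proposition \ref{rowstrictdual}, so the argument goes through cleanly.
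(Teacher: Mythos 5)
Your proposal is correct and is essentially the paper's own argument run in the opposite direction: the paper applies $\psi$ to the $F$-expansion of $\mathfrak{RS}^*_{I/J}$ and recovers $\mathfrak{S}^*_{I/J}$, while you apply $\psi$ to the expansion of $\mathfrak{S}^*_{I/J}$, but both rely on exactly the same ingredients ($\psi(F_K)=F_{K^c}$, invariance of the pairing under $\psi$, the algebra-homomorphism property of $\psi$ on $NSym_A$, $\psi(\mathfrak{S}^*_I)=\mathfrak{RS}^*_I$, and reindexing by the complement involution). Your closing remark correctly identifies the one subtle point, namely that distributing $\psi$ over $\mathfrak{S}_J R_K$ needs the $NSym_A$ isomorphism rather than the $QSym_A$ map.
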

\begin{proof} Let $I$ and $J$ be sentences such that $J \subseteq_L I$. Then,
\begin{align*}
    \psi(\mathfrak{RS}^*_{I/J}) &= \sum_K \langle \mathfrak{RS}_JR_K, \mathfrak{RS}^*_I \rangle \psi(F_K) = \sum_K \langle \mathfrak{RS}_JR_K, \mathfrak{RS}^*_I \rangle F_{K^c}\\
    &= \sum_K \langle \psi(\mathfrak{S}_JR_{K^c}), \psi(\mathfrak{S}^*_I) \rangle F_{K^c} \text{\quad by Theorem \ref{involutionbasis}} \\
    &= \sum_K \langle \mathfrak{S}_JR_{K^c}, \mathfrak{S}^*_I \rangle F_{K^c} = \mathfrak{S}^*_{I/J}. \text{\quad By Proposition \ref{rowstrictdual}} \qedhere
\end{align*}
\end{proof}

Comultiplication on the colored row-strict immaculate basis can be defined in terms of skew colored row-strict immaculate functions.  The proof follows that of Proposition \ref{color_comult} using Theorem \ref{rs_skew_coeff}.
\begin{prop} Let $I$ be a sentence. Then,
    
$$\Delta(\mathfrak{RS}^*_I) = \sum_J \mathfrak{RS}^*_J \otimes \mathfrak{RS}^*_{I/J},$$ where the sum runs over all sentences $J$ such that $J \subseteq_L I$.
\end{prop}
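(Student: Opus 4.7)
The plan is to mirror the proof of Proposition \ref{color_comult} exactly, substituting the colored row-strict immaculate and dual immaculate bases for the colored immaculate and dual immaculate bases, and invoking Theorem \ref{rs_skew_coeff} in place of Proposition \ref{skewcoeff} at the final step. The starting point is the fact that $\{\mathfrak{RS}_I\}_I$ and $\{\mathfrak{RS}^*_I\}_I$ are dual bases of $NSym_A$ and $QSym_A$ (this was recorded right after the definition of $\mathfrak{RS}_J$ via $\psi$). Hence for any sentences $J,K$ we can write
\[
\mathfrak{RS}_J\mathfrak{RS}_K \;=\; \sum_I \langle \mathfrak{RS}_J\mathfrak{RS}_K,\mathfrak{RS}^*_I\rangle \,\mathfrak{RS}_I,
\]
where the sum runs over all sentences $I$.

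Applying Proposition \ref{hopf_dual_coproduct} to this identity dualizes the product on $NSym_A$ to the coproduct on $QSym_A$, yielding
\[
\Delta(\mathfrak{RS}^*_I) \;=\; \sum_{J,K}\langle \mathfrak{RS}_J\mathfrak{RS}_K, \mathfrak{RS}^*_I\rangle\,\mathfrak{RS}^*_J\otimes\mathfrak{RS}^*_K,
\]
where the sum runs over all pairs of sentences $(J,K)$. I would then factor out $\mathfrak{RS}^*_J\otimes$ and regroup as
\[
\Delta(\mathfrak{RS}^*_I) \;=\; \sum_J \mathfrak{RS}^*_J\otimes\left(\sum_K \langle \mathfrak{RS}_J\mathfrak{RS}_K,\mathfrak{RS}^*_I\rangle \,\mathfrak{RS}^*_K\right).
\]
By Theorem \ref{rs_skew_coeff}, the inner sum is precisely $\mathfrak{RS}^*_{I/J}$, which substitutes in to give the desired formula.

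The only remaining point to check is that restricting the outer sum to those $J$ with $J\subseteq_L I$ is harmless: for any $J$ not of this form, the coefficients $\langle \mathfrak{RS}_J\mathfrak{RS}_K,\mathfrak{RS}^*_I\rangle$ all vanish, so that $\mathfrak{RS}^*_{I/J} = 0$ when interpreted as the corresponding sum (equivalently, one simply defines $\mathfrak{RS}^*_{I/J}=0$ in that case). This vanishing follows from the right Pieri rule of Theorem \ref{rightpieriI} under $\psi$ exactly as in the colored immaculate case: $\mathfrak{RS}_J\mathfrak{RS}_K$ expands as a nonnegative combination of $\mathfrak{RS}_L$ with $J\subseteq_L L$, so the pairing against $\mathfrak{RS}^*_I$ can be nonzero only when $J\subseteq_L I$. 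There is no real obstacle here; the entire argument is a formal transport of the colored dual immaculate comultiplication proof via the duality-preserving involution $\psi$ and Theorem \ref{rs_skew_coeff}, and the calculations needed are essentially two lines.
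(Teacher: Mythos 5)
Your argument is exactly the paper's proof: dualize the expansion of $\mathfrak{RS}_J\mathfrak{RS}_K$ via Proposition \ref{hopf_dual_coproduct}, regroup, and identify the inner sum with $\mathfrak{RS}^*_{I/J}$ by Theorem \ref{rs_skew_coeff}, just as Proposition \ref{color_comult} is proved with Proposition \ref{skewcoeff}. Your closing support argument is also fine, except that the word ``nonnegative'' is an overstatement (the expansion of $\mathfrak{RS}_K$ into the $E$-basis introduces signs before the Pieri rule is applied); all that is needed, and all that is true, is that the product is supported on sentences $L$ with $J\subseteq_L L$.
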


Multiplication and antipode of the colored row-strict dual immaculate functions are closely related to the multiplication and antipode of colored dual immaculate functions, and thus also remain open.

\subsection{Future work} 
Our future work on this project will take three directions.  First, we hope to continue exploring properties of the colored immaculate and dual immaculate bases by looking at multiplicative structures, potential Jacobi-Trudi formulas, possible rim hook generalizations, and expansions to and from more bases.  Second, we will continue to generalize other Schur-like bases to $QSym_A$ and $NSym_A$, specifically the quasisymmetric shin functions and Young quasisymmetric Schur functions, as well as their duals. Finally, we are interested in defining and studying the colored generalization of the symmetric functions that would be a subset of $QSym_A$ and the image of $NSym_A$ under a forgetful map. 

%%%%%%%%%%%%%%%%%%%%%%%%%%%%%%%%%%%%%%%%%%%%%%%%%%%%%%%%%%%%%%%%%%%%%%%%%%%%%%%%%%%%%%%%%

\printbibliography
%%%%%%%%%%%%%%%%%%%%%%%%%%%%%%%%%%%%%%%%%%%%%%%%%%%%%%%%%%%%%%%%%%%%%%%%%%%%%%%%%%%%%%%%%

\section{Appendix}

Let $A$ be a finite alphabet with a total order $\leq$. Let $I = (w_1, \ldots, w_k)$ and $J=(v_1, \ldots, v_h)$ be sentences and $K=(u_1, \ldots, u_g)$ a weak sentence.  Let $w = a_1 \ldots a_n$ and $v = b_1 \ldots b_m$. Let $U$ be a standard colored immaculate tableau and thus also a standard colored row-strict immaculate tableau.

\begin{itemize}
\item[] \
\item[] $|w| = n$ \quad (size)
\item[] $w \cdot v = a_1 \ldots a_n b_1 \ldots b_m$ \quad (concatenation of words)
\item[] $w \preceq_{\ell} v$ \quad if $a_i < b_i$ for the first positive integer $i$ such that $a_i \not= b_i$ \quad (lexicographic order)
\item[] \
\item[] $|I| = \sum_{i=1}^{k} |w_i|$ \quad(size) 
\item[] $\ell(I) = k$ \quad(length)
\item[] $w(I)= w_1 \cdot w_2 \cdots w_k$ \quad (maximal word)
\item[] $w\ell(I) = (|w_1|, |w_2|, \ldots, |w_k|)$ \quad (word lengths)
\item[] \
\item[] $J \preceq I$ \quad for $w(I)=w(J)$, if $J$ splits at each location that $I$ splits \quad (refinement order)
\item[] $\mu(J,I) = (-1)^{\ell(J)-\ell(I)} \text{ for } J \preceq I$ \quad (Möbius function on the poset of sentences ordered by $\preceq$)
\item[] $J \subset_w I$ \quad if $w_i = v_iq_i$ for $1 \leq i \leq k$ such that $q_k \cdots q_1 = w$ and $k \leq h+1$ where $v_{h+1}= \emptyset$ 
\item[] \
\item[] $I \cdot J = (w_1, \ldots, w_n, v_1, \ldots, v_m)$\quad (concatenation of sentences)
\item[] $I \odot J = (w_1, \ldots, w_{n-1}, w_n \cdot v_1, v_2, \ldots, v_m)$ \quad (near-concatenation)
\item[] \
\item[] $I^r = (w_k, \ldots, w_2, w_1)$ \quad (reversal)
\item[] $I^c$ \quad the unique sentence with $w(I^c)=w(I)$ that splits exactly where $I$ does not \quad (complement)
\item[] $\tilde{K}$ \quad the sentence obtained by removing all empty words from K \quad (flattening)
\item[] \
\item[] $K\subseteq_L I$ \quad there exists a weak sentence $I/_L K$ as defined below \quad (left-containment)
\item[] $I /_L K = (q_1, \ldots, q_t)$ such that $w_i = u_iq_i$ for all $i \in [k]$
\item[] \
\item[] $K \subseteq_R I$ \quad there exists a weak sentence $I /_R K$ as defined below \quad (right-containment)
\item[] $I /_R K = (q_1, \ldots, q_t)$ such that $w_i = q_iu_i$ for all $i \in [k]$
\item[] \
\item[] $\{M_I\}_I = \sum_{1 \leq j_1 < j_2 < \ldots < j_k} x_{w_1, j_1}x_{w_2, j_2} \cdots x_{w_k, j_k}$ \quad (the monomial basis of $QSym_A$)
\item[] $\{F_I\}_I = \sum_{J \preceq I} M_J$ \quad (the fundamental basis of $QSym_A$)
\item[] \
\item[] $\{H_I\}_I$ \quad (the complete homogeneous basis of $NSym_A$)
\item[] $\{R_I\}_I = \sum_{I \preceq J}(-1)^{\ell(J) - \ell(I)}H_J$ \quad (the ribbon basis of $NSym_A$)
\item[] $\{E_I\}_I = \sum_{J \preceq I} (-1)^{|I|-\ell(J)} H_J$ \quad (the elementary basis of $NSym_A$)
\item[] \
\item[] $\langle \cdot, \cdot \rangle: NSym \otimes QSym \rightarrow \mathbb{Q}$ \quad $\langle H_I, M_J \rangle = \delta_{I,J}$ (inner product)
\item[] \
\item[] \emph{Colored immaculate tableau (CIT)} \quad a colored composition diagram filled with integers such that rows are weakly increasing left to right and the first column is strictly decreasing top to bottom
\item[] $shape(T) = (w_1, \ldots, w_k)$ \quad where the colors of row $i$ read left to right give $w_i$
\item[] $type(T) = (v_1, \ldots, v_h)$ \quad $v_i =$ colors of boxes filled with $i$ read from left to right, bottom to top
\item[] \
\item[] $Des(U) = \{ i : \text{$i+1$ is on a strictly lower row of $U$ than $i$} \}$ \quad (descent set)
\item[] $Co_A(U)$ \quad a sentence formed by reading the color in each box of $U$ in order of number, splitting into a new word when moving to a strictly lower row  \quad (colored descent composition)
\item[] $std(T)$ \quad (the standardization of a tableau $T$)
\item[] \
\item[] $\{\mathfrak{S}^*_I\}_I$ \quad (the colored dual immaculate basis of $QSym_A$)\
\item[] $\{\mathfrak{S}_I\}_I$ \quad (the colored immaculate basis of $NSym_A$)\
\item[] \
\item[] $\mathfrak{P}_A$ \quad (the colored immaculate poset)
\item[] $\mathfrak{D}_A^n$ \quad (the colored immaculate descent graph)
\item[] $^{rs}\mathfrak{D}_A^n$ \quad (the colored row-strict immaculate descent graph)
\item[] \
\item[] $Des^{rs}(U) = \{ i : \text{$i+1$ is on a weakly higher row of $U$ than $i$}\}$ \quad (row-strict descent set)
\item[] $Co^{rs}_A(U)$ \quad a sentence formed by reading the color in each box of $U$ in order of number, splitting into a new word when moving to a weakly higher row \quad (colored row-strict descent composition)
\item[] \
\item[] $\{\mathfrak{RS}^*_I\}_I$ \quad (the row-strict dual immaculate basis of $QSym_A$)
\item[] $\{\mathfrak{RS}_I\}_I$ \quad (the row-strict immaculate basis of $NSym_A$)
\end{itemize}

\end{document}